\newcommand\reallywidehat[1]{\arraycolsep=0pt\relax%
\begin{array}{c}
\stretchto{
  \scaleto{
    \scalerel*[\widthof{\ensuremath{#1}}]{\kern-.5pt\bigwedge\kern-.5pt}
    {\rule[-\textheight/2]{1ex}{\textheight}} %WIDTH-LIMITED BIG WEDGE
  }{\textheight} % 
}{0.5ex}\\           % THIS SQUEEZES THE WEDGE TO 0.5ex HEIGHT
#1\\                 % THIS STACKS THE WEDGE ATOP THE ARGUMENT
\rule{-1ex}{0ex}
\end{array}
}
\theoremstyle{plain}
\newtheorem{theorem}{Theorem}[section]
\newtheorem{prop}[theorem]{Proposition}
\newtheorem{lem}[theorem]{Lemma}
\newtheorem{dfn}[theorem]{Definition} 
\newtheorem{cor}[theorem]{Corollary}
\newtheorem{subtheorem}{Theorem}[subsection]
\newtheorem{subproposition}[subtheorem]{Proposition}
\newtheorem{subthm}[subtheorem]{Theorem}  
\newtheorem{sublemma}[subtheorem]{Lemma}
\newtheorem{subcor}[subtheorem]{Corollary}  
\theoremstyle{definition}
\newtheorem{subdfn}[subtheorem]{Definition}
\newtheorem{rem}[theorem]{Remark}
\theoremstyle{remark}
\newtheorem{chunk}[theorem]{}
\newtheorem{subchunk}[subtheorem]{}
\numberwithin{equation}{subtheorem}
\theoremstyle{remark}
\def\depth{\operatorname{depth}}
\def \S {\mathcal S}  
\def\Tr{\operatorname{Tr}}
\def\tr{\operatorname{tr}}
\def\Ext{\operatorname{Ext}}
\def\Tor{\operatorname{Tor}}
\def\injdim{\operatorname{inj dim}}
\def\tor{\operatorname{Tor}} 
\def\add{\operatorname{add}} 
\def \H {\operatorname{H}}
\def\m{\mathfrak{m}}
\def\r{\operatorname{r}}
\def\ann{\operatorname{ann}} 
\def\mod{\operatorname{mod}}
\def\Soc{\operatorname{Soc}} 
\def\X{\mathcal{X}}
\def\A{\mathcal{A}}
\def\B{\mathcal{B}}
\def\E{\mathcal{E}}
\def\supp{\operatorname{Supp}}
\def \pd {\operatorname{pd}}  
\def\p{\mathfrak{p}} 
\def\Hom{\operatorname{Hom}} 
\def\Mor{\operatorname{Mor}} 
\def \Ul {\operatorname{Ul}}
\def \cm {\operatorname{CM}} 
\def\syz{\Omega} 
\def\spec{\operatorname{Spec}}
\def \Ass {\operatorname{Ass}} 
\def \Min {\operatorname{Min}}
\def \Mod {\operatorname{Mod}} 
\def \rnk {\operatorname{rank}} 
\def \C {\mathcal{C}}
\def \MD {\operatorname{MD}}
\begin{document}

%\allowdisplaybreaks
\title{Exact Subcategories, subfunctors of $\Ext$, and some applications}    
\author{Hailong Dao}   
\address{Department of Mathematics, University of Kansas, Lawrence, KS 66045-7523, USA} 
\email{hdao@ku.edu}

\author{Souvik Dey}
\address{Department of Algebra, Charles University, Faculty of Mathematics and Physics, Sokolovska´ 83, 186 75, Praha, Czech Republic}
\email{dey0976@gmail.com, souvik@ku.edu}

\author{Monalisa Dutta}   
\address{Department of Mathematics, University of Kansas, Lawrence, KS 66045-7523, USA} 
\email{m819d903@ku.edu}
 
\thanks{2020 {\em Mathematics Subject Classification.} 13C13, 13C14, 13C60, 13D07, 13H10, 18E05, 18G15} 
\thanks{{\em Key words and phrases.} Exact subcategory, subfunctors, extension functors, Ulrich modules} 
 
\begin{abstract} 
Let $(\A,\E)$ be an exact category. We establish basic results that allow one to identify sub(bi)functors of $\Ext_{\E}(-,-)$ using additivity of numerical functions and restriction to subcategories. We also study a small number of these new functors over commutative local rings in detail and find a range of applications from detecting regularity to understanding Ulrich modules.    
\end{abstract}   
\maketitle  

\section[Introduction]{Introduction}

The Yoneda characterization of $\Ext$ is familiar to most students of homological algebra. Let $A,B$ be two objects in an abelian category $\A$. Then $\Ext_{\A}(A,B)$ is the set of all equivalence classes of sequences of the form $0\to B\to C\to A\to 0$, where two sequences $\alpha,\beta$ are equivalent if we have the following commutative diagram:
$$\begin{tikzcd}
\alpha: & 0\arrow[r] & B \arrow[r] \arrow[d, equal] & C\arrow[d, "f"]\arrow[r] & A \arrow[d,equal]\arrow[r] & 0\\
\beta: & 0\arrow[r] & B\arrow[r] & C'  \arrow[r]  & A  \arrow[r] & 0         
\end{tikzcd}$$

Now $\Ext_{\A}(A,B)$ can be given an abelian group structure by the well-known Baer sum as described in, for instance, \cite[Tag 010I]{st}. This consideration can be carried out more generally in any exact category (see \cite[Subsection 1.2]{vec}).  

The purpose of this note is to study the following rather natural questions: what if we place additional constraints on the short exact sequences? When do we get a subfunctor of $\Ext^1$? Can one apply such functors to study ring and module theory, similar to the ways homological algebra has been very successfully applied in the last decades?  

Let us elucidate our goals with a concrete example. Let $(R,\m,k)$ be a Noetherian local ring. Let $A,B$ be finitely generated $R$-modules. We consider exact sequences $0\to B\to C\to A\to 0$ of $R$-modules, with the added condition that $\mu(C)=\mu(A)+\mu(B)$, where $\mu(-)$ denotes the minimal number of generators. As we shall see later, the equivalence classes of such sequences do form a subfunctor of $\Ext^1_R$, which we denote by $\Ext^1_R(-,-)^{\mu}$. More surprisingly, the vanishing of a single module  $\Ext^1_R(-,-)^{\mu}$  can be used to characterize the regularity of $R$, a feature that is lacking with the classical $\Ext^1$.  

Analogous versions of classical homological functors have been studied by various authors, notably starting with Hochschild (\cite{Hochschild}) who studied relative $\Ext$ and $\Tor$ groups of modules over a ring with respect to subrings of the original ring. This work is further developed by Butler-Horrocks as well as Auslander-Solberg, where the point of view is switched to allowable exact sequences that give rise to sub(bi)functors of $\Ext^1$.  The whole circle of ideas is now thriving on its own under the name ``relative homological algebra", with exact structures playing a fundamental role, see \cite{reduc,Buan,Crivei12,EnochsJenda_v2, vec, Enomoto1, Enomoto2, matsui, Rump11,ZZ} for an incomplete list of literature and \cite{Solberg, Theo} for some excellent surveys. In commutative algebra, as far as we know, this line of inquiry has not been exploited thoroughly, however traces of it can be found in \cite{tony, thesis} and \cite[Section 1,2]{ES}. 

Although the existing literature provides excellent starting points and inspiring ideas for this present work, it is not always easy to extract the precise results needed for our intended applications. For instance, while the connections between subfunctors of $\Ext^1$ and certain sub-exact structures on a fixed category are well known (\cite[Section 1.2]{vec}), checking the conditions of substructures in each case can be time-consuming.  

We are able to find criteria that can be applied in broad settings to identify exact subcategories and subfunctors. Here is a sample result applicable to our motivating example above, which follows from Theorem \ref{subadd} and Proposition \ref{exsub}. 

\begin{theorem}
Let $({\A},{\E})$ be an exact category. Let $\phi:{\A} \to \mathbb Z$ be a function such that $\phi$ is constant on isomorphism classes of objects in $\A$, $\phi$ is additive on finite biproducts, and $\phi$ is sub-additive on kernel-cokernel pairs in $\E$ (i.e., if $\begin{tikzcd}
M \arrow[r, tail] & N \arrow[r, two heads] & L
\end{tikzcd}$ is in $\E$, then $\phi(N)\le \phi(M)+\phi(L)$). Set ${\E}^{\phi}:=\{\text{kernel-cokernel pairs in } {\E} \text{ on which } \phi \text { is additive} \}$. Then ${\E}^{\phi}$ gives rise, via the Yoneda construction, to a subfunctor of $\Ext_{\E}(-,-)$. 
\end{theorem}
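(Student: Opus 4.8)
The plan is to realize $\mathcal{E}^{\phi}$ as a suitably closed subclass of conflations and then read off the subfunctor from the standard dictionary between (closed) exact substructures and subfunctors of $\Ext_{\mathcal E}$; concretely, I expect Theorem~\ref{subadd} to supply the closure properties of $\mathcal E^{\phi}$ and Proposition~\ref{exsub} to be the bridge turning those properties into a subfunctor. Two properties are immediate. Since $\phi$ is constant on isomorphism classes, membership in $\mathcal E^{\phi}$ depends only on the isomorphism class of a kernel-cokernel pair, so $\mathcal E^{\phi}$ is closed under isomorphism. Since $\phi$ is additive on biproducts, every split conflation $M \rightarrowtail M\oplus L \twoheadrightarrow L$ satisfies $\phi(M\oplus L)=\phi(M)+\phi(L)$ and hence lies in $\mathcal E^{\phi}$; in particular $\mathcal E^{\phi}$ contains the zero class of each $\Ext$ group.

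The heart of the argument is closure under pullback and pushout, and this is exactly where sub-additivity does the work. Given a conflation $\eta\colon B \rightarrowtail C \twoheadrightarrow A$ in $\mathcal E^{\phi}$ and a morphism $f\colon A'\to A$, I would form the pullback $\eta'\colon B \rightarrowtail C' \twoheadrightarrow A'$ (whose kernel is again $B$) and invoke the standard auxiliary conflation attached to a pullback square, namely $C' \rightarrowtail C\oplus A' \twoheadrightarrow A$. Applying sub-additivity to it gives $\phi(C)+\phi(A')=\phi(C\oplus A')\le \phi(C')+\phi(A)$, and since $\phi(C)=\phi(A)+\phi(B)$ this rearranges to $\phi(C')\ge \phi(B)+\phi(A')$. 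Sub-additivity applied to $\eta'$ itself gives the reverse inequality $\phi(C')\le \phi(B)+\phi(A')$, so $\phi$ is additive on $\eta'$ and $\eta'\in\mathcal E^{\phi}$. The pushout case is dual: for $g\colon B\to B'$ with pushout $\eta''\colon B'\rightarrowtail C''\twoheadrightarrow A$, the auxiliary conflation $B\rightarrowtail C\oplus B'\twoheadrightarrow C''$ together with the same two-sided estimate forces $\phi(C'')=\phi(B')+\phi(A)$.

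With pullback and pushout closure in hand the remaining steps are formal. Biproduct additivity shows $\eta_1\oplus\eta_2\in\mathcal E^{\phi}$ whenever $\eta_1,\eta_2\in\mathcal E^{\phi}$, and since the Baer sum is obtained from $\eta_1\oplus\eta_2$ by pulling back along the diagonal $A\to A\oplus A$ and pushing out along the codiagonal $B\oplus B\to B$, closure under Baer sum follows from the previous paragraph; negatives and the zero element are handled by isomorphism-invariance and the split conflations. Thus each $\Ext_{\mathcal E}^{\phi}(A,B)$ is a subgroup of $\Ext_{\mathcal E}(A,B)$, and the pullback/pushout closure is precisely the assertion that these subgroups are stable under the contravariant and covariant structure maps, i.e.\ that $\mathcal E^{\phi}$ determines a subfunctor. (If the exact-substructure formulation of Proposition~\ref{exsub} is used, I would additionally verify closure of $\mathcal E^{\phi}$-inflations under composition: for composable inflations with cokernels $L_1,L_2$ and composite cokernel $L_3$ sitting in a conflation $L_1\rightarrowtail L_3\twoheadrightarrow L_2$, the analogous sub-additivity sandwich yields $\phi$-additivity of the composite.) The one point requiring care, and the main obstacle, is correctly identifying the auxiliary conflations $C'\rightarrowtail C\oplus A'\twoheadrightarrow A$ and $B\rightarrowtail C\oplus B'\twoheadrightarrow C''$ in the abstract exact-category setting rather than relying on element-wise arguments; once these are available, every closure collapses to the same elementary estimate.
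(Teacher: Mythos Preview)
Your proposal is correct and follows essentially the same approach as the paper: the paper proves Theorem~\ref{subadd} by verifying the exact-category axioms for $(\A,\E^{\phi})$ via exactly the sub-additivity sandwich you describe (using the auxiliary conflations from \cite[Proposition 2.12]{Theo} attached to pushout/pullback squares), and then applies Proposition~\ref{exsub} to extract the subfunctor. Your primary route---verifying pullback/pushout stability and Baer-sum closure of $\E^{\phi}$ directly---is a mild reorganization of the same content, and you correctly note that the exact-substructure formulation (your parenthetical composition check) is the alternative the paper actually takes; either way the same inequality does all the work.
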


Our above Theorem is partly motivated by, and can be used to recover and extend recent interesting work of Puthenpurakal in \cite{tony}, see Theorem \ref{thmTony}.  

Another situation we would like to have convenient criteria for subfunctors is when one ``restricts" to a certain subcategory. A concrete example we have in mind concerns Ulrich modules and their recent generalizations. These modules form a subcategory of  Cohen--Macaulay modules over a commutative ring and have been receiving increasing attention over the years due to very interesting and useful algebraic and geometric properties that their existence or abundance imply. We are able to show that even the generalized notion of ``$I$-Ulrich modules", recently introduced in \cite{dms}, induces subfunctors of $\Ext^1_R(-,-)$. See Proposition \ref{funcspec} and Corollary \ref{iulrich} in this regard.  

While this work is mainly concerned with foundational results, we also study the properties of some chosen new subfunctors, just to see if they are worth our efforts to show their existence! The early returns seem promising: these functors can be used to detect a wide range of ring and module-theoretic properties.  Below we shall describe the organization of the paper and describe the most interesting findings in more detail. 

Section \ref{sec2} is devoted to preliminary results on subfunctors of additive functors. While these results are perhaps not new, we were unable to locate convenient references, hence their inclusion. 

Section \ref{sec3} establishes various foundational results on exact subcategories, which form the cornerstone of the theory. As mentioned above, our applications require some extra care in preparation, and we try to give complete proofs whenever possible.  

Section \ref{sec4} concerns our first main application. We study sub-additive numerical functions $\phi$ on an exact category and show that under mild conditions they induce exact subcategories and hence subfunctors of $\Ext^1(-,-)$, which we denote by $\Ext^1(-,-)^{\phi}$. See Theorem \ref{subadd}.  A key consequence, Theorem \ref{thmTony} is inspired by, as well as extends, \cite[Theorem 3.11, Theorem 3.13]{tony}. We also give similar results about subfunctors of $\Ext^1$ induced by half-exact functors, in the spirit of \cite{aus}, see Corollaries \ref{half} and \ref{half2}.

In Section \ref{sec5} we focus on two special types of subfunctors, which arise from simple applications of previous sections. Already these cases appear to be interesting and useful. The first one is $\Ext^1_R(-,-)^{\mu}$, where $\mu$ is the minimal number of generators function mentioned above. We compute the values of this subfunctor on all (pair of) finitely generated modules over a DVR (Corollary \ref{dvr}), as well as for certain pairs of modules over a Cohen--Macaulay ring of minimal multiplicity (Proposition \ref{mr}).

Using this subfunctor, we prove the following characterization of the regularity of local rings, which is the combination of Theorem \ref{regdchar}, Theorem \ref{reg}, Proposition \ref{dvr} and Corollary \ref{ulfaith}. Note that the regularity can be detected by the vanishing of a single $\Ext^1_R(-,-)^{\mu}$-module. In the following statement, we mention that for a finitely generated module $M$ over the local ring $R$, $\syz^i_R M$ denotes the $i$-th syzygy in a minimal free resolution of $M$.

 \begin{theorem}\label{mainintr} Let $(R,\m,k)$ be a local ring of depth $t>0$. Then, the following are equivalent:  

\begin{enumerate}[\rm(1)]
    \item $R$ is regular. 
    \item $\Ext^1_R(k,R/(x_1,...,x_{t-1})R)^{\mu}=0$ for some $R$-regular sequence $x_1,...,x_{t-1}$.
    \item $\Ext^1_R(k,M)^{\mu}=0$ for some finitely generated $R$-module $M$ of projective dimension $t-1$. 
    \item $R$ is Cohen--Macaulay, and $\Ext^1_R(\syz^{t-1}_R k,N)^{\mu}=0$ for some finitely generated non-zero $R$-module $N$ of finite injective dimension. 
    
\vspace{2mm}

    Moreover, if $t=1$, then the above are also equivalent to each of the following.

\vspace{2mm}

    \item $\Ext^1_R(M,N)^{\mu}=\m\Ext^1_R(M,N)$ for all finitely generated $R$-modules $M$ and $N$. 
   
    \item $R$ is Cohen--Macaulay and there exist Ulrich modules $M,N$ such that $N$ is faithful, $M\ne 0$ and for every $R$-module $X$ that fits into a short exact sequence $0\to N\to X\to M \to 0 $ one has $X$ is also an Ulrich module.  
\end{enumerate}  

\end{theorem}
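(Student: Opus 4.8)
The plan is to reduce every statement to one structural description of the subfunctor. First I would record that an extension $0\to B\to C\to A\to 0$ is $\mu$-additive iff $B\otimes_R k\to C\otimes_R k$ is injective, iff the connecting map $\delta_\xi\colon \Tor^R_1(A,k)\to B/\m B$ attached to its class $\xi$ vanishes. Since $\xi\mapsto\delta_\xi$ is a natural $k$-linear map $\Phi_{A,B}\colon \Ext^1_R(A,B)\to \Hom_k(\Tor^R_1(A,k),B/\m B)$, Theorem \ref{subadd} together with this yields $\Ext^1_R(A,B)^{\mu}=\ker\Phi_{A,B}$; in particular $\m\,\Ext^1_R(A,B)\subseteq \Ext^1_R(A,B)^{\mu}$ always. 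Specializing $A=k$ and applying $\Hom_R(-,M)$ to $0\to\m\to R\to k\to 0$, for any $M$ with $\depth M\ge 1$ I would identify $\Ext^1_R(k,M)\cong\Hom_R(\m,M)/\iota(M)$ (where $\iota(m)\colon x\mapsto xm$), and, reading off $\Phi$, the clean formula $\Ext^1_R(k,M)^{\mu}\cong\Hom_R(\m,\m M)/\iota(M)$. Hence the crucial criterion: $\Ext^1_R(k,M)^{\mu}=0$ iff the multiplication map $\iota\colon M\to\Hom_R(\m,\m M)$ is onto (equivalently an isomorphism, since $\depth M\ge1$ forces $\iota$ injective).

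With this in hand the equivalences among (1)--(3) follow a short cycle. For (1)$\Rightarrow$(2), choose $x_1,\dots,x_{t-1}$ to be part of a regular system of parameters, so $\bar R:=R/(x_1,\dots,x_{t-1})$ is a DVR with $\pd_R\bar R=t-1$. Any $\phi\in\Hom_R(\m,\m\bar R)$ kills $(x_1,\dots,x_{t-1})\m$, so it factors through $\m/(x_1,\dots,x_{t-1})\m$; using the splitting $\m/(x_1,\dots,x_{t-1})\m\cong\bar\m\oplus k^{t-1}$ and $\Soc(\bar\m)=0$ one computes $\Hom_R(\m,\m\bar R)\cong\End_{\bar R}(\bar\m)\cong\bar R$ with $\iota$ the identity, whence $\Ext^1_R(k,\bar R)^{\mu}=0$ (compatibly with the DVR computation \ref{dvr}). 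Then (2)$\Rightarrow$(3) is immediate. The substantial direction is (3)$\Rightarrow$(1): given $M$ with $\pd_R M=t-1$, Auslander--Buchsbaum gives $\depth M=1$, and the criterion says $M\cong\Hom_R(\m,\m M)$. I would run an induction on $t$, the base case $t=1$ being the assertion that $\End_R(\m)=R$ forces $R$ to be regular; this is where the real work lies.

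For (4) I would pass to the dual side. Dimension shifting gives $\Ext^1_R(\syz^{t-1}_R k,N)\cong\Ext^t_R(k,N)$, and for $N\ne0$ of finite injective dimension over a $\CM$ ring one has $\operatorname{injdim}N=t$, so this group is nonzero. I expect Theorem \ref{regdchar} to supply the finite-injective-dimension analogue of the criterion above, trading the role played by finite projective dimension in (3) for finite injective dimension in (4); then (1)$\Rightarrow$(4) is witnessed by $N=R$ (regular $\Rightarrow$ Gorenstein $\Rightarrow$ finite self-injective dimension), and (4)$\Rightarrow$(1) dualizes the argument for (3)$\Rightarrow$(1).

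Finally, for $t=1$: since $\m\,\Ext^1\subseteq\Ext^1(-,-)^{\mu}$ always, (5) is the reverse inclusion holding universally. Here (1)$\Rightarrow$(5) is the DVR computation \ref{dvr} on all pairs, while for (5)$\Rightarrow$(1) I would test on the pair $(k,R)$: when $R$ is not regular the gap $\End_R(\m)\supsetneq R$ produces a class in $\Ext^1_R(k,R)^{\mu}$ outside $\m\,\Ext^1_R(k,R)$, contradicting (5). For (6), the key observation is that for Ulrich $M,N$ over a one-dimensional $\CM$ ring an extension $0\to N\to X\to M\to0$ has $X$ Ulrich iff it is $\mu$-additive: multiplicity is additive and $\mu$ is sub-additive, so $X$ is Ulrich $\iff \mu(X)=\mu(M)+\mu(N)$. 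Thus (6) says precisely that $\Ext^1_R(M,N)^{\mu}=\Ext^1_R(M,N)$ for some faithful Ulrich $N$ and nonzero Ulrich $M$; taking $M=N=R$ gives (1)$\Rightarrow$(6), and (6)$\Rightarrow$(1) (Corollary \ref{ulfaith}) follows by showing that vanishing of all the connecting maps $\Phi_{M,N}$ against a faithful Ulrich $N$ forces minimal multiplicity $e(R)=1$, hence regularity. The main obstacles I anticipate are the converse (3)$\Rightarrow$(1) --- especially its non-$\CM$ base case $\End_R(\m)=R\Rightarrow R$ regular --- and the multiplicity-one deduction inside (6)$\Rightarrow$(1).
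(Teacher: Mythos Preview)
Your structural description $\Ext^1_R(A,B)^{\mu}=\ker\Phi_{A,B}$ and the resulting criterion $\Ext^1_R(k,M)^{\mu}=0\iff \iota\colon M\to\Hom_R(\m,\m M)$ is onto are correct and elegant, and genuinely different from what the paper does. The paper never writes down $\Phi$; instead it argues case by case using cyclicity of certain Ext groups (Lemma~\ref{one}, Lemma~\ref{lemM}) together with explicit non-$\mu$-additive sequences. Your viewpoint is cleaner for the forward directions.

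The serious gap is $(3)\Rightarrow(1)$. Your plan reduces to the assertion ``$\End_R(\m)=R\Rightarrow R$ regular'' for an arbitrary local ring of depth $1$, and you flag this as the obstacle --- rightly so, since no direct argument for this in the non-Cohen--Macaulay case is apparent, and you also give no indication of how the induction on $t$ is supposed to run (how do you pass from $M\cong\Hom_R(\m,\m M)$ with $\pd_RM=t-1$ to a module of projective dimension $t-2$ over a smaller ring?). The paper bypasses both issues entirely: from $\Ext^1_R(k,M)^{\mu}=0$ with $\depth M=1$ it picks $x$ regular on $R$ and $M$, observes $\Ext^1_R(k,xM)^{\mu}=0$, and then invokes Proposition~\ref{3.9} to conclude that $xM\subseteq M$ is weakly $\m$-full. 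Since $\depth(M/xM)=0$, the Burch-submodule machinery of \cite{dk} (their Lemma~4.3 and Theorem~1.2) then forces $\pd_Rk<\infty$. This is not an argument you can reconstruct from your $\End_R(\m)$ criterion; it genuinely uses the external input from \cite{dk}.

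For $(4)$, your expectation that one ``dualizes the argument for $(3)\Rightarrow(1)$'' is not what happens. The paper instead builds the minimal MCM approximation $0\to\omega_R\to E^R\to\syz^{d-1}_Rk\to0$, proves by a delicate induction (Lemma~\ref{cano}) that this sequence is $\mu$-additive when $R$ is not regular, and then reads off $\Ext^1_R(\syz^{d-1}_Rk,N)^{\mu}=\Ext^1_R(\syz^{d-1}_Rk,N)\neq0$ from the long exact sequence (Corollary~\ref{long}). For $(6)\Rightarrow(1)$, the paper does not deduce $e(R)=1$; rather it shows $\Ext^1_{\Ul(R)}(M,N)=\m\Ext^1_R(M,N)$ (Proposition~\ref{1}), so the hypothesis gives $\Ext^1_R(M,N)=\m\Ext^1_R(M,N)=0$ by Nakayama, and then $\Ext^1_R(\m M,\m N)=0$ with $N$ faithful forces regularity via \cite{dk} and \cite{lv}. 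Your sketch ``vanishing of $\Phi_{M,N}$ forces $e(R)=1$'' does not obviously lead anywhere without these ingredients.
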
  

We note here that the usual $\Ext$-modules (without the $\mu$) in (2), (3) and (4) of the above theorem are always non-zero. Moreover, the statement of part (6) of Theorem \ref{mainintr} apparently has nothing to do with subfunctor of $\Ext^1$, but we do not know a proof of $(1)\iff(6)$ (which is contained in Corollary \ref{ulfaith}) without resorting to $\Ext^1_{\Ul(R)}(-,-): \Ul(R)^{op}\times \Ul(R)\to \mod R$.  

It is worth mentioning that one can also use $\Ext^1_R(-,-)^{\mu}$ to detect the property of $R$ being a hypersurface of minimal multiplicity (Corollary \ref{hyper}) or the weak $\m$-fullness of a submodule (Proposition \ref{3.9}). 

The second type arises from $I$-Ulrich modules, where $I$ is any $\m$-primary ideal in a Noetherian local ring  $(R,\m)$. See \ref{uls} and \ref{iulrich} for the definition of $I$-Ulrich modules and the fact that they form an exact category. In Theorem \ref{projgor}, using a subfunctor of $\Ext^1$ corresponding to Ulrich modules over $1$-dimensional local Cohen--Macaulay rings, we give some characterizations of modules belonging to $\add_R(B(\m))$ and also characterize when $B(\m)$ is a Gorenstein ring in terms of annihilator of $\Ext^1_R$ of Ulrich modules. Here, $B(-)$ denotes the blow-up. We give some applications of Theorem \ref{projgor}, one of which relates annihilation of $\Ext^1_R(\Ul(R),\m)$ with that of $\Ext^1_R\left(\Ul_{\omega}(R),B(\omega)\right)$ (see Corollary \ref{5.2.16}). %We also present a characterization of almost Gorenstein rings of minimal multiplicity using the annihilator of $\Ext^1$ against Ulrich modules (see Corollary \ref{algor}).        

Finally, we should mention that one of the main applications of our results has appeared in a separate work, where we study the splitting of short exact sequences of Ulrich modules and connections to other properties of singularities (\cite{ulsplit}).

\section[Preliminaries on subfunctors of additive functors]{Preliminaries on subfunctors of additive functors}\label{sec2}

Unless otherwise stated, all rings in this paper are assumed to be commutative, Noetherian and with unity. For a ring $R$, $Q(R)$ will denote its total ring of fractions. For an $R$-module $M$, $\lambda_R(M)$ will denote its length. For a finitely generated $R$-module $M$, and $i\geq 1$, by $\syz^i_R M$ we mean $\text{Im} f_{i}$, where we have an exact sequence $F_{i}\xrightarrow{f_{i}} F_{i-1}\xrightarrow{f_{i-1}}\cdots \to F_0\to M \to 0$, with each $F_j$ being a finitely generated projective $R$-module. When $R$ is moreover local, we choose this so that $\text{Im}(f_j)\subseteq \m F_{j-1}$ for each $j$ (\cite[Proposition 1.3.1]{bh}). 

For definitions, and basic properties of additive categories, additive functors, $R$-linear categories, and $R$-linear functors,  we refer the reader to \cite[Tag 09SE, Tag 010M, Tag 09MI]{st}. 
We now recall the definition of subfunctors as in \cite{nlab}. 

\begin{dfn}\label{subfucntor} Let ${\A},{\B}$ be two categories. Let $F: {\A} \to {\B}$ be a covariant (resp. contravariant) functor. A covariant (resp. contravariant) functor $G: {\A} \to {\B}$ is called a subfunctor of $F$ if for every $M\in {\A}$, there exists a monomorphism $j_M:G(M)\to F(M)$, and moreover, for every $M,N\in {\A}$, and $f\in \Mor_{\A}(M,N)$, the following diagrams are commutative, where the left one stands for the covariant case and the right one for the contravariant case:    
\\
$$\begin{tikzcd}
F(M) \arrow[r, "F(f)"]                  & F(N)                  & F(N) \arrow[r, "F(f)"]                  & F(M)                  \\
G(M) \arrow[r, "G(f)"] \arrow[u, "j_M"] & G(N) \arrow[u, "j_N"] & G(N) \arrow[r, "G(f)"] \arrow[u, "j_N"] & G(M) \arrow[u, "j_M"]
\end{tikzcd}$$
\end{dfn}

\begin{chunk} For our purposes, we will always take $\B$ to be either the category of abelian groups $\mathbf{Ab}$, $\Mod R$ or $\mod R$ (hence monomorphisms are just injective morphisms) for some commutative ring $R$, and $j_M$ will usually be just the inclusion map. 
\end{chunk}  

The following Lemma is probably well-known, but we could not find an appropriate reference, hence we include a proof. This will be used throughout the remainder of the article, possibly without further reference.      

\begin{lem}\label{subad} \begin{enumerate}[\rm(1)]
    \item Subfunctor of an additive functor, between two additive categories, is additive. 
    
    \item  Subfunctor of an $R$-linear functor, between two $R$-linear categories, is $R$-linear.
\end{enumerate}   
\end{lem}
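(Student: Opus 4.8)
The plan is to check additivity (resp. $R$-linearity) at the level of Hom-sets: for a functor between additive categories, being additive on morphisms is equivalent to preserving finite biproducts, so it suffices to show that the map $G\colon \Mor_{\A}(M,N)\to \Mor_{\B}(G(M),G(N))$ is a homomorphism of abelian groups (resp. of $R$-modules). The only inputs I would use are the naturality of the structure monomorphisms $j_M$ (i.e. the commutative squares from Definition \ref{subfucntor}) and the fact that a monomorphism is left-cancellable; everything else is the bilinearity of composition in an additive (resp. $R$-linear) category.

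For (1), fix parallel morphisms $f,g\in \Mor_{\A}(M,N)$ and work in the covariant case (the contravariant case is symmetric, with the composites reversed). I would postcompose $G(f+g)$ with the monomorphism $j_N$ and compute using the naturality relation $j_N\circ G(h)=F(h)\circ j_M$:
\begin{align*}
j_N\circ G(f+g) &= F(f+g)\circ j_M = \big(F(f)+F(g)\big)\circ j_M \\
&= F(f)\circ j_M + F(g)\circ j_M = j_N\circ G(f) + j_N\circ G(g) = j_N\circ\big(G(f)+G(g)\big),
\end{align*}
where the second equality is additivity of $F$ and the third and last use that composition is biadditive. Since $j_N$ is a monomorphism, left-cancellation yields $G(f+g)=G(f)+G(g)$, so $G$ is additive on Hom-sets.

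For (2), note that $R$-linearity of a map on Hom-sets subsumes additivity, so part (1) already gives $G(f+g)=G(f)+G(g)$; it remains to verify $G(rf)=r\,G(f)$ for $r\in R$ and $f\in\Mor_{\A}(M,N)$. The computation is the same with a scalar in place of a sum:
\begin{align*}
j_N\circ G(rf) &= F(rf)\circ j_M = \big(r\,F(f)\big)\circ j_M \\
&= r\,\big(F(f)\circ j_M\big) = r\,\big(j_N\circ G(f)\big) = j_N\circ\big(r\,G(f)\big),
\end{align*}
using $R$-linearity of $F$ and the $R$-bilinearity of composition in an $R$-linear category, after which cancelling the monomorphism $j_N$ gives $G(rf)=r\,G(f)$.

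This is in essence a diagram chase, so I do not expect a genuine obstacle; the two points that must be handled correctly are that the bilinearity of composition (over $\mathbb{Z}$ for (1), over $R$ for (2)) is exactly what lets a sum or a scalar slide across the composite with $j_M$, and that ``monomorphism'' means precisely left-cancellable, which is what legitimizes the final cancellation. I would also record the standard equivalence between additivity on Hom-sets and preservation of finite biproducts, so as to conclude that $G$ is an additive functor in the full sense.
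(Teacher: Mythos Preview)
Your proof is correct and follows essentially the same approach as the paper: postcompose with the monomorphism $j_N$, use naturality to transfer to $F$, invoke additivity (resp.\ $R$-linearity) of $F$ together with bilinearity of composition, and then cancel $j_N$. The paper's argument is identical up to notation.
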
  

\begin{proof} We will only prove the covariant case of both, since the contravariant case is similar.  

(1)  Let ${\A},{\B}$ be two additive categories and let $F:{\A}\to{\B}$ be an additive functor. Also let $G: {\A} \to {\B}$ be a subfunctor of $F$. Then we have to show that the map $G:\operatorname{Mor}_{\A}(X,Y)\to \operatorname{Mor}_{\B}(G(X),G(Y))$ is a homomorphism of abelian groups for all $X,Y\in {\A}$. Fix two objects $X,Y\in{\A}$. Then we need to prove that $G(f+g)=G(f)+G(g)$ for all $f,g\in\operatorname{Mor}_{\A}(X,Y)$. Since $G$ is a subfunctor of $F$, we have the following commutative diagrams:
$$\begin{tikzcd}
F(X) \arrow[r, "F(f+g)"]                  & F(Y)\\
G(X) \arrow[r, "G(f+g)"] \arrow[u, "j_X"] & G(Y) \arrow[u, "j_Y"]
\end{tikzcd}
\begin{tikzcd}
F(X) \arrow[r, "F(f)"]                  & F(Y)\\
G(X) \arrow[r, "G(f)"] \arrow[u, "j_X"] & G(Y) \arrow[u, "j_Y"]
\end{tikzcd}
\begin{tikzcd}
F(X) \arrow[r, "F(g)"]                  & F(Y)\\
G(X) \arrow[r, "G(g)"] \arrow[u, "j_X"] & G(Y) \arrow[u, "j_Y"]
\end{tikzcd}$$
Then we get 
\begin{align*}
j_Y\circ G(f+g) &=F(f+g)\circ j_X\\
&= (F(f)+F(g))\circ j_X\text{ [Since }F\text{ is additive]}\\
&=F(f)\circ j_X+F(g)\circ j_X\\
&=j_Y\circ G(f)+j_Y\circ G(g)\\
&=j_Y\circ (G(f)+G(g))
\end{align*}
where the third and the fifth equalities follow from the fact that $\B$ is an additive category.
Now since $j_Y$ is a monomorphism, so $j_Y\circ G(f+g)=j_Y\circ (G(f)+G(g))$ implies that $G(f+g)=G(f)+G(g)$. Hence $G$ is additive.  

(2)  Let ${\A},{\B}$ be two $R$-linear categories and let $F:{\A}\to{\B}$ be an $R$-linear functor. Also let $G: {\A} \to {\B}$ be a subfunctor of $F$. Then we have to show that the map $G:\operatorname{Mor}_{\A}(X,Y)\to \operatorname{Mor}_{\B}(G(X),G(Y))$ is an $R$-linear map for all $X,Y\in {\A}$. Fix two objects $X, Y\in{\A}$. Then by part (1) we already have the additivity of $G$, so we only need to prove that $G(rf)=rG(f)$ for all $f\in\operatorname{Mor}_{\A}(X,Y)$ and for all $r\in R$. Since $G$ is a subfunctor of $F$, we have the following commutative diagrams:
$$\begin{tikzcd}
F(X) \arrow[r, "F(rf)"]                  & F(Y)\\
G(X) \arrow[r, "G(rf)"] \arrow[u, "j_X"] & G(Y) \arrow[u, "j_Y"]
\end{tikzcd}
\begin{tikzcd}
F(X) \arrow[r, "F(f)"]                  & F(Y)\\
G(X) \arrow[r, "G(f)"] \arrow[u, "j_X"] & G(Y) \arrow[u, "j_Y"]
\end{tikzcd}$$
Then we get 
\begin{align*}
j_Y\circ G(rf) &=F(rf)\circ j_X\\
&= (rF(f))\circ j_X\text{ [Since }F\text{ is }R\text{-linear]}\\
&=r(F(f)\circ j_X)\\
&=r(j_Y\circ G(f))\\
&=j_Y\circ (rG(f))
\end{align*}
where the third and the fifth equalities follow from the fact that $\B$ is an $R$-linear category.
Now since $j_Y$ is a monomorphism, so $j_Y\circ G(rf)=j_Y\circ (rG(f))$ implies that $G(rf)=rG(f)$. Hence $G$ is $R$-linear.
\end{proof}  

We finish this section with a submodule inclusion result relating subfunctors of $R$-linear functors which will be applied in Section \ref{sec5}. 

\begin{lem}\label{extended} Let $\A$ be an additive $R$-linear category, and $G:{\A}\to \Mod R$ be a subfunctor of an $R$-linear functor $F:{\A} \to \Mod R$, and for every object $A\in {\A}$, let $j_A:G(A)\to F(A)$ be the monomorphism as in the definition of a subfunctor. Let $I$ be an ideal of $R$. Let $\{A_i\}_{i=1}^n$ be objects in $\A$ such that $j_{A_i}\left(G(A_i)\right)\subseteq IF(A_i)$ (resp. $j_{A_i}\left(G(A_i)\right)\supseteq IF(A_i)$) for all $i=1,...,n$. Let $X:=\oplus_{i=1}^nA_i$. Then, it holds that $j_{X} \left(G(X)\right)\subseteq IF(X)$ (resp. $j_{X} \left(G(X)\right)\supseteq IF(X)$). 
\end{lem}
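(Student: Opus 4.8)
The plan is to exploit the finite biproduct structure of $X=\oplus_{i=1}^n A_i$ and transport it through both $F$ and $G$. Write $\iota_i:A_i\to X$ and $\pi_i:X\to A_i$ for the canonical injections and projections, so that the defining relations of a finite biproduct hold in $\A$, in particular $\sum_{i=1}^n \iota_i\pi_i=\operatorname{id}_X$. By Lemma \ref{subad}, $G$ is additive (and $R$-linear), being a subfunctor of the $R$-linear functor $F$; applying the additive functors $F$ and $G$ to this identity yields $\sum_i F(\iota_i)F(\pi_i)=\operatorname{id}_{F(X)}$ and $\sum_i G(\iota_i)G(\pi_i)=\operatorname{id}_{G(X)}$. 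The definition of subfunctor supplies the two families of commuting squares $j_X\circ G(\iota_i)=F(\iota_i)\circ j_{A_i}$ and $j_{A_i}\circ G(\pi_i)=F(\pi_i)\circ j_X$, which are exactly what is needed to pass between the summand level and $X$. Note also that $F(\iota_i)$ and $F(\pi_i)$ are $R$-module homomorphisms, since they are morphisms in $\Mod R$.

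For the inclusion $\subseteq$, I would take an arbitrary $g\in G(X)$, write $g=\sum_i G(\iota_i)\big(G(\pi_i)(g)\big)$ using the decomposition of $\operatorname{id}_{G(X)}$, and apply $j_X$. The first commuting square gives $j_X\big(G(\iota_i)(G(\pi_i)(g))\big)=F(\iota_i)\big(j_{A_i}(G(\pi_i)(g))\big)$. Since $G(\pi_i)(g)\in G(A_i)$, the hypothesis yields $j_{A_i}(G(\pi_i)(g))\in IF(A_i)$, and then $R$-linearity of $F(\iota_i)$ carries $IF(A_i)$ into $IF(X)$. Summing over $i$ shows $j_X(g)\in IF(X)$.

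For the reverse inclusion $\supseteq$, I would reverse the roles of the two squares. Given $y\in IF(X)$, decompose $y=\sum_i F(\iota_i)\big(F(\pi_i)(y)\big)$ via $\operatorname{id}_{F(X)}$; $R$-linearity of $F(\pi_i)$ gives $F(\pi_i)(y)\in IF(A_i)$, so the reversed hypothesis $IF(A_i)\subseteq j_{A_i}(G(A_i))$ produces $h_i\in G(A_i)$ with $j_{A_i}(h_i)=F(\pi_i)(y)$. Setting $g:=\sum_i G(\iota_i)(h_i)\in G(X)$ and invoking the first commuting square once more gives $j_X(g)=\sum_i F(\iota_i)(j_{A_i}(h_i))=\sum_i F(\iota_i)(F(\pi_i)(y))=y$, so $y\in j_X(G(X))$.

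The computations are routine once this framework is in place; the real content---and the only point needing care---is the bookkeeping that $F$ and $G$ respect the \emph{same} biproduct idempotents and that $j$ intertwines the structure maps of the two decompositions, which is precisely the commutativity built into the notion of subfunctor. Everything else reduces to applying the $R$-linear maps $F(\iota_i)$ and $F(\pi_i)$ to $I$-multiples. In particular no property of $I$ beyond being an ideal is used, and the two inclusions are handled by the same mechanism read in opposite directions.
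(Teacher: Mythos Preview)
Your proof is correct and follows essentially the same approach as the paper: both use the biproduct structure of $X$, the naturality squares coming from the subfunctor definition applied to the structure maps, and the $R$-linearity of $F$ on morphisms to move $I$-multiples across. The only cosmetic difference is that the paper packages the projections into a single isomorphism $\alpha=(F(\pi_i))_i:F(X)\to\oplus_iF(A_i)$ (and similarly $\theta$ for $G$) and argues via $\alpha^{-1}(IM)=I\alpha^{-1}(M)$, whereas you work element by element using both $\iota_i$ and $\pi_i$ and the identity $\sum_i\iota_i\pi_i=\operatorname{id}_X$; unwinding $\alpha^{-1}$ in the paper's argument recovers exactly your use of $F(\iota_i)$.
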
    

\begin{proof} We will only do the covariant case, the contravariant case being similar. For every $i$, we have $\pi_i:X \to A_i$, which gives rise to the following commutative diagram  

$$\begin{tikzcd}
G(X) \arrow[r, "G(\pi_i)"] \arrow[d, "j_X"] & G(A_i) \arrow[d, "j_{A_i}"] \\
F(X) \arrow[r, "F(\pi_i)"']                 & F(A_i)                     
\end{tikzcd}$$

Consequently, we get the following commutative diagram 
$$\begin{tikzcd}
G(X) \arrow[r, "(G(\pi_i))_{i=1}^n"] \arrow[d, "j_X"] & \oplus_{i=1}^nG(A_i) \arrow[d, "\oplus_{i=1}^nj_{A_i}"] \\
F(X) \arrow[r, "(F(\pi_i))_{i=1}^n"']                 & \oplus_{i=1}^nF(A_i)                           
\end{tikzcd}$$ 

where the horizontal arrows are isomorphisms, since $F$ and consequently $G$ are additive functors (Lemma \ref{subad}). Call the top horizontal map $\theta$, and the bottom one $\alpha$, so that $\alpha\circ j_X=(\oplus_{i=1}^nj_{A_i})\circ \theta$, hence $j_X\circ \theta^{-1}=\alpha^{-1}\circ (\oplus_{i=1}^nj_{A_i})$. So, now we get,
\begin{align*}
j_X(G(X))=(j_X \circ \theta^{-1}) (\theta(G(X)))=(j_X\circ \theta^{-1})(\oplus_{i=1}^nG(A_i)) &=(\alpha^{-1}\circ (\oplus_{i=1}^nj_{A_i}))(\oplus_{i=1}^nG(A_i))\\
&=\alpha^{-1}(\oplus_{i=1}^n j_{A_i}(G(A_i)))
\end{align*}
So, if $j_{A_i}\left(G(A_i)\right)\subseteq IF(A_i)$ (resp. $j_{A_i}\left(G(A_i)\right)\supseteq IF(A_i)$) for all $i=1,...,n$, then  
$$j_X(G(X))\subseteq \alpha^{-1}(\oplus_{i=1}^n IF(A_i))=\alpha^{-1}(I\left(\oplus_{i=1}^n F(A_i)\right))=I\alpha^{-1}(\oplus_{i=1}^n F(A_i))=IF(X)$$

(resp. $j_X(G(X))\supseteq \alpha^{-1}(\oplus_{i=1}^n IF(A_i))=\alpha^{-1}(I\left(\oplus_{i=1}^n F(A_i)\right))=I\alpha^{-1}(\oplus_{i=1}^n F(A_i))=IF(X)$), 

where we have used $\alpha^{-1}(IM)=I\alpha^{-1}(M)$, since $\alpha$ is  an $R$-linear map. 
\end{proof} 

\section[Some generalities about exact subcategories]{Some generalities about exact subcategories}\label{sec3}   

In this section, we record some generalities about exact subcategories of an exact category that we will later use for subcategories of $\mod R$ when $R$ is a commutative Noetherian ring. All our sub-categories are strict (closed under isomorphism classes) and full, and we often abbreviate this as strictly-full. We will follow the definition of an exact category described in \cite[Definition 2.1]{Theo}. We try to provide complete proofs whenever possible.  

Given an exact category $({\A},{\E})$, we call a monomorphism $X\xrightarrow{i} Y$ to be an $\E$-inflation (also, called an admissible monic) if it is the part of a kernel-cokernel pair $X\xrightarrow{i} Y\xrightarrow{} Z$, which lies in $\E$. Dually, we call an epimorphism $Y\xrightarrow{p} Z$ to be an $\E$-deflation (also, called an admissible epic) if it is the part of a kernel-cokernel pair $X\xrightarrow{} Y\xrightarrow{p} Z$, which lies in $\E$. We will often denote an admissible monic (resp. an admissible epic) by $\begin{tikzcd}
{} \arrow[r, tail] & {}
\end{tikzcd}$ (resp. $\begin{tikzcd}
{} \arrow[r, two heads] & {}
\end{tikzcd}$).

We begin by stating a lemma on morphisms and kernel-cokernel pairs, which we will use frequently while proving that certain structures are closed under isomorphism classes of kernel-cokernel pairs. This should be standard and well-known, but we could not find an appropriate reference, hence we include a proof. 

\begin{lem}\label{kerco} Let $\A$ be  an additive category. Let $M,N,L\in{\A}$ be such that $M\xrightarrow{i} N\xrightarrow{d} L$ is a kernel-cokernel pair in $\A$. Also, let $M',N',L'\in{\A}$. Consider morphisms $M'\xrightarrow{i'} N'$, $N'\xrightarrow{d'} L'$. If we have the following diagram with commutative squares: 

\begin{tikzcd}
M \arrow[r, "i"] \arrow[d, "\phi_1"] & N \arrow[r, "d"] \arrow[d, "\phi_2"] & L \arrow[d, "\phi_3"] \\
M' \arrow[r, "i'"]         & N' \arrow[r, "d'"]         & L'         
\end{tikzcd}

where the vertical arrows $\phi_1,\phi_2,\phi_3$ are isomorphisms, then $M'\xrightarrow{i'} N'\xrightarrow{d'} L'$ is a kernel-cokernel pair in $\A$.  
\end{lem}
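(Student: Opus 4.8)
The plan is to verify directly that the bottom row $M'\xrightarrow{i'}N'\xrightarrow{d'}L'$ satisfies the two universal properties defining a kernel-cokernel pair: namely that $i'$ is a kernel of $d'$ and that $d'$ is a cokernel of $i'$. Since these two conditions are formally dual, and the hypotheses are self-dual (the diagram together with its vertical isomorphisms dualizes to a diagram of the same shape), I would prove the kernel statement carefully and then note that the cokernel statement follows by the dual argument. The whole proof is a transport of universal properties along the isomorphisms $\phi_1,\phi_2,\phi_3$.

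First I would record that $d'\circ i'=0$. Using the commutativity of the two squares, $d'\circ i'\circ\phi_1 = d'\circ\phi_2\circ i = \phi_3\circ d\circ i = 0$, because $d\circ i=0$ for the given kernel-cokernel pair; since $\phi_1$ is an isomorphism (in particular an epimorphism), cancelling it yields $d'\circ i'=0$. Next, for the kernel universal property I would take an arbitrary test morphism $g\colon T\to N'$ with $d'\circ g=0$ and transport it to $N$ via $\phi_2^{-1}$. From the right square one has $d=\phi_3^{-1}\circ d'\circ\phi_2$, so $d\circ(\phi_2^{-1}\circ g)=\phi_3^{-1}\circ d'\circ g=0$; hence the kernel property of $i$ supplies a unique $u\colon T\to M$ with $i\circ u=\phi_2^{-1}\circ g$. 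Setting $h:=\phi_1\circ u$ and using $i'\circ\phi_1=\phi_2\circ i$ gives $i'\circ h=\phi_2\circ i\circ u=g$, establishing existence.

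For uniqueness I would use that a kernel is always a monomorphism: if $i'\circ h'=g$, then from $i=\phi_2^{-1}\circ i'\circ\phi_1$ we get $i\circ(\phi_1^{-1}\circ h')=\phi_2^{-1}\circ i'\circ h'=\phi_2^{-1}\circ g=i\circ u$, and cancelling the monomorphism $i$ forces $\phi_1^{-1}\circ h'=u$, i.e. $h'=h$. This shows $i'$ is a kernel of $d'$. The dual chase---transporting test morphisms $g'\colon N'\to T$ with $g'\circ i'=0$ out to $N$ via $\phi_2$, using the cokernel property of $d$, and invoking that a cokernel is an epimorphism to obtain uniqueness---shows $d'$ is a cokernel of $i'$. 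Together these give that $(i',d')$ is a kernel-cokernel pair.

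I do not anticipate a genuine obstacle here: the argument is a routine diagram chase, and the only things to be careful about are the directions of the isomorphisms and the explicit invocation that kernels are monic (resp. cokernels are epic) to secure the uniqueness half of each universal property. The one point worth stating explicitly is that we never need $\A$ to possess \emph{all} kernels and cokernels; we only use the universal properties of the specific maps $i$ and $d$ that are given to constitute a kernel-cokernel pair, and transport them to $i'$ and $d'$.
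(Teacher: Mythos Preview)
Your proposal is correct and follows essentially the same route as the paper's proof: both establish $d'\circ i'=0$ from the commuting squares, transport a test morphism along $\phi_2^{-1}$ to invoke the kernel property of $i$, and then transport the resulting factorization back via $\phi_1$, with the cokernel case left to the dual argument. Your version is in fact slightly more complete, since you explicitly verify the uniqueness clause of the universal property (using that $i$ is monic), whereas the paper's proof stops after producing the factorizing map.
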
 

\begin{proof} We will only prove that, $\ker(d')=i'$, since the proof of $\operatorname{Coker}(i')=d'$ can be given by a dual argument. 
\\
From the above commutative diagram we have, 
\begin{align}\label{comm}
    \phi_2\circ i=i'\circ\phi_1,\quad\phi_3\circ d=d'\circ\phi_2
\end{align}
Since $M\xrightarrow{i} N\xrightarrow{d} L$ is a kernel-cokernel pair in $\A$, we get $d\circ i=0$. Then by Equation \ref{comm} we have, $d'\circ i'=(\phi_3\circ d\circ\phi_2^{-1})\circ(\phi_2\circ i\circ\phi_1^{-1})=\phi_3\circ d\circ i\circ\phi_1^{-1}=\phi_3\circ 0\circ\phi_1^{-1}=0$. Now let, $K\in{\A}$ and consider a morphism $f':K\to N'$ such that $d'\circ f'=0$. Define $f:=\phi_2^{-1}\circ f':K\to N$. Then by Equation \ref{comm} we get that, $d\circ f=d\circ (\phi_2^{-1}\circ f')=(d\circ \phi_2^{-1})\circ f'=(\phi_3^{-1}\circ d')\circ f'=\phi_3^{-1}\circ (d'\circ f')=\phi_3^{-1}\circ 0=0$. Now, since $\ker(d)=i$, by the universal property of a kernel of a map, there exists a morphism $u:K\to M$ such that $i\circ u=f$. Now define $u':=\phi_1\circ u:K\to M'$. Then by Equation \ref{comm} we get that, $i'\circ u'=i'\circ (\phi_1\circ u)=(i'\circ \phi_1)\circ u=(\phi_2\circ i)\circ u=\phi_2\circ f=\phi_2\circ (\phi_2^{-1}\circ f')=f'$. This implies that, $\ker(d')=i'$. 
\end{proof}

We now record a Lemma on the intersection of exact subcategories. Note that, this is slightly different (and in view of \cite[Corollary 2]{Rump11}, more general) than \cite[Lemma 5.2]{reduc}.     

\begin{lem}\label{inter}
Let $({\A},{\E})$ be an exact category. Also let $({\A}_{\lambda},{\E}_{\lambda})$ be an arbitrary family of exact subcategories of $({\A},{\E})$. Then $(\cap_{\lambda} {\A}_{\lambda},\cap_{\lambda}{\E}_{\lambda})$ is exact subcategory of $({\A},{\E})$. 
\end{lem}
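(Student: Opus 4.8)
The plan is to check directly that the pair $(\A',\E')$, with $\A':=\cap_{\lambda}\A_{\lambda}$ and $\E':=\cap_{\lambda}\E_{\lambda}$, is an exact category satisfying $\E'\subseteq\E$, and hence an exact subcategory of $(\A,\E)$, by reducing each required property to the corresponding property of the individual $(\A_{\lambda},\E_{\lambda})$ and by using that the kernels and cokernels appearing in kernel-cokernel pairs are computed compatibly inside a full subcategory.

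First I would settle the underlying data. The intersection $\A'$ is a strictly-full additive subcategory of $\A$: it is closed under isomorphism, contains $0$, and is closed under finite biproducts, since each $\A_{\lambda}$ enjoys these properties and they pass to intersections; fullness holds because a full subcategory is determined by its class of objects. Next, any element of $\E'$ lies in every $\E_{\lambda}\subseteq\E$, so its three terms lie in $\cap_{\lambda}\A_{\lambda}=\A'$ and it is a kernel-cokernel pair in $\A$; as $\A'$ is full, it is therefore a kernel-cokernel pair in $\A'$. Finally $\E'$ is closed under isomorphism: if $\sigma\in\E'$ and $\sigma'$ is an isomorphic kernel-cokernel pair with terms in $\A'$, then Lemma \ref{kerco} shows $\sigma'$ is a kernel-cokernel pair in $\A$, hence in each $\A_{\lambda}$, and isomorphism-closure of each $\E_{\lambda}$ gives $\sigma'\in\E_{\lambda}$ for all $\lambda$, i.e. $\sigma'\in\E'$.

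It then remains to verify the axioms of \cite[Definition 2.1]{Theo} for $(\A',\E')$. The axioms asserting that identities are admissible, and that admissible monics (resp. admissible epics) are closed under composition, reduce at once to the same axioms for each $\E_{\lambda}$: one takes the witnessing kernel-cokernel pair with its cokernel (resp. kernel) object the canonical one formed in $\A$, and since a kernel-cokernel pair in $\E_{\lambda}$ is one in $\A$ with the same kernel and cokernel, this single canonical pair lies in every $\E_{\lambda}$, hence in $\E'$, by the isomorphism-closure just established. For example, for a composite $i_2\circ i_1$ of $\E'$-inflations, each $\E_{\lambda}$ supplies a kernel-cokernel pair with third term $\operatorname{Coker}(i_2\circ i_1)$ computed in $\A$, and this one pair then lies in $\E'$.

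The hard part will be the pushout and pullback axioms, where one must produce an object and a kernel-cokernel pair lying in the intersection, not merely in each $\A_{\lambda}$ separately. The key fact I would establish is that pushouts of admissible monics (dually, pullbacks of admissible epics) are computed compatibly with the inclusions $\A_{\lambda}\hookrightarrow\A$. Concretely, given an $\E'$-inflation $i:M\to N$ and a morphism $f:M\to M'$ in $\A'$, the pushout of $i$ along $f$ formed inside $(\A_{\lambda},\E_{\lambda})$ is realized as the cokernel of the admissible monic $M\to N\oplus M'$ with components $i$ and $-f$; because the witnessing kernel-cokernel pair lies in $\E_{\lambda}\subseteq\E$, this cokernel is simultaneously the cokernel in $\A$, so the $\A_{\lambda}$-pushout coincides with the pushout $N'$ formed in the ambient $(\A,\E)$ via its pushout axiom. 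As this holds for every $\lambda$, the single ambient object $N'$ lies in every $\A_{\lambda}$, hence in $\A'$; the resulting pushout kernel-cokernel pair $M'\to N'\to\operatorname{Coker}(i)$ is isomorphic to the $\E_{\lambda}$-pushout pair and so lies in every $\E_{\lambda}$, hence in $\E'$; and being a pushout in $\A$ with all terms in the full subcategory $\A'$, it is a pushout in $\A'$. This verifies the pushout axiom, and the pullback axiom follows dually, completing the proof that $(\A',\E')$ is an exact subcategory of $(\A,\E)$.
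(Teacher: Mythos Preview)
Your proposal is correct and follows essentially the same approach as the paper: verify isomorphism-closure of $\E'$ via Lemma \ref{kerco}, check the identity axioms directly, and then for composition and the pushout/pullback axioms argue that the witnessing objects produced in each $(\A_\lambda,\E_\lambda)$ are all canonically isomorphic (because they realize the same universal property in the ambient $\A$), so by strictness and isomorphism-closure they land in the intersection. The only notable difference is in how you justify that the $\A_\lambda$-pushout agrees with the ambient pushout: you argue directly via the realization of the pushout as $\operatorname{coker}\bigl(M\xrightarrow{(i,-f)} N\oplus M'\bigr)$ (cf.\ \cite[Proposition 2.12]{Theo}), whereas the paper simply invokes \cite[Proposition 5.2]{Theo}, which packages the same fact.
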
 

\begin{proof}
Clearly, $(\cap_{\lambda} {\A}_{\lambda},\cap_{\lambda}{\E}_{\lambda})$ is a strictly full additive subcategory of $({\A},{\E})$. By \cite[Definition 2.1]{Theo} and \cite[Remark 2.4]{Theo}, it is enough to show that $\cap_{\lambda}{\E}_{\lambda}$ is closed under isomorphisms of kernel-cokernel pairs and $(\cap_{\lambda} {\A}_{\lambda},\cap_{\lambda}{\E}_{\lambda})$ satisfies the axioms [E0], [E0$^{\text{op}}$], [E1$^{\text{op}}$], [E2] and [E2$^{\text{op}}$] of \cite[Definition 2.1]{Theo}. First, we will show that $\cap_{\lambda}{\E}_{\lambda}$ is closed under isomorphisms of kernel-cokernel pairs. Let $\begin{tikzcd}
M \arrow[r, tail] & N \arrow[r, two heads] & L
\end{tikzcd}$ be a kernel-cokernel pair in $\cap_{\lambda}{\E}_{\lambda}$, so $M,N,L\in{\A}_{\lambda}$ and $\begin{tikzcd}
M \arrow[r, tail] & N \arrow[r, two heads] & L
\end{tikzcd}$ is a kernel-cokernel pair in ${\A}_{\lambda}$ for all $\lambda$. Also let, $M'\to N'\to L'$ be a kernel-cokernel pair in $\cap_{\lambda}{\A}_{\lambda}$ such that it is isomorphic to $\begin{tikzcd}
M \arrow[r, tail] & N \arrow[r, two heads] & L
\end{tikzcd}$. So, $M',N',L'\in{\A}_{\lambda}$ and $M'\to N'$, $N'\to L'$ are two morphisms in ${\A}_{\lambda}$ for all $\lambda$.
%Since ${\A}_{\lambda}$ is a strictly full additive subcategory of ${\A}$ for all $\lambda$, so $\cap_{\lambda}{\A}_{\lambda}$ is a strictly full additive subcategory of ${\A}_{\lambda}$ for all $\lambda$. 
Hence by Lemma \ref{kerco} we get that, $M'\to N'\to L'$ is a kernel-cokernel pair in ${\A}_{\lambda}$ for all $\lambda$. Since $({\A}_{\lambda},{\E}_{\lambda})$ is an exact category and $\begin{tikzcd}
M \arrow[r, tail] & N \arrow[r, two heads] & L
\end{tikzcd}$ is in ${\E}_{\lambda}$ for all $\lambda$, we have $\begin{tikzcd}
M' \arrow[r, tail] & N' \arrow[r, two heads] & L'
\end{tikzcd}$ is in ${\E}_{\lambda}$ for all $\lambda$. So, $\begin{tikzcd}
M' \arrow[r, tail] & N' \arrow[r, two heads] & L'
\end{tikzcd}$ is in $\cap_{\lambda}{\E}_{\lambda}$. Thus $\cap_{\lambda}{\E}_{\lambda}$ is closed under isomorphisms. Next, we will show that $(\cap_{\lambda} {\A}_{\lambda},\cap_{\lambda}{\E}_{\lambda})$ satisfies the axiom [E0]. Let $A\in\cap_{\lambda} {\A}_{\lambda}$. Since $({\A}_{\lambda}, {\E}_{\lambda})$ is an exact category for all $\lambda$, by \cite[Lemma 2.7]{Theo} we get that $\begin{tikzcd}
A \arrow[r, "1_A", tail] & A\oplus 0\cong A \arrow[r, "0", two heads] & 0
\end{tikzcd}$ is in ${\E}_{\lambda}$ for all $\lambda$. As ${\A}_{\lambda}$ is an additive subcategory of $\A$ for all $\lambda$, so $0_{\A}=0_{{\A}_{\lambda}}$ for all $\lambda$. Hence by definition $\begin{tikzcd}
A \arrow[r, "1_A", tail] & A\oplus 0\cong A \arrow[r, "0", two heads] & 0
\end{tikzcd}$ is in $\cap_{\lambda}{\E}_{\lambda}$, which implies $1_A$ is an admissible monic in $\cap_{\lambda}{\E}_{\lambda}$. So, $(\cap_{\lambda} {\A}_{\lambda},\cap_{\lambda}{\E}_{\lambda})$ satisfies the axiom [E0]. Next, we will show that, $(\cap_{\lambda} {\A}_{\lambda},\cap_{\lambda}{\E}_{\lambda})$ satisfies the axiom [E0$^{\text{op}}$]. Let $A\in\cap_{\lambda} {\A}_{\lambda}$. Since $({\A}_{\lambda}, {\E}_{\lambda})$ is an exact category for all $\lambda$, by \cite[Lemma 2.7]{Theo} we get that $\begin{tikzcd}
0 \arrow[r, "0", tail] & 0\oplus A\cong A \arrow[r, "1_A", two heads] & A
\end{tikzcd}$ is in ${\E}_{\lambda}$ for all $\lambda$. As ${\A}_{\lambda}$ is an additive subcategory of $\A$ for all $\lambda$, so $0_{\A}=0_{{\A}_{\lambda}}$ for all $\lambda$. Hence by definition $\begin{tikzcd}
0 \arrow[r, "0", tail] & A\oplus 0\cong A \arrow[r, "1_A", two heads] & A
\end{tikzcd}$ is in $\cap_{\lambda}{\E}_{\lambda}$, which implies $1_A$ is an admissible epic in $\cap_{\lambda}{\E}_{\lambda}$. So, $(\cap_{\lambda} {\A}_{\lambda},\cap_{\lambda}{\E}_{\lambda})$ satisfies the axiom [E0$^{\text{op}}$]. Next, we will show that $(\cap_{\lambda} {\A}_{\lambda},\cap_{\lambda}{\E}_{\lambda})$ satisfies the axiom [E1$^{\text{op}}$]. Let $\begin{tikzcd}
B' \arrow[r, "e", two heads] & B
\end{tikzcd}$ and $\begin{tikzcd}
B \arrow[r, "p", two heads] & C
\end{tikzcd}$ be two admissible epics in $\cap_{\lambda}{\E}_{\lambda}$. Then we will show that
$\begin{tikzcd}
B' \arrow[r, "p\circ e", two heads] & C
\end{tikzcd}$ is an admissible epic in $\cap_{\lambda}{\E}_{\lambda}$. Now $\begin{tikzcd}
B' \arrow[r, "e", two heads] & B
\end{tikzcd}$ and $\begin{tikzcd}
B \arrow[r, "p", two heads] & C
\end{tikzcd}$ are admissible epics in ${\E}_{\lambda}$ for all $\lambda$. Since $({\A}_{\lambda},{\E}_{\lambda})$ is an exact category for all $\lambda$, we get $\begin{tikzcd}
B' \arrow[r, "p\circ e", two heads] & C
\end{tikzcd}$ is an admissible epic in ${\E}_{\lambda}$ for all $\lambda$. Hence there exist objects $D_{\lambda}\in{\A}_{\lambda}$ and kernel-cokernel pairs $\sigma_{\lambda}:\begin{tikzcd}
D_{\lambda} \arrow[r, "i_{\lambda}", tail] & B' \arrow[r, "p\circ e", two heads] & C
\end{tikzcd}$ in ${\E}_{\lambda}$ for all $\lambda$. So, $\begin{tikzcd}
D_{\lambda} \arrow[r, "i_{\lambda}", tail] & B' \arrow[r, "p\circ e", two heads] & C
\end{tikzcd}$ is in $\E$ for all $\lambda$. Hence $\sigma_{\lambda}$ is a kernel-cokernel pair in $\A$ for all $\lambda$, so by the universal property of kernels we get that the kernel-cokernel pairs $\sigma_{\lambda}$'s are all isomorphic to each other. Hence all the $D_{\lambda}$'s are isomorphic to each other. Now fix a $\lambda$, say $\lambda_0$. Then $D_{\lambda_0}\cong D_{\lambda}$ for all $\lambda$. Since ${\A}_{\lambda}$ is a strict subcategory of $\A$ for all $\lambda$, we have $D_{\lambda_0}\in\cap_{\lambda}{\A}_{\lambda}$. Now ${\A}_{\lambda}$ is a strictly full subcategory of $\A$ for all $\lambda$, so $D_{\lambda_0}\in{\A}_{\lambda}$ implies that $\sigma_{\lambda_0}$ is a kernel-cokernel pair in ${\A}_{\lambda}$ for all $\lambda$. Since ${\E}_{\lambda}$ is closed under
isomorphisms of kernel-cokernel pairs in ${\A}_{\lambda}$ and $\sigma_{\lambda}$'s are all isomorphic to each other, we get $\sigma_{\lambda_0}\in{\E}_{\lambda}$ for all $\lambda$. So, $\sigma_{\lambda_0}\in\cap_{\lambda}{\E}_{\lambda}$. Hence $\begin{tikzcd}
D_{\lambda_0} \arrow[r, "i_{\lambda_0}", tail] & B' \arrow[r, "p\circ e", two heads] & C
\end{tikzcd}$ is in $\cap_{\lambda}{\E}_{\lambda}$, so $\begin{tikzcd}
B' \arrow[r, "p\circ e", two heads] & C
\end{tikzcd}$ is an admissible epic in $\cap_{\lambda}{\E}_{\lambda}$. Thus $(\cap_{\lambda}{\A}_{\lambda},\cap_{\lambda}{\E}_{\lambda})$ satisfies the axiom [E1$^{\text{op}}$]. Now we will show that $(\cap_{\lambda}{\A}_{\lambda},\cap_{\lambda}{\E}_{\lambda})$ satisfies the axiom [E2]. Let  $\begin{tikzcd}
A \arrow[r, "i", tail] & B
\end{tikzcd}$ be an admissible monic in $\cap_{\lambda}{\E}_{\lambda}$ and $A\xrightarrow{f}A'$ be an arbitrary morphism in $\cap_{\lambda}{\A}_{\lambda}$. Now $\begin{tikzcd}
A \arrow[r, "i", tail] & B
\end{tikzcd}$ is an admissible monic in ${\E}_{\lambda}$ for all $\lambda$. Since $({\A}_{\lambda},{\E}_{\lambda})$ is an exact category for all $\lambda$, by \cite[Proposition 2.12(iv)]{Theo} we have the following pushout commutative diagram with rows being kernel-cokernel pairs in ${\E}_{\lambda}$:
$$\begin{tikzcd}
& A \arrow[r, "i", tail] \arrow[d, "f"] & B \arrow[r, "p", two heads] \arrow[d, "f'_{\lambda}"] & C \arrow[d, equal] \\
\beta_{\lambda}:& A'\arrow[ur, phantom, "\scalebox{1.0}{$\text{PO}$}" description] \arrow[r, "i'_{\lambda}", tail]              & B'_{\lambda} \arrow[r, "p'_{\lambda}", two heads]                          & C               
\end{tikzcd}(\text{ Note that, }C\in\cap_{\lambda}{\A}_{\lambda})$$
Since $({\A}_{\lambda},{\E}_{\lambda})$ is an exact subcategory of $({\A},{\E})$, by \cite[Proposition 5.2]{Theo} we get that the square $\begin{tikzcd}
A \arrow[r, "i", tail] \arrow[d, "f"] & B\arrow[d, "f'_{\lambda}"]\\
A'\arrow[r, "i'_{\lambda}", tail]              & B'_{\lambda}
\end{tikzcd}$ is a pushout square in $\E$ for all $\lambda$. Then by the universal property of pushout we get that the kernel-cokernel pairs $\beta_{\lambda}$'s are all isomorphic to each other, so all the $B'_{\lambda}$'s are isomorphic to each other. Now fix a $\lambda$, say $\lambda_0$. Then $B'_{\lambda_0}\cong B'_{\lambda}$ for all $\lambda$. Since ${\A}_{\lambda}$ is a strict subcategory of $\A$ for all $\lambda$, we have $B'_{\lambda_0}\in\cap_{\lambda}{\A}_{\lambda}$. Now ${\A}_{\lambda}$ is a strictly full subcategory of $\A$ for all $\lambda$, so $B'_{\lambda_0}\in{\A}_{\lambda}$ implies that $\beta_{\lambda_0}$ is a kernel-cokernel pair in ${\A}_{\lambda}$ for all $\lambda$. Since ${\E}_{\lambda}$ is closed under
isomorphisms of kernel-cokernel pairs in ${\A}_{\lambda}$ and $\beta_{\lambda}$'s are all isomorphic to each other, we have $\beta_{\lambda_0}\in{\E}_{\lambda}$ for all $\lambda$. So, $\beta_{\lambda_0}\in\cap_{\lambda}{\E}_{\lambda}$. Hence $\begin{tikzcd}
A'\arrow[r, "i'_{\lambda_0}", tail]              & B'_{\lambda_0} \arrow[r, "p'_{\lambda_0}", two heads]                          & C               
\end{tikzcd}$ is in $\cap_{\lambda}{\E}_{\lambda}$, so $\begin{tikzcd}
A'\arrow[r, "i'_{\lambda_0}", tail]              & B'_{\lambda_0}
\end{tikzcd}$ is an admissible monic in $\cap_{\lambda}{\E}_{\lambda}$. Thus $(\cap_{\lambda}{\A}_{\lambda},\cap_{\lambda}{\E}_{\lambda})$ satisfies the axiom [E2]. A dual argument will show that $(\cap_{\lambda}{\A}_{\lambda},\cap_{\lambda}{\E}_{\lambda})$ satisfies the axiom [E2$^{\text{op}}$]. Hence $(\cap_{\lambda}{\A}_{\lambda},\cap_{\lambda}{\E}_{\lambda})$ is an exact subcategory of $({\A},{\E})$.
\end{proof}

Next, we record a useful lemma for proving the exactness of certain structures on additive subcategories of a given exact category.  

\begin{lem}\label{rest}
Let $({\A},{\E})$ be an exact category. Let ${\A}'$ be a strictly full additive subcategory of $\A$. Define ${\E}|_{\A'}:=\{\begin{tikzcd}
M\arrow[r, tail]              & N \arrow[r, two heads]                          & L               
\end{tikzcd}\text{ is in }{\E}:M,N,L\in{\A}'\}$. If the pullback (resp. pushout) in $\A$ of every $\mathcal E$-deflation (resp. inflation) of sequences in $\mathcal E|_{\A'}$ along every morphism in $\mathcal A'$ is again in $\mathcal A'$, then $({\A}',{\E}|_{\A'})$ is an exact subcategory of $({\A},{\E})$. 
\end{lem}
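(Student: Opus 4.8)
The plan is to follow the same reduction used in the proof of Lemma \ref{inter}: by \cite[Definition 2.1]{Theo} together with \cite[Remark 2.4]{Theo}, it suffices to check that $(\A',\E|_{\A'})$ is a strictly full additive subcategory (which is given), that $\E|_{\A'}$ is closed under isomorphisms of kernel--cokernel pairs, and that it satisfies the axioms [E0], [E0$^{\text{op}}$], [E1$^{\text{op}}$], [E2] and [E2$^{\text{op}}$]. Throughout I will use that, since $\A'$ is strictly full, a kernel--cokernel pair in $\A$ all of whose terms lie in $\A'$ is automatically a kernel--cokernel pair in $\A'$, so $\E|_{\A'}$ genuinely consists of kernel--cokernel pairs of $\A'$ that lie in $\E$.

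The closure property and the ``trivial'' axioms are quick. For closure under isomorphisms, if $\sigma\in\E|_{\A'}$ and $\sigma'$ is a kernel--cokernel pair in $\A'$ isomorphic to $\sigma$, then Lemma \ref{kerco} shows $\sigma'$ is a kernel--cokernel pair in $\A$; since $\E$ is closed under isomorphisms and the terms of $\sigma'$ lie in $\A'$, we get $\sigma'\in\E|_{\A'}$. For [E0] and [E0$^{\text{op}}$], the split pairs $A\rightarrowtail A\twoheadrightarrow 0$ and $0\rightarrowtail A\twoheadrightarrow A$ lie in $\E$ and have all terms in $\A'$ (as $0\in\A'$), hence lie in $\E|_{\A'}$. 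The axioms [E2] and [E2$^{\text{op}}$] are exactly where the hypothesis is used directly: given an inflation $i\colon A\rightarrowtail B$ of a sequence $A\rightarrowtail B\twoheadrightarrow C$ in $\E|_{\A'}$ and a morphism $f\colon A\to A''$ in $\A'$, the pushout in $\A$ produces a conflation $A''\rightarrowtail B''\twoheadrightarrow C$ in $\E$ with the same cokernel term $C\in\A'$ (via \cite[Proposition 2.12]{Theo}); by hypothesis $B''\in\A'$, so this conflation lies in $\E|_{\A'}$ and the pushout of $i$ is an admissible monic there. The dual base-change argument gives [E2$^{\text{op}}$].

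The main obstacle is [E1$^{\text{op}}$], the stability of admissible epics under composition, because here the kernel of the composite is a \emph{new} object that must be shown to lie in $\A'$. Let $e\colon B'\twoheadrightarrow B$ and $p\colon B\twoheadrightarrow C$ be admissible epics in $\E|_{\A'}$, coming from conflations $K_1\rightarrowtail B'\twoheadrightarrow B$ and $K_2\rightarrowtail B\twoheadrightarrow C$ with $K_1,B',B,K_2,C\in\A'$. In $(\A,\E)$ the composite $p\circ e$ is an admissible epic, hence part of a conflation $D\rightarrowtail B'\twoheadrightarrow C$ in $\E$ with $D=\ker(p\circ e)$, and the whole point is to identify $D$. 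The key observation is purely categorical: $\ker(p\circ e)$ is canonically the pullback of $e$ along the kernel inclusion $\iota\colon K_2\hookrightarrow B$ of $p$, as in
\begin{tikzcd}
D \arrow[r] \arrow[d, tail] & K_2 \arrow[d, "\iota", tail] \\
B' \arrow[r, "e"', two heads] & B
\end{tikzcd}
indeed, since $\iota$ is monic, a map $a$ into $B'$ satisfies $p\circ e\circ a=0$ if and only if $e\circ a$ factors through $\iota=\ker p$, which is precisely the universal property of the pullback $B'\times_B K_2$. Now $e$ is the deflation of the sequence $K_1\rightarrowtail B'\twoheadrightarrow B$ in $\E|_{\A'}$ and $\iota\colon K_2\to B$ is a morphism in $\A'$ (both $K_2$ and $B$ lie in $\A'$ and $\A'$ is full), so the pullback hypothesis yields $D\in\A'$.

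Since also $B',C\in\A'$, the conflation $D\rightarrowtail B'\twoheadrightarrow C$ lies in $\E|_{\A'}$, proving that $p\circ e$ is an admissible epic there and establishing [E1$^{\text{op}}$]. (The axiom [E1] would follow by the dual argument using the pushout hypothesis, but it is not separately required by \cite[Remark 2.4]{Theo}, just as in the proof of Lemma \ref{inter}.) I expect the identification of $\ker(p\circ e)$ with the pullback --- and the verification that the maps entering the hypothesis are genuinely a deflation of a sequence in $\E|_{\A'}$ and a morphism in $\A'$ --- to be the only subtle point; the remaining axioms follow formally.
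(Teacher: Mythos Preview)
Your proof is correct and follows essentially the same route as the paper's. The only cosmetic difference is in the verification of [E1$^{\text{op}}$]: the paper packages the identification of $\ker(p\circ e)$ with the pullback $B'\times_B K_2$ into the separate Proposition \ref{compepic} (which is itself extracted from \cite[Proposition 2.15]{Theo}), whereas you give the direct universal-property argument inline; in both cases the hypothesis is then applied to the deflation $e$ of the conflation $K_1\rightarrowtail B'\twoheadrightarrow B$ in $\E|_{\A'}$ along the morphism $K_2\to B$ in $\A'$ to conclude $D\in\A'$.
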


The proof of Lemma \ref{rest} depends on Lemma \ref{kerco} and the following proposition:

\begin{prop}\label{compepic}
Let $({\A},{\E})$ be an exact category. Suppose we have the following diagram:
$$\begin{tikzcd}
A' \arrow[r, "i'"] \arrow[d, "e'", two heads] & B' \arrow[d, "e", two heads] &   \\
A \arrow[ur, phantom, "\scalebox{1.0}{$\text{PB}$}" description]\arrow[r, "i", tail]                        & B \arrow[r, "p", two heads]  & C
\end{tikzcd}
$$
where the square commutative diagram is a pullback diagram and $e,p$ are admissible epics and $i$ is a kernel of $p$. Then $i'$ is an admissible monic with a cokernel given by $\begin{tikzcd}
B' \arrow[r, "p\circ e", two heads] & C
\end{tikzcd}$.
\end{prop}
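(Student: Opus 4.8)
The plan is to show directly that $\begin{tikzcd} A' \arrow[r, "i'", tail] & B' \arrow[r, "p\circ e", two heads] & C \end{tikzcd}$ is a kernel-cokernel pair lying in $\E$; this is precisely the assertion that $i'$ is an admissible monic with cokernel $p\circ e$.

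First I would observe that $p\circ e$ is an admissible epic. Indeed, $e$ and $p$ are both admissible epics by hypothesis, and the composition of two deflations is again a deflation (axiom [E1$^{\text{op}}$]). Hence there exists a kernel-cokernel pair $\begin{tikzcd} K \arrow[r, "j", tail] & B' \arrow[r, "p\circ e", two heads] & C \end{tikzcd}$ in $\E$; in particular $j=\ker(p\circ e)$. The strategy is then to identify $i'$ with this kernel.

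The key categorical observation is that $i'$, being the pullback of the monomorphism $i$ along $e$, is itself a monomorphism. With this in hand I would check that $i'$ satisfies the universal property of the kernel of $p\circ e$. On one hand, using commutativity of the pullback square $e\circ i'=i\circ e'$ together with $p\circ i=0$ (as $i=\ker p$), we get $(p\circ e)\circ i' = p\circ i\circ e' = 0$. On the other hand, given any $g:T\to B'$ with $(p\circ e)\circ g=0$, the relation $p\circ(e\circ g)=0$ together with $i=\ker p$ produces a unique $h:T\to A$ with $i\circ h=e\circ g$; the pair $(g,h)$ satisfies $e\circ g=i\circ h$, so the universal property of the pullback $A'=B'\times_B A$ yields a unique $u:T\to A'$ with $i'\circ u=g$ (and $e'\circ u=h$). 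Uniqueness of a $u$ with $i'\circ u=g$ is automatic since $i'$ is a monomorphism. Therefore $i'$ is a kernel of $p\circ e$.

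Finally, since both $i'$ and $j$ are kernels of $p\circ e$, there is a unique isomorphism $\psi:A'\to K$ with $j\circ\psi=i'$. Taking $\psi$ on the left term and the identity on $B'$ and $C$ exhibits an isomorphism of diagrams, so by Lemma \ref{kerco} the triple $\begin{tikzcd} A' \arrow[r, "i'", tail] & B' \arrow[r, "p\circ e", two heads] & C \end{tikzcd}$ is a kernel-cokernel pair, and since $\E$ is closed under isomorphisms of kernel-cokernel pairs it lies in $\E$. Hence $i'$ is an admissible monic with cokernel $p\circ e$, as claimed. The step I expect to be the main obstacle is the middle one: verifying that $i'$ is a kernel of $p\circ e$ by juggling the two universal properties (of the pullback and of $i=\ker p$) at once, and recognizing that $i'$ is a monomorphism as a pullback of a mono, which is exactly what makes the required uniqueness automatic rather than something to be argued separately.
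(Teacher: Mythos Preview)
Your proof is correct and follows essentially the same route as the paper: the paper appeals to the construction in B\"uhler's Proposition~2.15 (filling in only the verification that $(p\circ e)\circ i'=0$), and what you have written is precisely a self-contained version of that argument---use axiom [E1$^{\text{op}}$] to see $p\circ e$ is a deflation, verify $i'=\ker(p\circ e)$ via the pullback and $i=\ker p$, and conclude by closure of $\E$ under isomorphisms of kernel-cokernel pairs.
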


\begin{proof} 
The proof follows from the construction in the proof of \cite[Proposition 2.15]{Theo}. The only missing point in the proof of \cite[Proposition 2.15]{Theo}, towards showing $i'$ is a kernel of $p\circ e$, is the following: it was not shown that $(p\circ e)\circ i'=0$. This can be easily checked as follows: $(p\circ e)\circ i'=p\circ (e\circ i')=p\circ (i\circ e')=(p\circ i)\circ e'=0\circ e'=0$, since $i$ is a kernel of $p$. 
\end{proof}  

%Now we  give a proof of Lemma \ref{rest}: 

\begin{proof}[Proof of Lemma \ref{rest}:] Clearly, each kernel-cokernel pair in ${\E}|_{\A'}$ is a kernel-cokernel pair in $\A$, hence also a kernel-cokernel pair in $\A'$. Thus ${\E}|_{\A'}$ consists of kernel-cokernel pairs in $\A'$. Now we will show that, $({\A}',{\E}|_{\A'})$ satisfies the axiom [E2]. Let $A,B,A'\in{\A}'$ and let $\begin{tikzcd}
A \arrow[r, "i", tail] & B
\end{tikzcd}$ be an admissible monic in ${\E}|_{\A'}$ and $A\xrightarrow{f}A'$ be an arbitrary morphism in ${\A}'$. Now $\begin{tikzcd}
A \arrow[r, "i", tail] & B
\end{tikzcd}$ is an admissible monic in $\E$. Since $({\A},{\E})$ is an exact category, by \cite[Proposition 2.12(iv)]{Theo} we have the following pushout commutative diagram with rows being kernel-cokernel pairs in $\E$:
$$\begin{tikzcd}
& A \arrow[r, "i", tail] \arrow[d, "f"] & B \arrow[r, "p", two heads] \arrow[d, "f'"] & C \arrow[d, equal] \\
\beta:& A'\arrow[ur, phantom, "\scalebox{1.0}{$\text{PO}$}" description] \arrow[r, "i'", tail]              & B' \arrow[r, "p'", two heads]                          & C
\end{tikzcd}$$
Now by the assumption we have, $B'\in{\A}'$. Since $\begin{tikzcd}
A \arrow[r, "i", tail] & B
\end{tikzcd}$ is an admissible monic in ${\E}|_{\A'}$, we have $C\in{\A}'$. Thus $\beta\in{\E}|_{\A'}$. Since $\A'$ is a strictly full subcategory of $\A$, the above diagram is a pushout diagram in $\A'$ as well. Thus $({\A}',{\E}|_{\A'})$ satisfies the axiom [E2]. Similarly, $({\A}',{\E}|_{\A'})$ satisfies the axiom [E2$^{\text{op}}$]. Now by \cite[Definition 2.1]{Theo} and \cite[Remark 2.4]{Theo}, it is enough to show that ${\E}|_{\A'}$ is closed under isomorphisms and $({\A}',{\E}|_{\A'})$ satisfies the axioms [E0], [E0$^{\text{op}}$] and [E1$^{\text{op}}$]. First,  we will show that ${\E}|_{\A'}$ is closed under isomorphisms. Let $\begin{tikzcd}
M \arrow[r, tail] & N \arrow[r, two heads] & L
\end{tikzcd}$ be a kernel-cokernel pair in ${\E}|_{\A'}$, so $M,N,L\in{\A}'$ and $\begin{tikzcd}
M \arrow[r, tail] & N \arrow[r, two heads] & L
\end{tikzcd}$ is a kernel-cokernel pair in $\A$. Also let, $M'\to N'\to L'$ be a kernel-cokernel pair in $\A'$ such that it is isomorphic to $\begin{tikzcd}
M \arrow[r, tail] & N \arrow[r, two heads] & L
\end{tikzcd}$. Hence by Lemma \ref{kerco} we get that, $M'\to N'\to L'$ is a kernel-cokernel pair in $\A$. Since $({\A},{\E})$ is an exact category, we have $\begin{tikzcd}
M' \arrow[r, tail] & N' \arrow[r, two heads] & L'
\end{tikzcd}$ is in ${\E}$. Hence by definition, $\begin{tikzcd}
M' \arrow[r, tail] & N' \arrow[r, two heads] & L'
\end{tikzcd}$ is in ${\E}|_{\A'}$. Thus ${\E}|_{\A'}$ is closed under isomorphisms. Next, we will show that, $({\A}',{\E}|_{\A'})$ satisfies the axiom [E0]. Let $A\in{\A}'$. Since $({\A},{\E})$ is an exact category, by \cite[Lemma 2.7]{Theo} we get that $\begin{tikzcd}
A \arrow[r, "1_A", tail] & A\oplus 0\cong A \arrow[r, "0", two heads] & 0
\end{tikzcd}$ is in ${\E}$. As $\A'$ is an additive subcategory of $\A$, so $0_{\A}=0_{\A'}$. Hence by definition $\begin{tikzcd}
A \arrow[r, "1_A", tail] & A\oplus 0\cong A \arrow[r, "0", two heads] & 0
\end{tikzcd}$ is in ${\E}|_{\A'}$, which implies $1_A$ is an admissible monic in ${\E}|_{\A'}$. So, $({\A}',{\E}|_{\A'})$ satisfies the axiom [E0]. Next, we will show that, $({\A}',{\E}|_{\A'})$ satisfies the axiom [E0$^{\text{op}}$]. Let $A\in{\A}'$. Since $({\A},{\E})$ is an exact category, by \cite[Lemma 2.7]{Theo} we get that $\begin{tikzcd}
0 \arrow[r, "0", tail] & 0\oplus A\cong A \arrow[r, "1_A", two heads] & A
\end{tikzcd}$ is in ${\E}$. As $\A'$ is an additive subcategory of $\A$, so $0_{\A}=0_{\A'}$. Hence by definition $\begin{tikzcd}
0 \arrow[r, "0", tail] & A\oplus 0\cong A \arrow[r, "1_A", two heads] & A
\end{tikzcd}$ is in ${\E}|_{\A'}$, which implies $1_A$ is an admissible epic in ${\E}|_{\A'}$. So, $({\A}',{\E}|_{\A'})$ satisfies the axiom [E0$^{\text{op}}$]. Next, we will show that $({\A}',{\E}|_{\A'})$ satisfies the axiom [E1$^{\text{op}}$]. Let $\begin{tikzcd}
B' \arrow[r, "e", two heads] & B
\end{tikzcd}$ and $\begin{tikzcd}
B \arrow[r, "p", two heads] & C
\end{tikzcd}$ be two admissible epics in $\E$ such that they are in ${\E}|_{\A'}$, which means that there exist two kernel-cokernel pairs $\begin{tikzcd}
A \arrow[r, "i", tail] & B\arrow[r, "p", two heads] & C
\end{tikzcd}$ and $\begin{tikzcd}
D \arrow[r, tail] & B'\arrow[r, "e", two heads] & B
\end{tikzcd}$ in $\E$ such that  $A,B,C,D,B'\in{\A}'$. Then we will show that
$\begin{tikzcd}
B' \arrow[r, "p\circ e", two heads] & C
\end{tikzcd}$ is an admissible epic in ${\E}|_{\A'}$. From Proposition \ref{compepic} we get that, $\begin{tikzcd}
A' \arrow[r, "i'", tail] & B'\arrow[r, "p\circ e", two heads] & C
\end{tikzcd}$ is a kernel-cokernel pair in $\E$. By the hypothesis of Lemma \ref{rest} and the diagram of Proposition \ref{compepic} we get that, $A'\in{\A}'$. Hence $\begin{tikzcd}
B' \arrow[r, "p\circ e", two heads] & C
\end{tikzcd}$ is an admissible epic in ${\E}|_{\A'}$. Thus $({\A}',{\E}|_{\A'})$ satisfies the axiom [E1$^{\text{op}}$]. So, $({\A}',{\E}|_{\A'})$ is an exact subcategory of $({\A},{\E})$. 
\end{proof}

From now on, given a subcategory $\A'$ of an exact category $({\A},{\E})$, the notation ${\E}|_{\A'}$ will stand for as defined in Lemma \ref{rest}. Using Lemma \ref{rest}, we now record two quick consequences, which give a sufficient condition on a subcategory $\A'$ such that $({\A}',{\E}|_{\A'})$ is an exact subcategory of $({\A},{\E})$. The first of which we state below now is well-known, see for instance \cite[Lemma 10.20]{Theo}.  However, due to the absence of a proof in \cite[Lemma 10.20]{Theo}, we give a proof using our Lemma \ref{rest}.

\begin{prop}\label{extclosed} (c.f. \cite[Lemma 10.20]{Theo})
Let $({\A},{\E})$ be an exact category. Let $\A'$ be a strictly full additive subcategory of $\A$. Assume that for every $X\to Y\to Z$ in ${\E}$, if $X,Z\in {\A}'$, then $Y\in{\A}'$ (i.e., $\A'$ is closed under extensions). Then $({\A}',{\E}|_{\A'})$ is an exact subcategory of $({\A},{\E})$. 
\end{prop}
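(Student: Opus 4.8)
The plan is to deduce this directly from Lemma \ref{rest}: it suffices to verify that the pushout in $\A$ of every $\E$-inflation occurring in a sequence of $\E|_{\A'}$, taken along an arbitrary morphism of $\A'$, again lies in $\A'$, and dually that pullbacks of $\E$-deflations of such sequences along morphisms of $\A'$ stay in $\A'$. Once these two closure conditions are in hand, Lemma \ref{rest} immediately gives that $(\A',\E|_{\A'})$ is an exact subcategory of $(\A,\E)$.

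For the pushout condition, I would begin with a kernel-cokernel pair $\begin{tikzcd} A \arrow[r, "i", tail] & B \arrow[r, "p", two heads] & C \end{tikzcd}$ in $\E|_{\A'}$, so that $A,B,C\in\A'$, together with an arbitrary morphism $f\colon A\to A'$ in $\A'$. Axiom [E2] for the ambient exact category $(\A,\E)$ produces a pushout square whose lower row is a kernel-cokernel pair $\begin{tikzcd} A' \arrow[r, tail] & B' \arrow[r, two heads] & C \end{tikzcd}$ in $\E$, with the same cokernel term $C$. The two outer terms $A'$ and $C$ already belong to $\A'$, so the extension-closedness of $\A'$ forces the middle term $B'$ into $\A'$. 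This is precisely the pushout hypothesis of Lemma \ref{rest}.

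Dually, for the pullback condition I would take a kernel-cokernel pair $\begin{tikzcd} A \arrow[r, tail] & B \arrow[r, "p", two heads] & C \end{tikzcd}$ in $\E|_{\A'}$ and a morphism $g\colon C'\to C$ in $\A'$. Axiom [E2$^{\mathrm{op}}$] yields a pullback square whose upper row is a kernel-cokernel pair $\begin{tikzcd} A \arrow[r, tail] & B'' \arrow[r, two heads] & C' \end{tikzcd}$ in $\E$ with the same kernel term $A$. Here $A\in\A'$ and $C'\in\A'$, so once more extension-closedness places the pullback object $B''$ in $\A'$. Invoking Lemma \ref{rest} then completes the argument.

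Both verifications are immediate consequences of the defining closure property of $\A'$ combined with the exact-category axioms already available, so I expect no genuine obstacle here. The only point requiring a little care is the bookkeeping: one must confirm that the newly produced kernel-cokernel pair really has its two outer terms among the objects already known to lie in $\A'$ --- namely the pushed-out object $A'$ (resp. the base $C'$) and the unchanged cokernel $C$ (resp. kernel $A$) --- so that extension-closedness can be applied to the single remaining middle object.
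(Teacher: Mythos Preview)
Your proposal is correct and follows essentially the same approach as the paper's proof: both invoke Lemma \ref{rest}, verify the pushout condition by noting that the induced sequence $A'\rightarrowtail B'\twoheadrightarrow C$ in $\E$ has outer terms in $\A'$ so that extension-closedness forces $B'\in\A'$, and handle the pullback condition dually. The only cosmetic difference is that the paper cites \cite[Proposition 2.12(iv)]{Theo} for the existence of the pushed-out kernel-cokernel pair, whereas you invoke axiom [E2] directly; these amount to the same thing.
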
      

\begin{proof}
By Lemma \ref{rest}, it is enough to show that the pullback (resp. pushout) in $\A$ of every $\mathcal E$-deflation (resp. inflation) of sequences in $\mathcal E|_{\A'}$ along every morphism in $\mathcal A'$ is again in $\mathcal A'$. Let $\begin{tikzcd}
A \arrow[r, "i'", tail] & B
\end{tikzcd}$ be an inflation in $\mathcal E|_{\A'}$ and $f:A\to A'$ be a morphism in $\mathcal A'$. Then by \cite[Proposition 2.12(iv)]{Theo} we get the following pushout commutative diagram:
$$\begin{tikzcd}
A \arrow[r, "i", tail] \arrow[d, "f"] & B \arrow[r, "p", two heads] \arrow[d, "f'"] & C \arrow[d, equal] \\
A'\arrow[ur, phantom, "\scalebox{1.0}{$\text{PO}$}" description] \arrow[r, "i'", tail]              & B' \arrow[r, "p'"]                          & C               
\end{tikzcd}$$
Since $A',C\in{\A}'$, by assumption and the bottom row of the above diagram we get that $B'\in{\A}'$. The pullback case follows by a dual argument. Hence $({\A}',{\E}|_{\A'})$ is an exact subcategory of $({\A},{\E})$.  
\end{proof}  

\begin{theorem}\label{specext}
Let $({\A},{\E})$ be an exact category. Let $\A'$ be a strictly full additive subcategory of $\A$. Assume that $\A'$ is closed under kernels and co-kernels of admissible epics and monics in $\E$ respectively. Then, $({\A}',{\E}|_{\A'})$ is an exact subcategory of $({\A},{\E})$. 
\end{theorem}    

\begin{proof}   
By Lemma \ref{rest}, it is enough to show that the pullback (resp. pushout) in $\A$ of every $\mathcal E$-deflation (resp. inflation) of sequences in $\mathcal E|_{\A'}$ along every morphism in $\mathcal A'$ is again in $\mathcal A'$. Let $\begin{tikzcd}
A \arrow[r, "i'", tail] & B
\end{tikzcd}$ be an inflation in $\mathcal E|_{\A'}$ and $f:A\to A'$ be a morphism in $\mathcal A'$. Now we have the following pushout commutative diagram in $(A,{\E})$:
$$\begin{tikzcd}
A \arrow[r, "i", tail] \arrow[d, "f"] & B\arrow[d, "f'"]\\
A'\arrow[ur, phantom, "\scalebox{1.0}{$\text{PO}$}" description] \arrow[r, "i'", tail]              & B'             
\end{tikzcd}$$
Then by \cite[Proposition 2.12(ii)]{Theo} we have the following kernel-cokernel pair in ${\E}$: $\begin{tikzcd}
A \arrow[r, tail] & B\oplus A' \arrow[r, two heads] & B'
\end{tikzcd}$. As $B,A'\in{\A}'$ and $\A'$ is additive, so $B\oplus A'\in{\A}'$. Since $\begin{tikzcd}
A \arrow[r, tail] & B\oplus A'
\end{tikzcd}$ is an admissible monic in $\E$, we get the cokernel $B'\in{\A}'$. The pullback case follows by a dual argument. Hence $({\A}',{\E}|_{\A'})$ is an exact subcategory of $({\A},{\E})$.
\end{proof}   

Given any exact category $({\A},{\E})$ and $C,A\in {\A}$, one can define the Yoneda Ext group $\Ext_{\E}(C,A)$, which has an abelian group structure by Baer sum (see the beginning of \cite[Section 1.2]{vec} for a description of $\Ext_{\E}(-,-)$). When $({\A},{\E})$ is moreover an $R$-linear category, then $\Ext_{\E}(C,A)$ can be given an $R$-linear structure via either of the following constructions, both of which yield equivalent triples in $\Ext_{\E}(C,A)$: 

Given a kernel-cokernel pair $\begin{tikzcd} 
\sigma: A \arrow[r, tail] & B \arrow[r, two heads] & C
\end{tikzcd}$ in $\Ext_{\E}(C,A)$, the multiplication $r\cdot \sigma$ is either given by the following pullback diagram:

$$\begin{tikzcd}
r\sigma: & A \arrow[r, tail] \arrow[d, equal] & B'\arrow[d]\arrow[r, two heads] & C \arrow[d, "r\cdot \text{id}_C"]\\
\sigma: & A \arrow[r, tail]              & B \arrow[ur, phantom, "\scalebox{1.0}{$\text{PB}$}" description]  \arrow[r, two heads]               & C           
\end{tikzcd}$$

or by the following pushout diagram: 
$$\begin{tikzcd}
\sigma: & A \arrow[r, tail] \arrow[d, "r\cdot \text{id}_A"'] & B\arrow[d]\arrow[r, two heads] & C \arrow[d, equal]\\
r\sigma: & A\arrow[ur, phantom, "\scalebox{1.0}{$\text{PO}$}" description] \arrow[r, tail]  & B' \arrow[r, two heads]  & C           
\end{tikzcd}$$

That both of these yield equivalent triplet in $\Ext_{\E}(-,-)$ follows from \cite[Proposition 3.1]{Theo}. Moreover, this makes $\Ext_{\E}(-,-)$ into an $R$-linear functor in each component.  

Now we show that if $({\A}',{\E'})$ is a strictly full exact subcategory of an exact category $({\A},{\E})$, then $\Ext_{\E'}(-,-):{\A}'^{\operatorname{op}}\times{\A}'\to\mathbf{Ab}$ is naturally a subfunctor of $\Ext_{\E}(-,-)|_{{\A}'^{\operatorname{op}}\times{\A}'}:{\A}'^{\operatorname{op}}\times{\A}'\to\mathbf{Ab}$. For this, we first record a remark.

\begin{rem}\label{equi} Let $({\A}',{\E} ')$ be a strictly full exact subcategory of $(\A,{\E})$. Then, for $\sigma \in {\E} ' (\subseteq {\E})$, it is actually true that $[\sigma]_{\E '}=[\sigma]_{\E}$, hence the map $\Ext_{\E '}(-,-)\xrightarrow{[\sigma]_{\E '}\mapsto [\sigma]_{\E}} \Ext_{\E}(-,-)$ is the natural inclusion map.  Indeed, to see $[\sigma]_{\E '}=[\sigma]_{\E}$: Let $\sigma$ be a kernel-cokernel pair $\begin{tikzcd}
A \arrow[r, tail] & B \arrow[r, two heads] & C
\end{tikzcd}$ in ${\E}'$, so $A,B,C\in{\A}'$.  Let $\beta\in [\sigma]_{\E}$ be the kernel-cokernel pair $\begin{tikzcd}
A \arrow[r, tail] & B' \arrow[r, two heads] & C
\end{tikzcd}$ in $\E$, so $B'\in{\A}$. Hence there exists $f\in\Mor_{\A}(B,B')$ such that we have the following commutative diagram in $\A$:
$$\begin{tikzcd}
\sigma: & A \arrow[r, tail] \arrow[d, equal] & B\arrow[d, "f"]\arrow[r, two heads] & C \arrow[d,equal]\\
\beta: & A\arrow[r, tail]              & B'  \arrow[r, two heads]               & C           
\end{tikzcd}$$

By \cite[Corollary 3.2]{Theo} we get that, $f$ is an isomorphism. Since $B\in{\A}'$, $B'\in{\A}$ and $\A'$ is a strict subcategory of $\A$, $B\cong B'$ implies that $B'\in{\A}'$. Since $\A'$ is full, $f\in\Mor_{\A'}(B,B')$. Hence the above commutative diagram is in $\A'$. Since $({\A}',{\E}')$ is an exact category, ${\E}'$ is closed under isomorphisms. Hence $\beta\in{\E}'$, $\beta\in[\sigma]_{\E'}$. Thus $[\sigma]_{\E}\subseteq[\sigma]_{\E'}$. Now $({\A}',{\E}')$ is a subcategory of $({\A},{\E})$, so $[\sigma]_{\E'}\subseteq[\sigma]_{\E}$ as well. Hence $[\sigma]_{\E'}=[\sigma]_{\E}$.  
\end{rem}

\begin{prop}\label{exsub}
Let $({\A},{\E})$ be an exact category. Let $({\A}',{\E}')$ be a strictly full exact subcategory of $({\A},{\E})$. Then $\Ext_{\E'}(-,-):{\A}'^{\operatorname{op}}\times{\A}'\to\mathbf{Ab}$ is a subfunctor of $\Ext_{\E}(-,-)|_{{\A}'^{\operatorname{op}}\times{\A}'}:{\A}'^{\operatorname{op}}\times{\A}'\to\mathbf{Ab}$, where for every $C,A\in {\A}'$, the natural inclusion map $\Ext_{\E'}(C,A)\to \Ext_{\E}(C,A)$ is given by $[\sigma]_{\E'} \mapsto [\sigma]_{\E}$. If ${\A}$ is moreover $R$-linear, then the natural inclusion map is also $R$-linear, hence  $\Ext_{\E'}(-,-):{\A'}^{\operatorname{op}}\times{\A'}\to\Mod R$ is a subfunctor of $\Ext_{\E}(-,-)|_{{\A'}^{\operatorname{op}}\times{\A}'}:{\A'}^{\operatorname{op}}\times{\A}'\to \Mod R$.  
\end{prop}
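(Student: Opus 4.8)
The plan is to verify directly the three requirements of Definition \ref{subfucntor} for the assignment $[\sigma]_{\E'}\mapsto[\sigma]_{\E}$, leaning heavily on Remark \ref{equi}. That remark already shows that for $\sigma\in\E'$ the two equivalence classes coincide, $[\sigma]_{\E'}=[\sigma]_{\E}$, which makes the proposed map $j_{(C,A)}:\Ext_{\E'}(C,A)\to\Ext_{\E}(C,A)$ well defined. It is moreover injective: if $[\sigma]_{\E}=[\tau]_{\E}$ for $\sigma,\tau\in\E'$, then $\tau\in[\sigma]_{\E}=[\sigma]_{\E'}$, so $[\sigma]_{\E'}=[\tau]_{\E'}$. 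Thus $j_{(C,A)}$ is the claimed natural inclusion monomorphism, identifying $\Ext_{\E'}(C,A)$ with the subset of those $\E$-classes that admit a representative whose terms all lie in $\A'$.

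Next I would check the two commutative squares of Definition \ref{subfucntor}, recalling that $\Ext_{\E}(-,-)$ is contravariant in the first variable via pullback and covariant in the second via pushout; it suffices to treat the two variables separately. Fix a morphism $f$ in $\A'$ (hence in $\A$, since $\A'$ is full). For the first variable, the image $\Ext_{\E'}(f,A)([\sigma]_{\E'})=[f^{*}\sigma]_{\E'}$ is obtained by forming the pullback of $\sigma$ along $f$ inside $(\A',\E')$. The crucial point is that, by \cite[Proposition 5.2]{Theo} (used already in the proof of Lemma \ref{inter}), a pullback square of $(\A',\E')$ is again a pullback square of $(\A,\E)$; consequently $f^{*}\sigma$, computed in $\E'$, is simultaneously a valid representative of the pullback computed in $\E$. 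By uniqueness of the pullback up to the Yoneda equivalence this gives $[f^{*}\sigma]_{\E}=\Ext_{\E}(f,A)([\sigma]_{\E})$, which is exactly the commutativity $j\circ\Ext_{\E'}(f,A)=\Ext_{\E}(f,A)\circ j$. The second variable is handled symmetrically, with pushouts in place of pullbacks and the dual of \cite[Proposition 5.2]{Theo}.

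Finally, for the $R$-linear refinement, I would observe that the $R$-action on $\Ext_{\E}(C,A)$ and on $\Ext_{\E'}(C,A)$ is defined by pulling back, or equivalently pushing out, along $r\cdot\operatorname{id}$, the two descriptions agreeing by \cite[Proposition 3.1]{Theo}. Since $r\cdot\operatorname{id}_{C}$ is the same morphism whether read in $\A'$ or in $\A$, the very same pullback/pushout compatibility yields $j(r\cdot[\sigma]_{\E'})=j([r\sigma]_{\E'})=[r\sigma]_{\E}=r\cdot[\sigma]_{\E}=r\cdot j([\sigma]_{\E'})$, so $j$ is $R$-linear; that $\Ext_{\E'}(-,-)$ is itself $R$-linear in each variable then follows formally from Lemma \ref{subad}(2). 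I expect the main obstacle to be precisely this comparison of the pullback and pushout constructions across the two exact structures---ensuring that the functorial operations and the scalar action computed inside the subcategory still represent the correct classes in the ambient category---rather than any of the formal diagram chasing, which becomes routine once that compatibility is in hand.
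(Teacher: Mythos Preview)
Your approach is essentially the same as the paper's: both rely on Remark \ref{equi} for well-definedness and injectivity, and on \cite[Proposition 5.2]{Theo} to transport pullbacks and pushouts from $(\A',\E')$ to $(\A,\E)$ for the functoriality squares and the $R$-action.

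There is, however, one small omission. You pass directly from ``injective'' to ``monomorphism in $\mathbf{Ab}$'' without checking that $j_{(C,A)}$ is a group homomorphism, i.e., that it respects the Baer sum. The paper does this explicitly: given $[\sigma]_{\E'},[\beta]_{\E'}$, it takes a representative $\alpha$ of $[\sigma]_{\E'}+[\beta]_{\E'}$ built by the pushout along $\Sigma:A\oplus A\to A$ followed by the pullback along $\Delta:C\to C\oplus C$ inside $(\A',\E')$, and then invokes \cite[Proposition 5.2]{Theo} to see that the same $\alpha$ represents $[\sigma]_{\E}+[\beta]_{\E}$ in $\Ext_{\E}(C,A)$. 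This is exactly the same compatibility argument you already use for the functoriality squares and for scalar multiplication, so the fix is immediate---but it should be stated, since a priori the Baer sum computed in $\E'$ and the Baer sum computed in $\E$ are two different operations on the same underlying set.
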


\begin{proof} Let $C,A\in {\A}'$. Define $\phi_{C,A}:\Ext_{\E'}(C,A)\to \Ext_{\E}(C,A)$ by $\phi_{C,A}([\sigma]_{\E'})=[\sigma]_{\E}$. From now on, we will call this map $\phi$. The well-definedness and injectivity of $\phi$ follow from Remark \ref{equi}. Next, we will show that $\phi$ is a group homomorphism. Let $[\sigma]_{\E'},[\beta]_{\E'}\in\Ext_{\E'}(C,A)$, where $\sigma: \begin{tikzcd}
A \arrow[r, tail] & B \arrow[r, two heads] & C
\end{tikzcd}$ and $\beta:\begin{tikzcd}
A \arrow[r, tail] & B' \arrow[r, two heads] & C
\end{tikzcd}$ for some $B,B'\in{\A}'$. Then $[\sigma]_{\E'}+[\beta]_{\E'}=[\alpha]_{\E'}$ is given by the following commutative diagram in $({\A}',{\E}')$:
$$\begin{tikzcd}
\sigma\oplus\beta: & A\oplus A \arrow[r, tail] \arrow[d, "\Sigma"] & B\oplus B' \arrow[r, two heads] \arrow[d] & C\oplus C \arrow[d,equal]\\
\gamma: & A \arrow[r, tail]                             & E \arrow[r, two heads]                  & C\oplus C\\
{\alpha}: & A \arrow[r, tail] \arrow[u,equal] & B+B' \arrow[u] \arrow[r, two heads]       & C \arrow[u, "\Delta"]
\end{tikzcd}$$
We know that, $\gamma$ is the pushout of $\sigma\oplus\beta$ in $\E'$ by the sum map $A\oplus A\xrightarrow{\Sigma}A$. Since $({\A}',{\E}')$ is an exact subcategory of $({\A},{\E})$, by \cite[Proposition 5.2]{Theo} we get that $\gamma$ is also the pushout of $\sigma\oplus\beta$ in ${\E}$ by the sum map $A\oplus A\xrightarrow{\Sigma}A$. Next, $\alpha$ is the pullback of $\gamma$ in $\E'$ by the diagonal map $C\xrightarrow{\Delta} C\oplus C$, so by \cite[Proposition 5.2]{Theo} we get that $\alpha$ is also the pullback of $\gamma$ in $\E$ by the diagonal map $C\xrightarrow{\Delta} C\oplus C$. This implies that $\alpha$ is a representative of $[\sigma]_{\E}+[\beta]_{\E}$ in $\Ext_{\E}(C,A)$, i.e., $[\alpha]_{\E}=[\sigma]_{\E}+[\beta]_{\E}$. Since $\phi([\sigma]_{\E'}+[\beta]_{\E'})=\phi([\alpha]_{\E'})=[\alpha]_{\E}$ and $\phi([\sigma]_{\E'})+\phi([\beta]_{\E'})=[\sigma]_{\E}+[\beta]_{\E}$, we get $\phi$ is a group homomorphism.
\\
Let $A,B,C\in {\A}'$ and let $f\in\Mor_{\A'}(A,B)$. First, we will show that $\Ext_{\E'}(C,-)$ is a subfunctor of $\Ext_{\E}(C,-)$, so we need to prove that the following diagram commutes:
$$\begin{tikzcd}
{\Ext_{\E}(C,A)} \arrow[r, "f^*"]  & {\Ext_{\E}(C,B)}       \\
{\Ext_{\E'}(C,A)} \arrow[r, "\tilde{f}"] \arrow[u, "\phi_{C,A}" , hook] & {\Ext_{\E'}(C,B)} \arrow[u, "\phi_{C,B}" , hook]
\end{tikzcd}$$
where for every $[\sigma]_{\E} \in\Ext_{\E}(C,A)$, a representative of $f^*([\sigma]_{\E})$ is the pushout of $\sigma$ by $f$ in $\E$ and for every $[\beta]_{\E'} \in\Ext_{\E'}(C,A)$, a representative of $\tilde{f}([\beta]_{\E'})$ is the pushout of $\beta$ by $f$ in $\E'$.
\\
Let, $[\sigma]_{\E'}\in\Ext_{\E'}(C,A)$. Now  $\phi_{C,A}([\sigma]_{\E'})=[\sigma]_{\E}$ and $f^*([\sigma]_{\E})=[\gamma]_{\E}$, where $\gamma$ is the pushout of $\sigma$ by $f$ in $\E$. Next we have, $\tilde{f}([\sigma]_{\E'})=[\alpha]_{\E'}$, where $\alpha$ is the pushout of $\sigma$ by $f$ in $\E'$. Hence by \cite[Proposition 5.2]{Theo} we get that, $\alpha$ is also the pushout of $\sigma$ by $f$ in $\E$. Hence $[\alpha]_{\E}=[\gamma]_{\E}$, so $\phi_{C,B}(\tilde{f}([\sigma]_{\E'}))=\phi_{C,B}([\alpha]_{\E'})=[\alpha]_{\E}=[\gamma]_{\E}=f^*([\sigma]_{\E})=f^*(\phi_{C,A}([\sigma]_{\E'}))$, therefore the above diagram commutes. Hence $\Ext_{\E'}(C,-)$ is a subfunctor of $\Ext_{\E}(C,-)$ for any $C\in{\A}'$. Similarly, $\Ext_{\E'}(-,C)$ is a subfunctor of $\Ext_{\E}(-,C)$ for any $C\in{\A}'$. Thus $\Ext_{\E'}(-,-):{\A}'^{\operatorname{op}}\times{\A}'\to\mathbf{Ab}$ is a subfunctor of $\Ext_{\E}(-,-)|_{{\A}'^{\operatorname{op}}\times{\A}'}:{\A}'^{\operatorname{op}}\times{\A}'\to\mathbf{Ab}$.
\\  
Now let $\A$ be $R$-linear, hence so is $\A'$ because it is a full subcategory of $\A$. Let $C,A\in {\A}'$ and $[\sigma]_{\E'}\in \Ext_{\E'}(C,A)$. Let $[\gamma]_{\E'}=r\cdot[\sigma]_{\E'}$, so $\gamma$ is obtained from $\sigma$ by pullback in $\E'$ along the map $C\xrightarrow{r\cdot \text{id}_C} C$. Now $({\A}',{\E}')$ is an exact subcategory of $({\A},{\E})$, hence this is also the pullback in $\E$ along the map $C\xrightarrow{r\cdot \text{id}_C} C$ (\cite[Proposition 5.2]{Theo}). So, $[\gamma]_{\E}=r\cdot[\sigma]_{\E}=r\phi([\sigma]_{\E'})$. Also, $[\gamma]_{\E}=\phi([\gamma]_{\E'})=\phi(r\cdot[\sigma]_{\E'})$. This shows that the natural inclusion map $\phi$ is $R$-linear. 
\end{proof} 

In the following corollary, we denote $\Hom_{\A}(-,-)$ just by $\Hom(-,-)$ (we completely ignore the subcategory, since all our subcategories are full). In view of Definition \ref{numdef}, and the discussion following \ref{chnum}, the following corollary is crucial for recovering \cite[Proposition 1.37, Proposition 1.38]{thesis}.     

\begin{cor}\label{long} Let $({\A},{\E})$ be an exact category. Let $({\A}',{\E}')$ be a strictly full exact subcategory of $({\A},{\E})$. Let $\sigma:\begin{tikzcd}
M \arrow[r, "i", tail] & N \arrow[r, "p", two heads] & L
\end{tikzcd}$ be a kernel-cokernel pair in $\E'$.  Then, for every $A\in {\A}'$, we have the following commutative diagrams of long exact sequences: 

$$\begin{tikzcd}[sep=1.8em, font=\small]
0 \arrow[r] & {\Hom(A,M)} \arrow[r]                 & {\Hom(A,N)} \arrow[r]                 & {\Hom(A,L)} \arrow[r]                 & {\Ext_{\E}(A,M)} \arrow[r]                 & {\Ext_{\E}(A,N)} \arrow[r]                 & {\Ext_{\E}(A,L)}                 \\
0 \arrow[r] & {\Hom(A,M)} \arrow[r] \arrow[u, equal] & {\Hom(A,N)} \arrow[r] \arrow[u, equal] & {\Hom(A,L)} \arrow[r] \arrow[u, equal] & {\Ext_{\E'}(A,M)} \arrow[r] \arrow[u, hook] & {\Ext_{\E'}(A,N)} \arrow[r] \arrow[u, hook] & {\Ext_{\E'}(A,L)} \arrow[u, hook]
\end{tikzcd}$$  and  

$$\begin{tikzcd}[sep=1.8em, font=\small]
0 \arrow[r] & {\Hom(L,A)} \arrow[r]                 & {\Hom(N,A)} \arrow[r]                 & {\Hom(M,A)} \arrow[r]                 & {\Ext_{\E}(L,A)} \arrow[r]                 & {\Ext_{\E}(N,A)} \arrow[r]                 & {\Ext_{\E}(M,A)}                 \\
0 \arrow[r] & {\Hom(L,A)} \arrow[r] \arrow[u, equal] & {\Hom(N,A)} \arrow[r] \arrow[u, equal] & {\Hom(M,A)} \arrow[r] \arrow[u, equal] & {\Ext_{\E'}(L,A)} \arrow[r] \arrow[u, hook] & {\Ext_{\E'}(N,A)} \arrow[r] \arrow[u, hook] & {\Ext_{\E'}(M,A)} \arrow[u, hook]
\end{tikzcd}$$ 
\end{cor}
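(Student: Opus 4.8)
The plan is to assemble both rows of each diagram from the standard long exact sequence attached to a conflation in an exact category, then to identify the three vertical comparison maps and verify the squares one at a time. First I would invoke the fundamental long exact sequence for Yoneda $\Ext^1$ (as recorded in \cite{Theo}): applying $\Hom(A,-)$ to $\sigma$ computed inside $(\A,\E)$ produces the exact top row $0\to \Hom(A,M)\to \Hom(A,N)\to \Hom(A,L)\xrightarrow{\partial_\E}\Ext_\E(A,M)\to \Ext_\E(A,N)\to \Ext_\E(A,L)$, and running the same construction inside $(\A',\E')$ — legitimate because $\sigma\in\E'$ by hypothesis — produces the bottom row with $\Ext_{\E'}$ in place of $\Ext_\E$. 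The contravariant diagram is obtained dually from $\Hom(-,A)$.

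Next I would identify the vertical maps. Since $\A'$ is a full subcategory, $\Hom_{\A'}(A,-)=\Hom_{\A}(A,-)$ as abelian groups and the horizontal maps induced by $i$ and $p$ are literally the same in both rows; hence the three $\Hom$-columns are equalities. On the $\Ext$-columns the vertical maps are the natural inclusions $\phi_{A,-}\colon \Ext_{\E'}(A,-)\hookrightarrow \Ext_\E(A,-)$ furnished by Proposition \ref{exsub} and Remark \ref{equi}, which are injective; this accounts for the hook arrows.

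It then remains to check commutativity of the squares. Those built entirely from $\Hom$-maps commute trivially, since the horizontal maps coincide and the verticals are identities. The two squares between consecutive $\Ext$-terms, with horizontal maps $i_*$ and $p_*$, commute precisely because Proposition \ref{exsub} asserts that $\Ext_{\E'}(A,-)$ is a subfunctor of $\Ext_\E(A,-)$: commutativity of these squares is exactly the naturality of $\phi$. The one genuinely new square is the connecting square, whose bottom map is $\partial_{\E'}$, top map is $\partial_\E$, left vertical is the identity on $\Hom(A,L)$, and right vertical is $\phi_{A,M}$. Here I would use the explicit description of the connecting homomorphism: $\partial$ sends a morphism $g\colon A\to L$ to the class of the conflation obtained by pulling $\sigma$ back along $g$. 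Because $(\A',\E')$ is an exact subcategory, this pullback computed in $\E'$ is the same conflation as the pullback computed in $\E$ (\cite[Proposition 5.2]{Theo}, exactly as used in the proof of Proposition \ref{exsub}); hence $\phi_{A,M}(\partial_{\E'}(g))=[g^*\sigma]_\E=\partial_\E(g)$, giving commutativity. The contravariant diagram is treated identically, with $\partial$ computed by pushout of $\sigma$ along a morphism $M\to A$ and the analogous pushout-compatibility between $\E'$ and $\E$.

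The step I expect to be the main obstacle is precisely this connecting-map square: one must commit to a definite description of $\partial$ (as pullback, resp.\ pushout, of $\sigma$) and confirm that it is preserved by $\phi$, the crucial input being the compatibility of pullbacks and pushouts between $\E'$ and $\E$ recorded in \cite[Proposition 5.2]{Theo}. Every other square reduces either to a triviality or to the already-established subfunctor property of Proposition \ref{exsub}.
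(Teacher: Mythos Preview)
Your proposal is correct and matches the paper's own proof essentially step for step: both reduce the Hom-squares to trivialities, both invoke the subfunctor property from Proposition~\ref{exsub} for the two Ext--Ext squares, and both handle the connecting square by unwinding $\partial$ as a pullback of $\sigma$ and appealing to \cite[Proposition 5.2]{Theo} for compatibility of that pullback between $\E'$ and $\E$. The only cosmetic difference is that the paper leaves the existence of the two long exact rows implicit, whereas you spell out that they come from the standard construction in \cite{Theo}.
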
   

\begin{proof}
We will only prove the covariant version (the first diagram above), since the proof of the contravariant version is given by the dual argument.
\\
In the first diagram above, the commutativity of the first two squares is obvious. Also, the commutativity of the last two squares follows directly from the proof of Proposition \ref{exsub} (i.e, $\Ext_{\E'}(-,-)$ is a subfunctor of $\Ext_{\E}(-,-)$). So, it is enough to prove that the following square is commutative:
$$\begin{tikzcd}[sep=1.8em, font=\small]
{\Hom(A,L)} \arrow[r, "f"]                 & {\Ext_{\E}(A,M)} \\
{\Hom(A,L)} \arrow[r, "f^*"] \arrow[u, equal] & {\Ext_{\E'}(A,M)} \arrow[u, "\phi_{A,M}", hook]
\end{tikzcd}$$
Now $f(g)=[\gamma]_{\E}$, where $\gamma$ is the pullback of $\sigma$ by $g$ in $\E$ and $f^*(g)=[\alpha]_{\E'}$, where $\alpha$ is the pullback of $\sigma$ by $g$ in $\E'$. Since $({\A}',{\E}')$ is an exact subcategory of $({\A},{\E})$, by \cite[Proposition 5.2]{Theo} we get that $[\gamma]_{\E}=[\alpha]_{\E}$. Also, by definition $\phi_{A,M}([\alpha]_{\E'})=[\alpha]_{\E}$. Thus $f(g)=\phi_{A,M}(f^*(g))$, so the above square commutes. 
\end{proof}

Let $({\A},{\E})$ be an exact category. Let $\A'$ be a strictly full subcategory of $\A$. If 
$\begin{tikzcd}
A_1 \arrow[r, "i", tail] & B \arrow[r, "p", two heads] & A_2\end{tikzcd}$ and $\begin{tikzcd}
A_1 \arrow[r, "i", tail] & C \arrow[r, "p", two heads] & A_2\end{tikzcd}$ are two kernel-cokernel pairs in $\E$ belonging to the same class in $\Ext_{\E}(A_2,A_1)$, then $B\cong C$ (by short five lemma), hence $B\in {\A}'$ if and only if $C\in {\A}'$. Hence, for every $M,N\in {\A}'$, the collection $\{[\sigma]_{\E}\in \Ext_{\E}(M,N) : \text{the middle object of } \sigma \text{ is in } {\A}'\}$ is well-defined.   

\begin{prop} Let $({\A},{\E})$ be an exact category. Let $\A'$ be a strictly full additive subcategory of $\A$. For every $M,N\in {\A}'$, define $F(M,N):=\{[\sigma]_{\E}\in \Ext_{\E}(M,N) : \text{ the middle object of } \sigma \text{ is in } {\A}'\}$. If $F(-,-)$ is a subfunctor of $\Ext_{\E}(-,-)|_{{\A}'^{\operatorname{op}}\times{\A}'}:{\A}'^{\operatorname{op}}\times{\A}'\to\mathbf{Ab}$ via the natural inclusion maps, then $({\A}',{\E}|_{\A'})$ is an exact subcategory of $({\A},{\E})$.
\end{prop}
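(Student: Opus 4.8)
The plan is to deduce the result directly from the pullback/pushout criterion of Lemma \ref{rest}. The point is that the functorial action of $\Ext_{\E}(-,-)$ is computed on representatives by pushout in the covariant variable and by pullback in the contravariant variable (this is exactly the description used in Proposition \ref{exsub} and Corollary \ref{long}). Consequently, the subfunctor hypothesis says precisely that these pushouts and pullbacks, performed along morphisms of $\A'$, carry classes of $F$ to classes of $F$; and by the very definition of $F$ this is a statement about the middle object of the resulting kernel--cokernel pair lying in $\A'$. That is exactly the input Lemma \ref{rest} demands, so essentially no extra work is needed beyond translating the hypothesis.

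By Lemma \ref{rest} it suffices to verify two conditions. For the pushout condition, start with an inflation $A \rightarrowtail B$ coming from a kernel--cokernel pair $\sigma\colon A \rightarrowtail B \twoheadrightarrow C$ in $\E|_{\A'}$ (so $A,B,C\in\A'$), together with a morphism $f\colon A\to A'$ in $\A'$. Pushing out along $f$ yields a kernel--cokernel pair $\sigma'\colon A' \rightarrowtail B' \twoheadrightarrow C$ in $\E$ whose class is $\Ext_{\E}(C,f)([\sigma]_{\E})=f_*[\sigma]_{\E}$. Since $B\in\A'$ we have $[\sigma]_{\E}\in F(C,A)$, and because $F$ is a subfunctor of $\Ext_{\E}(-,-)$ via the inclusion maps, the commuting square in Definition \ref{subfucntor} forces $f_*$ to restrict to a map $F(C,A)\to F(C,A')$. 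Hence $[\sigma']_{\E}=f_*[\sigma]_{\E}\in F(C,A')$, and by the definition of $F$ the middle object $B'$ of $\sigma'$ lies in $\A'$, which is exactly the required pushout conclusion.

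The pullback condition is entirely dual. Given a deflation $B \twoheadrightarrow C$ from a sequence $\sigma\colon A \rightarrowtail B \twoheadrightarrow C$ in $\E|_{\A'}$ and a morphism $g\colon C'\to C$ in $\A'$, the pullback sequence $\sigma''\colon A \rightarrowtail B'' \twoheadrightarrow C'$ represents $\Ext_{\E}(g,A)([\sigma]_{\E})=g^*[\sigma]_{\E}$. Using the contravariant variable of the subfunctor $F$, the map $g^*$ restricts to $F(C,A)\to F(C',A)$, so $g^*[\sigma]_{\E}\in F(C',A)$ and therefore the middle object $B''$ lies in $\A'$. With both conditions verified, Lemma \ref{rest} gives that $(\A',\E|_{\A'})$ is an exact subcategory of $(\A,\E)$.

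The argument is short once this dictionary is in place, and there is no serious obstacle; the only point requiring care is the identification of the pushout (resp. pullback) middle objects $B'$ (resp. $B''$) with the middle objects of representatives of the functorial images $f_*[\sigma]_{\E}$ and $g^*[\sigma]_{\E}$. This rests on the standard pushout/pullback description of the $\Ext_{\E}$-action together with the observation, recorded just before the statement, that membership of the middle object in $\A'$ is independent of the chosen representative of an $\Ext$-class (two representatives having isomorphic middle objects by the short five lemma).
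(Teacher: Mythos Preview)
Your proof is correct and follows essentially the same approach as the paper's own proof: both invoke Lemma \ref{rest} and verify its pullback/pushout hypotheses by translating the subfunctor condition on $F$ into the statement that the middle object of the pushed-out or pulled-back sequence remains in $\A'$. The paper spells out only the pullback case and declares the pushout case similar, whereas you write out both, but the underlying argument is identical.
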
   

\begin{proof}
By Lemma \ref{rest}, it is enough to show that the pullback (resp. pushout) in $\A$ of every $\E$-deflation (resp. inflation) of sequences in ${\E}|_{\A'}$ along every morphism in $\A'$ is again in $\A'$. We will only prove the pullback case, since the proof of the pushout case can be given by a similar argument. Let $\begin{tikzcd}
B\arrow[r, two heads] & C
\end{tikzcd}$ be an admissible epic in $\E$ such that it is in ${\E}|_{\A'}$, which means that there exist a kernel-cokernel pair
$\begin{tikzcd}
\gamma: A \arrow[r, tail] & B\arrow[r, two heads] & C
\end{tikzcd}$ in $\E$ such that $A, B,C\in{\A}'$. Also, let $B'\xrightarrow{f} C$ be a morphism in $\A'$. Then we have a map $f^*:\Ext_{\E}(C,A)\to\Ext_{\E}(B',A)$ defined as follows: for every $[\sigma]_{\E} \in\Ext_{\E}(C,A)$, a representative of $f^*([\sigma]_{\E})$ is the pullback of $\sigma$ by $f$ in $({\A},{\E})$. Now by definition of $F(-,-)$, it is clear that $F(M,N)\subseteq\Ext_{\E}(M,N)$ for any $M,N\in{\A}'$. Since $F(-,-)$ is a subfunctor of $\Ext_{\E}(-,-)|_{{\A}'^{\operatorname{op}}\times{\A}'}:{\A}'^{\operatorname{op}}\times{\A}'\to\mathbf{Ab}$, we have the following commutative square:
$$\begin{tikzcd}
{\Ext_{\E}(C,A)} \arrow[r, "f^*"]  & {\Ext_{\E}(B',A)}       \\
{F(C,A)} \arrow[r, "\tilde{f}"] \arrow[u, hook] & {F(B',A)} \arrow[u, hook]
\end{tikzcd}$$
where the columns are natural inclusion maps. So, for every $[\beta]_{\E} \in F(C,A)$, a representative of $\tilde{f}([\beta]_{\E})$ is the pullback of $\beta$ by $f$ in $({\A},{\E})$. Now consider the pullback of $\gamma$ by $f$ in $({\A},{\E})$ as follows:
$$\begin{tikzcd}
A\arrow[r, tail] & A' \arrow[r, two heads] \arrow[d] & B'\arrow[d, "f"]\\
\gamma: A \arrow[r, tail] & B\arrow[ur, phantom, "\scalebox{1.0}{$\text{PB}$}" description] \arrow[r, two heads]              & C
\end{tikzcd}$$
Now $[\gamma]_{\E}\in F(C,A)$, so  $\tilde{f}([\gamma]_{\E})\in F(B',A)$ implies that $A'\in{\A}'$. Hence the pullback in $\A$ of every $\E$-deflation of sequences in ${\E}|_{\A'}$ along every morphism in $\A'$ is again in $\A'$. Hence by Lemma \ref{rest} we get that, $({\A}',{\E}|_{\A'})$ is an exact subcategory of $({\A},{\E})$. 
\end{proof} 

\section[subcategories and subfunctors of $\Ext^1$ from numerical functions and applications to module categories]{subcategories and subfunctors of $\Ext^1$ from numerical functions and applications to module categories}\label{sec4}    

In this section, we present tools to identify exact subcategories of an exact category coming from certain numerical functions. Consequently, due to Proposition \ref{exsub}, we are also able to identify subfunctors of $\Ext^1$ associated with certain numerical functions. Our first result in this direction is an application of Theorem \ref{specext}. 

\begin{prop}\label{funcspec} Let $({\A},{\E})$ be an exact category. Let $\phi:{\A}\to \mathbb {Z}_{\le 0}$ be a function such that $\phi$ is constant on isomorphism classes of objects in $\A$, $\phi$ is additive on finite biproducts, and $\phi$ is sub-additive on kernel-cokernel pairs in $\E$ (i.e., if $\begin{tikzcd}
M \arrow[r, tail] & N \arrow[r, two heads] & L
\end{tikzcd}$ is in $\E$, then $\phi(N)\le \phi(M)+\phi(L)$). Let $\A'$ be the strictly full subcategory of $\A$, whose objects are given by $\{M\in {\A}: \phi(M)=0\}$. Then $({\A}',{\E}|_{\A'})$ is an exact subcategory of $({\A},{\E})$. 
\end{prop}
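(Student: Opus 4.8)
The plan is to reduce everything to Proposition \ref{specext}: I will verify that $\A'$ is a strictly full additive subcategory of $\A$ that is closed under kernels of admissible epics and cokernels of admissible monics in $\E$, after which the conclusion is immediate.

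First I would check the additive structure. Strictness is automatic: since $\phi$ is constant on isomorphism classes, $M\in\A'$ and $M\cong M'$ force $\phi(M')=\phi(M)=0$, so $\A'$ is closed under isomorphism. For additivity, applying additivity of $\phi$ on biproducts to the zero object gives $\phi(0)=\phi(0\oplus 0)=2\phi(0)$, hence $\phi(0)=0$ and $0\in\A'$; and for $M,N\in\A'$ we get $\phi(M\oplus N)=\phi(M)+\phi(N)=0$, so $M\oplus N\in\A'$. Thus $\A'$ is a strictly full additive subcategory of $\A$.

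The crux is the closure under kernels and cokernels, and here the key observation is that sub-additivity together with the sign constraint $\phi\le 0$ forces the relevant value to vanish. Concretely, consider any kernel-cokernel pair $M\rightarrowtail N\twoheadrightarrow L$ in $\E$. If $M,N\in\A'$, so that $\phi(M)=\phi(N)=0$, then sub-additivity $\phi(N)\le\phi(M)+\phi(L)$ reads $0\le\phi(L)$; combined with $\phi(L)\le 0$ this gives $\phi(L)=0$, i.e.\ $L\in\A'$. This shows $\A'$ is closed under cokernels of admissible monics. Symmetrically, if $N,L\in\A'$ then sub-additivity gives $0\le\phi(M)$, hence $\phi(M)=0$ and $M\in\A'$, giving closure under kernels of admissible epics.

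With both closure properties in hand, Proposition \ref{specext} applies directly and yields that $(\A',\E|_{\A'})$ is an exact subcategory of $(\A,\E)$. I do not anticipate any genuine obstacle: the proof is essentially a two-line consequence of the hypotheses once the correct ambient tool is identified. The only points requiring care are tracking the direction of the sub-additivity inequality and ensuring that the nonpositivity of $\phi$ is invoked on the correct object ($L$ in the cokernel case, $M$ in the kernel case) in each of the two arguments.
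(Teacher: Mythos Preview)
Your proposal is correct and follows essentially the same approach as the paper: verify that $\A'$ is strictly full and additive, establish the closure properties required by Proposition \ref{specext}, and then invoke that proposition. The only cosmetic difference is that the paper handles both closure conditions in one stroke by assuming only the middle term $N$ lies in $\A'$ and deducing $0=\phi(N)\le\phi(M)+\phi(L)\le 0$, forcing $\phi(M)=\phi(L)=0$ simultaneously, whereas you split into two cases; both arguments are valid and yield exactly what Proposition \ref{specext} needs.
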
  

\begin{proof} Note that $\A'$ is a strict subcategory of $\A$ since $\phi$ is constant on isomorphism classes of objects in $\A$. Since $\phi$ is additive on finite biproduct, $\phi(0_{\A})=\phi(0_{\A}\oplus 0_{\A})=\phi(0_{\A})+\phi(0_{\A})$, hence $\phi(0_{\A})=0$. Hence $0_{\A}\in {\A}'$. Since $\A$ is an additive category and $\phi$ is additive on finite biproducts, by definition of $\A'$ we get that for every $X,Y\in {\A}'$, the biproduct of $X$ and $Y$ in $\A$ also belongs to $\A'$. Hence $\A'$ is an additive subcategory of $\A$. Let $B\in{\A}'$ and $\begin{tikzcd}
A_1 \arrow[r, "i", tail] & B \arrow[r, "p", two heads] & A_2
\end{tikzcd}$ be a kernel-cokernel pair in $\mathcal E$. We will show that $A_1, A_2 \in{\A}'$. Since $\phi$ is sub-additive on kernel-cokernel pairs in $\E$ and $B\in {\A}'$, we get $0=\phi(B)\leq\phi(A_1)+\phi(A_2)\le 0$. Hence $\phi(A_1)+\phi(A_2)=0$. Since we know $\phi$ always takes non-positive values, we get $\phi(A_1)=\phi(A_2)=0$, hence $A_1,A_2\in {\A}'$. Thus, $\A'$ is closed under kernels and co-kernels of admissible epics and monics in $\E$ respectively. Then by Theorem \ref{specext} we get that, $({\A}',{\E}|_{\A'})$ is an exact subcategory of $({\A},{\E}$.  
\end{proof}  

We now proceed to give our main example of Proposition \ref{funcspec}.  First, we recall some terminologies and prove some preliminary lemmas. For any unexpected concepts and notations, we refer the reader to \cite{bh} and \cite{lw}. 

Let $R$ be a commutative ring, and let ${\S}_R$ be the collection of all short exact sequences of $R$-modules, which gives the standard exact structure on $\Mod R$.  When the ring in question is clear, we drop the suffix $R$ and write only $\S$. Note that, since $\mod R$ is extension closed in $\Mod R$, hence the collection of all short exact sequences in $\mod R$ gives an exact subcategory of $\Mod R$, and we take this as the standard exact structure of $\mod R$. Note that, if $({\X},{\E})$ is an exact subcategory of $\Mod R$ and ${\X}\subseteq \mod R$, then $({\X},{\E})$ is also an exact subcategory of $\mod R$.

Now for a Noetherian local ring $(R,\m)$ of dimension $d$ and for an integer $s\ge 0$, let $\cm^s(R)$ denote the full subcategory of $\mod R$ consisting of the zero-module, and all Cohen--Macaulay $R$-modules (\cite[Definition 2.1.1]{bh}) of dimension $s$. Note that, when $s=d$, $\cm^d(R)$ is just the category of all maximal Cohen--Macaulay modules, which we will also denote by $\cm(R)$.

\begin{chunk}\label{cmchu} We quickly note that for each $s\ge 0$, $\cm^s(R)$ is closed under finite direct sums, direct summands and closed under extensions in $\Mod R$.  Indeed, let $0\to L \to M \to N \to 0$ be a short exact sequence with $L,N\in \cm^s(R)$. If $L$ or $N$ is zero, then there is nothing to prove. So, assume $L,N$ are non-zero. Now $M\in \mod R$, and we also have the following calculation for $\dim M$: $\dim M = \dim \supp(M)=\dim \left(\supp(L)\cup\supp(N)\right)=\max\{\dim \supp(L),\dim \supp(N)\}=\max\{\dim L, \dim N\}=s$. Consequently, $s=\inf\{\depth_R L, \depth_R N\}\le \depth_R M \le \dim M=s$, which shows $M\in \cm^s(R)$.  Thus, $\cm^s(R)$ is closed under extensions in $\Mod R$. To also show $\cm^s(R)$ is closed under direct summands, let $M,N$ be non-zero modules with $M\oplus N\in \cm^s(R)$. Then, $M,N\in \mod R$, and $s=\depth_R(M\oplus N)=\inf\{\depth_R M,\depth_R N\}\le \depth_R M\le \dim M\le \dim (M\oplus N)=s$, thus $M\in \cm^s(R)$. 
\end{chunk} 

Hence, if $\S$ is the standard exact structure on $\Mod R$, then $(\cm^s(R),{\S}|_{\cm^s(R)})$ is an exact subcategory of $\Mod R$ by Proposition \ref{extclosed}. Now let $I$ be an $\m$-primary ideal and $\phi_I:\cm^s(R)\to \mathbb Z$ be the function defined by $\phi_I(M):=\lambda_R(M/IM)-e_R(I,M)$, where $e_R(I,M)$ is the multiplicity of $M$ with respect to $I$ (\cite[Definition 4.6.1]{bh}).  We first show that $\phi_I$ satisfy all the hypothesis of Proposition \ref{funcspec}. To prove this, we will need to pass to the faithfully flat extension $S=R[X]_{\m[X]}$ whose unique maximal ideal is $\m S$. That we can harmlessly pass to this extension, is discussed in the following 

\begin{chunk}\label{infres} Let $(R,\m)$ be a local ring and consider $S=R[X]_{\m[X]}$ which is a faithfully flat extension of $R$ and the unique maximal ideal of $S$ is $\m S$, whose residue field $S/\m S$ is infinite (see \cite[Section 8.4]{HS}). Let $M\in \mod (R)$. By \cite[Theorem 2.1.7]{bh} (and the sentences following it), we have $M\in \cm^s(R)$ if and only if $M\otimes_R S \in \cm^s(S)$. Let $I$ be an $\m$-primary ideal of $R$. As $\m S$ is the maximal ideal of $S$, so $IS=I\otimes_R S$ is also primary to the maximal ideal of $S$. Now, $S\otimes_R \dfrac {M}{IM}\cong \dfrac{S\otimes_R M}{S\otimes_R (IM)}=\dfrac{S\otimes_R M}{(IS)(S\otimes_R M)}$ by Lemma \ref{tenfaith}(1). Hence, $\lambda_S\left(\dfrac{S\otimes_R M}{(IS)(S\otimes_R M)}\right)=\lambda_S\left(S\otimes_R \dfrac {M}{IM} \right)=\lambda_R\left(\dfrac{M}{IM}\right)$, where the last equality is by \cite[Tag 02M1]{st} remembering that $S/\m S$ is the residue field of $S$. Also, $e_R(I,M):=\lim_{n\to \infty}\dfrac{(\dim M)!}{n^{\dim M}}\lambda_R\left(\dfrac{M}{I^{n+1}M}\right)$, and $\dim_S(M\otimes_R S)=\dim_R(M)$ (\cite[Theorem A.11(b)]{bh}), so a similar argument as the previous one shows $e_R(I,M)=e_S(IS, S\otimes_R M)$. Thus $\phi_I(M)=\phi_{IS}(S\otimes_R M)$.  
\end{chunk}

\begin{lem}\label{ineq} The function $\phi_I:\cm^s(R)\to \mathbb Z$ satisfies $\phi_I(M)\le 0$, is constant on isomorphism classes of modules in $\cm^s(R)$, additive on finite biproducts, and subadditive on short exact sequences of modules in $\cm^s(R)$.  
\end{lem}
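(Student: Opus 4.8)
The plan is to verify the four claimed properties separately, treating the non-positivity $\phi_I(M)\le 0$ as the substantive one and disposing of the other three quickly. Constancy on isomorphism classes is immediate, since both $\lambda_R(M/IM)$ and $e_R(I,M)$ depend only on the isomorphism type of $M$. Additivity on biproducts is equally routine: the isomorphism $(M\oplus N)/I(M\oplus N)\cong M/IM\oplus N/IN$ gives additivity of the first term, while additivity of multiplicity on direct sums (coming from additivity of the Hilbert--Samuel function) gives $e_R(I,M\oplus N)=e_R(I,M)+e_R(I,N)$.

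For subadditivity, suppose $0\to L\to M\to N\to 0$ with $L,M,N\in\cm^s(R)$. The key observation is that the multiplicity terms match \emph{exactly} rather than merely satisfying an inequality: since all three modules have dimension $s$, the additivity formula for multiplicity along a short exact sequence yields $e_R(I,M)=e_R(I,L)+e_R(I,N)$. It therefore suffices to prove $\lambda_R(M/IM)\le \lambda_R(L/IL)+\lambda_R(N/IN)$. This follows from right-exactness of $-\otimes_R R/I$: the sequence $L/IL\to M/IM\to N/IN\to 0$ is exact, so $M/IM$ is an extension of $N/IN$ by a quotient of $L/IL$, and taking lengths gives the inequality. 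Combining the two, $\phi_I(M)=\lambda_R(M/IM)-e_R(I,M)\le \phi_I(L)+\phi_I(N)$.

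The main work is $\phi_I(M)\le 0$, i.e. $\lambda_R(M/IM)\le e_R(I,M)$. First I would reduce to the case that the residue field is infinite by passing to the faithfully flat local extension $R\to R':=R[X]_{\m R[X]}$ and setting $M':=M\otimes_R R'$: lengths of finite-length modules and the entire Hilbert--Samuel function are preserved under this base change, so $\lambda_{R'}(M'/IM')=\lambda_R(M/IM)$ and $e_{R'}(IR',M')=e_R(I,M)$, while $M'$ remains Cohen--Macaulay of dimension $s$ over $R'$ and $R'$ has infinite residue field. Over $R'$ one can then choose a minimal reduction $J=(x_1,\dots,x_s)\subseteq IR'$ of $IR'$ relative to $M'$; because $M'$ is Cohen--Macaulay of dimension $s$, the system of parameters $x_1,\dots,x_s$ is an $M'$-regular sequence, whence $e_{R'}(J,M')=\lambda_{R'}(M'/JM')$. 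Since $J$ is a reduction one has $e_{R'}(IR',M')=e_{R'}(J,M')$, and since $JM'\subseteq IM'$ one has $\lambda_{R'}(M'/IM')\le\lambda_{R'}(M'/JM')$. Chaining these,
$$\lambda_{R'}(M'/IM')\le \lambda_{R'}(M'/JM')=e_{R'}(J,M')=e_{R'}(IR',M'),$$
and transporting back along the base change gives $\lambda_R(M/IM)\le e_R(I,M)$.

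I expect the only genuine obstacle to be this non-positivity step, and within it the bookkeeping of the base change $R\to R[X]_{\m R[X]}$: specifically, verifying that length, dimension, depth (hence the Cohen--Macaulay property), and the Hilbert--Samuel function are all preserved, together with invoking the two standard facts that a minimal reduction leaves the multiplicity unchanged and that for a Cohen--Macaulay module the multiplicity of a parameter ideal $J$ equals the colength $\lambda(M/JM)$.
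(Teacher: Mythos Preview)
Your proposal is correct and follows essentially the same route as the paper: additivity of $e_R(I,-)$ plus subadditivity of $\lambda_R((-)\otimes_R R/I)$ for the subadditivity claim, and for $\phi_I(M)\le 0$ the reduction to infinite residue field followed by passage to a parameter reduction $J\subseteq I$, using that the parameters are $M$-regular (Cohen--Macaulay) so that $e_R(J,M)=\lambda_R(M/JM)\ge \lambda_R(M/IM)$. The only cosmetic difference is that the paper treats $s=0$ separately (where $e_R(I,M)=\lambda_R(M)$ directly) and derives $e_R(J,M)=\lambda_R(M/JM)$ by the explicit isomorphism $J^nM/J^{n+1}M\cong (M/JM)^{\oplus\binom{s+n-1}{n}}$ rather than quoting it as a standard fact.
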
 

\begin{proof} It is obvious that $\phi_I$ is constant on isomorphism classes of modules in $\cm^s(R)$, and additive on finite biproducts (direct sums). Now, $e_R(I,-)$ is additive on short exact sequences of modules in $\cm^s(R)$ by \cite[Corollary 4.7.7]{bh}. Since $\lambda_R((-)\otimes_R R/I)$ is always subadditive on short exact sequences, this proves $\phi_I$ is also subadditive on short exact sequences of modules in $\cm^s(R)$. Now we finally prove  that $\phi_I(M)\le 0$ for all $M\in \cm^s(R)$. If $\dim M=0$, then $e_R(I,M)=\lambda_R(M)\ge \lambda_R(M/IM)$, so $\phi_I(M)\le 0$. Now assume that $s=\dim M>0$. We may assume that $R$ has infinite residue field due to \ref{infres}. By \cite[Corollary 4.6.10]{bh}, we have $e_R(I,M)=e_R((\mathbf x),M)$ for some system of parameters $\mathbf x=x_1,...,x_s$ on $M$, which is a reduction of $I$ with respect to $M$. Then, $\mathbf x$ is a $M$-regular sequence, since $M$ is Cohen--Macaulay (\cite[Theorem 2.1.2(d)]{bh}). Let $J=(\mathbf x)$.  Since $\mathbf x$ is a $M$-regular sequence, by \cite[Theorem 1.1.8]{bh} we have $J^nM/J^{n+1}M\cong (M/JM)^{\oplus \dfrac{(s+n-1)!}{n!(s-1)!}}$. Hence, $e_R(I,M)=e_R(J,M)=(s-1)!\lim_{n\to \infty}\dfrac{\lambda_R(J^nM/J^{n+1}M)}{n^{s-1}}=\lambda_R(M/JM)\ge \lambda_R(M/I M)$ (the last inequality follows by noticing that $J\subseteq I$). Hence $\phi_I(M)\le 0$. 
\end{proof}  

\begin{dfn}\label{uls} Let $(R,\m)$ be a Noetherian local ring, and $I$ be an $\m$-primary ideal. Let $s\ge 0$ be an integer. We denote by $\Ul^s_I(R)$ the full subcategory of all modules $M\in \cm^s(R)$ such that $\phi_I(M)=0$, i.e., $\lambda_R(M/IM)=e_R(I,M)$. When $s=\dim R$, we will denote this subcategory simply by $\Ul_I(R)$. When $I=\m$, $\Ul^s_\m(R)$  will be denoted by $\Ul^s(R)$. We will also denote $\Ul^{\dim R}_\m(R)$ simply by $\Ul(R)$. Note that, when $s=\dim R=1$ and $R$ is Cohen--Macaulay, $\Ul_I(R)$ is exactly the collection of all $I$-Ulrich modules as defined in \cite[Definition 4.1]{dms}.  
\end{dfn}   

\begin{rem} In terms of \cite[Definition 2.1]{agl}, a non-zero $R$-module $M$ is Ulrich if $M\in \Ul^s(R)$ for some $s\ge 0$ in our notation. When $R$ is Cohen--Macaulay, then the modules in $\Ul(R)$ are simply the maximally generated modules as studied in \cite{ul}.  
\end{rem}

\begin{cor}\label{iulrich} For each integer $s\ge 0$ and $\m$-primary ideal $I$, $(\Ul^s_I(R), {\S}|_{{\Ul}^s_I(R)})$ is an exact subcategory of $\Mod R$ (hence of $\mod R$). Hence, $\Ext^1_{\Ul^s_I(R)}(-,-): \Ul^s_I(R)^{op}\times \Ul^s_I(R)\to \mod R$ is a subfunctor of $\Ext^1_{R}(-,-): \Ul^s_I(R)^{op}\times \Ul^s_I(R)\to \mod R$. 
\end{cor} 

\begin{proof} Since it has been noticed that $(\cm^s(R),{\S}|_{\cm^s(R)})$ is an exact subcategory of $\Mod R$, by Lemma \ref{ineq} and Proposition \ref{funcspec} it follows that $(\Ul^s_I(R), {\S}|_{\Ul^s_I(R)})$ is an exact subcategory of $(\cm^s(R),{\S}|_{\cm^s(R)})$, and hence of $\Mod R$.  The subfunctor part now follows from Proposition \ref{exsub}. 
\end{proof}

Next, we record a proposition for constructing a special kind of exact substructure of an exact structure $\E$ on a category $\A$, without shrinking the category, that also comes from certain kinds of numerical functions. This will be used in the next section to recover and improve some results on module categories.  

\begin{theorem}\label{subadd} Let $({\A},{\E})$ be an exact category. Let $\phi:{\A} \to \mathbb Z$ be a function such that $\phi$ is constant on isomorphism classes of objects in $\A$, $\phi$ is additive on finite biproducts, and $\phi$ is sub-additive on kernel-cokernel pairs in $\E$ (i.e., if $\begin{tikzcd}
M \arrow[r, tail] & N \arrow[r, two heads] & L
\end{tikzcd}$ is in $\E$, then $\phi(N)\le \phi(M)+\phi(L)$). Set ${\E}^{\phi}:=\{\text{kernel-cokernel pairs in } {\E} \text{ on which } \phi \text { is additive} \}$.  Then, $({\A},{\E}^{\phi})$ is an exact subcategory of $({\A},{\E})$.
\end{theorem}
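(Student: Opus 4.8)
The plan is to keep the underlying additive category fixed (here the subcategory is $\A$ itself, so there is nothing to check about additivity) and verify instead that $\E^{\phi}$ is an exact substructure of $\E$. By \cite[Definition 2.1]{Theo} together with \cite[Remark 2.4]{Theo} it suffices to show that $\E^{\phi}$ is closed under isomorphisms of kernel-cokernel pairs and that the axioms [E0], [E0$^{\text{op}}$], [E1$^{\text{op}}$], [E2] and [E2$^{\text{op}}$] hold, exactly the list verified in Lemma \ref{inter}. Two preliminary remarks dispatch the easy axioms. Since $\phi$ is constant on isomorphism classes, a kernel-cokernel pair isomorphic to one in $\E^{\phi}$ has the same three $\phi$-values, so it is again additive and lies in $\E^{\phi}$; this gives closure under isomorphisms. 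And additivity on biproducts forces $\phi(0)=\phi(0\oplus 0)=2\phi(0)$, whence $\phi(0)=0$, so the split pairs $A\rightarrowtail A\twoheadrightarrow 0$ and $0\rightarrowtail A\twoheadrightarrow A$ are $\phi$-additive and belong to $\E^{\phi}$, yielding [E0] and [E0$^{\text{op}}$].

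The engine behind the remaining axioms is a ``pinching'' principle. Whenever I already know that a kernel-cokernel pair $M\rightarrowtail N\twoheadrightarrow L$ lies in $\E$, subadditivity gives $\phi(N)\le\phi(M)+\phi(L)$ for free; to promote membership to $\E^{\phi}$ I only need the reverse inequality, and I will extract it by applying subadditivity to a second, auxiliary $\E$-pair assembled from the same data together with objects whose $\phi$-values are already pinned down by the hypotheses.

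For [E2$^{\text{op}}$], suppose $A\rightarrowtail B\twoheadrightarrow C$ lies in $\E^{\phi}$ and form the pullback of $B\twoheadrightarrow C$ along an arbitrary $g:C'\to C$, producing a kernel-cokernel pair $A\rightarrowtail B'\twoheadrightarrow C'$ in $\E$. Subadditivity gives $\phi(B')\le\phi(A)+\phi(C')$. For the reverse inequality I use the bicartesian square: by the dual of \cite[Proposition 2.12(ii)]{Theo}, exactly as invoked in the proof of Proposition \ref{specext}, there is a kernel-cokernel pair $B'\rightarrowtail B\oplus C'\twoheadrightarrow C$ in $\E$. Subadditivity and biproduct-additivity then give $\phi(B)+\phi(C')\le\phi(B')+\phi(C)$, and substituting $\phi(B)=\phi(A)+\phi(C)$ cancels $\phi(C)$ to give $\phi(A)+\phi(C')\le\phi(B')$. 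Pinching yields $\phi(B')=\phi(A)+\phi(C')$, so the pullback pair lies in $\E^{\phi}$. Axiom [E2] is the exact dual, run through the pushout sequence $A\rightarrowtail B\oplus A'\twoheadrightarrow B'$ of \cite[Proposition 2.12(ii)]{Theo}.

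The main obstacle is [E1$^{\text{op}}$]. Given admissible epics $e:B'\twoheadrightarrow B$ and $p:B\twoheadrightarrow C$ in $\E^{\phi}$, with kernels $K_e\rightarrowtail B'$ and $K_p\rightarrowtail B$, the composite $pe$ is an admissible epic in $\E$ with some kernel $D\rightarrowtail B'$, and I must show $D\rightarrowtail B'\twoheadrightarrow C$ lies in $\E^{\phi}$, i.e. $\phi(B')=\phi(D)+\phi(C)$. The hypotheses give $\phi(B')=\phi(K_e)+\phi(B)$ and $\phi(B)=\phi(K_p)+\phi(C)$, hence $\phi(B')=\phi(K_e)+\phi(K_p)+\phi(C)$, and subadditivity on $D\rightarrowtail B'\twoheadrightarrow C$ then forces $\phi(K_e)+\phi(K_p)\le\phi(D)$. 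The reverse inequality is where the real subtlety sits: I need the auxiliary kernel-cokernel pair $K_e\rightarrowtail D\twoheadrightarrow K_p$ relating the three kernels, which is the content of the (dual) Noether lemma for exact categories \cite{Theo}; subadditivity on it gives $\phi(D)\le\phi(K_e)+\phi(K_p)$, and pinching delivers $\phi(D)=\phi(K_e)+\phi(K_p)$, so $\phi(B')=\phi(D)+\phi(C)$ as required. The one step that genuinely uses exact-category machinery, rather than the numerical hypotheses on $\phi$, is establishing that $K_e\rightarrowtail D\twoheadrightarrow K_p$ is an honest admissible pair and not merely a set-theoretic short exact sequence; once that is in hand, all five axioms follow from the same pinching mechanism.
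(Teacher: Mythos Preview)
Your argument is correct and tracks the paper's axiom-by-axiom verification: closure under isomorphisms, [E0], [E0$^{\text{op}}$], [E2], and [E2$^{\text{op}}$] are handled identically, the last two via subadditivity on the bicartesian sequence of \cite[Proposition~2.12(ii)]{Theo} and its dual. The one substantive difference is [E1$^{\text{op}}$]. You obtain the reverse inequality $\phi(D)\le\phi(K_e)+\phi(K_p)$ from the dual Noether lemma, which supplies the admissible pair $K_e\rightarrowtail D\twoheadrightarrow K_p$; this uses the $\phi$-additivity of both $e$ and $p$. The paper instead realizes $D$ as the pullback $A'$ of Proposition~\ref{compepic} and asserts a bicartesian sequence $B'\rightarrowtail B\oplus A'\twoheadrightarrow A$, from which it reads off $\phi(B)+\phi(A')\le\phi(A)+\phi(B')$. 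But the dual of \cite[Proposition~2.12(ii)]{Theo} applied to that particular pullback square actually yields $A'\rightarrowtail B'\oplus A\twoheadrightarrow B$, and subadditivity on \emph{that} gives only the unhelpful direction $\phi(B')+\phi(A)\le\phi(A')+\phi(B)$ (already known from $A'\rightarrowtail B'\twoheadrightarrow C$); a direct check in $\Mod\mathbb{Z}$ with $B'=\mathbb{Z}\twoheadrightarrow\mathbb{Z}/4\twoheadrightarrow\mathbb{Z}/2$ shows no short exact sequence $\mathbb{Z}\rightarrowtail\mathbb{Z}/4\oplus 2\mathbb{Z}\twoheadrightarrow\mathbb{Z}/2$ exists. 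What does salvage the paper's setup is precisely your Noether sequence: the left vertical $e':A'\twoheadrightarrow A$ in the pullback square is an admissible epic with kernel $K_e$, so $K_e\rightarrowtail A'\twoheadrightarrow A$ is exactly $K_e\rightarrowtail D\twoheadrightarrow K_p$. Your route is therefore the one that cleanly closes the argument.
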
  

\begin{proof}
By \cite[Definition 2.1]{Theo} and \cite[Remark 2.4]{Theo}, it is enough to show that ${\E}^{\phi}$ is closed under isomorphisms and $({\A},{\E}^{\phi})$ satisfies the axioms [E0], [E0$^{\text{op}}$], [E1$^{\text{op}}$], [E2] and [E2$^{\text{op}}$]. First, we will show that ${\E}^{\phi}$ is closed under isomorphisms. Let $\begin{tikzcd}
M \arrow[r, tail] & N \arrow[r, two heads] & L
\end{tikzcd}$ be a kernel-cokernel pair in ${\E}^{\phi}$. Also let, $M'\to N'\to L'$ be a kernel-cokernel pair in $\A$ such that it is isomorphic to $\begin{tikzcd}
M \arrow[r, tail] & N \arrow[r, two heads] & L
\end{tikzcd}$, which implies $\begin{tikzcd}
M' \arrow[r, tail] & N' \arrow[r, two heads] & L'
\end{tikzcd}$ is in $\E$ and $M\cong M'$, $N\cong N'$, $L\cong L'$. Since $\phi(N)=\phi(M)+\phi(L)$ and $\phi$ is constant on isomorphism classes of objects in $\A$, we have $\phi(N')=\phi(N)=\phi(M)+\phi(L)=\phi(M')+\phi(L')$. So, $\begin{tikzcd}
M' \arrow[r, tail] & N' \arrow[r, two heads] & L'
\end{tikzcd}$ is in ${\E}^{\phi}$. Hence ${\E}^{\phi}$ is closed under isomorphisms. Next, we will show that, $({\A},{\E}^{\phi})$ satisfies the axiom [E0]. Let $A\in{\A}$. Since $({\A},{\E})$ is an exact category, by \cite[Lemma 2.7]{Theo} we get that $\begin{tikzcd}
A \arrow[r, "1_A", tail] & A\oplus 0\cong A \arrow[r, "0", two heads] & 0
\end{tikzcd}$ is in $\E$. Since $\phi$ is additive on finite biproducts, $\phi$ is additive on the kernel-cokernel pair $\begin{tikzcd}
A \arrow[r, "1_A", tail] & A\oplus 0\cong A \arrow[r, "0", two heads] & 0
\end{tikzcd}$. Hence by definition $\begin{tikzcd}
A \arrow[r, "1_A", tail] & A\oplus 0\cong A \arrow[r, "0", two heads] & 0
\end{tikzcd}$ is in ${\E}^{\phi}$, which implies $1_A$ is an admissible monic in ${\E}^{\phi}$. So, $({\A},{\E}^{\phi})$ satisfies the axiom [E0]. Next we will show that, $({\A},{\E}^{\phi})$ satisfies the axiom [E0$^{\text{op}}$]. Let $A\in{\A}$. Since $({\A},{\E})$ is an exact category, by \cite[Lemma 2.7]{Theo} we get that $\begin{tikzcd}
0 \arrow[r, "0", tail] & A\oplus 0\cong A \arrow[r, "1_A", two heads] & A
\end{tikzcd}$ is in $\E$. Since $\phi$ is additive on finite biproducts, $\phi$ is additive on the kernel-cokernel pair $\begin{tikzcd}
0 \arrow[r, "0", tail] & A\oplus 0\cong A \arrow[r, "1_A", two heads] & A
\end{tikzcd}$. Hence by definition $\begin{tikzcd}
0 \arrow[r, "0", tail] & A\oplus 0\cong A \arrow[r, "1_A", two heads] & A
\end{tikzcd}$ is in ${\E}^{\phi}$, which implies $1_A$ is an admissible epic in ${\E}^{\phi}$. So, $({\A},{\E}^{\phi})$ satisfies the axiom [E0$^{\text{op}}$]. Next, we will show that $({\A},{\E}^{\phi})$ satisfies the axiom [E1$^{\text{op}}$]. Let $\begin{tikzcd}
B' \arrow[r, "e", two heads] & B
\end{tikzcd}$ and $\begin{tikzcd}
B \arrow[r, "p", two heads] & C
\end{tikzcd}$ be two admissible epics in ${\E}^{\phi}$. Then we will show that
$\begin{tikzcd}
B' \arrow[r, "p\circ e", two heads] & C
\end{tikzcd}$ is an admissible epic in ${\E}^{\phi}$. Let $\begin{tikzcd}
A \arrow[r, "i", tail] & B \arrow[r, "p", two heads] & C
\end{tikzcd}$ be the complete kernel-cokernel pair of $p$, and consider a pullback square as in Proposition \ref{compepic}. From Proposition \ref{compepic} we get that, $\begin{tikzcd}
A' \arrow[r, "i'", tail] & B'\arrow[r, "p\circ e", two heads] & C
\end{tikzcd}$ is a kernel-cokernel pair in $\E$, so $\phi(B')\leq\phi(C)+\phi(A')$. 

%%%%%Since $\begin{tikzcd} A \arrow[r, "i", tail] & B \arrow[r, "p", two heads] & C \end{tikzcd}$ is in ${\E}^{\phi}$, so $\phi(B)=\phi(A)+\phi(C)$. 
Now consider the pullback square of Proposition \ref{compepic}

$$\begin{tikzcd}
A' \arrow[r, "e'", two heads] \arrow[d, "i'", tail] & A \arrow[d, "i", tail] &   \\
B' \arrow[ur, phantom, "\scalebox{1.0}{$\text{PB}$}" description]\arrow[r, "e", two heads]                        & B 
\end{tikzcd}
$$

By the dual (pullback version) of \cite[Proposition 2.12(i)$\implies$(iv)]{Theo} we get a  commutative diagram in $\E$  as follows:

$$ \begin{tikzcd}
D \arrow[r, tail] \arrow[d, equal] & A' \arrow[r, "e'", two heads] \arrow[d, "i'"]                                   & A \arrow[d, "i"] \\
D \arrow[r, tail]                       & B' \arrow[ru, "\scalebox{1.0}{$\text{PB}$}", phantom] \arrow[r, "e", two heads] & B               
\end{tikzcd}
$$ 

 The top row gives $\phi(A')\leq \phi(A)+\phi(D)$. Thus $\phi(C)+\phi(A')\leq \phi(A)+\phi(C)+\phi(D)$. Since  $\begin{tikzcd}
B \arrow[r, "p", two heads] & C
\end{tikzcd}$ is an admissible epic in ${\E}^{\phi}$ and $\begin{tikzcd}
A \arrow[r, "i", tail] & B \arrow[r, "p", two heads] & C
\end{tikzcd}$ is in $\E$ and $\phi(-)$ is constant on isomorphism classes of objects, we have $\phi(A)+\phi(C)=\phi(B)$. Thus $\phi(C)+\phi(A')\leq \phi(A)+\phi(C)+\phi(D)$ implies $\phi(C)+\phi(A')\leq \phi(B)+\phi(D)$. Since $\begin{tikzcd} B'  \arrow[r, "e", two heads]  & B \end{tikzcd}$ is an admissible epic in ${\E}^{\phi}$, the bottom row of the above diagram gives $\phi(B')=\phi(B)+\phi(D)$. Hence $\phi(C)+\phi(A')\leq \phi(B')$. Thus $\begin{tikzcd}
A' \arrow[r, "i'", tail] & B'\arrow[r, "p\circ e", two heads] & C
\end{tikzcd}$ in $\E$ is actually in ${\E}^{\phi}$. This shows $({\A},{\E}^{\phi})$ satisfies the axiom [E1$^{\text{op}}$].  

\if0
Now by an argument dual to \cite[Proposition 2.12(ii)]{Theo} applied to the pullback square of Proposition \ref{compepic} we get a kernel-cokernel pair in $\E$ as follows: $\begin{tikzcd}
B' \arrow[r, tail] & B\oplus A'\arrow[r, two heads] & A
\end{tikzcd}$, which implies $\phi(B)+\phi(A')=\phi(B\oplus A')\leq\phi(A)+\phi(B')$. Hence $\phi(A)+\phi(C)+\phi(A')\leq\phi(A)+\phi(B')$, so $\phi(C)+\phi(A')\leq\phi(B')$. Thus $\phi(C)+\phi(A')=\phi(B')$, so $\begin{tikzcd}
A' \arrow[r, "i'", tail] & B'\arrow[r, "p\circ e", two heads] & C
\end{tikzcd}$ is a kernel-cokernel pair in ${\E}^{\phi}$. This implies that $\begin{tikzcd}
B' \arrow[r, "p\circ e", two heads] & C
\end{tikzcd}$ is an admissible epic in ${\E}^{\phi}$. Thus $({\A},{\E}^{\phi})$ satisfies the axiom [E1$^{\text{op}}$].
\fi

Now we will show that $({\A},{\E}^{\phi})$ satisfies the axiom [E2]. Let  $\begin{tikzcd}
A \arrow[r, "i", tail] & B
\end{tikzcd}$ be an admissible monic in ${\E}^{\phi}$ and $A\xrightarrow{f}A'$ be an arbitrary morphism in $\A$. Then we have the following pushout commutative square in $\E$:
$$\begin{tikzcd}
A \arrow[r, "i", tail] \arrow[d, "f"] & B\arrow[d, "f'"]\\
A'\arrow[ur, phantom, "\scalebox{1.0}{$\text{PO}$}" description] \arrow[r, "i'", tail]              & B'
\end{tikzcd}$$
Then by \cite[Proposition 2.12(ii)]{Theo} we have the following kernel-cokernel pair in $\E$: $\begin{tikzcd}
A \arrow[r, tail] & B\oplus A' \arrow[r, two heads] & B'
\end{tikzcd}$, so $\phi(B)+\phi(A')=\phi(B\oplus A')\leq\phi(A)+\phi(B')$. Also, by \cite[Proposition 2.12(iv)]{Theo} we have the following commutative diagram with rows being kernel-cokernel pairs in $\E$:
$$\begin{tikzcd}
A \arrow[r, "i", tail] \arrow[d, "f"] & B\arrow[d, "f'"]\arrow[r, "p", two heads] & C \arrow[d,equal]\\
A'\arrow[ur, phantom, "\scalebox{1.0}{$\text{PO}$}" description] \arrow[r, "i'", tail]              & B'  \arrow[r, "p'", two heads]               & C           
\end{tikzcd}$$
Since $\begin{tikzcd}
A \arrow[r, "i", tail] & B
\end{tikzcd}$ is an admissible monic in ${\E}^{\phi}$, we have $\phi(B)=\phi(A)+\phi(C)$. This implies $\phi(A)+\phi(C)+\phi(A')\leq\phi(A)+\phi(B')$, so $\phi(C)+\phi(A')\leq\phi(B')$. Also, $\begin{tikzcd}
A' \arrow[r, "i'", tail] & B' \arrow[r, "p'", two heads] & C
\end{tikzcd}$ is a kernel-cokernel pair in $\E$, so $\phi(B')\leq\phi(C)+\phi(A')$. Thus $\phi(C)+\phi(A')=\phi(B')$. Hence $\begin{tikzcd}
A' \arrow[r, "i'", tail] & B' \arrow[r, "p'", two heads] & C
\end{tikzcd}$ is a kernel-cokernel pair in ${\E}^{\phi}$, so $\begin{tikzcd}
A' \arrow[r, "i'", tail] & B'
\end{tikzcd}$ is an admissible monic in ${\E}^{\phi}$. Thus $({\A},{\E}^{\phi})$ satisfies the axiom [E2]. A dual argument will show that $({\A},{\E}^{\phi})$ satisfies the axiom [E2$^{\text{op}}$]. Hence $({\A},{\E}^{\phi})$ is an exact subcategory of $({\A},{\E})$.  
\end{proof}

%\subsection{Identifying Subfunctors of Ext in Mod R} 

Moving forward, we record some applications of Theorem \ref{subadd}.

\begin{chunk}\label{subst} Let $\X$ be a subcategory of $\Mod (R)$ such that $({\X},{\S}|_{\X})$ is an exact subcategory of $\Mod R$. In this case, for $M,N\in {\X}$, by $\Ext^1_{\X}(M,N)$ we will mean $\Ext_{{\S}|_{\X}}(M,N)$, i.e., $\Ext^1_{\X}(M,N)=\{[\sigma]: \text{the middle object of }\sigma \text{ is in } {\X}\}$ (Note that, it does not matter whether the equivalence class is taken in ${\S}|_{\X}$ or $\S$ by Remark \ref{equi}). Note that, $\Ext^1_{\X}(-,-): {\X}^{op}\times {\X}\to \Mod R$ is a subfunctor of $\Ext^1_R(-,-): {\X}^{op}\times {\X}\to \Mod R$ by Proposition \ref{exsub}. For example, we can apply this discussion to ${\X}=\Ul^s_I(R)$ due to Corollary \ref{iulrich}.   
\end{chunk}    

Note that, if $\X$ is closed under taking extensions in $\Mod R$, then $({\X},{\S}|_{\X})$ is exact subcategory of $\Mod R$ by Proposition \ref{extclosed}. In this case, $\Ext^1_{\X}(M,N)=\Ext^1_{R}(M,N)$ for all $M,N\in {\X}$.  

\begin{dfn}\label{numdef} Let $\X$ be a subcategory of $\Mod R$ such that $({\X},{\S}|_{\X})$ is an exact subcategory of $\Mod R$, and $\phi:{\X}\to \mathbb Z$ be a function satisfying the hypothesis of Theorem \ref{subadd}, i.e., $\phi$ is constant on isomorphism classes of modules in $\X$, $\phi$ is additive on finite direct sums, and sub-additive on short exact sequences of modules in $\X$. Then, in the notation of Theorem \ref{subadd}, $({\X},{\S}|_{{\X}^{\phi}})$ is an exact subcategory of $({\X},{\S}|_{\X})$, hence an exact subcategory of $\Mod R$. For $M,N\in{\X}$, define $\Ext_{\X}^1(M,N)^{\phi}:=\Ext_{{\S}|_{{\X}^{\phi}}}(M,N)=\{[\alpha]\in \Ext_{\X}^1(M,N)\text{ }:\text{ }\phi\text{ is additive on }\alpha\}$. Again, we notice that if ${\X}\subseteq \Mod R$ is closed under taking extensions, then $\Ext_{\X}^1(M,N)^{\phi} =\{[\alpha]\in \Ext^1_{R}(M,N)\text{ }:\text{ }\phi\text{ is additive on }\alpha\}$, and in this case, we denote it just by $\Ext^1_R(M,N)^{\phi}$.
\end{dfn}   

\begin{chunk}\label{chnum}  Note that, $\Ext_{\X}^1(-,-)^{\phi}:{\X}^{op}\times {\X}\to \Mod R$ is a subfunctor of $\Ext_{\X}^1(-,-):{\X}^{op}\times {\X}\to \Mod R$, which in turn is a subfunctor of $\Ext^1_{R}(-,-):{\X}^{op}\times {\X}\to \Mod R$ (Proposition \ref{exsub}) and we have corresponding commutative diagram of long exact sequences by Corollary \ref{long}. 
\end{chunk}   

With the extension-closed subcategory ${\X}=\mod R$, where $(R,\m,k)$ is a Noetherian local ring, and the subadditive function $\phi(-)=\mu(-):\mod R\to \mathbb Z$ being the number of generators function in Definition \ref{numdef}, we see that  $\Ext^1_R(M,N)^{\mu}=\langle M, N\rangle$ in the notation of \cite[Definition 1.35]{thesis}. So by our discussion, Theorem \ref{subadd}, Proposition \ref{exsub} and Corollary \ref{long} recovers \cite[Corollary 1.36, Proposition 1.37, Proposition 1.38]{thesis}. 
\\

Before we apply our discussion to more certain special cases, we record one preliminary lemma.

\begin{lem}\label{31} Let $({\X},{\E})$ be an exact subcategory of $\Mod R$. Let $G:{\X} \to \operatorname{fl}(R)$ be an additive half-exact functor, where $\operatorname{fl}(R)$ denotes the full subcategory of $\mod R$ consisting of finite length modules. Then, $\phi(-):=\lambda_R(G(-)): {\X} \to \mathbb Z$ satisfies the hypothesis of Theorem \ref{subadd}. Moreover, given a short exact sequence $\sigma$ with objects in $\X$, $G(\sigma)$ is a short exact sequence of modules if and only if $\phi$ is additive on $\sigma$. 
\end{lem}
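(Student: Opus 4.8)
The plan is to check the three hypotheses of Theorem \ref{subadd} one at a time and then to read off the additivity criterion from a single length count. Throughout I use that every object in the image of $G$ lies in $\operatorname{fl}(R)$, so $\lambda_R$ is finite-valued and additive on short exact sequences of such modules. The first two hypotheses are immediate: since $G$ is a functor, $M\cong M'$ gives $G(M)\cong G(M')$ and hence $\phi(M)=\phi(M')$, so $\phi$ is constant on isomorphism classes; and since $G$ is additive, $G(M\oplus N)\cong G(M)\oplus G(N)$, so $\phi(M\oplus N)=\phi(M)+\phi(N)$ by additivity of length on direct sums.

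The substantive hypothesis is sub-additivity, and this is exactly where half-exactness is used. Let $\sigma:\ 0\to M\to N\to L\to 0$ be a sequence in $\E$; since $\E$ refines the standard exact structure on $\Mod R$, this is a genuine short exact sequence of $R$-modules. Write $G(\sigma)$ as $G(M)\xrightarrow{f}G(N)\xrightarrow{g}G(L)$ (taking $G$ covariant; the contravariant case only swaps the roles of $f$ and $g$). Half-exactness supplies exactness in the middle, namely $\Im(f)=\ker(g)$. Applying additivity of length to $0\to\ker(g)\to G(N)\to\Im(g)\to 0$ then gives $\lambda_R(G(N))=\lambda_R(\Im f)+\lambda_R(\Im g)$; since $\Im(f)$ is a quotient of $G(M)$ and $\Im(g)$ a submodule of $G(L)$, the right-hand side is at most $\lambda_R(G(M))+\lambda_R(G(L))$. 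This is precisely $\phi(N)\le\phi(M)+\phi(L)$.

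For the ``moreover'' clause I would refine the previous estimate to an exact formula for the defect. From $0\to\ker f\to G(M)\to\Im f\to 0$ and $0\to\Im g\to G(L)\to\operatorname{coker} g\to 0$, together with the identity $\lambda_R(G(N))=\lambda_R(\Im f)+\lambda_R(\Im g)$ above, one obtains
$$\phi(M)+\phi(L)-\phi(N)=\lambda_R(\ker f)+\lambda_R(\operatorname{coker} g).$$
As lengths are non-negative, $\phi$ is additive on $\sigma$ if and only if $\ker f=0$ and $\operatorname{coker} g=0$, i.e.\ $f$ is injective and $g$ is surjective. Combined with the middle exactness $\Im f=\ker g$ already granted by half-exactness, these two conditions say exactly that $0\to G(M)\xrightarrow{f}G(N)\xrightarrow{g}G(L)\to 0$ is exact, that is, that $G(\sigma)$ is a short exact sequence.

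I do not anticipate a real obstacle: half-exactness does all the conceptual work by pinning down the middle term, and the only care required is the bookkeeping showing that the additivity defect of $\phi$ equals $\lambda_R(\ker f)+\lambda_R(\operatorname{coker} g)$, so that additivity of $\phi$ detects precisely the failure of injectivity at the left and surjectivity at the right.
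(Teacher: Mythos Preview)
Your proof is correct and follows essentially the same approach as the paper: both arguments hinge on the identity $\phi(M)+\phi(L)-\phi(N)=\lambda_R(\ker f)+\lambda_R(\operatorname{coker} g)$, obtained from half-exactness at the middle term, and read off both sub-additivity and the additivity criterion from it. The only difference is presentational---the paper writes down the five-term exact sequence $0\to\ker f\to G(M)\to G(N)\to G(L)\to\operatorname{coker} g\to 0$ in one stroke, whereas you assemble the same identity from the three short exact sequences involving $\Im f$ and $\Im g$---but the content is identical.
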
   

\begin{proof} We will only prove the statement when $G$ is a covariant functor, since the proof for a contravariant functor is similar. 
\\
Since $G$ is a functor and $\lambda_R$ is constant on isomorphism classes of objects in $\Mod R$, $\phi$ is constant on isomorphism classes of objects in $\X$. Let $A,B\in{\X}$. Then $\phi(A\oplus B)=\lambda_R(G(A\oplus B))=\lambda_R(G(A)\oplus G(B))=\lambda_R(G(A))+\lambda_R(G(B))=\phi(A)+\phi(B)$, so $\phi$ is additive on finite biproducts. Next let, $\sigma:0\to M\xrightarrow{f} N\xrightarrow{g} L\to 0$ be a short exact sequence with objects in $\X$. Then $G(\sigma)$ is the exact sequence $G(\sigma):0\to K\to G(M)\xrightarrow{G(f)} G(N)\xrightarrow{G(g)} G(L)\to P\to 0$, where $K=\operatorname{Ker}(G(f))$ and $P=\operatorname{Coker}(G(g))$. Then we have $\phi(M)+\phi(L)-\phi(N)=\lambda_R(G(M))+\lambda_R(G(L))-\lambda_R(G(N))=\lambda_R(K)+\lambda_R(P)\geq 0$, so $\phi(M)+\phi(L)\ge\phi(N)$. Hence $\phi$ is subadditive on a short exact sequence with objects in $\X$. Thus $\phi$ satisfies the hypothesis of Proposition \ref{subadd}.
\\
Next let, $\sigma:0\to M\xrightarrow{f} N\xrightarrow{g} L\to 0$ be a short exact sequence with objects in $\X$. Then $G(\sigma)$ is short exact if and only if $K=\operatorname{Ker}(G(f))=0$ and $P=\operatorname{Coker}(G(g))=0$ if and only if $\lambda_R(K)=\lambda_R(P)=0$ if and only if $\phi(M)+\phi(L)-\phi(N)=\lambda_R(G(M))+\lambda_R(G(L))-\lambda_R(G(N))=\lambda_R(K)+\lambda_R(P)= 0$ if and only if $\phi(M)+\phi(L)=\phi(N)$ if and only if $\phi$ is additive on $\sigma$.  
\end{proof} 

\begin{cor}\label{half} Let $({\X},{\E})$ be an exact subcategory of $\Mod R$. Let $G:{\X} \to \operatorname{fl}(R)$ be an additive half-exact functor, where $\operatorname{fl}(R)$ denotes the full subcategory of $\mod R$ consisting of finite length modules. Set ${\E}^G:=\{\sigma \in {\E}: G(\sigma) \text{ is a short exact sequence of modules}\}$. Then, $({\X}, {\E}^G)$ is an exact subcategory of $\Mod R$. 
\end{cor}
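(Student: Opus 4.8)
The plan is to reduce the statement entirely to the already-established Lemma \ref{31} and Theorem \ref{subadd}, so that essentially no new verification is required. First I would set $\phi(-):=\lambda_R(G(-)):\X\to\mathbb Z$, exactly the function appearing in Lemma \ref{31}. That lemma delivers two facts at once: on one hand, $\phi$ is constant on isomorphism classes, additive on finite biproducts, and sub-additive on short exact sequences of modules in $\X$, so $\phi$ satisfies the full hypothesis of Theorem \ref{subadd} applied to the exact category $(\X,\E)$ in place of $(\A,\E)$; on the other hand, the moreover clause says that for any $\sigma\in\E$, the sequence $G(\sigma)$ is short exact if and only if $\phi$ is additive on $\sigma$.

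This second assertion is the crux of the identification, since it gives on the nose the equality of collections
$$\E^G=\{\sigma\in\E: G(\sigma)\text{ short exact}\}=\{\sigma\in\E:\phi\text{ additive on }\sigma\}=\E^{\phi},$$
where $\E^{\phi}$ is the substructure produced by Theorem \ref{subadd}. So I would simply observe that $\E^G=\E^{\phi}$.

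Applying Theorem \ref{subadd} with $\A=\X$ then yields that $(\X,\E^{\phi})=(\X,\E^G)$ is an exact subcategory of $(\X,\E)$. Finally, since $(\X,\E)$ is by hypothesis an exact subcategory of $\Mod R$, and the relation ``being an exact subcategory'' is transitive (the relevant kernel-cokernel pairs and the axioms are inherited), I would conclude that $(\X,\E^G)$ is an exact subcategory of $\Mod R$, as claimed.

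There is really no hard step here: all of the genuine work, namely closure under isomorphism of kernel-cokernel pairs and verification of the axioms [E0], [E0$^{\text{op}}$], [E1$^{\text{op}}$], [E2] and [E2$^{\text{op}}$], is carried out once and for all inside Theorem \ref{subadd}, and the bridge from ``$G(\sigma)$ exact'' to ``$\phi$ additive on $\sigma$'' is precisely the moreover clause of Lemma \ref{31}. The only point meriting a word of care is the transitivity of the exact-subcategory relation invoked in the last step; should one prefer to sidestep it, one could instead note directly that every $\sigma\in\E^G$ lies in $\E\subseteq\S$ and is therefore a genuine short exact sequence of $R$-modules, so that $(\X,\E^G)$ becomes an exact subcategory of $\Mod R$ as soon as Theorem \ref{subadd} certifies it to be an exact category.
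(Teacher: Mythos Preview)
Your proposal is correct and follows essentially the same approach as the paper: define $\phi(-):=\lambda_R(G(-))$, invoke Lemma \ref{31} to identify $\E^G=\E^{\phi}$, and then apply Theorem \ref{subadd}. The paper's proof is simply a two-sentence compression of exactly this argument.
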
  

\begin{proof} From Lemma \ref{31} it follows that, ${\E}^G={\E}^{\phi}$, where $\phi(-):=\lambda_R(G(-)): {\X} \to \mathbb Z$. Hence the claim now follows from Theorem \ref{subadd}.   
\end{proof}

Now again, let ${\X}=\mod R$, where $R$ is Noetherian. Basic examples of additive half-exact functors $G:{\X} \to \operatorname{fl}(R)$ are $G(-):=\Ext^i_R(-,X),\Ext^i_R(X,-), \tor^R_i(X,-)$, where  $X$ is a fixed $R$-module of finite length and $i\ge 0$ is an integer.

Given a collection ${\C}\subseteq \mod R$, in \cite{aus}, the authors considered the following:
\begin{align*}
F_{\C}(C,A)&:=\{\sigma: 0\to A \to B \to C\to 0\text{ } |\text{ }\Hom_R(X,\sigma) \text{ is short exact for all } X \in {\C}\}\\
F^{\C}(C,A)&:=\{\sigma: 0\to A \to B \to C\to 0\text{ }|\text{ }\Hom_R(\sigma,X) \text{ is short exact for all } X \in {\C}\}
\end{align*}
and showed in \cite[Proposition 1.7]{aus} that these define subfunctors of $\Ext^1_R(-,-):\mod R^{op}\times \mod R \to \textbf{Ab}$.  Using Corollary \ref{half}, we recover this result when ${\C}\subseteq \operatorname{fl}(R)$ as follows:

\begin{cor}\label{half2} Let $R$ be a commutative Noetherian ring. Let ${\C}\subseteq \operatorname{fl}(R)$. Then, $F_{\C}(-,-)$ and $F^{\C}(-,-)$ are subfunctors of $\Ext^1_R(-,-):\mod R^{op}\times \mod R \to \mod R$.
\end{cor}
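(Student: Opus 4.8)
The plan is to realize $F_\C(-,-)$ and $F^\C(-,-)$ as the $\Ext$-functors attached to suitable exact substructures on $\mod R$, built by intersecting the substructures that Corollary \ref{half} produces for each $X \in \C$, and then to quote Proposition \ref{exsub}.

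The one genuinely module-theoretic point I would establish first is that $\Hom_R(X,-)$ and $\Hom_R(-,X)$ actually land in $\operatorname{fl}(R)$ when $X \in \operatorname{fl}(R)$. For any $M \in \mod R$ both $\Hom_R(X,M)$ and $\Hom_R(M,X)$ are finitely generated (as $R$ is Noetherian), and localizing shows their supports are contained in $\supp(X)$, which is a finite set of maximal ideals; a finitely generated module with such support has finite length. Hence, for each fixed $X \in \C$, the covariant functor $G_X := \Hom_R(X,-)\colon \mod R \to \operatorname{fl}(R)$ and the contravariant functor $G'_X := \Hom_R(-,X)\colon \mod R \to \operatorname{fl}(R)$ are additive and left exact, in particular half-exact, so each satisfies the hypotheses of Corollary \ref{half} with $\X = \mod R$ equipped with its standard exact structure.

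Next I would apply Corollary \ref{half} to each $X \in \C$, obtaining exact subcategories $(\mod R, \E_X)$ and $(\mod R, \E'_X)$ of $\Mod R$, where $\sigma \in \E_X$ (resp. $\sigma \in \E'_X$) exactly when $\Hom_R(X,\sigma)$ (resp. $\Hom_R(\sigma,X)$) is short exact. Forming the intersections $\bigcap_{X \in \C} \E_X$ and $\bigcap_{X \in \C} \E'_X$ and invoking Lemma \ref{inter} (which permits an arbitrary, possibly infinite, family, and here leaves the underlying category $\mod R$ unchanged) yields two more exact subcategories of $\Mod R$.

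Finally I would identify the resulting $\Ext$-groups. A short exact sequence $\sigma\colon 0 \to A \to B \to C \to 0$ of finitely generated modules (its middle term is automatically finitely generated) lies in $\bigcap_{X} \E_X$ precisely when $\Hom_R(X,\sigma)$ is short exact for every $X \in \C$, i.e.\ precisely the defining condition of $F_\C(C,A)$; by Remark \ref{equi} the equivalence class taken in this substructure agrees with the class taken in $\S$, so $\Ext_{\bigcap_X \E_X}(C,A) = F_\C(C,A)$ inside $\Ext^1_R(C,A)$, and symmetrically $\Ext_{\bigcap_X \E'_X}(C,A) = F^\C(C,A)$. Proposition \ref{exsub} then delivers the subfunctor assertions via the natural inclusions. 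The only delicate step is the finite-length claim, since it is exactly what makes Corollary \ref{half} applicable; once it is in place, functoriality and the subfunctor property come for free from Lemma \ref{inter} and Proposition \ref{exsub}.
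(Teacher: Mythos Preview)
Your proposal is correct and follows essentially the same route as the paper: apply Corollary~\ref{half} to each $X\in\C$ separately, intersect the resulting exact substructures via Lemma~\ref{inter}, identify the $\Ext$-groups with $F_\C$ and $F^\C$, and conclude with Proposition~\ref{exsub}. The only difference is that you make explicit the verification that $\Hom_R(X,-)$ and $\Hom_R(-,X)$ land in $\operatorname{fl}(R)$, which the paper records just before the corollary rather than inside the proof.
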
  

\begin{proof} We only prove the case of $F_{\C}(-,-)$, since the proof of $F^{\C}(-,-)$ is similar. 

Let ${\S}^{\C}$ be the collection of all short exact sequences $\sigma$ of finitely generated $R$-modules such that $\Hom_R(X,\sigma)$ is short exact for all $X \in {\C}$. We first prove that $(\mod R,{\S}^{\C})$ is an exact subcategory of $\mod R$. Since ${\S}^{\C}=\cap_{C \in {\C}} {\S}^{C}$, it is enough to prove that for each $C\in {\C}$, ${\S}^C$ gives an exact substructure on $\mod R$ (intersection of exact substructures is again an exact substructure by Lemma \ref{inter}). Now note that, ${\S}^C=\{\sigma: G(\sigma) \text{ is short exact}\}$, where $G:\mod R \to \operatorname{fl}(R)$ is given by $G(-):=\Hom_R(C,-)$, so by Corollary \ref{half} we get that $(\mod R,{\S}^C)$ is an exact subcategory of $\mod R$. Since $\Ext_{{\S}^{\C}}(-,-)=F_{\C}(-,-)$, by Proposition \ref{exsub} we are done.
\end{proof}  

Next, we recover and improve upon  \cite[Theorem 3.11, Theorem 3.13]{tony}. 

For a Noetherian local ring $(R,\m)$, let $\MD(R)$ denote the full subcategory of $\mod R$ consisting of all modules $M$ such that either $M=0$ or $\dim M=\dim R$. We recall that, if $I$ is an $\m$-primary ideal, then $e_R(I,-)$ is additive on $\MD(R)$. We also recall that, a finitely generated module $M$ satisfy Serre's condition $(S_n)$ if $\depth M_{\p}\ge \inf\{n,\dim R_{\p}\}$ for all $\p\in\operatorname{Spec} R$. The full subcategory of $\mod R$ consisting of all modules satisfying $(S_n)$ is denoted by $S_n(R)$. Note that, $S_n(R)$ is closed under extensions in $\mod R$.   

\begin{lem}\label{md} Let $R$ be an equidimensional local ring. Let $S_1(R)$ be the collection of all modules in $\mod R$ satisfying Serre's condition $(S_1)$. Then, $S_1(R)\subseteq \MD(R)$.
\end{lem}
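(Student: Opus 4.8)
The plan is to translate the Serre condition $(S_1)$, as defined in the excerpt via $\depth M_\p \ge \inf\{1,\dim R_\p\}$, into a statement about the associated primes of $M$, and then to feed that statement into the equidimensionality hypothesis. Concretely, I would first establish the key intermediate claim that every $\p \in \Ass(M)$ is in fact a minimal prime of $R$, i.e. $\Ass(M) \subseteq \Min(R)$.

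To prove this claim, fix $\p \in \Ass(M)$. After localizing at $\p$, the maximal ideal $\p R_\p$ lies in $\Ass_{R_\p}(M_\p)$, so there is a nonzero element of $M_\p$ annihilated by $\p R_\p$; this means $\p R_\p$ consists of zerodivisors on $M_\p$, and hence $\depth M_\p = 0$. Plugging this into the $(S_1)$ inequality gives $0 = \depth M_\p \ge \inf\{1,\dim R_\p\}$, which forces $\inf\{1,\dim R_\p\} = 0$, and therefore $\dim R_\p = 0$, i.e. $\p \in \Min(R)$.

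Next I would assume $M \ne 0$. Then $\Ass(M) \ne \emptyset$, so by the claim we may choose some $\p \in \Ass(M) \subseteq \Min(R)$. Since $\p \in \supp(M)$, we have $\dim M \ge \dim R/\p$, while equidimensionality of $R$ says precisely that $\dim R/\p = \dim R$ for every minimal prime $\p$. Combining this with the trivial upper bound $\dim M \le \dim R$ yields $\dim M = \dim R$, which is exactly the condition $M \in \MD(R)$. As $M$ was an arbitrary nonzero module in $S_1(R)$ and the zero module lies in $\MD(R)$ by definition, this gives $S_1(R) \subseteq \MD(R)$.

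I do not expect a serious obstacle here; the computation is short and relies only on the depth-zero characterization of associated primes and on the definition of an equidimensional ring. The single point that genuinely requires care is the precise reading of the $(S_1)$ definition adopted in the excerpt, which uses $\dim R_\p$ rather than the more common $\dim M_\p$: it is exactly this choice that upgrades ``associated primes are minimal in $\supp(M)$'' to the stronger ``associated primes are minimal in $R$,'' and hence lets equidimensionality of $R$ do the work of forcing full dimension.
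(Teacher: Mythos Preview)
Your proof is correct and follows essentially the same route as the paper's: both argue that $\Ass(M)\subseteq\Min(R)$ and then use equidimensionality to force $\dim M=\dim R$ via some $\p\in\Ass(M)$. The paper's proof is terser, simply asserting the inclusion $\Ass(M)\subseteq\Min(R)$, whereas you spell out the depth argument and correctly flag that it hinges on the paper's use of $\dim R_\p$ rather than $\dim M_\p$ in the definition of $(S_1)$.
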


\begin{proof}  The hypothesis on $R$ implies that, $\dim(R/\p)=\dim R$ for all $\p \in \Min(R)$. So, if  $0\ne M$ satisfy $(S_1)$, then there exists $\p \in  \Ass(M)\subseteq \Min(R)$. Hence $\dim(M)\ge \dim(R/\p)=\dim R$, and we are done. 
\end{proof} 

Now inspecting the proof of \cite[Proposition 17]{tonyh}, the only place, where $R$ is Cohen--Macaulay and $M$ is maximal Cohen--Macaulay are required, is to ensure that $e_R(I,-)$ is additive on $0\to \syz_R M \to F \to M \to 0$ and that $M,\syz_R M \in \MD(R)$. But if we assume $R$ is equidimensional and satisfy $(S_1)$, and $M\in S_1(R)\subseteq \MD(R)$, then since $\syz_R M\in S_1(R)\subseteq \MD(R)$ (Lemma \ref{md}) and $e_R(I,-)$ is additive on short exact sequence of modules of the same dimension (\cite[Corollary 4.7.7]{bh}), hence we get the following lemma by following the same proof as in \cite[Proposition 17]{tonyh}: 

\begin{lem}\label{quasi} Let $(R,\m)$ be an equidimensional local ring satisfying $(S_1)$. Let $M\in S_1(R)$. Let $d=\dim R\ge 1$, and $I$ be an $\m$-primary ideal. Then, the function $n \mapsto \lambda(\tor^R_1(M,R/I^{n+1}))$ is given by a polynomial of degree $\le d-1$ for $n\gg 0$. So, in particular, the limit $\lim_{n\to \infty}\dfrac{\lambda(\tor^R_1(M,R/I^{n+1}))}{n^{d-1}}$ exists.  
\end{lem}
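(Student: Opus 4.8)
The plan is to reduce the computation to ordinary Hilbert--Samuel functions via a free presentation, and then exploit the additivity of multiplicity on $\MD(R)$ to annihilate the top-degree term. We may assume $M\ne 0$, the zero case being trivial.

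First I would fix a short exact sequence $0\to \syz_R M\to F\to M\to 0$ with $F$ finitely generated free. Tensoring with $R/I^{n+1}$ and using $\tor^R_1(F,R/I^{n+1})=0$, the long exact sequence of $\tor^R(-,R/I^{n+1})$ collapses to the four-term exact sequence
$$0\to \tor^R_1(M,R/I^{n+1})\to \syz_R M\otimes_R R/I^{n+1}\to F\otimes_R R/I^{n+1}\to M\otimes_R R/I^{n+1}\to 0.$$
Since $I^{n+1}$ is $\m$-primary, every module appearing has finite length, so taking lengths along this sequence and rearranging yields
$$\lambda(\tor^R_1(M,R/I^{n+1}))=\lambda(\syz_R M/I^{n+1}\syz_R M)+\lambda(M/I^{n+1}M)-\lambda(F/I^{n+1}F).$$

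Next I would invoke the standard theory of Hilbert--Samuel functions: for any finitely generated module $X$, the function $n\mapsto\lambda(X/I^{n+1}X)$ agrees for $n\gg 0$ with a polynomial of degree $\dim X$, and when $\dim X=d$ its degree-$d$ coefficient equals $e_R(I,X)/d!$. As observed just above the statement, the equidimensional and $(S_1)$ hypotheses force $M,\syz_R M\in S_1(R)\subseteq\MD(R)$, so $\dim M=\dim\syz_R M=d$; and $F$ is free of dimension $d$. Hence all three length functions on the right-hand side are eventually polynomial of degree exactly $d$, and so $n\mapsto\lambda(\tor^R_1(M,R/I^{n+1}))$ is eventually polynomial of degree $\le d$.

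The key step is the cancellation of the degree-$d$ coefficient, which is $\tfrac{1}{d!}\left(e_R(I,\syz_R M)+e_R(I,M)-e_R(I,F)\right)$. Because $0\to\syz_R M\to F\to M\to 0$ has all three terms in $\MD(R)$, the additivity of $e_R(I,-)$ on $\MD(R)$ gives $e_R(I,F)=e_R(I,\syz_R M)+e_R(I,M)$, so this leading coefficient vanishes and the Tor length function is eventually polynomial of degree $\le d-1$. Existence of the limit is then immediate: if $f(n)=c_{d-1}n^{d-1}+\cdots$ is eventually polynomial of degree $\le d-1$, then $f(n)/n^{d-1}\to c_{d-1}$. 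The only delicate point I expect is ensuring the top-degree coefficients genuinely line up so that additivity of multiplicity applies; but this is exactly what $\syz_R M\in\MD(R)$ guarantees, which is precisely the content supplied by the equidimensional and $(S_1)$ hypotheses.
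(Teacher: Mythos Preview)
Your proposal is correct and follows essentially the same route the paper has in mind: the paper simply refers to the proof of \cite[Proposition 17]{tonyh} and observes that the Cohen--Macaulay hypotheses there are only used to guarantee $M,\syz_R M\in\MD(R)$ so that $e_R(I,-)$ is additive on $0\to\syz_R M\to F\to M\to 0$, which is precisely the argument you have written out in full. One tiny cosmetic point: you may as well also assume $M$ is not free (equivalently $\syz_R M\ne 0$), since otherwise the claim is trivial and your assertion that all three length functions have degree exactly $d$ would need a caveat.
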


If $R$ and $M$ are as in Lemma \ref{quasi}, then let us denote $e^T_I(M):=(d-1)!\lim_{n\to \infty}\dfrac{\lambda(\tor^R_1(M,R/I^{n+1}))}{n^{d-1}}$. (What \cite{tony} denotes by $e^T_R(M)$ is exactly same as $e^T_{\m}(M)$ in our notation). 

The following theorem  recovers and improves upon \cite[Theorem 3.11, Theorem 3.13]{tony} (since a local Cohen--Macaulay ring $R$ is equidimensional, satisfies $(S_1)$ and $\cm(R)\subseteq S_1(R)$). 

\begin{theorem}\label{thmTony} Let $(R,\m)$ be an equidimensional local ring satisfying $(S_1)$. Let ${\X}=S_1(R)$ be the subcategory of $\mod R$ consisting of all modules satisfying $(S_1)$. Let $I$ be an $\m$-primary ideal. Then, $\Ext^1_R(-,-)^{e_I^T}:S_1(R)^{op}\times S_1(R)\to \mod R$  is a subfunctor of $\Ext^1_R(-,-):S_1(R)^{op}\times S_1(R)\to \mod R$.  
\end{theorem}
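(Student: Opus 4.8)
The plan is to deploy the machinery already assembled in this section: I will verify that the numerical function $e_I^T$ satisfies the three hypotheses of Theorem \ref{subadd} on the subcategory $\X=S_1(R)$, and then read off the subfunctor conclusion from Definition \ref{numdef} together with \ref{chnum}. Because $S_1(R)$ is closed under extensions in $\mod R$, Proposition \ref{extclosed} already guarantees that $(S_1(R),\S|_{S_1(R)})$ is an exact subcategory of $\Mod R$ and that $\Ext^1_{S_1(R)}(-,-)=\Ext^1_R(-,-)$ on $S_1(R)$. Thus, once $e_I^T$ is shown to be admissible in the sense of Theorem \ref{subadd}, the object $\Ext^1_R(-,-)^{e_I^T}$ is by definition $\Ext_{\S|_\X^{e_I^T}}(-,-)$, and the asserted subfunctor relation is exactly the content of \ref{chnum} (equivalently, Proposition \ref{exsub} applied to this exact subcategory).

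Everything therefore reduces to checking, for $\phi:=e_I^T$, that (i) $\phi$ is integer-valued and constant on isomorphism classes, (ii) $\phi$ is additive on finite direct sums, and (iii) $\phi$ is sub-additive on short exact sequences with all terms in $S_1(R)$. For (i), Lemma \ref{quasi} shows that $n\mapsto \lambda_R(\tor^R_1(M,R/I^{n+1}))$ agrees for large $n$ with a numerical polynomial of degree $\le d-1$; its leading coefficient multiplied by $(d-1)!$ is an integer, so $\phi(M)\in\mathbb Z$, and since $\tor^R_1(-,R/I^{n+1})$ is invariant under isomorphism, $\phi$ is constant on isomorphism classes. Part (ii) is immediate from the natural isomorphism $\tor^R_1(M\oplus N,R/I^{n+1})\cong \tor^R_1(M,R/I^{n+1})\oplus \tor^R_1(N,R/I^{n+1})$, which makes the relevant lengths additive and hence $\phi$ additive after passing to the limit.

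The only step needing a genuine argument is (iii). Given a short exact sequence $0\to M\to N\to L\to 0$ with $M,N,L\in S_1(R)$, I would tensor with $R/I^{n+1}$ and extract from the long exact $\tor$-sequence the three-term exact piece $\tor^R_1(M,R/I^{n+1})\to \tor^R_1(N,R/I^{n+1})\to \tor^R_1(L,R/I^{n+1})$. A routine length count on this piece gives $\lambda_R(\tor^R_1(N,R/I^{n+1}))\le \lambda_R(\tor^R_1(M,R/I^{n+1}))+\lambda_R(\tor^R_1(L,R/I^{n+1}))$ for every $n$ (each term has finite length since $I$ is $\m$-primary, so each $\tor$ is annihilated by some power of $I$). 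Dividing by $n^{d-1}$, letting $n\to\infty$, and multiplying by $(d-1)!$ then yields $e_I^T(N)\le e_I^T(M)+e_I^T(L)$, which is precisely the required sub-additivity.

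Having checked (i)--(iii), Theorem \ref{subadd} produces the exact substructure $\S|_\X^{e_I^T}$ on $\X=S_1(R)$, and the subfunctor statement follows as indicated above. I expect the essential (indeed, the only nontrivial) input to be Lemma \ref{quasi}: it is the equidimensional $(S_1)$ hypothesis that keeps syzygies inside $S_1(R)\subseteq \MD(R)$, so that $e_I^T$ is well-defined and the limits appearing in step (iii) exist at all. Since a kernel-cokernel pair in $\S|_\X$ has all three terms in $S_1(R)$ by definition, Lemma \ref{quasi} applies to each of them and there is no obstruction to passing simultaneously to the limit; once Lemma \ref{quasi} is granted, the remainder of the argument is purely formal.
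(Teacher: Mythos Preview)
Your proposal is correct and follows essentially the same route as the paper's proof: both note that $S_1(R)$ is extension-closed (hence an exact subcategory via Proposition~\ref{extclosed}), verify that $e_I^T$ is sub-additive on short exact sequences in $S_1(R)$ by observing that each $M\mapsto\lambda_R(\tor^R_1(M,R/I^{n+1}))$ is sub-additive and passing to the limit, and then invoke Theorem~\ref{subadd} together with Proposition~\ref{exsub}. Your write-up is simply more explicit about the integrality and direct-sum additivity of $e_I^T$, which the paper leaves implicit.
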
 

\begin{proof} Since $S_1(R)$ is an extension closed subcategory of $\mod R$, we have $(S_1(R),{\S}|_{S_1(R)})$ is an exact subcategory of $\mod R$. Since for each $n$, the function $\mod R \to \mathbb Z$ given by $M\mapsto\lambda(\tor^R_1(M,R/I^{n+1}))$ is subadditive on short exact sequences of $\mod R$, we have $e^T_I:S_1(R)\to \mathbb Z$ is also subadditive on short exact sequences of $S_1(R)$. Thus $(S_1(R),{\S}|_{S_1(R)}^{e^T_I})$ is an exact subcategory of $(S_1(R),{\S}|_{S_1(R)})$ by Theorem \ref{subadd}. Hence we are done by Proposition \ref{exsub}.  
\end{proof}   

\section[Special subfunctors of $\Ext^1$ and applications]{Special subfunctors of $\Ext^1$ and applications}\label{sec5}  

Throughout this section, $R$ will denote a Noetherian local ring with unique maximal ideal $\m$ and residue field $k$. We will denote by $\mu_R(-)$ the minimal number of generators function for finitely generated modules, i.e., $\mu_R(M)=\lambda_R(M\otimes_R R/\m)$ for all $M\in \mod R$. We drop the subscript $R$ when the ring in question is clear. We shall study properties and applications of a number of subfunctors of $\Ext^1$, whose existence follow from results in previous sections.  

\subsection[Some computations and applications of the subfunctor $\Ext^1_R(-,-)^{\mu}$]{Some computations and applications of the subfunctor $\Ext^1_R(-,-)^{\mu}$}   
\text{ }
\\
We start this subsection with a characterisation of regularity among Cohen--Macaulay rings of positive dimension $d$ in terms of vanishing of certain $\Ext^1_R(-,-)^{\mu}$ (see Definition \ref{numdef} and the discussion after \ref{chnum} for notation).

\begin{subthm}\label{regdchar} Let $R$ be a local Cohen--Macaulay ring of dimension $d\ge 1$. Then, $R$ is regular if and only if  $\Ext^1_R(\syz^{d-1}_R k, N)^{\mu}=0$ for some finitely generated non-zero $R$-module $N$ of finite injective dimension.  
\end{subthm}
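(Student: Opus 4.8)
The plan is to first convert membership in the subfunctor $\Ext^1_R(-,-)^{\mu}$ into a concrete condition on homomorphisms, and then treat the two implications separately. Write $\syz^{d-1}_R k$ for the module in question and fix the beginning of its minimal free resolution, a short exact sequence $\sigma_0 : 0 \to \syz^d_R k \xrightarrow{\iota} F \to \syz^{d-1}_R k \to 0$ with $F$ free and $\iota(\syz^d_R k) \subseteq \m F$. Since $F$ is free, every class in $\Ext^1_R(\syz^{d-1}_R k, N)$ is a connecting image $\partial(g)$ for some $g \in \Hom_R(\syz^d_R k, N)$, represented by the pushout $P$ of $\sigma_0$ along $g$. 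Reducing mod $\m$ and using $\iota\otimes k = 0$ gives $P \otimes k \cong (F\otimes k)\oplus \operatorname{coker}(g\otimes k)$, so $\mu(P) = \mu(\syz^{d-1}_R k) + \mu(N) - \dim_k \Im(g\otimes k)$. Hence $\partial(g)$ lies in $\Ext^1_R(\syz^{d-1}_R k, N)^{\mu}$ if and only if $g(\syz^d_R k) \subseteq \m N$. Feeding this into the long exact sequence of $\Hom_R(\syz^{d-1}_R k,-)$ for $0\to \m N \to N \to N/\m N \to 0$ identifies
\[
\Ext^1_R(\syz^{d-1}_R k, N)^{\mu} \;=\; \ker\big[\Ext^1_R(\syz^{d-1}_R k, N) \to \Ext^1_R(\syz^{d-1}_R k, N/\m N)\big],
\]
the map induced by $N \twoheadrightarrow N/\m N$; via the shift $\Ext^1_R(\syz^{d-1}_R k,-)\cong \Ext^d_R(k,-)$ this is $\ker\big[\Ext^d_R(k,N)\to \Ext^d_R(k,k)^{\mu(N)}\big]$.

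For the forward implication, assume $R$ is regular and take $N=R$, which is legitimate since a regular ring is Gorenstein and so has finite injective dimension. The minimal resolution of $k$ is the Koszul complex, so $\sigma_0$ reads $0\to R \to R^{d} \to \syz^{d-1}_R k \to 0$ with $\syz^d_R k \cong R$, and $\Ext^1_R(\syz^{d-1}_R k, R)\cong k$ is one-dimensional. The nonzero class is $\partial(\mathrm{id})$ with $g=\mathrm{id}\colon \syz^d_R k = R \to R = N$; as $g(\syz^d_R k)=R\not\subseteq \m = \m N$, the criterion above shows it is not $\mu$-additive, whence $\Ext^1_R(\syz^{d-1}_R k, R)^{\mu}=0$. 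This argument also makes transparent the stated remark that the ordinary $\Ext^1_R(\syz^{d-1}_R k, R)=k\neq 0$.

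The converse is the substantial direction and the main obstacle. Given a valid $N$ with $\Ext^1_R(\syz^{d-1}_R k, N)^{\mu}=0$, I would first pass to the completion $\widehat R$ (which preserves $d$, regularity, and all Betti and Bass numbers, and carries $N$ to a valid module), so that a canonical module $\omega$ is available, and aim to force the embedding dimension of $R$ down to $d$, i.e. $\syz^d_R k$ free. The plan is to reduce to the case $N=\omega$: a maximal Cohen--Macaulay module of finite injective dimension is a direct sum of copies of $\omega$ (its $\omega$-dual is maximal Cohen--Macaulay of finite projective dimension, hence free), and Lemma \ref{extended} together with additivity of the functors involved then lets one replace $N$ by $\omega$. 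The finite-injective-dimension modules that are \emph{not} maximal Cohen--Macaulay (such as $k$ itself in the regular case) must be dispatched by a separate dévissage on depth, and this is the delicate point. Granting the reduction to $\omega$, local duality yields $\Ext^d_R(k,\omega)\cong k$, so by Step 1 the hypothesis becomes nonvanishing of the single induced map $k=\Ext^d_R(k,\omega)\to \Ext^d_R(k,k)$.

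To finish I would cut down to dimension one by a regular element $x\in \m\setminus\m^2$ (regular on $R$ and on $\omega$), replacing $R$ by $\overline R = R/xR$, with canonical module $\overline\omega = \omega/x\omega$. In the one-dimensional case $\syz^{d-1}_R k$ becomes $k$, $\syz^d_R k$ becomes $\overline\m$, and the criterion reads: the unique nonzero class of $\Ext^1_{\overline R}(k,\overline\omega)\cong k$, whose middle term is $\Hom_{\overline R}(\overline\m,\overline\omega)\cong \overline\omega:\overline\m$, is $\mu$-additive exactly when $\mu(\overline\omega:\overline\m)=\mu(\overline\omega)+1$. Thus $\Ext^1_{\overline R}(k,\overline\omega)^{\mu}=0$ forces $\mu(\overline\omega:\overline\m)=\mu(\overline\omega)$, which (combining $\overline\m(\overline\omega:\overline\m)\subseteq\overline\omega$ with Nakayama) is equivalent to $\overline\m(\overline\omega:\overline\m)=\overline\omega$ together with $\overline\omega$ cyclic, i.e. to $\overline\m$ being invertible, i.e. $\overline R$ a DVR; since $x\in\m\setminus\m^2$, this lifts to regularity of $R$. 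I expect the genuinely hard part to be precisely this converse: both the dévissage reducing an arbitrary finite-injective-dimension $N$ to the canonical module, and the base change controlling $\syz^{d}_R k$ modulo $x$, whereas Step 1 and the forward implication are routine.
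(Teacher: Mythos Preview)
Your Step~1 characterization of $\Ext^1_R(\syz^{d-1}_R k, N)^{\mu}$ as the kernel of the map induced by $N\twoheadrightarrow N/\m N$ is correct and is a pleasant reformulation that the paper does not make explicit. The forward implication is also fine and matches the paper's Corollary~\ref{regu} in spirit.

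The converse, however, has two genuine gaps. First, the reduction from an arbitrary $N$ of finite injective dimension to $N=\omega$ is, as you yourself flag, not carried out: you handle the maximal Cohen--Macaulay case but leave the general case to an unspecified ``d\'evissage on depth''. There is no evident reason why vanishing of $\Ext^1_R(\syz^{d-1}_R k, N)^{\mu}$ for one such $N$ should force it for $\omega$. Second, your reduction to dimension one is not correct as written: modding out by a regular $x\in\m\setminus\m^2$ does not send $\syz^{d-1}_R k$ to $k$; rather $\syz^{d-1}_R k/x\,\syz^{d-1}_R k\cong \syz^{d-2}_{\overline R}k\oplus \syz^{d-1}_{\overline R}k$, so the induction must track both summands and it is not clear how the $\mu$-vanishing hypothesis descends. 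Even in dimension one, the implication ``$\mu(\overline\omega:\overline\m)=\mu(\overline\omega)$ forces $\overline\omega$ cyclic and $\overline\m$ invertible'' is asserted without justification and does not follow from Nakayama alone.

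The paper's route avoids all of this by working contrapositively and never reducing $N$. The key idea you are missing is to use the minimal MCM approximation $0\to\omega_R\to E^R\to\syz^{d-1}_R k\to 0$ of $\syz^{d-1}_R k$ (Lemma~\ref{3.12} and the discussion before it). The heart of the matter is Lemma~\ref{cano}, which shows by induction on $d$ that this sequence is $\mu$-additive whenever $R$ is non-regular; the induction is on the numerical identity $\mu(E^R)=\operatorname{r}(R)+\mu(\syz^{d-1}_R k)$, which behaves well under going modulo $x$. Once that sequence is $\mu$-additive, applying $\Hom_R(-,N)$ and the long exact sequence for the subfunctor (Corollary~\ref{long}), together with $\Ext^1_R(E^R,N)=0$ (since $E^R$ is MCM and $N$ has finite injective dimension), immediately gives $\Ext^1_R(\syz^{d-1}_R k,N)^{\mu}=\Ext^1_R(\syz^{d-1}_R k,N)\cong\Ext^d_R(k,N)\neq 0$ for \emph{every} nonzero $N$ of finite injective dimension. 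This handles all $N$ at once, with no d\'evissage needed.
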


When $R$ is regular, we show $\Ext^1_R(\syz^{d-1}_R k, R)^{\mu}=0$, which shows one direction of Theorem \ref{regdchar}. For this, we first need two preliminary lemmas.

\begin{sublemma}\label{one} Let $Y$ be a submodule of an $R$-module $X$ such that $\m X\subseteq Y$. If $X$ is cyclic and $Y\ne X$, then $Y=\m X$.   
\end{sublemma}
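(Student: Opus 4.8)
The plan is to pass to the quotient $X/\m X$ and recognize it as a vector space over the residue field $k=R/\m$. Since $\m X\subseteq Y\subseteq X$, the submodule $Y$ corresponds under the canonical projection $X\to X/\m X$ to the $k$-subspace $Y/\m X$ of $X/\m X$, and the desired conclusion $Y=\m X$ is equivalent to $Y/\m X=0$. The whole argument therefore reduces to a dimension count in the one object $X/\m X$.

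First I would observe that because $X$ is cyclic, say $X=Rx$, the quotient $X/\m X$ is generated over $k$ by the single image $\overline{x}$, hence $\dim_k(X/\m X)\le 1$. Next I would rule out the degenerate possibility $X/\m X=0$: a cyclic module is finitely generated, so Nakayama's lemma applies, and $X=\m X$ would force $X=0$; but then $Y\subseteq X=0$ gives $Y=X$, contradicting the hypothesis $Y\ne X$. Consequently $X/\m X$ is exactly one-dimensional over $k$.

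It then remains to use that a one-dimensional $k$-vector space has only two subspaces, namely $0$ and the whole space. Since $Y\ne X$, the subspace $Y/\m X$ is a proper subspace of $X/\m X$, so it must be the zero subspace, which is precisely the statement $Y=\m X$.

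I do not expect any serious obstacle here, as the statement is elementary; the only point requiring genuine care is the exclusion of the case $X=\m X$, where one must invoke finite generation (guaranteed by cyclicity) so that Nakayama is available. Everything else is a routine subspace count once one works modulo $\m X$.
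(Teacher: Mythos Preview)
Your proof is correct and essentially the same as the paper's: both hinge on the fact that $X/\m X$ is a $k$-vector space of dimension at most one, so any proper subspace is zero. The paper phrases this via the contrapositive (pick $y\in Y\setminus\m X$, observe it is a minimal generator of the cyclic module $X$, hence $X=Ry\subseteq Y$), while you pass to the quotient and count dimensions; these are two ways of saying the same thing. One minor remark: your invocation of Nakayama to exclude $X=\m X$ is harmless but unnecessary, since in that case $\m X\subseteq Y\subseteq X=\m X$ already gives $Y=\m X$ directly.
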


\begin{proof} We prove that if $Y\ne \m X$, then $Y=X$. If $Y\ne \m X$, then pick $y\in Y\setminus \m X\subseteq X\setminus \m X$. Then, $y$ is part of a minimal system of generators of $X$, but $X$ is cyclic, so $X=Ry$. Also, $Ry\subseteq Y\subseteq X$. Hence, $Y=X$.  
\end{proof}

\begin{sublemma}\label{lemM} Let $M$ be a finitely generated $R$-module with first Betti number $1$.  Then, $\Ext^1_R(M,\syz_R M)^{\mu}=\m \Ext^1_R(M,\syz_R M)$.  
\end{sublemma}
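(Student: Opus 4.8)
The plan is to prove the two inclusions $\m\Ext^1_R(M,\syz_R M)\subseteq \Ext^1_R(M,\syz_R M)^{\mu}$ and its reverse, extracting the reverse inclusion almost for free from Lemma \ref{one} once I know that $\Ext^1_R(M,\syz_R M)$ is a cyclic $R$-module. Write $N:=\syz_R M$; since the first Betti number of $M$ is $1$ we have $\mu(N)=1$, so $N$ is a nonzero cyclic module, say $N=Ry$ with $y\notin \m N$. Throughout I use that $\Ext^1_R(M,N)^{\mu}$ is an $R$-submodule of $\Ext^1_R(M,N)$ (combining Theorem \ref{subadd}, Proposition \ref{exsub} and Lemma \ref{subad}).

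The first ingredient I would record is a generator-test for membership: for a short exact sequence $\sigma:0\to N\xrightarrow{i} C\xrightarrow{p} M\to 0$, tensoring with $k$ yields the right-exact sequence $N/\m N\xrightarrow{\bar i} C/\m C\to M/\m M\to 0$, whence $\mu(C)=\mu(M)+\mu(N)$ holds iff $\bar i$ is injective; as $N/\m N\cong k$ is one-dimensional, this holds iff $i(y)\notin \m C$. Thus $[\sigma]\in \Ext^1_R(M,N)^{\mu}$ exactly when the image of the generator $y$ is part of a minimal generating set of $C$.

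For the inclusion $\m\Ext^1_R(M,N)\subseteq \Ext^1_R(M,N)^{\mu}$, I would fix $r\in\m$ and an arbitrary $[\sigma]$, and compute $r[\sigma]$ as the pushout of $\sigma$ along $r\cdot\mathrm{id}_N$, so that its middle term is $C'=(N\oplus C)/\{(rn,-i(n)):n\in N\}$ with structural inclusion $i'(n)=[(n,0)]$. If $i'(y)\in \m C'$, then $(y,0)=(a,c)+(rn,-i(n))$ for some $a\in\m N$, $c\in \m C$, $n\in N$, forcing $y=a+rn\in\m N$ (this is where $r\in\m$ enters), contradicting $y\notin \m N$. Hence $i'(y)\notin \m C'$, and the generator-test gives $r[\sigma]\in \Ext^1_R(M,N)^{\mu}$; since the target is a submodule, the desired inclusion follows.

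The decisive structural fact, and the main point of the argument, is that $\Ext^1_R(M,N)$ is cyclic. I would invoke the minimal presentation $\eta:0\to N\xrightarrow{\iota} F\xrightarrow{\pi} M\to 0$ with $F=R^{\mu(M)}$ free; applying $\Hom_R(-,N)$ and using $\Ext^1_R(F,N)=0$ shows that the connecting map $\delta:\End_R(N)\to \Ext^1_R(M,N)$ is surjective with $\delta(\mathrm{id}_N)=[\eta]$. Since $N$ is cyclic, $\End_R(N)\cong N$ is itself cyclic, generated by $\mathrm{id}_N$, so $\Ext^1_R(M,N)=R\cdot[\eta]$ is cyclic. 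Finally, $\mu$ is \emph{not} additive on $\eta$ since $\mu(F)=\mu(M)\ne \mu(M)+1=\mu(M)+\mu(N)$ (equivalently $\iota(y)\in\m F$), so $[\eta]\notin \Ext^1_R(M,N)^{\mu}$; thus the submodule $\Ext^1_R(M,N)^{\mu}$ omits the generator $[\eta]$ and is proper. Setting $X=\Ext^1_R(M,N)$ (cyclic) and $Y=\Ext^1_R(M,N)^{\mu}$, we have $\m X\subseteq Y\subsetneq X$, and Lemma \ref{one} immediately yields $Y=\m X$, which is exactly the assertion.
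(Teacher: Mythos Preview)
Your proof is correct and follows essentially the same architecture as the paper's: show $\m\Ext^1_R(M,N)\subseteq \Ext^1_R(M,N)^{\mu}$, show $\Ext^1_R(M,N)$ is cyclic via the long exact sequence from the minimal presentation, observe the presentation sequence itself is not $\mu$-additive so the inclusion is proper, then invoke Lemma~\ref{one}. The only real difference is that for the inclusion $\m\Ext^1\subseteq \Ext^{1,\mu}$ the paper cites the general Lemma~\ref{jane} (with $I=\m$, so $\nu_\m=\mu$), whereas you give a direct pushout computation tailored to the case $\mu(N)=1$; your argument is self-contained and avoids the forward reference to Lemma~\ref{jane}, while the paper's citation handles arbitrary $\mu(N)$ in one stroke.
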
   

\begin{proof} $\Ext^1_R(M,\syz_R M)^{\mu}$ is a submodule of $\Ext^1_R(M,\syz_R M)$ such that $\m \Ext^1_R(M,\syz_R M)\subseteq \Ext^1_R(M,\syz_R M)^{\mu}$ by Lemma \ref{jane}.  By hypothesis, we have $\syz_R M\cong R/I$ for some ideal $I\ne R$, and we have an exact sequence $ 0\to R/I \to F \to M \to 0$, for some free $R$-module $F$. So, in particular, $\mu(F)=\mu(M)\ne \mu(R/I)+\mu(M)$. Hence this exact sequence is not $\mu$-additive. So, $\Ext^1_R(M,\syz_R M)^{\mu}\ne \Ext^1_R(M,\syz_R M)$.  Also, applying $\Hom_R(-,R/I)$ to the exact sequence  $0\to R/I \to F \to M \to 0$ we get an exact sequence $0\to \Hom_R(M,R/I)\to \Hom_R(F,R/I)\to \Hom_R(R/I,R/I)\cong R/I \to \Ext^1_R(M,R/I)\cong \Ext^1_R(M,\syz_RM)\to 0$. Hence $\Ext^1_R(M,\syz_R M)$ is a cyclic module. Now applying Lemma \ref{one} to $Y=\Ext^1_R(M,\syz_R M)^{\mu}$ and $X=\Ext^1_R(M,\syz_R M)$, we get $\Ext^1_R(M,\syz_R M)^{\mu}=\m\Ext^1_R(M,\syz_R M)$.  
\end{proof}

Now, one direction of Theorem \ref{regdchar} is the following.

\begin{subcor}\label{regu} Let $(R,\m,k)$ be a regular local ring of dimension $d\ge 1$. Then, $\Ext^1_R(\syz^{d-1}_R k, R)^{\mu}=0$. 
\end{subcor}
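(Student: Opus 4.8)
The plan is to identify $M := \syz^{d-1}_R k$ as a module with first Betti number $1$ whose first syzygy is free, and then apply Lemma \ref{lemM} to reduce the assertion to a short length-one computation of $\Ext^1$.

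First I would exploit regularity. Since $R$ is regular of dimension $d$, the residue field $k$ is resolved by the Koszul complex on a regular system of parameters, so $\pd_R k = d$ with Betti numbers $\beta_i(k)=\binom{d}{i}$. Truncating this resolution yields the minimal free resolution
$$0 \to R \to R^d \to M \to 0$$
of $M=\syz^{d-1}_R k$, because in the Koszul resolution $F_{d-1}=R^{\binom{d}{d-1}}=R^d$ and $F_d=R^{\binom{d}{d}}=R$. From this I would extract two facts: $M$ has first Betti number $\beta_1(M)=1$, and $\syz_R M\cong R$.

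Next I would invoke Lemma \ref{lemM} for this $M$. Since $\syz_R M\cong R$, that lemma gives
$$\Ext^1_R(M,R)^{\mu}=\Ext^1_R(M,\syz_R M)^{\mu}=\m\,\Ext^1_R(M,\syz_R M)=\m\,\Ext^1_R(M,R).$$
It therefore remains to show $\m\,\Ext^1_R(M,R)=0$, i.e.\ that $\Ext^1_R(M,R)$ is annihilated by $\m$. I would compute this module by dimension shifting along the syzygy sequences of $k$, obtaining $\Ext^1_R(\syz^{d-1}_R k,R)\cong \Ext^d_R(k,R)$; since $R$ is regular, hence Gorenstein of type $1$, one has $\Ext^d_R(k,R)\cong k$. (Equivalently, applying $\Hom_R(-,R)$ to the displayed resolution identifies $\Ext^1_R(M,R)$ with the cokernel of the transpose of the Koszul differential $R\to R^d$, whose image is $\m$, so the cokernel is $R/\m=k$.) Either route gives $\m\,\Ext^1_R(M,R)=\m k=0$, and combining this with the previous display yields $\Ext^1_R(M,R)^{\mu}=0$, as desired.

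The only genuinely delicate points are bookkeeping: confirming that truncating the Koszul resolution produces a module of first Betti number $1$ with $\syz_R M\cong R$ (so that Lemma \ref{lemM} applies, here with the degenerate ideal $I=0$), and the dimension-shift identification $\Ext^1_R(\syz^{d-1}_R k,R)\cong \Ext^d_R(k,R)\cong k$. Both are standard for regular local rings, so the substance of the argument is just the clean application of Lemma \ref{lemM}.
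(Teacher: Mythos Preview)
Your proposal is correct and follows essentially the same approach as the paper: use the Koszul resolution to see that $\syz_R(\syz^{d-1}_R k)\cong R$ so the first Betti number is $1$, apply Lemma~\ref{lemM} to get $\Ext^1_R(\syz^{d-1}_R k,R)^{\mu}=\m\Ext^1_R(\syz^{d-1}_R k,R)$, and then identify $\Ext^1_R(\syz^{d-1}_R k,R)\cong\Ext^d_R(k,R)\cong k$ to conclude. The paper's proof is just a terser version of exactly this argument.
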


\begin{proof} We know $\syz_R \syz^{d-1}_R k\cong R$ by looking at the Koszul complex of $R/\m$, so the first Betti number of $\syz^{d-1}_R k$ is $1$. Hence by Lemma \ref{lemM} we get that, $\Ext^1_R(\syz^{d-1}_Rk,R)^{\mu}=\m\Ext^1_R(\syz^{d-1}_Rk,R)\cong \m\Ext^d_R(k,R)\cong \m\cdot k=0$.  
\end{proof}   

The other direction of Theorem \ref{regdchar} requires more work. First, we begin with the following setup.

\begin{subchunk}\label{eR}

Let $(R,\m,k)$ be a local Cohen--Macaulay ring of dimension $d\ge 1$ admitting a canonical module $\omega_R$. Then, $\Ext^1_R(\syz^{d-1}_Rk,\omega_R)\cong k$. Hence for any two non-split exact sequences $\alpha, \beta \in \Ext^1_R(\syz^{d-1}_Rk,\omega_R)$, we have $[\beta]=r[\alpha] \in \Ext^1_R(\syz^{d-1}_Rk,\omega_R)$ for some unit $r\in R$. So, we have the following pushout diagram: 

$$\begin{tikzcd}
\alpha: 0 \arrow[r] & \omega_R \arrow[r] \arrow[d, "\cdot r"'] & X_{\alpha} \arrow[r] \arrow[d] & \syz^{d-1}_Rk \arrow[r] \arrow[d, equal] & 0 \\
\beta: 0 \arrow[r] & \omega_R \arrow[r]                       & X_{\beta} \arrow[r]           & \syz^{d-1}_Rk \arrow[r]           & 0
\end{tikzcd}$$ 
Since $r\in R$ is a unit, we have $\omega_R \xrightarrow{\cdot r} \omega_R$ is an isomorphism. Hence by five lemma, $X_{\alpha}\cong X_{\beta}$. So, there exists a unique module (up to isomorphism), call it $E^R$, such that the middle term of every non-split exact sequence $0\to \omega_R \to X \to \syz^{d-1} k \to 0$ is isomorphic to $E^R$. Since $\syz^{d-1}_R k$ has co-depth $1$, by \cite[Proposition 11.21]{lw} we get that $0\to \omega_R \to E^R \to \syz^{d-1}_Rk \to 0$ is the minimal MCM approximation of $\syz^{d-1}_R k$.
\end{subchunk}     
Moving forward, we denote $\Hom_R(-,\omega_R)$ by $(-)^{\dagger}$.

We need to collect some properties of $E^R$ to prove the other direction of Theorem \ref{regdchar}. 

\begin{sublemma}\label{3.12} Let $(R,\m,k)$ be a local Cohen--Macaulay ring of dimension $1$ admitting a canonical module $\omega_R$. Then, $E^R\cong \m^{\dagger}$. 
\end{sublemma}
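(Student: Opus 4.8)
The plan is to identify $E^R$ explicitly and then show that the canonical dual of $\m$ realizes it. Since $d=1$ we have $\syz^{d-1}_R k = \syz^0_R k = k$, so by the setup preceding the statement, $E^R$ is characterized as the (unique up to isomorphism) middle term of any non-split short exact sequence $0\to \omega_R \to E^R \to k \to 0$; equivalently, $E^R$ is the middle term of a generator of $\Ext^1_R(k,\omega_R)\cong k$. Thus it suffices to produce one non-split extension of $k$ by $\omega_R$ whose middle term is $\m^{\dagger}$.

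First I would apply the contravariant functor $(-)^{\dagger}=\Hom_R(-,\omega_R)$ to the canonical presentation $0\to \m \to R \to k \to 0$ and read off the long exact sequence
$$0 \to k^{\dagger} \to R^{\dagger} \to \m^{\dagger} \to \Ext^1_R(k,\omega_R) \to \Ext^1_R(R,\omega_R).$$
Here $R^{\dagger}=\omega_R$, and $\Ext^1_R(R,\omega_R)=0$ since $R$ is free. Moreover $k^{\dagger}=\Hom_R(k,\omega_R)$ is the socle of $\omega_R$, which vanishes because $\depth_R \omega_R = \dim R = 1 > 0$. Using $\Ext^1_R(k,\omega_R)\cong k$ (recorded in the setup with $d=1$), the long exact sequence collapses to the short exact sequence
$$0 \to \omega_R \to \m^{\dagger} \to k \to 0.$$

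The remaining point, which I expect to be the main (if mild) obstacle, is to verify that this extension is non-split. I would argue that $\m$ is maximal Cohen--Macaulay: from $0\to \m \to R \to k \to 0$ together with $\depth R = 1$ the depth lemma gives $\depth \m = 1 = \dim \m$. Since the canonical dual $(-)^{\dagger}$ preserves maximal Cohen--Macaulayness over a one-dimensional Cohen--Macaulay ring admitting a canonical module (standard duality theory, see \cite{bh}), the module $\m^{\dagger}$ is again maximal Cohen--Macaulay, so $\depth \m^{\dagger} = 1$. Were the displayed sequence to split, we would have $\m^{\dagger}\cong \omega_R \oplus k$, which has depth $0$ because $\depth k = 0$, a contradiction. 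Hence the sequence is non-split. Finally, a non-split element of the one-dimensional vector space $\Ext^1_R(k,\omega_R)\cong k$ is a unit multiple of the generator, so by the uniqueness of the middle term recorded in the setup we conclude $\m^{\dagger}\cong E^R$, as desired.
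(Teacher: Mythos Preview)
Your proof is correct and follows essentially the same approach as the paper: dualize $0\to\m\to R\to k\to 0$ by $\omega_R$ to obtain $0\to\omega_R\to\m^{\dagger}\to k\to 0$, and then rule out splitting via a depth argument. The only difference is that you spell out more explicitly why $k^{\dagger}=0$ and why $\m^{\dagger}$ has positive depth, whereas the paper states these facts tersely.
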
   

\begin{proof} Dualizing $0\to \m \to R \to k \to 0$ by $\omega_R$ we get, $0\to \omega_R \to \m^{\dagger} \to \Ext^1_R(k,\omega_R)\cong k \to 0$. So, we have the exact sequence $0\to \omega_R \to \m^{\dagger} \to k \to 0$, which is clearly non-split, since $\m^{\dagger}$ has positive depth. But $k\oplus \omega_R$ has depth $0$. Thus $E^R\cong \m^{\dagger}$.  
\end{proof}

We record the following lemma, which will be used to deduce further properties of the module $E^R$.  

\begin{sublemma}\label{3.10} Let $(R,\m,k)$ be a local ring, and $I$ an ideal of $R$, which is not principal. Then, for every $x\in I\setminus \m I$, we have $(x\m:_{\m} I)=(x\m:_R I)=((x) :_R I)$.  
\end{sublemma}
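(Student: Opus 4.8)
The plan is to isolate the three ideals and reduce the problem to a single inclusion. Writing $A := (x\m :_{\m} I)$, $B := (x\m :_R I)$, and $C := ((x):_R I)$, I first note the two evident containments $A \subseteq B$ (because $\m \subseteq R$) and $B \subseteq C$ (because $x\m \subseteq (x)$). Directly from the definitions one also has $A = B \cap \m$. Thus it will suffice to prove the reverse inclusion $C \subseteq A$, and for this it is enough to show that every $r \in C$ lies both in $\m$ and in $B$.

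First I would show $C \subseteq \m$. Suppose $r \in C$ were a unit; then from $rI \subseteq (x)$ we would get $I = r^{-1}(rI) \subseteq r^{-1}(x) = (x)$, and since $x \in I$ forces $(x) \subseteq I$, this would give $I = (x)$, contradicting the assumption that $I$ is not principal. Hence every element of $C$ is a nonunit, i.e. $C \subseteq \m$ since $R$ is local.

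Next comes the key step, $C \subseteq B$. I would fix $r \in C$ and an arbitrary $y \in I$; since $rI \subseteq (x)$, I may write $ry = x t_0$ for some $t_0 \in R$. If $t_0$ were a unit, then $x = r(t_0^{-1} y)$ with $t_0^{-1} y \in I$ and $r \in \m$ (by the previous step), whence $x \in \m I$, contradicting $x \in I \setminus \m I$. Therefore $t_0$ is a nonunit, so $t_0 \in \m$ (again as $R$ is local), and $ry = x t_0 \in x\m$. As $y \in I$ was arbitrary, $rI \subseteq x\m$, i.e. $r \in B$. Combining this with $C \subseteq \m$ yields $r \in B \cap \m = A$, so $C \subseteq A$ and all three ideals coincide.

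The main obstacle is that $x$ need not be a nonzerodivisor, so one cannot simply cancel $x$ to extract information about $t_0$ from the relation $ry = x t_0 \in (x)$. The hypothesis $x \notin \m I$ is precisely what compensates: it forces the coefficient $t_0$ to be a nonunit whenever $ry \in (x)$, which is exactly what upgrades membership in $(x)$ to membership in $x\m$. The non-principality of $I$, together with $x \in I$, plays the complementary bookkeeping role of confining all the relevant colon ideals inside $\m$, making the final identification $B \cap \m = A$ automatic.
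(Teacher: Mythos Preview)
Your proof is correct and follows essentially the same route as the paper's: establish the trivial chain $A\subseteq B\subseteq C$, then show $C\subseteq \m$ via non-principality of $I$, and finally upgrade $ry\in(x)$ to $ry\in x\m$ by arguing that the coefficient $t_0$ must lie in $\m$ (else $x\in\m I$). The only cosmetic difference is that you explicitly note $A=B\cap\m$ before combining the two inclusions, whereas the paper verifies $C\subseteq A$ in one pass.
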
   

\begin{proof} Since trivially $(x\m:_{\m}I)\subseteq (x\m:_R I)\subseteq ((x) :_R I)$ always holds, it is enough to prove the inclusion $((x) :_R I)\subseteq (x\m:_{\m} I)$. So, let $y\in ((x) :_R I)$, which means $yI \subseteq (x)$. If $y$ were a unit, then we would have $I \subseteq y^{-1}(x)=(x)\subseteq I$, implying $I=(x)$, contradicting our assumption that $I$ is not principal. Thus, we must have $y\in \m$. Now pick an arbitrary element $r\in I$. Then $yr\in y I\subseteq (x)$, so $yr=xs$ for some $s\in R$. If $s\notin \m$, then $x=s^{-1}yr\in \m I$, which is a contradiction. Hence,  $s\in \m$. So, $yr=xs\in x\m$. Since $r\in I$ was arbitrary, we get $yI \subseteq x\m$. Hence $y\in (x\m:_{\m} I)$.      
\end{proof} 

In the following, $\r(-)$ will denote the type of a module, i.e., $\r(M)=\dim_k \Ext^{\depth_R M}_R(k,M)$. 

\begin{sublemma}\label{cano} Let $(R,\m,k)$ be a non-regular complete local Cohen--Macaulay ring of dimension $d\ge 1$, with canonical module $\omega_R$. Then, we have $\mu(E^R)=\operatorname{r}(R)+\mu(\syz^{d-1}_Rk)$.  
\end{sublemma}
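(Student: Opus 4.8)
The plan is to reduce the statement to the assertion that the defining sequence $\sigma\colon 0\to \omega_R\to E^R\to \syz^{d-1}_Rk\to 0$ of $E^R$ is $\mu$-additive. Since the number of generators of the canonical module equals the type, $\mu(\omega_R)=\r(R)$, and so the claimed equality $\mu(E^R)=\r(R)+\mu(\syz^{d-1}_Rk)$ is exactly $\mu(E^R)=\mu(\omega_R)+\mu(\syz^{d-1}_Rk)$. As $\mu$ is always subadditive, only the reverse inequality is at issue, and by right-exactness of $-\otimes_R k$ this is equivalent to injectivity of $\omega_R\otimes_R k\to E^R\otimes_R k$, i.e. to the vanishing of the connecting map $\delta\colon \Tor^R_1(\syz^{d-1}_Rk,k)\to \omega_R\otimes_R k$. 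Dually, applying $\Hom_R(-,k)$ to $\sigma$, it is equivalent to surjectivity of $\Hom_R(E^R,k)\to \Hom_R(\omega_R,k)$, i.e. to the vanishing of $\partial\colon \Hom_R(\omega_R,k)\to \Ext^1_R(\syz^{d-1}_Rk,k)$.

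To organize the computation I would first record the companion dual sequence. Applying $(-)^{\dagger}=\Hom_R(-,\omega_R)$ to $\sigma$ and using that $E^R$ is maximal Cohen--Macaulay, so $\Ext^{\ge 1}_R(E^R,\omega_R)=0$, together with the dimension shift $\Ext^1_R(\syz^{d-1}_Rk,\omega_R)\cong \Ext^d_R(k,\omega_R)\cong k$ (the last isomorphism because $\depth\omega_R=\operatorname{injdim}\omega_R=d$ and the top Bass number of $\omega_R$ is its type $1$), one obtains a short exact sequence $0\to (\syz^{d-1}_Rk)^{\dagger}\to (E^R)^{\dagger}\xrightarrow{\iota^{*}}\m\to 0$, where $\iota^{*}$ is restriction along $\omega_R\hookrightarrow E^R$ and its image is $\m$ precisely because its cokernel is $\Ext^1_R(\syz^{d-1}_Rk,\omega_R)\cong k$. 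This clarifies the mechanism but is formally circular for counting $\mu(E^R)$; the genuine content is the vanishing of $\partial$. By naturality of the dimension shift $\Ext^1_R(\syz^{d-1}_Rk,-)\cong\Ext^d_R(k,-)$ in the second variable, and since every map $\omega_R\to k$ factors through $\omega_R\twoheadrightarrow\omega_R/\m\omega_R$, this vanishing reduces to showing that the induced map $\Ext^d_R(k,\omega_R)\to \Ext^d_R(k,\omega_R/\m\omega_R)$ is zero --- a statement involving $k$ and $\omega_R$ alone.

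I would settle this first in the one-dimensional case, where it is concrete. Realizing $\omega_R$ as a canonical ideal $\mathfrak a$ and writing $Q$ for the total quotient ring, Sublemma \ref{3.12} gives $E^R\cong \m^{\dagger}\cong(\mathfrak a:_Q\m)$, and since $(\mathfrak a:_Q\m)/\mathfrak a\cong \Ext^1_R(k,\omega_R)$ is one-dimensional we may write $(\mathfrak a:_Q\m)=\mathfrak a+Rc$ with $c\m\subseteq\mathfrak a$ and $c\notin\mathfrak a$. A short Nakayama computation (resolving $\mathfrak a+Rc$ by $0\to\m\to\mathfrak a\oplus R\to \mathfrak a+Rc\to 0$) shows $\mu(\mathfrak a+Rc)=\mu(\mathfrak a)+1$ if and only if $\m c\subseteq\m\mathfrak a$. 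Now $c\m^2\subseteq(c\m)\m\subseteq\mathfrak a\m=\m\mathfrak a$, so a failure of $\m c\subseteq\m\mathfrak a$ would produce $m_0\in\m\setminus\m^2$ with $cm_0\in\mathfrak a\setminus\m\mathfrak a$; since $\mathfrak a$ is non-principal when $R$ is non-Gorenstein, the colon Lemma \ref{3.10}, applied at the minimal generators of $\m$ and of $\mathfrak a$, forces the relevant colon ideals into $\m$ and rules this out. When $R$ is Gorenstein (so $\mathfrak a$ is principal and $\r(R)=1$), non-regularity instead forces $\m\m^{-1}=\m$, whence $\m c\subseteq\m\m^{-1}=\m=\m\mathfrak a$ directly. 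Either way $\mu(E^R)=\mu(\mathfrak a)+1=\r(R)+1$, matching $\mu(\syz^{0}_Rk)=1$. For general $d$ I would propagate the vanishing of $\Ext^d_R(k,\omega_R)\to\Ext^d_R(k,\omega_R/\m\omega_R)$ to the one-dimensional statement by cutting a maximal $\omega_R$-regular sequence $\underline x=x_1,\dots,x_{d-1}$ (any system of parameters, as $\omega_R$ is maximal Cohen--Macaulay), under which the canonical module descends as $\omega_R/\underline x\omega_R\cong\omega_{R/\underline x R}$ and $\Ext^d_R(k,\omega_R)\cong\Ext^1_{R/\underline x R}(k,\omega_{R/\underline x R})$.

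The main obstacle is exactly this last vanishing, equivalently the $\mu$-additivity of $\sigma$: subadditivity is automatic, so everything hinges on showing that no minimal generator of $\omega_R$ is absorbed into $\m E^R$. The one-dimensional colon identity of Lemma \ref{3.10}, tailored to comparing colons at minimal generators of a non-principal ideal, is what excludes this absorption (and non-regularity is what makes the ambient ideal non-principal, or else forces $\m\m^{-1}=\m$); the remaining delicate point is transporting this input faithfully through the dimension shift and the reduction modulo $\underline x$ to arbitrary $d$, since the quotient $\omega_R/\m\omega_R$ is not regular on $\underline x$ and so must be handled through the connecting maps rather than by a naive change of rings.
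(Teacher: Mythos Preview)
Your overall framework is sound: the lemma is exactly the $\mu$-additivity of $\sigma$, and your reformulations via $\Tor_1$ or the connecting map $\partial$ are correct. Your Gorenstein $d=1$ argument (via $\m\m^{-1}=\m$ when $R$ is non-regular) is fine. However, two genuine gaps remain.

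\textbf{The non-Gorenstein $d=1$ case.} You correctly reduce to showing $\m c\subseteq\m\mathfrak a$ for $c\in(\mathfrak a:_Q\m)\setminus\mathfrak a$, but the sentence ``Lemma~\ref{3.10}, applied at the minimal generators of $\m$ and of $\mathfrak a$, forces the relevant colon ideals into $\m$ and rules this out'' is not a proof. From a hypothetical $m_0\in\m\setminus\m^2$ with $a_0:=cm_0\in\mathfrak a\setminus\m\mathfrak a$ one obtains $a_0\m\subseteq m_0\mathfrak a$, but I do not see how either instance of Lemma~\ref{3.10} (with $I=\m$, $x=m_0$ or with $I=\mathfrak a$, $x=a_0$) yields a contradiction; the conclusion $((x):_RI)\subseteq\m$ is too weak, and the stronger identity $((x):_RI)=(x\m:_RI)$ does not obviously match the data $a_0\m\subseteq m_0\mathfrak a$. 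The paper avoids this by passing to the Artinian quotient: pick $x\in\m\setminus\m^2$ regular, note $E^R/xE^R\cong(\m/x\m)^{\dagger}$ over $R/xR$, and use Matlis duality to get $\mu(E^R)=\dim_k\Hom_{R/xR}(k,\m/x\m)=\dim_k\big((x\m:_\m\m)/x\m\big)$. Lemma~\ref{3.10} then enters in one precise stroke, identifying $(x\m:_\m\m)=((x):_R\m)$, after which the filtration $x\m\subset xR\subset((x):_R\m)$ gives $\mu(E^R)=\r(R)+1$ immediately.

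\textbf{The inductive step.} You yourself flag that the reduction of the vanishing of $\Ext^d_R(k,\omega_R)\to\Ext^d_R(k,\omega_R/\m\omega_R)$ modulo a regular sequence is ``delicate'' because $\omega_R/\m\omega_R$ has depth zero; as written this is an outline, not an argument. The paper sidesteps the cohomological reduction entirely: it works directly with the minimal MCM approximation, using Yoshida's theorem that minimal MCM approximations are preserved under reduction by a regular element, together with the splitting $\syz^{d-1}_Rk/x\syz^{d-1}_Rk\cong\syz^{d-1}_{R/xR}k\oplus\syz^{d-2}_{R/xR}k$. This gives $E^R/xE^R\cong E^{R/xR}\oplus\syz^{d-1}_{R/xR}k$, from which $\mu(E^R)=\mu(E^{R/xR})+\mu(\syz^{d-1}_{R/xR}k)$ and induction closes cleanly.
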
   

\begin{proof}  We prove this by induction on $d$. First, let $d=1$. By Lemma \ref{3.12} we have $E^R\cong\m^{\dagger}$. Pick $x\in \m\setminus \m^2$ to be $R$-regular. Then $\mu_R(E^R)=\mu_{R/xR}(E^R/xE^R)$. Now, $E^R/x E^R\cong \m^{\dagger}/x\m^{\dagger}\cong \Hom_{R/xR}(\m/x\m,\omega_R/x\omega_R)\cong (\m/x\m)^{\dagger}$ (where the last two isomorphisms follow from \cite[Proposition 3.3.3(a), Theorem 3.3.5(a)]{bh}). Now $ \Hom_{R/xR}((\m/x\m)^{\dagger},k)\cong \Hom_{R/xR}(k^{\dagger}, \m/x\m)\cong \Hom_{R/xR}(k, \m/x\m)\cong (x\m:_{\m}\m)/x\m=((x):_R\m)/x\m$ (where the first  isomorphism holds because $R/xR$ is Artinian, so one can invoke \cite[Lemma 3.14, Remark 3.15]{clus}, and the last equality follows from Lemma \ref{3.10} since $R$ is not regular). Hence $\mu_R(E^R)=\mu_{R/xR}(E^R/xE^R)=\dim_k \Hom_{R/xR}((\m/x\m)^{\dagger},k)=\dim_k ((x):_R\m)/x\m=\dim_k \left((xR:_R \m)/xR \right)+ \dim_k (xR/x\m)=\dim_k \Soc(R/xR)+\dim_k (xR/x\m)=\text{r}(R)+\dim_k (xR/x\m)$ (since $xR/x\m$ is annihilated by $\m$, hence it is a $k$-vector space). Since $xR/x\m$ is generated by $x$ and it is a $k$-vector space, we have $xR/x\m\cong k$. Hence $\dim_k (xR/x\m)=1$, so $\mu(E^R)=\text{r}(R)+1=\text{r}(R)+\mu(k)$. This concludes the $d=1$ case. 

Now let, $\dim R=d>1$ and suppose the claim has been proved for all non-regular local Cohen--Macaulay rings of dimension $1,...,d-1$ admitting a canonical module. Since $\syz^{d-1}_R k$ has co-depth $1$, by \cite[Proposition 11.21]{lw}(see \ref{eR}) we get that $0\to \omega_R \to E^R \to \syz^{d-1}_Rk \to 0$ is the minimal MCM approximation of $\syz^{d-1}_R k$. Now pick an $R$-regular element $x\in \m\setminus \m^2$ (which is also $\syz^{d-1}_R k$-regular, since $d-1>0$). By \cite[Corollary 2.5]{yo}, we have $0\to \omega_{R/xR}\to E^R/xE^R\to \syz^{d-1}_R k/x(\syz^{d-1}_Rk)\to 0$ is the minimal MCM approximation of $\syz^{d-1}_R k/x(\syz^{d-1}_Rk) \cong \syz^{d-1}_{R/xR}k \oplus \syz^{d-2}_{R/xR} k$ over $R/xR$ (the isomorphism follows from \cite[Corollary 5.3]{summ}). Since $0\to 0 \to \syz^{d-1}_{R/xR}k\to \syz^{d-1}_{R/xR}k \to 0$ is the minimal MCM approximation of $\syz^{d-1}_{R/xR}k$ over $R/xR$ and $0\to \omega_{R/xR}\to E^{R/xR} \to \syz^{d-2}_{R/xR} k \to 0$ is the minimal MCM approximation of $\syz^{d-2}_{R/xR} k$ over $R/xR$ (by \ref{eR}, \cite[Proposition 11.21]{lw}), their direct sum $0\to \omega_{R/xR}\to E^{R/xR} \oplus \syz^{d-1}_{R/xR}k \to \syz^{d-2}_{R/xR} k \oplus \syz^{d-1}_{R/xR}k \to 0$ is the minimal MCM approximation of $\syz^{d-2}_{R/xR} k \oplus \syz^{d-1}_{R/xR}k\cong \syz^{d-1}_R k/x(\syz^{d-1}_Rk)$ over $R/xR$ (direct sum preserves minimal MCM approximation by \cite[Theorem 1.4]{yo}). By uniqueness of minimal MCM approximation, we have $E^R/xE^R\cong E^{R/xR} \oplus \syz^{d-1}_{R/xR}k$. So, $\mu_R(E^R)=\mu_{R/xR}(E^R/xE^R)=\mu_{R/xR}(E^{R/xR})+\mu_{R/xR}(\syz^{d-1}_{R/xR}k)$. Since $R$ is not regular, $R/xR$ is not regular. Hence by induction hypothesis, we have $\mu_{R/xR}(E^{R/xR})=\text{r}(R/xR)+\mu_{R/xR}(\syz^{d-2}_{R/xR} k)=\text{r}(R)+\mu_{R/xR}(\syz^{d-2}_{R/xR} k)$. So, $\mu_R(E^R)=\text{r}(R)+\mu_{R/xR}(\syz^{d-2}_{R/xR} k)+\mu_{R/xR}(\syz^{d-1}_{R/xR} k)=\text{r}(R)+\mu_{R/xR}(\syz^{d-2}_{R/xR} k \oplus \syz^{d-1}_{R/xR}k)=\text{r}(R)+\mu_{R/xR}\left( \syz^{d-1}_R k/x(\syz^{d-1}_Rk) \right)=\text{r}(R)+\mu_R(\syz^{d-1}_R k)$. This finishes the inductive step, and hence the proof.  
\end{proof}  

Since $\mu(\omega_R)=\text{r}(R)$, Lemma \ref{cano} says that the sequence $0\to \omega_R\to E^R\to \Omega_R^{d-1}k\to 0$ is $\mu$-additive. As a consequence of this, we get the following:  

\begin{subproposition}\label{injd}
Let $(R,\m,k)$ be a non-regular local Cohen--Macaulay ring of dimension $d\ge 1$, and $N$ be a finitely generated non-zero $R$-module with finite injective dimension. Then $\Ext^1_R(\Omega_R^{d-1}k,N)^{\mu}=\Ext_{R}^1(\Omega_R^{d-1}k,N)\ne 0$. 
\end{subproposition}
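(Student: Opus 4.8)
The plan is to reduce the whole statement to the already understood case $N=\omega_R$. Throughout write $L:=\Omega_R^{d-1}k$. The non-vanishing is immediate: dimension shifting along a minimal free resolution of $k$ gives an isomorphism $\Ext^1_R(L,-)\cong\Ext^{d}_R(k,-)$ natural in the second variable, and since $N\ne 0$ has finite injective dimension over the Cohen--Macaulay ring $R$ we have $\operatorname{injdim}_R N=\depth R=d$ by Bass's theorem, so $\Ext^1_R(L,N)\cong\Ext^{d}_R(k,N)\ne 0$. For the equality $\Ext^1_R(L,N)^{\mu}=\Ext^1_R(L,N)$ I would first pass to the completion $\widehat R$, so as to assume $R$ has a canonical module $\omega_R$: injective dimension, $\mu$, and the syzygy $L$ are unchanged by completion, and a sequence over $R$ is $\mu$-additive iff its completion is (the middle terms have the same number of generators); thus if every class over $\widehat R$ is $\mu$-additive, so is every class over $R$.

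Next I would settle the base case $N=\omega_R$. Recall $\Ext^1_R(L,\omega_R)\cong k$ is one dimensional, and the non-split sequence $0\to\omega_R\to E^R\to L\to 0$ is $\mu$-additive by Lemma \ref{cano} and the remark preceding this proposition. Since $\Ext^1_R(L,\omega_R)$ is annihilated by $\m$, its submodule $\Ext^1_R(L,\omega_R)^{\mu}$ is a $k$-subspace; as it contains the non-zero class above, it must be everything, so $\Ext^1_R(L,\omega_R)^{\mu}=\Ext^1_R(L,\omega_R)$. By additivity of the subfunctor (Lemma \ref{subad} and Lemma \ref{extended} with $I=R$), the same equality then holds for every finite direct sum, i.e. $\Ext^1_R(L,\omega_R^{\oplus b})^{\mu}=\Ext^1_R(L,\omega_R^{\oplus b})$ for all $b$.

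For general $N$ of finite injective dimension I would invoke Foxby equivalence: over a Cohen--Macaulay local ring with canonical module, $N$ lies in the Bass class, so $N\cong\omega_R\otimes_R M$ with $\pd_R M<\infty$ and $\Tor^R_{>0}(\omega_R,M)=0$; applying $\omega_R\otimes_R-$ to a free cover $R^{b}\to M$ yields a short exact sequence
\[
0\to K\to \omega_R^{\oplus b}\xrightarrow{\ \pi\ } N\to 0
\]
whose kernel $K$ again has finite injective dimension. Producing this surjection from a finite direct sum of copies of $\omega_R$ is the main obstacle; the remainder is a diagram chase. Since $K$ has finite injective dimension, $\operatorname{injdim}_R K\le\depth R=d$, so $\Ext^2_R(L,K)\cong\Ext^{d+1}_R(k,K)=0$, and the long exact sequence of $\Ext_R(L,-)$ attached to the displayed sequence shows that $\pi_{*}\colon\Ext^1_R(L,\omega_R^{\oplus b})\to\Ext^1_R(L,N)$ is surjective. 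Finally, the commuting square expressing that $\Ext^1_R(L,-)^{\mu}$ is a subfunctor of $\Ext^1_R(L,-)$ (via the inclusions $j$) at the morphism $\pi$ has left vertical map $j_{\omega_R^{\oplus b}}$ an isomorphism (the base case) and bottom map $\pi_*$ surjective; hence the bottom-left composite is surjective, and since it equals $j_N$ composed with the top map, the injection $j_N\colon\Ext^1_R(L,N)^{\mu}\to\Ext^1_R(L,N)$ is forced to be surjective, hence an isomorphism. This gives $\Ext^1_R(L,N)^{\mu}=\Ext^1_R(L,N)$, which is non-zero by the first paragraph.
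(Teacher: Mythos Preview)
Your proof is correct, but the paper's argument is more direct and avoids Foxby equivalence entirely. After passing to the completion (as you do), the paper does not first establish a base case $N=\omega_R$ and then reduce general $N$ to it. Instead it applies $\Hom_R(-,N)$ (the \emph{contravariant} variable) to the single $\mu$-additive sequence $\sigma:0\to\omega_R\to E^R\to L\to 0$ from Lemma~\ref{cano}. Since $\sigma\in\S^{\mu}$, Corollary~\ref{long} gives a commutative diagram of long exact sequences for both $\Ext^1_R(-,N)$ and $\Ext^1_R(-,N)^{\mu}$. The key vanishing is $\Ext^1_R(E^R,N)=0$, which holds because $E^R$ is maximal Cohen--Macaulay and $N$ has finite injective dimension; this makes the connecting maps $\Hom_R(\omega_R,N)\to\Ext^1_R(L,N)$ and $\Hom_R(\omega_R,N)\to\Ext^1_R(L,N)^{\mu}$ both surjective, and the inclusion is forced to be an equality.

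Your route instead works covariantly in the second variable: you prove $\Ext^1_R(L,\omega_R)^{\mu}=\Ext^1_R(L,\omega_R)$ by a dimension count (both sides are $k$), pass to $\omega_R^{\oplus b}$ by additivity, and then use Foxby equivalence to produce a surjection $\omega_R^{\oplus b}\twoheadrightarrow N$ with kernel of finite injective dimension, so that $\Ext^2_R(L,K)=0$ and the subfunctor square at $\pi$ finishes. This is valid, but it imports the Auslander--Bass class machinery, whereas the paper only needs the single fact that $E^R$ is MCM. The trade-off: your argument makes transparent that the result for general $N$ is formally a consequence of the case $N=\omega_R$, while the paper's argument handles all $N$ uniformly in one step and with fewer external inputs.
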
  

\begin{proof} That $\Ext_{R}^1(\Omega_R^{d-1}k,N)\cong \Ext^d_R(k,N)\ne 0$ follows from \cite[Exercise 3.1.24]{bh}. So, it is enough to prove that $\Ext^1_R(\Omega_R^{d-1}k,N)^{\mu}=\Ext_{R}^1(\Omega_R^{d-1}k,N)$. 

We first consider the case where $R$ is complete, hence admitting a canonical module $\omega_R$. Consider the $\mu$-additive (by Lemma \ref{cano}) exact sequence $\sigma:0\to \omega_R\to E^R\to \Omega_R^{d-1}k\to 0$. Since $\sigma\in {\S}|_{\mod R}^{\mu}$, by applying $\Hom_R(-,N)$ to $\sigma$, we get the following part of a commutative diagram of exact sequences by Corollary \ref{long}:    

$$\begin{tikzcd}[sep=1.8em, font=\small]
{\Hom_R(\omega_R,N)} \arrow[r]                 & {\Ext_{R}^1(\Omega_R^{d-1}k,N)} \arrow[r]                 & {\Ext_{R}^1(E^R,N)}                 \\
{\Hom_R(\omega_R,N)} \arrow[r] \arrow[u, equal] & {\Ext_{R}^1(\Omega_R^{d-1}k,N)^{\mu}} \arrow[r] \arrow[u, hook] & {\Ext_{R}^1(E^R,N)^{\mu}} \arrow[u, hook]
\end{tikzcd}$$  
Since $E^R$ is maximal Cohen--Macaulay, we have $\Ext_{R}^1(E^R,N)=0$ by \cite[Exercise 3.1.24]{bh}. So, $\Ext_{R}^1(E^R,N)^{\mu}=0$ as well. Hence we get the following commutative diagram:
$$\begin{tikzcd}[sep=1.8em, font=\small]
{\Hom_R(\omega_R,N)} \arrow[r, "f"]                 & {\Ext_{R}^1(\Omega_R^{d-1}k,N)} \arrow[r]                 & 0                 \\
{\Hom_R(\omega_R,N)} \arrow[r,"g"] \arrow[u, equal] & {\Ext_{R}^1(\Omega_R^{d-1}k,N)^{\mu}} \arrow[r] \arrow[u, hook, "h"] & 0 
\end{tikzcd}$$ 
Thus $h\circ g=f$ is surjective, so $h$ is surjective. Since $h$ is the natural inclusion map, we have $\Ext^1_R(\Omega_R^{d-1}k,N)^{\mu}=\Ext^1_R(\Omega_R^{d-1}k,N)$.  

Now we consider the general case. Since $\Ext^1_R(\Omega_R^{d-1}k,N)^{\mu}\subseteq \Ext_{R}^1(\Omega_R^{d-1}k,N)$, it is enough to prove the other inclusion. So let, $\sigma: 0\to N\to X\to \syz^{d-1}_R k\to 0 $ be a short exact sequence. We need to show that, $\sigma$ is $\mu$-additive. Now consider the completion $\widehat \sigma: 0\to \widehat N\to \widehat X\to \widehat{\syz^{d-1}_R k}\cong \syz^{d-1}_{\widehat R} k \to 0$. Since $\widehat R$ is non-regular, Cohen--Macaulay of dimension $d$ and admits a canonical module and $\widehat N \in \mod \widehat R$ has finite injective dimension over $\widehat R$, by the first part of the proof we get $ \Ext^1_{\widehat R}(\syz^{d-1}_{\widehat R} k,\widehat N)=\Ext^1_{\widehat R}(\syz^{d-1}_{\widehat R} k,\widehat N)^{\mu}$. Thus $[\widehat \sigma ]\in \Ext^1_{\widehat R}(\syz^{d-1}_{\widehat R} k,\widehat N)=\Ext^1_{\widehat R}(\syz^{d-1}_{\widehat R} k,\widehat N)^{\mu}$. Hence $\widehat \sigma$ is $\mu$-additive. So, $\mu_{\widehat R}(\widehat N)+\mu_{\widehat R}( \syz^{d-1}_{\widehat R} k)=\mu_{\widehat R}(\widehat X)$. Since number of generators does not change under completion, we get $\mu_R(N)+\mu_R(\syz^{d-1}_R k)=\mu_R(X)$. Thus $\sigma$ is $\mu$-additive, which is what we wanted to prove. 
\end{proof}

\begin{proof}{\textit{of Theorem \ref{regdchar}}}: Follows by combining Corollary \ref{regu} and Proposition \ref{injd}.  
\end{proof}   

For an arbitrary local ring of positive depth, we give a characterization of the ring being regular in terms of vanishing of $\Ext^1_R(k,M)^{\mu}$ for some $M\in \mod R$ of finite projective dimension. For this, we first recall the definitions of weakly $\m$-full and Burch submodules of a module from \cite[Definition 3.1, 4.1]{dk} and subsequently, we relate that property to the vanishing of certain $\Ext^1_R(k,-)^{\mu}$. 

\begin{subdfn}
Let $(R,\m,k)$ be a local ring and let, $N$ be an $R$-submodule of a finitely generated $R$-module $M$. Then $N$ is called a weakly $\m$-full submodule of $M$ if $(\m N:_M\m)=N$. Also, $N$ is called a Burch submodule of $M$ if $\m (N:_M \m)\ne \m N$. 
\end{subdfn}  

%First we record that vanishing of $\Ext^1_R(k,J)^{\mu}$ have some consequences for the weakly $\m$-full property.  

\begin{subproposition}\label{3.9}
Let $(R,\m,k)$ be a local ring and let, $N$ be an $R$-submodule of a finitely generated $R$-module $M$ such that $\Ext^1_R(k,N)^{\mu}=0$. Then $(\m N:_M\m)=N+\Soc(M)$, i.e., $N+\Soc(M)$ is a weakly $\m$-full submodule of $M$. So, in particular, if we moreover have $\depth(M)>0$, then $N$ is a weakly $\m$-full submodule of $M$.   
\end{subproposition}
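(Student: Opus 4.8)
The plan is to establish the set-theoretic equality $(\m N:_M\m)=N+\Soc(M)$ by proving the two inclusions, after which both remaining assertions follow formally. The inclusion $N+\Soc(M)\subseteq(\m N:_M\m)$ is immediate, since $\m n\subseteq\m N$ for every $n\in N$ and $\m s=0\subseteq\m N$ for every $s\in\Soc(M)$; all the substance is in the reverse inclusion.

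For the reverse inclusion I would fix $x\in(\m N:_M\m)$ and assume $x\notin N$ (otherwise there is nothing to prove). The decisive computation is that $\m(N+Rx)=\m N+\m x=\m N$, which holds precisely because the hypothesis $x\in(\m N:_M\m)$ says $\m x\subseteq\m N$. The image of $x$ in $M/N$ is a nonzero element killed by $\m$, so $(N+Rx)/N\cong k$ and we obtain a short exact sequence
$$\sigma:\quad 0\to N\to N+Rx\to k\to 0.$$
Feeding the identity $\m(N+Rx)=\m N$ into the length count (note $(N+Rx)/\m N$ is killed by $\m$, hence a finite-dimensional $k$-vector space) gives $\mu(N+Rx)=\dim_k((N+Rx)/\m N)=\dim_k(N/\m N)+\dim_k((N+Rx)/N)=\mu(N)+1=\mu(N)+\mu(k)$, so that $\sigma$ is $\mu$-additive, i.e. $[\sigma]\in\Ext^1_R(k,N)^{\mu}$.

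Now I would invoke the hypothesis $\Ext^1_R(k,N)^{\mu}=0$ to conclude that $\sigma$ splits. A splitting $s\colon k\to N+Rx$ has image contained in $\Soc(N+Rx)\subseteq\Soc(M)$ and yields $N+Rx=N\oplus s(k)$; writing $x=(x-s(\bar x))+s(\bar x)$, where $\bar x$ denotes the image of $x$ in $k$, exhibits $x$ as the sum of the element $x-s(\bar x)\in N$ and the element $s(\bar x)\in\Soc(M)$. This proves $(\m N:_M\m)\subseteq N+\Soc(M)$, hence the desired equality.

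Finally, because $\m\,\Soc(M)=0$ we have $\m(N+\Soc(M))=\m N$, so $(\m(N+\Soc(M)):_M\m)=(\m N:_M\m)=N+\Soc(M)$, which is exactly the assertion that $N+\Soc(M)$ is weakly $\m$-full. If in addition $\depth M>0$, then $\Soc(M)=\Hom_R(k,M)=0$, whence $N+\Soc(M)=N$ and $N$ itself is weakly $\m$-full. The one genuinely nonformal point is recognizing that $\m x\subseteq\m N$ forces $\m(N+Rx)=\m N$; this is what makes the canonical extension $\sigma$ automatically $\mu$-additive and lets the vanishing hypothesis be brought to bear, so that everything else reduces to bookkeeping.
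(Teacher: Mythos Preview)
Your proof is correct and follows essentially the same approach as the paper's: both construct the extension $0\to N\to N+Rx\to k\to 0$, verify its $\mu$-additivity via $\m(N+Rx)=\m N$, split it using the hypothesis, and decompose $x$ accordingly. The only cosmetic difference is that the paper phrases the reverse inclusion as a proof by contradiction (assuming $x\notin N+\Soc(M)$), whereas you argue directly.
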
   

\begin{proof}  
Clearly, $N\subseteq (\m N:_M \m)$ and $\Soc(M)=(0:_M \m)\subseteq (\m N:_M\m)$, so $N+\Soc(M)\subseteq (\m N:_M\m)$. Now choose an element $f\in (\m N:_M\m)$, and we aim to show $f\in N+\Soc(M)$. If $f\in N$, then we are done. Otherwise, say $f\notin N$. As $f\in (\m N:_M\m)$, so $\m f\subseteq\m N\subseteq N$, hence $\m(N+Rf)=\m N$. Thus $\dfrac{Rf+N}{N}$ is a non-zero (as $f\not\in N$) $k$-vector space (as $\m(N+Rf)\subseteq N$). Moreover, $\dfrac{Rf+N}{N}$ is cyclically generated by the image of $f$ in $M/N$. Hence $\dfrac{Rf+N}{N}\cong k$, so we have a short exact sequence $\sigma: 0\to N\to Rf+N \to k\to 0$. This sequence is $\mu$-additive, since  $\mu(N+Rf)=\lambda\left(\frac{N+Rf}{\m(N+Rf)}\right)=\lambda\left(\frac{N+Rf}{\m N}\right)=\lambda\left(\frac{N+Rf}{N}\right)+\lambda\left(\frac{N}{\m N}\right)=\lambda(k)+\mu(N)=\mu(k)+\mu(N)$. So, $\sigma\in\Ext^1_R(k,N)^{\mu}=0$. Hence $\sigma$ splits. Thus $N+Rf=N\oplus g(k)$, where $g:k\to N+Rf$ is the splitting map, so $k\cong g(k)$. Now $f\in N+Rf=N\oplus g(k)$, so $f=x+y$ for some $x\in N$ and $y\in g(k)\subseteq M$. Since $k\cong g(k)$, we have $\m g(k)=0$. Now $\m y\in\m g(k)=0$, so $y\in (0:_M\m)=\Soc(M)$. Hence $f=x+y\in N+\Soc(M)$. This finally shows that $(\m N:_M\m)\subseteq N+\Soc(M)$, so $(\m N:_M\m)=N+\Soc(M)$. Since $\m(N+\Soc(M))=\m N$, we get $(\m (N+\Soc(M)):_M\m)=N+\Soc(M)$, which implies $N+\Soc(M)$ is a weakly $\m$-full submodule $M$. Now note that, if $\depth(M)>0$, then $\Soc(M)=0$. Hence $N+\Soc(M)=N$ is a weakly $\m$-full submodule of $M$. 

%%%Next let, $R$ be Gorenstein. We are already done if $\depth(R)>0$. So, we moreover assume that $\depth(R)=0$. Hence $R$ is an Artinian Gorenstein ring. Now since $J$ is a non-zero ideal of $R$, so $\Soc(R)\subseteq J$, so $J+\Soc(R)=J$. Hence $J$ is a weakly $\m$-full ideal of $R$. 
\end{proof}

Now we give another characterization of regular local rings in terms of vanishing of certain $\Ext^1_R(k,-)^{\mu}$.  

\begin{subthm}\label{reg} Let $(R,\m,k)$ be a local ring of depth $t>0$. Then, the following are equivalent:

\begin{enumerate}[\rm(1)]
    \item $R$ is regular. 
    
    \item $\Ext^1_R(k,R/(x_1,...,x_{t-1})R)^{\mu}=0$ for some $R$-regular sequence $x_1,...,x_{t-1}$.
    
    \item $\Ext^1_R(k,M)^{\mu}=0$ for some finitely generated $R$-module $M$ of projective dimension $t-1$. 
\end{enumerate}

\end{subthm}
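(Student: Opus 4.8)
The plan is to establish the cycle of implications $(1)\Rightarrow(2)\Rightarrow(3)\Rightarrow(1)$. For $(1)\Rightarrow(2)$, assume $R$ is regular and choose a regular system of parameters $x_1,\dots,x_t$, so that $(x_1,\dots,x_t)=\m$. Set $\bar R:=R/(x_1,\dots,x_{t-1})R$, a discrete valuation ring; here $x_t$ is a nonzerodivisor on $\bar R$ with $\bar R/x_t\bar R\cong R/\m=k$. Applying $\Hom_R(k,-)$ to $0\to\bar R\xrightarrow{\,x_t\,}\bar R\to k\to 0$ and using $\Hom_R(k,\bar R)=0$ (as $\depth\bar R=1$) together with the fact that $x_t$ annihilates $\Ext^1_R(k,\bar R)$, one obtains $\Ext^1_R(k,\bar R)\cong k$, generated by the class of that very sequence, whose middle term is $\bar R$. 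Every nonzero class is a unit multiple of this generator, hence has middle term isomorphic to $\bar R$ (five lemma); since $\mu(\bar R)=1\neq 2=\mu(\bar R)+\mu(k)$, no nonzero class is $\mu$-additive, so $\Ext^1_R(k,\bar R)^{\mu}=0$. The implication $(2)\Rightarrow(3)$ is immediate: $M:=R/(x_1,\dots,x_{t-1})R$ is resolved by the Koszul complex on the regular sequence and has $\depth M=1$, hence $\pd_R M=t-1$ by Auslander--Buchsbaum.

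The substance is $(3)\Rightarrow(1)$, and I would begin with a clean reformulation of the hypothesis. Writing $M\subseteq X$ for a sequence $\sigma\colon 0\to M\to X\to k\to 0$, one has $\m X\subseteq M$, and a direct computation of $X/\m X$ shows that $\sigma$ is $\mu$-additive precisely when $\m X=\m M$. Passing through the identification $\Ext^1_R(k,M)\cong\Hom_R(\m,M)/\rho_M(M)$ coming from $0\to\m\to R\to k\to 0$, where $\rho_M(m)\colon a\mapsto am$, this identifies the subgroup $\Ext^1_R(k,M)^{\mu}$ with $\Hom_R(\m,\m M)/\rho_M(M)$. Since $\pd_R M=t-1<\infty$ forces $\depth M=1$, we have $\Soc M=0$, so $\rho_M$ is injective; thus $(3)$ is equivalent to the statement that $\rho_M$ maps $M$ \emph{onto} $\Hom_R(\m,\m M)$, i.e.\ every homomorphism $\m\to\m M$ is the restriction of a multiplication by an element of $M$.

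For the base case $t=1$, finiteness of projective dimension forces $M$ free, and since $\Ext^1_R(k,-)^{\mu}$ is an additive subfunctor (Lemma \ref{subad}) the hypothesis reduces to $\Ext^1_R(k,R)^{\mu}=0$, i.e.\ to $\End_R(\m)=R$ (only homotheties). Picking a nonzerodivisor $x\in\m$, one has $R\subseteq(\m:_Q\m)=\End_R(\m)\subseteq\m^{-1}=\Hom_R(\m,R)$ inside the total quotient ring $Q$, and $\m^{-1}/R\cong\Ext^1_R(k,R)\neq 0$ because $\depth R=1$; hence $\m^{-1}\supsetneq R$. The ideal $\m\m^{-1}$ lies between $\m$ and $R$: if $\m\m^{-1}=\m$ then $\m^{-1}\subseteq(\m:_Q\m)=R$, a contradiction, so $\m\m^{-1}=R$, $\m$ is invertible, hence principal, and $R$ is a DVR.

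For the inductive step ($t\ge 2$) I would pass to $\bar R=R/xR$ for a suitable $x\in\m\setminus\m^2$ that is regular on $R$ (and on an appropriate module), after the harmless reduction to an infinite residue field; since $R$ is regular iff $\bar R$ is, it suffices to produce over $\bar R$ (which has depth $t-1$) a module witnessing $(3)$. The main obstacle lies exactly here: keeping the projective dimension equal to $\depth\bar R-1$ forces one first to replace $M$ by a syzygy and then to cut by $x$, and to carry the vanishing $\Ext^1_R(k,M)^{\mu}=0$ across both operations. I expect the cleanest route is to phrase this descent through weak $\m$-fullness: using $\depth M>0$ and the criterion above, Proposition \ref{3.9} shows that the relevant submodule is weakly $\m$-full, and the reduction-modulo-$x$ behaviour of weak $\m$-fullness, together with the Burch-type results of \cite{dk}, transports the hypothesis to $\bar R$. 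Making this transport precise---reconciling the depth/projective-dimension bookkeeping and verifying compatibility of the $\mu$-subfunctor with $-\otimes_R\bar R$ along the nonzerodivisor $x$---is the technical heart of the argument.
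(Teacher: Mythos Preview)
Your arguments for $(1)\Rightarrow(2)$ and $(2)\Rightarrow(3)$ are correct and essentially match the paper. Your identification $\Ext^1_R(k,M)^{\mu}\cong \Hom_R(\m,\m M)/\rho_M(M)$ is a nice observation, and the base case $t=1$ via $\End_R(\m)=R$ and invertibility of $\m$ is a valid alternative to what the paper does.

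The genuine gap is in $(3)\Rightarrow(1)$ for $t\ge 2$. You set up an induction on $t$ and propose to ``transport the hypothesis to $\bar R=R/xR$'' by tracking how weak $\m$-fullness and the $\mu$-subfunctor behave under $-\otimes_R \bar R$, but you do not carry this out, and you yourself flag the descent as the unresolved ``technical heart''. In particular you have not explained how to produce, from $M$, an $\bar R$-module of projective dimension exactly $t-2$ with vanishing $\Ext^1_{\bar R}(k,-)^{\mu}$; the bookkeeping you mention (syzygy first, then reduce mod $x$, then check $\mu$-additivity is preserved) is nontrivial and is not established anywhere in the paper or in \cite{dk}.

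The paper avoids this entirely: there is \emph{no induction} on $t$. One picks $x\in\m$ regular on $R$ and on $M$ (possible since $\depth M=1$ by Auslander--Buchsbaum), notes $xM\cong M$ so $\Ext^1_R(k,xM)^{\mu}=0$, applies Proposition~\ref{3.9} to conclude $xM$ is weakly $\m$-full in $M$, observes $\depth(M/xM)=0$ so $xM$ is a Burch submodule of $M$ by \cite[Lemma~4.3]{dk}, and checks $\pd_R(M/xM)<\infty$. Then \cite[Theorem~1.2]{dk} gives $\pd_R k<\infty$ directly, hence $R$ is regular. The point you are missing is that the Burch machinery of \cite{dk} already delivers regularity in one shot for all $t$; no passage to $\bar R$ is needed.
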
  

\begin{proof} $(1) \implies (2)$ Since $R$ is regular, we have $t=\dim R$. Since $R$ is regular, we get that $\m$ is generated by a regular sequence $x_1,...,x_t$. Now $R/(x_1,...,x_{t-1})R$ has projective dimension $t-1$ over $R$ and the $(t-1)$-th Betti number of this module is $1$ (by looking at the Koszul complex). Hence, $\Ext^1_R(k,R/(x_1,...,x_{t-1})R)\cong \Tor^R_{t-1}(k,R/(x_1,...,x_{t-1})R)\cong k$ by \cite[Exercise 3.3.26]{bh}. Now, we have an exact sequence $0\to R/(x_1,...,x_{t-1})R\xrightarrow{\cdot x_t} R/(x_1,...,x_{t-1})R\to R/(x_1,...,x_{t-1},x_t)R\cong k\to 0$, which is clearly not $\mu$-additive. Hence, $\Ext^1_R(k,R/(x_1,...,x_{t-1})R)\ne \Ext^1_R(k,R/(x_1,...,x_{t-1})R)^{\mu}$. Since $\Ext^1_R(k,R/(x_1,...,x_{t-1})R)\cong k$ is cyclic, by Lemma \ref{one} we get that $\Ext^1_R(k,R/(x_1,...,x_{t-1})R)^{\mu}=\m \Ext^1_R(k,R/(x_1,...,x_{t-1})R)=0$. 

$(2) \implies (3)$ Obvious.   

$(3) \implies (1)$ By Auslander-Buchsbaum formula, $\depth M=1$, so $\Soc(M)=0$. Hence by prime avoidance, we can choose $x\in \m$, which is both $R$ and $M$-regular. Then $xM\cong M$, so $\Ext^1_R(k, xM)^{\mu}\cong \Ext^1_R(k,M)^{\mu}=0$. By Proposition \ref{3.9} we get that, $xM$ is a weakly $\m$-full submodule of $M$. Since $\depth (M/xM)=0$, $xM$ is a Burch submodule of $M$ by \cite[Lemma 4.3]{dk}. Moreover, $\pd_{R/xR}(M/xM)=\pd_R M<\infty$, and $\pd_R R/xR <\infty$, so $\pd_R M/xM<\infty$. Hence $\Tor^R_{\gg 0}(k,M/xM)=0$. Thus $\pd_R k<\infty$ by \cite[Theorem 1.2]{dk}, so $R$ is regular.  %%%%%In view of Corollary \ref{regu}, we only need to prove that $\Ext^1_R(k,R)^{\mu}=0$ implies $R$ is regular.Pick a non-zero divisor $x\in \m\setminus \m^2$. Then, $(x)\cong R$, and so $\Ext_R(k,(x))^{\mu}\cong \Ext_R(k,R)^{\mu}=0$. By Proposition \ref{3.9} we have $(x\m:_R\m)=(x)$. If $R$ is not regular, then $\m$ is not principal, so by Lemma \ref{3.10} (applied to $I=\m$) we get $(x\m:_R \m)=((x):_R \m)$. Hence we get $(x)=((x):_R \m)$. So, $\text{soc}(R/(x))=\dfrac{((x):_R \m)}{(x)}=0$, contradicting $\depth R/(x)=\depth R-1=0$. Thus, $R$ must be regular.  
\end{proof} 

Next, we try to relate $\Ext^1_R(M,N)^{\mu}$ to $\Ext^1_R(M,N)$ for all $M,N\in \mod R$, when $(R,\m)$ is a regular local ring of dimension $1$ (i.e., a local PID). For this, we first record a preliminary lemma.

\begin{sublemma}\label{cycquot} Let $(R,\m)$ be a local ring and $x\in \m$ be a non-zero-divisor. Then, $\Ext^1_R(R/xR, R/I)^{\mu}=\m\Ext^1_R(R/xR, R/I)\cong \dfrac{\m}{I+xR}$ for every proper ideal $I$ of $R$.   
\end{sublemma}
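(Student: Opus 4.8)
The plan is to first identify the $\Ext$-module explicitly, then pin down the $\mu$-subfunctor using its cyclicity together with the two previously established lemmas (Lemma \ref{jane} and Lemma \ref{one}).

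First I would compute $\Ext^1_R(R/xR,R/I)$. Since $x$ is a non-zero-divisor, $\begin{tikzcd} 0\arrow[r] & R\arrow[r,"\cdot x"] & R\arrow[r] & R/xR\arrow[r] & 0\end{tikzcd}$ is a free resolution of $R/xR$; applying $\Hom_R(-,R/I)$ and using $\Hom_R(R,R/I)\cong R/I$ identifies the relevant map with multiplication by $x$ on $R/I$. Hence $\Ext^1_R(R/xR,R/I)\cong \operatorname{coker}(R/I\xrightarrow{\cdot x}R/I)\cong R/(I+xR)$, which is in particular a cyclic $R$-module. Because $I$ is proper and $x\in\m$, we have $I+xR\subseteq\m$, so $\m\cdot R/(I+xR)=\m/(I+xR)$; transporting along the isomorphism gives $\m\Ext^1_R(R/xR,R/I)\cong\m/(I+xR)$. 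This already yields the claimed second isomorphism, so it remains only to prove the equality $\Ext^1_R(R/xR,R/I)^\mu=\m\Ext^1_R(R/xR,R/I)$.

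For that equality I would invoke Lemma \ref{one} with $X=\Ext^1_R(R/xR,R/I)$ (cyclic, as just shown) and $Y=\Ext^1_R(R/xR,R/I)^\mu$. By Lemma \ref{jane} (which gives $\m\Ext^1_R(M,N)\subseteq\Ext^1_R(M,N)^\mu$ in general) we have $\m X\subseteq Y$, so the hypotheses of Lemma \ref{one} are met provided $Y\ne X$; granting this, Lemma \ref{one} forces $Y=\m X$, which is exactly what we want. Thus the whole proof reduces to producing a single extension class on which $\mu$ fails to be additive.

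The key, and only nonformal, step is therefore to exhibit such a class. The natural candidate is the sequence
$$0\to R/I\xrightarrow{\cdot x} R/xI\to R/xR\to 0,$$
where the first map sends $r+I\mapsto xr+xI$ and the second is the canonical surjection $r+xI\mapsto r+xR$. I would check that the first map is well-defined and injective (injectivity uses that $x$ is a non-zero-divisor), that the composite is zero, and that image equals kernel (both being $xR/xI$), so that this is a genuine short exact sequence. Its middle term $R/xI$ is cyclic, hence $\mu(R/xI)=1$, whereas $\mu(R/I)+\mu(R/xR)=2$ since both $R/I$ and $R/xR$ are nonzero cyclic modules. So $\mu$ is not additive on this sequence, i.e.\ its class lies in $\Ext^1_R(R/xR,R/I)$ but not in $\Ext^1_R(R/xR,R/I)^\mu$, giving the strict inequality $Y\ne X$. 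The main obstacle is spotting the cyclic middle term $R/xI$; once it is in hand, the remaining verifications and the appeals to Lemmas \ref{jane} and \ref{one} are routine.
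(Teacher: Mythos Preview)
Your proposal is correct and follows essentially the same approach as the paper: compute $\Ext^1_R(R/xR,R/I)\cong R/(I+xR)$ from the two-term free resolution, use Lemma~\ref{jane} for the inclusion $\m X\subseteq Y$, exhibit the non-$\mu$-additive sequence $0\to R/I\to R/xI\to R/xR\to 0$ to get $Y\ne X$, and finish with Lemma~\ref{one}. The only cosmetic difference is that the paper first writes the sequence as $0\to xR/xI\to R/xI\to R/xR\to 0$ and then identifies $xR/xI\cong R/I$, whereas you write the map $R/I\xrightarrow{\cdot x}R/xI$ directly.
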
     

\begin{proof}  Calculating $\Ext^1_R(R/xR, R/I)$ from the minimal free-resolution  $0\to R \xrightarrow{\cdot x} R\to R/xR\to 0$ of $R/xR$, we see that $\Ext^1_R(R/xR,R/I)\cong R/(xR+I)$ is a cyclic $R$-module. By Lemma \ref{jane} we have $\m\Ext^1_R(R/xR,R/I)\subseteq \Ext^1_R(R/xR,R/I)^{\mu}$. We finally claim that $\Ext^1_R(R/xR,R/I)^{\mu}\neq \Ext^1_R(R/xR,R/I)$. Indeed, we have an exact sequence $\sigma: 0\to xR/xI \to R/xI\to R/xR\to 0$. Now, we have a natural surjection $R \xrightarrow{r\mapsto rx+xI} xR/xI$, whose kernel is $\{r\in R: rx\in xI\}$. Since $x$ is a non-zero-divisor, $rx\in xI$ if and only if $r\in I$. Hence, the kernel is $I$. Hence $R/I\cong xR/xI$.  So, we get the exact sequence $\sigma: 0\to R/I\to R/xI \to R/xR\to 0$. Moreover, $\sigma$ is not $\mu$-additive, since  $\mu(R/xI)=1\ne 1+1= \mu(R/I)+\mu(R/xR)$. Thus, $[\sigma]\in \Ext^1_R(R/xR, R/I)\setminus \Ext^1_R(R/xR, R/I)^{\mu}$.  Hence $\Ext^1_R(R/xR,R/I)^{\mu}=\m \Ext^1_R(R/xR,R/I)$ by Lemma \ref{one}. 
\end{proof}

Now using this lemma, we can compare the structure of $\Ext^1_R(M,N)^{\mu}$ to $\Ext^1_R(M,N)$ for every pair of finitely generated modules $M,N$ over a DVR.  

\begin{subproposition}\label{dvr} Let $(R,\m)$ be a regular local ring of dimension $1$. Then, $\Ext^1_R(M,N)^{\mu}=\m\Ext^1_R(M,N)$ for all finitely generated $R$-modules $M$ and $N$. So, in particular, $\Ext^1_R(k,N)^{\mu}=0$ for all finitely generated $R$-modules $N$. 
\end{subproposition}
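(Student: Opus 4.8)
The plan is to reduce to the cyclic (indecomposable) case via the structure theorem for modules over a discrete valuation ring, and then feed the computation of Lemma \ref{cycquot} into the additivity result Lemma \ref{extended}.

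Since $R$ is a DVR it is a local principal ideal domain, so every finitely generated module decomposes into cyclics; write $M \cong \bigoplus_i M_i$ and $N \cong \bigoplus_j N_j$, where each summand is either free of rank one ($\cong R$) or torsion of the form $R/(x^n) = R/\m^n$ for the uniformizer $x$ and some $n \ge 1$. The crucial point is that for every pair $(M_i,N_j)$ of indecomposable summands one has
$$\Ext^1_R(M_i,N_j)^{\mu} = \m\,\Ext^1_R(M_i,N_j).$$
Indeed, if $M_i \cong R$ then $\Ext^1_R(M_i,N_j)=0$ and both sides vanish; otherwise $M_i \cong R/(x^n)$ with $x^n$ a non-zero-divisor and $N_j \cong R/I$ for a proper ideal $I$ (allowing $I=(0)$, i.e. $N_j\cong R$), and the equality is exactly Lemma \ref{cycquot}.

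Next I would globalize this over the direct-sum decompositions using Lemma \ref{extended} with $I=\m$, applied once in each variable. Recall that $\Ext^1_R(-,-)^{\mu}$ is a subfunctor of $\Ext^1_R(-,-)$ by Proposition \ref{exsub} (see Definition \ref{numdef}). Fixing an indecomposable $N_j$ and taking the contravariant functor $F=\Ext^1_R(-,N_j)$ with subfunctor $\Ext^1_R(-,N_j)^{\mu}$, the summand-wise equalities supply both the $\subseteq$ and the $\supseteq$ hypotheses of Lemma \ref{extended} for $M=\bigoplus_i M_i$, giving $\Ext^1_R(M,N_j)^{\mu}=\m\,\Ext^1_R(M,N_j)$. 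Then, keeping this $M$ fixed and running the same argument in the second (covariant) variable with $F=\Ext^1_R(M,-)$ and summands $N_j$, Lemma \ref{extended} yields $\Ext^1_R(M,N)^{\mu}=\m\,\Ext^1_R(M,N)$, which is the asserted equality. Finally, for the ``in particular'' clause I specialize to $M=k=R/\m$: since $\Ext^1_R(k,N)\cong N/\m N$ (from $0\to R\xrightarrow{x} R\to k\to 0$) is annihilated by $\m$, we get $\m\,\Ext^1_R(k,N)=0$, whence $\Ext^1_R(k,N)^{\mu}=0$ by the equality just proved.

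The main obstacle is the bookkeeping in the globalization step: Lemma \ref{extended} is a one-variable statement, so the bifunctorial equality must be assembled by applying it separately in each argument, and one must check that the summand-wise equalities feed correctly into both the $\subseteq$ and $\supseteq$ halves of the lemma. The purely computational input—classifying the indecomposables over a DVR and evaluating $\Ext^1$ on cyclic pairs—is routine once Lemma \ref{cycquot} is available.
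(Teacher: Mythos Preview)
Your proof is correct and follows essentially the same route as the paper: reduce to cyclic pairs via the structure theorem for modules over a DVR, invoke Lemma~\ref{cycquot} on those pairs, and then globalize by applying Lemma~\ref{extended} once in each variable with $I=\m$. The only cosmetic difference is the order in which you apply Lemma~\ref{extended}---you first fix an indecomposable $N_j$ and decompose $M$, whereas the paper first fixes an arbitrary $N$ and decomposes $M$---but this is immaterial.
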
  

\begin{proof} Let $\m=xR$. Then, for every finitely generated $R$-module $X$, we have $X\cong R^{\oplus a}\oplus \left(\oplus_{i=1}^n R/x^{a_i}R\right)$ for some non-negative integers (depending on $X$) $a,a_i$. Now, fix arbitrary $N\in \mod R$. Applying Lemma \ref{extended} to the subfunctor $\Ext^1_R(-,N)^{\mu}$ of $\Ext^1_R(-,N)$ and taking $I=\m$, it is enough to prove that $\Ext^1_R(R,N)^{\mu}=\m\Ext^1_R(R,N)$ and $\Ext^1_R(R/x^lR,N)^{\mu}=\m\Ext^1_R(R/x^lR,N)$ for every integer $l\ge 1$. Now $\Ext^1_R(R,N)^{\mu}=\m\Ext^1_R(R,N)$ is obvious, as both sides are zero. Now to prove $\Ext^1_R(R/x^lR,N)^{\mu}=\m\Ext^1_R(R/x^lR,N)$ for every integer $l\ge 1$, first fix an $l\ge 1$, and look at the subfunctor $\Ext^1_R(R/x^lR,-)^{\mu}$ of $\Ext^1_R(R/x^lR,-)$. Again by the structure of finitely generated $R$-modules and Lemma \ref{extended}, it is enough to prove that $\Ext^1_R(R/x^lR,R)^{\mu}=\m \Ext^1_R(R/x^lR,R)$ and $\Ext^1_R(R/x^lR,R/x^bR)^{\mu}=\m \Ext^1_R(R/x^lR,R/x^bR)$ for every integer $b\ge 1$. Now these equalities follow from Lemma \ref{cycquot}, since $x$ is a non-zero-divisor. 
\end{proof}   

%Next, we aim to establish a converse of Proposition \ref{reg}. for which we first record a Lemma. 

When restricting to short exact sequences in $\Ext^1$, on which certain subadditive function, other than $\mu(-)$, is additive, one obtains vanishing of the corresponding submodule of $\Ext^1_R(M,F)$ for any free $R$-module $F$. We make this precise in Proposition \ref{loewy}, whose proof uses the following lemma. 

\begin{sublemma}\label{lowey}
Let $(R,\m)$ be a local Cohen--Macaulay ring of dimension $1$. Let $x\in \m$ be $R$-regular. Let $\sigma:0\to F_1\to F_2\oplus \frac{R}{x^aR}\to \frac{R}{x^bR}\to 0$ be a short exact sequence, where $a,b$ are non-negative integers and $F_1,F_2$ are free $R$-modules. Let $c$ be an integer such that $c\ge b$ and $\sigma\otimes \frac{R}{x^cR}$ is short exact. Then $a=b$ and $\rnk(F_2)= \rnk(F_1)$. So, in particular, $\sigma$ is split exact.   
\end{sublemma}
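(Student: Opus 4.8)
The plan is to convert the statement into length bookkeeping, which is legitimate here because $x$ is a non-zero-divisor and $\dim R=1$, so $x$ is a system of parameters and $R/x^jR$ is Artinian of finite length for every $j\ge 1$. Writing $e:=\lambda_R(R/xR)$, the short exact sequences $0\to R/x^{j-1}R\xrightarrow{\cdot x}R/x^jR\to R/xR\to 0$ (the left map is injective exactly because $x$ is a non-zero-divisor) give, by induction, $\lambda_R(R/x^jR)=je$, and $e\ge 1$ since $x\in\m$. I will also use repeatedly that $R/x^aR\otimes_R R/x^cR\cong R/x^{\min(a,c)}R$. For the rank assertion I would localize $\sigma$ at a minimal prime $\p$ of $R$: since $R$ is Cohen--Macaulay, $\p\in\Ass(R)$, so the non-zero-divisor $x$ is a unit in $R_\p$ and both $R/x^aR$ and $R/x^bR$ vanish there; the localized sequence collapses to an isomorphism $R_\p^{\rnk F_1}\cong R_\p^{\rnk F_2}$, whence $\rnk F_1=\rnk F_2$ by invariance of rank.

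Next I would read off the inequality $a\le b$ from torsion. Let $t(-)$ denote the torsion submodule, i.e. the largest finite-length submodule. As $F_1$ is free, hence torsion-free, the torsion submodule $t(F_2\oplus R/x^aR)=R/x^aR$ meets the copy of $F_1$ (the kernel of the surjection in $\sigma$) trivially, and therefore injects into the cokernel $R/x^bR$. Comparing lengths of this injection gives $ae\le be$, i.e. $a\le b$.

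The crux is upgrading $a\le b$ to $a=b$, and this is where the hypothesis on $c$ enters. By assumption $\sigma\otimes_R R/x^cR$ is short exact; using $R/x^aR\otimes_R R/x^cR\cong R/x^{\min(a,c)}R$ together with $\min(b,c)=b$ (as $c\ge b$), additivity of length over this sequence reads $\rnk(F_2)\,c+\min(a,c)=\rnk(F_1)\,c+b$. Feeding in $\rnk F_2\le\rnk F_1$ forces $\min(a,c)\ge b$. A short case split then finishes the job: if $a\le c$ then $\min(a,c)=a\ge b$, which combined with $a\le b$ from the previous step gives $a=b$ and, back in the length equation, $\rnk F_1=\rnk F_2$; if instead $a>c$, then the chain $a\le b\le c<a$ is absurd, so this case is impossible. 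I expect the exclusion of the case $a>c$ to be the only genuine obstacle, since it is precisely the loophole left open by the length equation $\min(a,c)=b$ on its own, and it is closed only by also invoking the torsion inequality $a\le b$.

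Finally, the splitting is then automatic. With $a=b$, the torsion submodule $R/x^aR$ of the middle term and the cokernel $R/x^bR$ have the same finite length $ae=be$, and, as in the second step, the former injects into the latter via the surjection of $\sigma$; an injection between modules of equal finite length is an isomorphism, and its inverse is a section of the surjection in $\sigma$. Hence $\sigma$ is split exact.
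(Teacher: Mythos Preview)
Your argument is correct and takes a somewhat different route from the paper. The paper obtains $a\le b$ by applying $\Hom_R(-,R)$ to $\sigma$ and reading off a surjection $R/x^bR\cong\Ext^1_R(R/x^bR,R)\twoheadrightarrow\Ext^1_R(R/x^aR,R)\cong R/x^aR$; it then uses the single length identity from $\sigma\otimes R/x^cR$ (already knowing $a\le b\le c$) to extract both $a=b$ and $\rnk F_1=\rnk F_2$, and asserts splitting from the resulting isomorphism of the middle term with the sum of the outer terms (implicitly Miyata's theorem). Your torsion-injection argument for $a\le b$ is more elementary and, pleasantly, recycles verbatim to produce an explicit section once $a=b$ is established, so you avoid any appeal to Miyata. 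Your localization at a minimal prime is an additional observation the paper does not make: it shows $\rnk F_1=\rnk F_2$ outright, without using either the exactness of $\sigma\otimes R/x^cR$ or the standing hypothesis $\rnk F_2\le\rnk F_1$, so that part of the conclusion is in fact automatic. Note, incidentally, that since you already know $a\le b\le c$ before the length step, your case ``$a>c$'' is vacuous, and with $\rnk F_1=\rnk F_2$ already in hand the length equation reduces immediately to $a=b$; so your argument could be streamlined a bit.
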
  

\begin{proof} Since $x\in R$ is a non-zero-divisor, so $\frac{R}{x^aR}, \frac{R}{x^bR}$ are torsion modules, i.e., have constant rank $0$. Hence calculating rank along $\sigma$, we get $\rnk(F_2)= \rnk(F_1)$.   Next, we will show that $a\le b$. Dualizing $\sigma$ by $R$, we get the following part of a long exact sequence: $\Ext^1_R\left(\frac{R}{x^bR},R\right)\to\Ext^1_R\left(\frac{R}{x^aR},R\right)\to 0$. Now since $x^a\in R$ is a non-zero-divisor, by taking the resolution $0\to R\xrightarrow{\cdot x^a}R\to \frac{R}{x^aR}\to 0$ of $\frac{R}{x^aR}$, and dualizing by $R$, and calculating the cohomology we get that, $\Ext^1_R\left(\frac{R}{x^aR},R\right)\cong \frac{R}{x^aR}$. Similarly, $\Ext^1_R\left(\frac{R}{x^bR},R\right)\cong \frac{R}{x^bR}$. Hence we have the exact sequence  $\frac{R}{x^bR}\to\frac{R}{x^aR}\to 0$. Since $x^b$ annihilates $\frac{R}{x^bR}$, we have $x^b$ annihilates $\frac{R}{x^aR}$ as well, which implies $x^bR\subseteq x^aR$. Since $x\in\m$ is $R$-regular, so $x^bR\subseteq x^aR$ implies that $a\leq b$. Since $\sigma\otimes \frac{R}{x^cR}$ is short exact, we have the following short exact sequence $$\sigma\otimes \frac{R}{x^cR}:0\to \left(\frac{R}{x^cR}\right)^{\oplus l_1}\to\left(\frac{R}{x^cR}\right)^{\oplus l_2}\oplus\frac{R}{x^aR}\to \frac{R}{x^bR}\to 0$$ where we have used that $\frac{R}{x^cR}\otimes\frac{R}{x^aR}\cong \frac{R}{x^cR+x^aR}\cong \frac{R}{x^aR}$ and $\frac{R}{x^cR}\otimes\frac{R}{x^bR}\cong \frac{R}{x^cR+x^bR}\cong \frac{R}{x^bR}$, as $a\leq b\leq c$. Since $R$ is local Cohen--Macaulay of dimension $1$ and $x\in \m$ is $R$-regular, we have $x^aR, x^b R, x^c R$ are $\m$-primary ideals.  Hence by calculating the length along the short exact sequence $\sigma\otimes \frac{R}{x^cR}$ we get that, $\lambda\left(\frac{R}{x^bR}\right)-\lambda\left(\frac{R}{x^aR}\right)=(l_2-l_1)\lambda\left(\frac{R}{x^cR}\right)=0$. Since $x^bR\subseteq x^aR$, so $\lambda\left(\frac{R}{x^bR}\right)=\lambda\left(\frac{R}{x^aR}\right)$ now implies $x^aR=x^bR$ by calculating length along the short exact sequence $0\to \frac{x^aR}{x^bR}\to \frac{R}{x^bR}\to \frac{R}{x^aR}\to 0$. Since $x\in\m$ is $R$-regular, we have $a=b$. Finally, since $F_1\cong F_2$ and $a=b$, we get $\sigma$ is split exact. 
\end{proof}

In the following, $\H^0_{\m}(-)$ denotes the zero-th local cohomology module. For a finite length $R$-module $M$, $\ell\ell(M)$ will stand for the smallest integer $n\ge 0$ such that $\m^n M=0$. 

\begin{subproposition}\label{loewy}
Let $(R,\m)$ be a regular local ring of dimension $1$. Let $L\in \mod(R)$. Let $c$ be an integer such that $c\ge\ell\ell(\H^0_{\m}(L))$. Consider the function $\phi_L(-):=\lambda(\frac{R}{\m^{c}}\otimes -):\mod(R)\to\mathbb{N}\cup\{0\}$. Then for any free $R$-module $F$ we have, $\Ext_R^1(L,F)^{\phi_L}=0$. 
\end{subproposition}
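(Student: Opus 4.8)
The plan is to write $\m = xR$ (so $R$ is a DVR) and reduce the statement, by the structure theorem for finitely generated modules over $R$ together with two applications of Lemma \ref{extended}, to a single indecomposable instance that is then settled by Lemma \ref{lowey}.

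First I would confirm that the pair $\mu, \phi_L$ genuinely defines the subfunctor in question: the functor $G := (R/\m^c)\otimes_R -$ is additive and half-exact, and it carries finitely generated modules into $\operatorname{fl}(R)$ since $x^c$ is a parameter, so by Lemma \ref{31} the function $\phi_L = \lambda_R(G(-))$ satisfies the hypotheses of Theorem \ref{subadd}. The same lemma records the key translation I will use at the end: a short exact sequence $\sigma$ of finitely generated modules is $\phi_L$-additive exactly when $\sigma \otimes_R R/\m^c$ stays short exact.

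Next I would perform two reductions with $c$ (hence $\phi_L$) fixed once and for all. Since $\Ext^1_R(L,-)^{\mu,\phi_L}$ is a subfunctor of $\Ext^1_R(L,-)$, writing $F \cong R^n$ and applying Lemma \ref{extended} with $I = (0)$ reduces the claim to $\Ext^1_R(L,R)^{\mu,\phi_L} = 0$. Then, viewing $\Ext^1_R(-,R)^{\mu,\phi_L}$ as a contravariant subfunctor of $\Ext^1_R(-,R)$ and decomposing $L \cong R^a \oplus \bigoplus_i R/x^{b_i}R$, a second application of Lemma \ref{extended} with $I = (0)$ reduces matters to the indecomposable summands. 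The point to verify here is that the single fixed $c$ still works for each summand: every torsion summand $R/x^{b_i}R$ is a direct summand of $\H^0_\m(L)$, so $b_i = \ell\ell(R/x^{b_i}R) \le \ell\ell(\H^0_\m(L)) \le c$.

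Finally I would treat the indecomposable cases. For $L_0 = R$ there is nothing to do, since $\Ext^1_R(R,R) = 0$. For $L_0 = R/x^bR$ with $1 \le b \le c$, I would take a class represented by $\sigma : 0 \to R \to X \to R/x^bR \to 0$ on which both $\mu$ and $\phi_L$ are additive. By the structure theorem $X \cong R \oplus T$ with $T$ its torsion submodule (the free rank is $1$ by additivity of rank along $\sigma$), and since the image of $R$ is torsion-free, $T$ embeds into $R/x^bR$, forcing $T \cong R/x^aR$ with $0 \le a \le b$; the $\mu$-additivity $\mu(X) = \mu(R) + \mu(R/x^bR) = 2$ then forces $a \ge 1$. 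At this point $\sigma$ has precisely the shape $0 \to F_1 \to F_2 \oplus R/x^aR \to R/x^bR \to 0$ with $F_1 = F_2 = R$ (so $\rnk F_2 \le \rnk F_1$), $a,b > 0$, $c \ge b$, and $\sigma \otimes_R R/\m^c$ short exact, so Lemma \ref{lowey} gives that $\sigma$ is split. Tracing the reductions back then yields $\Ext^1_R(L,F)^{\mu,\phi_L} = 0$. The genuine content sits in Lemma \ref{lowey}; the only subtleties in the present argument are bookkeeping, namely that the single fixed $c$ survives both reductions, that the $\mu$-condition is exactly what excludes the degenerate value $a = 0$ (which would fall outside the hypotheses of Lemma \ref{lowey}), and that the $\phi_L$-condition is exactly what supplies its tensor hypothesis.
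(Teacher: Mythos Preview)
Your proposal is correct and follows essentially the same route as the paper: reduce $F$ to $R$, decompose $L$ into indecomposables over the DVR, and for each torsion summand $R/x^bR$ analyze the middle term $X$ and feed it into Lemma~\ref{lowey}. The only visible difference is in how you pin down the shape of $X$: you use rank additivity plus the observation that the torsion part of $X$ injects into $R/x^bR$, whereas the paper argues by a $\mu(X)=2$ case split (ruling out $X\cong R^{2}$ via Schanuel and $X$ torsion via length); your argument is slightly cleaner but both land on $X\cong R\oplus R/x^aR$ with $a\ge 1$ and then invoke Lemma~\ref{lowey} in the same way.
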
    

\begin{proof} Note that, $R$ is not a field, since $\dim R=1$. Since $R$ is a regular local ring, we have $L\cong G\oplus L'$, where $G$ is a finite free $R$-module and $L'$ is an $R$-module of finite length. We have $\H^0_\m(L)\cong L'$. Since $\Ext^1_R(L,F)^{\phi_L}\cong \Ext^1_R(G,F)^{\phi_L}\oplus \Ext^1_R(L',F)^{\phi_L}\cong \Ext^1_R(L',F)^{\phi_L}$, we may replace $L$ by $L'$, and assume without loss of generality that $L$ has finite length, and $c\ge \ell\ell(L)$. For simplicity, we denote $\phi_L=\phi$. Now it is enough to show that, $\Ext_R^1(L,R)^{\phi}=0$. Since $(R,\m)$ is a regular local ring of dimension $1$, $R$ is a PID. Hence $\m=xR$ for some $x\neq 0$. Since $L$ is a finite length module, by the structure theorem of modules over a PID we have, $L\cong \oplus_{i=1}^{n} \frac{R}{x^{b_i}R}$, where $b_i>0$. So, $\ell\ell(L)=\max_{1\le i\le n}{b_i}$. Hence it is enough to show that, $\Ext_R^1\left(\frac{R}{x^{b_i}R},R\right)^{\phi}=0$ for all $i=1,\cdots,n$. Fix an $i\in\{1,\cdots,n\}$ and consider a short exact sequence $\sigma:0\to R\to X\to\frac{R}{x^{b_i}R}\to 0$ in $\Ext_R^1\left(\frac{R}{x^{b_i}R},R\right)^{\phi}$. By the structure theorem of modules over a PID, we also have $X\cong R^{\oplus s}\oplus \left(\oplus_{j=1}^t\frac{R}{x^{a_j}R}\right)$, where $a_j>0$. As $R$ is an integral domain and $\frac{R}{x^{b_i}R}$ and $\frac{R}{x^{a_j}R}$ are all torsion $R$-modules, so calculating rank along the short exact sequence $\sigma$, we obtain $s=1$. Moreover, $1+t=s+t=\mu(X)\leq 2$, since $\mu $ is subadditive. Thus $t\leq 1$. If $t=0$, then we obtain $\sigma:0\to R\to R \oplus \frac{R}{x^0R}\to\frac{R}{x^{b_i}R}\to 0$. Since $\sigma$ is $\phi$-additive, by using Lemma \ref{31} with the functor $\lambda_R\left(\frac{R}{\m^{c}} \otimes_R -\right)$ we obtain that $\sigma\otimes\frac{R}{\m^{c}}=\sigma\otimes\frac{R}{x^{c}R}$ is short exact. As $c\geq b_i>0$, we obtain a contradiction from Lemma \ref{lowey}.  Thus $t=1$. So, we get $X\cong R\oplus\frac{R}{x^{a}R}$ for some $a=a_j$. Thus we have the short exact sequence $\sigma:0\to R\to R\oplus\frac{R}{x^{a}R}\to\frac{R}{x^{b_i}R}\to 0$. %by calculating the length along the exact sequence $\sigma\otimes\frac{R}{\m^{c}}=\sigma\otimes\frac{R}{x^{c}R}$ we get that, the sequence $\sigma\otimes\frac{R}{x^{c}R}$ is short exact (by Lemma \ref{31}). 

Since $c\ge\ell\ell(L)=\max_{1\le i\le n}{b_i}$, we have $c\ge b_i$ for all $i=1,\cdots,n$. Hence by applying Lemma \ref{lowey} on $\sigma$ we get that, $\sigma$ is split exact. Since $\sigma$ is an arbitrary element of $\Ext_R^1\left(\frac{R}{x^{b_i}R},R\right)^{\phi}$, we get $\Ext_R^1\left(\frac{R}{x^{b_i}R},R\right)^{\phi}=0$ for all $i=1,\cdots,n$. This implies that, $\Ext_R^1\left(L,R\right)^{\phi}=0$, so $\Ext_R^1\left(L,F\right)^{\phi}=0$.
\end{proof}   

Taking $L=k$, $c=1=\ell\ell(k)$, we see that $\phi_L(-)=\mu(-)$ in Proposition \ref{loewy}. So, we also get another proof of Corollary \ref{regu} in dimension $1$. 

Next, we compare $\Ext^1_R(M,R)^{\mu}$ with $\Ext^1_R(M,R)$. In arbitrary dimension, we only consider this for local Cohen--Macaulay rings of minimal multiplicity.  We first record some general preliminary lemmas.    

\begin{sublemma}\label{artincan}
Let $(R,\m,k)$ be a local ring such that $\m^2=0$, $\m\neq 0$. Let $e:=\mu(\m)$. Then  $\mu(\omega_R)=e$, $\mu(\Omega_R\omega_R)=e^2-1$ and $\Omega_R\omega_R\cong k^{\oplus(e^2-1)}$.
\end{sublemma}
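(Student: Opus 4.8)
The plan is to exploit that $\m^2=0$ forces $R$ to be Artinian with $\m$ a $k$-vector space, and then to run two length counts together with Matlis duality. I would begin by recording the basic numerics. Since $\m^2=0$, the ideal $\m$ is annihilated by $\m$, so $\m=\m/\m^2$ is a $k$-vector space of dimension $\mu(\m)=e$; thus $\lambda_R(\m)=e$ and $\lambda_R(R)=1+e$. Moreover $(0:_R\m)=\m$: the inclusion $\m\subseteq(0:_R\m)$ holds because $\m^2=0$, while any $x\in(0:_R\m)$ cannot be a unit (otherwise $\m=\m x=0$) and hence lies in $\m$. Therefore $\Soc(R)=\m$, and since $\depth R=0$ the type satisfies $\r(R)=\dim_k\Soc(R)=e$.

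For the first assertion I would invoke the standard identity $\mu(\omega_R)=\r(R)$ (the same identity already used just before this lemma, and which for the Artinian ring $R$ also follows from $\mu(M^\vee)=\r(M)$ under Matlis duality applied to $\omega_R\cong R^\vee$). This gives $\mu(\omega_R)=\r(R)=e$ at once.

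For the remaining two assertions I would take a minimal free cover $\pi\colon R^{e}\twoheadrightarrow\omega_R$, which is minimal precisely because $\mu(\omega_R)=e$, so that $\Omega_R\omega_R=\ker\pi$. Minimality gives $\Omega_R\omega_R\subseteq\m R^{e}$, and since $\m^2=0$ the submodule $\m R^{e}$ is annihilated by $\m$; hence $\Omega_R\omega_R$ is a $k$-vector space and is therefore isomorphic to $k^{\oplus r}$ with $r=\lambda_R(\Omega_R\omega_R)$. To compute $r$ I would apply additivity of length to $0\to\Omega_R\omega_R\to R^{e}\to\omega_R\to 0$, using $\lambda_R(\omega_R)=\lambda_R(R)=1+e$ (Matlis duality: for the Artinian ring $R$ one has $\omega_R\cong R^\vee$ and $\lambda_R(R^\vee)=\lambda_R(R)$). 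This yields $r=e\,\lambda_R(R)-\lambda_R(\omega_R)=e(1+e)-(1+e)=e^2-1$, so $\Omega_R\omega_R\cong k^{\oplus(e^2-1)}$, and consequently $\mu(\Omega_R\omega_R)=e^2-1$ since a finite-dimensional $k$-vector space is minimally generated by its dimension.

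This is a clean computation rather than a deep argument; the only inputs beyond direct calculation are the two standard facts $\mu(\omega_R)=\r(R)$ and $\lambda_R(\omega_R)=\lambda_R(R)$, both coming from the theory of the canonical (Matlis dual) module of an Artinian local ring. The step deserving the most care is the observation that the first syzygy lands inside $\m R^{e}$ and is thus a $k$-vector space, since this is exactly what upgrades the length count into the structural conclusion $\Omega_R\omega_R\cong k^{\oplus(e^2-1)}$; note also that $e\ge 1$ because $\m\neq 0$, so $e^2-1\ge 0$ and the case $e=1$ correctly recovers $\Omega_R\omega_R=0$, i.e.\ $R$ Gorenstein.
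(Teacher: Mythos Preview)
Your proof is correct and follows essentially the same route as the paper: you compute $(0:_R\m)=\m$ to get $\mu(\omega_R)=\r(R)=e$, observe that $\Omega_R\omega_R\subseteq \m R^{\oplus e}$ is killed by $\m$, and then use additivity of length together with $\lambda_R(\omega_R)=\lambda_R(R)=1+e$ from Matlis duality to obtain $\lambda_R(\Omega_R\omega_R)=e^2-1$. Your presentation is slightly more explicit (e.g.\ the remark on the $e=1$ case), but the argument is the same.
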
  

\begin{proof}
Since $\m^2=0$ and $\m\neq 0$, we have $\m\subseteq(0:_R\m)\subsetneq R$. Hence $\m=(0:_R\m)$. Then we have $\mu(\omega_R)=\r(R)=\dim_k(0:_R\m)=\dim_k\m=\dim_k\left(\frac{\m}{\m^2}\right)=\mu(\m)=e$. Also note that, $\lambda(R)=\lambda\left(\frac{R}{\m^2}\right)=\lambda\left(\frac{R}{\m}\right)+\lambda\left(\frac{\m}{\m^2}\right)=1+\mu(\m)$. Next, consider the short exact sequence $0\to\Omega_R\omega_R\to R^{\oplus\mu(\omega_R)}\to\omega_R\to 0$, so $\Omega_R\omega_R\subseteq\m R^{\oplus\mu(\omega_R)}$. Hence $\m\Omega_R\omega_R=0$. This implies that, $\mu(\Omega_R\omega_R)=\lambda\left(\frac{\Omega_R\omega_R}{\m\Omega_R\omega_R}\right)=\lambda(\Omega_R\omega_R)=\lambda(R^{\oplus\mu(\omega_R)})-\lambda(\omega_R)=\mu(\omega_R)\lambda(R)-\lambda(R)=\lambda(R)(\mu(\omega_R)-1)=\lambda(R)(\mu(\m)-1)=(1+\mu(\m))(\mu(\m)-1)=e^2-1$, where $\lambda(\omega_R)=\lambda(R)$ follows from Matlis duality.
\end{proof}

\begin{sublemma}\label{syzreg}
Let $(R,\m,k)$ be a local ring and let, $M$ be a finitely generated $R$-module. If $x$ is $M$-regular, then $\frac{\Omega_R M}{x\Omega_R M}\cong\Omega_{R/xR}\left(\frac{M}{xM}\right)$.
\end{sublemma}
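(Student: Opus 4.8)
The plan is to start from a minimal free presentation of $M$ over $R$ and reduce it modulo $x$. Choose a minimal free cover $\pi\colon F\to M$, so that $F=R^{\mu_R(M)}$ and $\Omega_R M=\ker\pi\subseteq \m F$, giving the short exact sequence $0\to \Omega_R M\to F\xrightarrow{\pi} M\to 0$. Since $\Omega_{R/xR}(M/xM)$ is by definition the kernel of a minimal $R/xR$-free cover of $M/xM$, it suffices to show that applying $-\otimes_R R/xR$ to this sequence yields a short exact sequence $0\to \Omega_R M/x\Omega_R M\to F/xF\to M/xM\to 0$ which is again a minimal cover.

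The crux, and the only place the regularity hypothesis enters, is left-exactness after reducing mod $x$, i.e. the vanishing $\Tor_1^R(M,R/xR)=0$. If $x$ happens to be $R$-regular this is immediate, since then $0\to R\xrightarrow{\cdot x}R\to R/xR\to 0$ is a free resolution and $\Tor_1^R(M,R/xR)\cong(0:_M x)=0$. In general I would tensor $0\to xR\to R\to R/xR\to 0$ with $M$ to identify $\Tor_1^R(M,R/xR)$ with $(0:_M x)/\ann_R(x)M$; since $x$ is $M$-regular we have $(0:_M x)=0$, and as $\ann_R(x)M\subseteq(0:_M x)$ this forces the whole group to vanish. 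Either way the long exact $\Tor$-sequence collapses to $0\to \Omega_R M/x\Omega_R M\to F/xF\to M/xM\to 0$, with the middle term $F/xF\cong (R/xR)^{\mu_R(M)}$ being $R/xR$-free.

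It then remains to check minimality. Because $x\in\m$ and $\Omega_R M\subseteq \m F$, the image of $\Omega_R M/x\Omega_R M$ in $F/xF$ lies in $\m(F/xF)=\m F/xF$, so $F/xF\to M/xM$ is a minimal cover. Moreover $\mu_{R/xR}(M/xM)=\dim_k(M/\m M)=\mu_R(M)$ (using $xM\subseteq \m M$), so $F/xF$ has exactly the rank of the minimal free cover of $M/xM$. By uniqueness of the minimal free cover, its kernel $\Omega_R M/x\Omega_R M$ is isomorphic, as an $R/xR$-module, to $\Omega_{R/xR}(M/xM)$, which is the desired conclusion.

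The main obstacle is precisely the $\Tor$-vanishing step: the hypothesis is only that $x$ is $M$-regular (not $R$-regular), so the quick argument via the periodic resolution of $R/xR$ is unavailable, and one must control $\Tor_1^R(M,R/xR)$ directly. The key observation there is that $M$-regularity of $x$ also annihilates $\ann_R(x)M$, which is what makes the general computation collapse to zero.
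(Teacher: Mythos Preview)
Your proof is correct and follows essentially the same approach as the paper: take the minimal free presentation of $M$, use $M$-regularity of $x$ to obtain $\Tor_1^R(M,R/xR)=0$, tensor with $R/xR$, and invoke $\mu_R(M)=\mu_{R/xR}(M/xM)$ to conclude that the resulting presentation is minimal. The paper simply asserts the $\Tor$-vanishing in one line, whereas you spell out the computation $\Tor_1^R(M,R/xR)\cong(0:_Mx)/\ann_R(x)M$ to handle the case when $x$ is not assumed $R$-regular; this extra care is warranted by the hypothesis as stated.
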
 

\begin{proof}
Since $x$ is $M$-regular, we have $\tor_1^R(M,\frac{R}{xR})=0$. Hence tensoring the short exact sequence $0\to\Omega_R M\to R^{\oplus\mu_R(M)}\to M\to 0$ with $R/xR$, we get the exact sequence $0\to\frac{\Omega_R M}{x\Omega_R M}\to (\frac{R}{xR})^{\oplus\mu_R(M)}\to\frac{M}{xM}\to 0$. Since $\mu_R(M)=\mu_{\frac{R}{xR}}(\frac{M}{xM})$, we have $\frac{\Omega_R M}{x\Omega_R M}\cong\Omega_{R/xR}\left(\frac{M}{xM}\right)$.
\end{proof}  

\begin{subchunk}\label{inftype} Since $\mu_R(-)=\lambda_R\left(\dfrac{(-)}{\m(-)}\right)$, it follows by \ref{infres} that $\mu_R((-))=\mu_S(S\otimes_R(-))$, where $S=R[X]_{\m[X]}$. Since tensoring with $S$ preserves exactness, so tensoring a minimal free resolution $(F_{\bullet},\partial_{\bullet})$ of an $R$-module $M$ with $S$ and remembering $S\otimes\partial$ now have entries in $\m S$, the maximal ideal of $S$, we see that $S\otimes_R \syz_R M\cong\syz_S(S\otimes_S M)$. Also, if $\omega_R$ exists, then owing to the fact that $S/\m S$ is a field, we see that $\omega_S$ also exists and $\omega_S\cong S\otimes_R \omega_R$ by \cite[Theorem 3.3.14(a)]{bh}. Hence, $S\otimes_R \Hom_R(-,\omega_R)\cong \Hom_S\left(S\otimes_R(-),\omega_S\right)$.   Finally, we also have $\r(R)=\r(S)$ by \cite[Proposition 1.2.16(b)]{bh}.
\end{subchunk}   

\begin{subproposition}\label{mintype}
Let $(R,\m,k)$ be a local Cohen--Macaulay ring of minimal multiplicity admitting a canonical module $\omega_R$ and also let, $\m\neq 0$. Then $\mu((\Omega_R\omega_R)^{\dagger})=\r(R)^2-1$. 
\end{subproposition}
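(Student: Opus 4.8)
The plan is to induct on $d=\dim R$, after two preliminary reductions. First I would pass to the case of infinite residue field via the faithfully flat local extension $R\to R'=R[X]_{\m R[X]}$: this preserves $\dim$, $\depth$, type, multiplicity and the property of minimal multiplicity, sends $\omega_R$ to $\omega_{R'}$, and commutes with $\Omega_R$, $\Hom_R(-,\omega_R)$ and $\mu$, so the claimed equality is unaffected. Second, if $R$ is Gorenstein then $\omega_R\cong R$, so $\Omega_R\omega_R=0$ and $(\Omega_R\omega_R)^{\dagger}=0$ while $\r(R)=1$; both sides equal $0$ and there is nothing to prove. Hence I may assume $R$ is not Gorenstein, i.e. $\r(R)\ge 2$ and $\omega_R$ is not free. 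Then from the syzygy sequence $0\to \Omega_R\omega_R\to R^{\mu(\omega_R)}\to \omega_R\to 0$ and the depth inequality $\depth\Omega_R\omega_R\ge\min\{d,d+1\}=d$, the module $\Omega_R\omega_R$ is nonzero and maximal Cohen--Macaulay.

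For the base case $d=0$, minimal multiplicity forces $\m^2=0$, and $\m\ne0$ since $R$ is not a field. Lemma \ref{artincan} then gives $\Omega_R\omega_R\cong k^{\oplus(e^2-1)}$ with $e=\mu(\m)$, and $\m^2=0$ yields $\r(R)=\dim_k(0:_R\m)=\dim_k\m=e$. Applying $(-)^{\dagger}$ and using $\Hom_R(k,\omega_R)=\Soc(\omega_R)\cong k$ (the canonical module has type one), I obtain $(\Omega_R\omega_R)^{\dagger}\cong k^{\oplus(e^2-1)}$, so $\mu((\Omega_R\omega_R)^{\dagger})=e^2-1=\r(R)^2-1$.

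For the inductive step $d\ge 1$, I would pick a general $x\in\m\setminus\m^2$ which is superficial and, as $R$ is Cohen--Macaulay, $R$-regular, hence also $\omega_R$- and $(\Omega_R\omega_R)$-regular. Then $R/xR$ is Cohen--Macaulay of dimension $d-1$ and of minimal multiplicity (superficiality gives $e(R/xR)=e(R)$ while $\mu(\m_{R/xR})=\mu(\m)-1$), with $\omega_{R/xR}\cong\omega_R/x\omega_R$ and $\r(R/xR)=\r(R)\ge 2$; in particular $R/xR$ is again non-Gorenstein with $\m_{R/xR}\ne0$, so the inductive hypothesis applies. The crux is to descend $(\Omega_R\omega_R)^{\dagger}$ modulo $x$. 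By the preceding sublemma (syzygies commute with reduction along a regular element), $(\Omega_R\omega_R)\otimes_R R/xR\cong \Omega_{R/xR}(\omega_{R/xR})$; and since $\Omega_R\omega_R$ is maximal Cohen--Macaulay, \cite[Proposition 3.3.3(a), Theorem 3.3.5(a)]{bh} gives $(\Omega_R\omega_R)^{\dagger}\otimes_R R/xR\cong \Hom_{R/xR}\!\big((\Omega_R\omega_R)\otimes_R R/xR,\ \omega_{R/xR}\big)$. Combining the two isomorphisms yields $(\Omega_R\omega_R)^{\dagger}\otimes_R R/xR\cong\big(\Omega_{R/xR}\,\omega_{R/xR}\big)^{\dagger_{R/xR}}$. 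As $\mu_R(N)=\mu_{R/xR}(N\otimes_R R/xR)$ for every finitely generated $N$, I conclude $\mu_R((\Omega_R\omega_R)^{\dagger})=\mu_{R/xR}\big((\Omega_{R/xR}\,\omega_{R/xR})^{\dagger_{R/xR}}\big)=\r(R/xR)^2-1=\r(R)^2-1$ by induction.

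The main obstacle is the compatibility of the canonical dual with reduction modulo $x$, namely the isomorphism $(\Omega_R\omega_R)^{\dagger}\otimes_R R/xR\cong\Hom_{R/xR}(\,\cdot\,,\omega_{R/xR})$: this is precisely where maximal Cohen--Macaulayness of $\Omega_R\omega_R$ and the identification $\omega_R/x\omega_R\cong\omega_{R/xR}$ are essential. The other point requiring care is verifying that $R/xR$ remains of minimal multiplicity with $\r(R/xR)=\r(R)$, so the induction stays within the stated hypotheses and never degenerates to a field; both are secured by choosing $x$ superficial after arranging $k$ to be infinite.
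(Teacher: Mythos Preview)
Your proof is correct and follows essentially the same approach as the paper's: reduce to infinite residue field, induct on $d$, use Lemma \ref{artincan} for the base case $d=0$, and for the inductive step descend $(\Omega_R\omega_R)^{\dagger}$ modulo a suitable regular element $x\in\m\setminus\m^2$ via the compatibility of syzygies and the canonical dual with reduction by $x$. The only cosmetic differences are that you separate out the Gorenstein case explicitly and select $x$ as a superficial element, whereas the paper chooses $x$ as part of a minimal reduction witnessing $\m^2=(x,x_1,\dots,x_{d-1})\m$; both choices yield that $R/xR$ again has minimal multiplicity and the same type.
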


\begin{proof} Due to \ref{inftype}, we may pass to the faithfully flat extension $S:=R[X]_{\m[X]}$ and assume the residue field is infinite.  

We will prove the claim by induction on $\dim R=d$. First let, $d=0$. Since $R$ has minimal multiplicity, we have $\m^2=0$. Hence by Lemma \ref{artincan} we get that, $\Omega_R\omega_R\cong k^{\oplus(\r(R)^2-1)}$. This implies $(\Omega_R\omega_R)^{\dagger}\cong k^{\oplus(\r(R)^2-1)}$, so $\mu((\Omega_R\omega_R)^{\dagger})=\r(R)^2-1$. Now let, $\dim R=d\ge 1$ and let, the claim be true for rings with dimension $d-1$. Let $x\in\m$ be such that $\m^2=(x,x_1,...,x_{d-1})\m$ (see \cite[4.6.14(c)]{bh}). So, $\frac{R}{xR}$ has minimal multiplicity. Now we have
\begin{align*}
\frac{(\Omega_R\omega_R)^{\dagger}}{x(\Omega_R\omega_R)^{\dagger}} &\cong\left(\frac{\Omega_R\omega_R}{x\Omega_R\omega_R}\right)^{\dagger}\cong \left(\Omega_{\frac{R}{xR}}\left(\frac{\omega_R}{x\omega_R}\right)\right)^{\dagger}\cong \left(\Omega_{\frac{R}{xR}}\omega_{\frac{R}{xR}}\right)^{\dagger}
\end{align*} where the first isomorphism follows from \cite[Proposition 3.3.3(a)]{bh}, and the second isomorphism is by Lemma \ref{syzreg}.  
So, 
\begin{align*}
\mu_R((\Omega_R\omega_R)^{\dagger}) =\mu_{\frac{R}{xR}}\left(\frac{(\Omega_R\omega_R)^{\dagger}}{x(\Omega_R\omega_R)^{\dagger}}\right)&=\mu_{\frac{R}{xR}}\left(\left(\Omega_{\frac{R}{xR}}\omega_{\frac{R}{xR}}\right)^{\dagger}\right)\\
&=r\left(\frac{R}{xR}\right)^2-1\text{ [By induction hypothesis]}\\
&=\r(R)^2-1
\end{align*}
\end{proof}

\begin{subproposition}\label{muadd}
Let $(R,\m,k)$ be a local Cohen--Macaulay ring of minimal multiplicity admitting a canonical module $\omega_R$. Then the exact sequence $0\to R\to\omega_R^{\oplus\mu(\omega_R)}\to (\Omega_R\omega_R)^{\dagger}\to 0$ is $\mu$-additive.
\end{subproposition}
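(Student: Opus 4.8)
The plan is to realize the displayed sequence as the $\omega_R$-dual of the start of a minimal free resolution of $\omega_R$, and then to verify $\mu$-additivity by a direct count of minimal generators, where the only substantive input is Proposition \ref{mintype}.

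First I would start from the exact sequence $0\to \Omega_R\omega_R \to R^{\oplus \mu(\omega_R)}\to \omega_R\to 0$ arising from a minimal free presentation of $\omega_R$, and apply the functor $(-)^{\dagger}=\Hom_R(-,\omega_R)$. Using the standard facts that $\Hom_R(\omega_R,\omega_R)\cong R$ and $\Ext^1_R(\omega_R,\omega_R)=0$ for a Cohen--Macaulay local ring admitting a canonical module, together with $\Hom_R(R^{\oplus \mu(\omega_R)},\omega_R)\cong \omega_R^{\oplus \mu(\omega_R)}$, the resulting long exact sequence collapses to $0\to R\to \omega_R^{\oplus \mu(\omega_R)}\to (\Omega_R\omega_R)^{\dagger}\to 0$. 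This is precisely the sequence in the statement, and this step simultaneously identifies it and certifies its exactness.

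Next I would compute $\mu$ of each term. We have $\mu(R)=1$, and since $\mu(\omega_R)=\r(R)$, the middle term satisfies $\mu(\omega_R^{\oplus \mu(\omega_R)})=\mu(\omega_R)^2=\r(R)^2$. For the right-hand term, Proposition \ref{mintype} gives $\mu((\Omega_R\omega_R)^{\dagger})=\r(R)^2-1$. Hence $\mu(R)+\mu((\Omega_R\omega_R)^{\dagger})=1+(\r(R)^2-1)=\r(R)^2=\mu(\omega_R^{\oplus\mu(\omega_R)})$, so $\mu$ is additive on the sequence, as desired.

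The conceptual content has essentially already been carried out in Proposition \ref{mintype}; once that generator count is in hand, the present statement reduces to the short arithmetic above. The only point requiring a little care is justifying the vanishing $\Ext^1_R(\omega_R,\omega_R)=0$ and the identification $\Hom_R(\omega_R,\omega_R)\cong R$ that produce the dual sequence, but both are standard properties of the canonical module, so I do not anticipate any genuine obstacle here.
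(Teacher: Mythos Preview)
Your proposal is correct and follows essentially the same route as the paper: dualize the minimal presentation $0\to \Omega_R\omega_R\to R^{\oplus\mu(\omega_R)}\to \omega_R\to 0$ using $\Ext^1_R(\omega_R,\omega_R)=0$ and $\Hom_R(\omega_R,\omega_R)\cong R$, then invoke Proposition \ref{mintype} to verify the generator count $\mu((\Omega_R\omega_R)^{\dagger})=\r(R)^2-1$. The only minor remark is that Proposition \ref{mintype} assumes $\m\neq 0$, but when $\m=0$ the ring is a field and the sequence degenerates trivially, so this causes no difficulty.
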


\begin{proof}
Consider the exact sequence $0\to\Omega_R\omega_R\to R^{\oplus\mu(\omega_R)}\to\omega_R\to 0$. Since $\Ext^1_R(\omega_R,\omega_R)=0$, we have the exact sequence $0\to R\to \omega_R^{\oplus\mu(\omega_R)}\to(\Omega_R\omega_R)^{\dagger}\to 0$. From Proposition \ref{mintype} we have, $\mu((\Omega_R\omega_R)^{\dagger})=\r(R)^2-1=\mu(\omega_R)^2-\mu(R)=\mu(\omega_R^{\oplus\mu(\omega_R)})-\mu(R)$. Hence $0\to R\to\omega_R^{\oplus\mu(\omega_R)}\to (\Omega_R\omega_R)^{\dagger}\to 0$ is $\mu$-additive.
\end{proof}

\begin{subproposition}\label{mr}
Let $(R,\m,k)$ be a local Cohen--Macaulay ring of minimal multiplicity. Then $\Ext^1_R(M,F)=\Ext^1_R(M,F)^{\mu}$ for any maximal Cohen--Macaulay $R$-module $M$ and any finitely generated free $R$-module $F$.    
\end{subproposition}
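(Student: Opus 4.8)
The plan is to reduce, in two steps, to an Artinian ring whose maximal ideal squares to zero, where a one-line socle argument settles everything. Recall that $[\sigma]\in\Ext^1_R(M,F)^{\mu}$ means precisely that the sequence $\sigma:0\to F\to X\to M\to 0$ is $\mu$-additive, i.e. $\mu(X)=\mu(M)+\mu(F)$; since $\mu$ is always subadditive, the content is the reverse inequality $\mu(X)\ge\mu(M)+\mu(F)$, so the whole point is to rule out a drop in the number of generators.

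First I would reduce to the case $F=R$. The functor $\Ext^1_R(M,-)^{\mu}$ is a subfunctor of the $R$-linear functor $\Ext^1_R(M,-)$, hence is itself $R$-linear by Lemma \ref{subad}. Writing the free module as $F=R^{\oplus n}$, a finite direct sum of copies of $R$, and applying Lemma \ref{extended} with the ideal $I=R$ (so that $I\cdot\Ext^1_R(M,R)=\Ext^1_R(M,R)$), it suffices to prove the equality on each summand, i.e. that $\Ext^1_R(M,R)^{\mu}=\Ext^1_R(M,R)$. Concretely, I must show that every short exact sequence $\sigma:0\to R\xrightarrow{\iota} X\to M\to 0$ with $M$ maximal Cohen--Macaulay is $\mu$-additive.

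Next I would cut down by a maximal regular sequence. As in Proposition \ref{mintype}, passing to $R[X]_{\m[X]}$ I may assume $k$ is infinite; this is harmless, since faithfully flat base change preserves minimal multiplicity, maximal Cohen--Macaulayness, the invariant $\mu$, and exactness, so $\sigma$ is $\mu$-additive over $R$ if and only if its base change is. Now I choose a minimal reduction $\mathbf x=x_1,\dots,x_d$ of $\m$; as $R$ is Cohen--Macaulay this is a maximal $R$-regular sequence, and minimal multiplicity yields the reduction-number-one equality $\m^2=\mathbf x\m$. Because $M$ is maximal Cohen--Macaulay and $R$ is maximal Cohen--Macaulay over itself, the middle term $X$ of $\sigma$ is maximal Cohen--Macaulay as well, so $\mathbf x$ is regular on each of $R$, $X$, $M$. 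Consequently $\Tor_1^R(R/\mathbf x R,M)=0$, so tensoring $\sigma$ with $\bar R:=R/\mathbf x R$ gives a short exact sequence $\bar\sigma:0\to\bar R\to\bar X\to\bar M\to 0$, and $\mu$ is unchanged under $-\otimes_R\bar R$ since $\mathbf x\subseteq\m$. Hence $\sigma$ is $\mu$-additive over $R$ iff $\bar\sigma$ is $\mu$-additive over $\bar R$, where $\bar R$ is Artinian with $\bar\m^2=(\m^2+\mathbf x R)/\mathbf x R=0$ (when $d=0$ this step is skipped, as $R$ is already such a ring).

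Finally I would dispatch the square-zero Artinian case, which is the heart of the matter. For Artinian local $(\bar R,\bar\m)$ with $\bar\m^2=0$, tensoring $0\to\bar R\xrightarrow{\iota}\bar X\to\bar M\to 0$ with $k$ shows that $\bar\sigma$ is $\mu$-additive exactly when $\iota(1)\notin\bar\m\bar X$. If instead $\iota(1)\in\bar\m\bar X$, then for every $s\in\bar\m$ one gets $\iota(s)=s\,\iota(1)\in\bar\m\cdot\bar\m\bar X=\bar\m^2\bar X=0$, forcing $\bar\m\subseteq\ker\iota=0$, which is impossible unless $\bar\m=0$ (in which case $\bar R=k$ and the sequence splits, so it is trivially $\mu$-additive). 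Thus $\bar\sigma$ is always $\mu$-additive. The only delicate point in the whole argument is checking that reduction modulo $\mathbf x$ preserves exactness and $\mu$ at the same time; this is precisely where the maximal Cohen--Macaulayness of $M$ (giving $M$-regularity of $\mathbf x$, hence $\Tor_1^R(\bar R,M)=0$) and the minimal multiplicity hypothesis (giving $\bar\m^2=0$) are both used, and once they are in place the square-zero computation is immediate.
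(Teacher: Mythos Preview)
Your proof is correct and takes a genuinely different route from the paper. The paper's argument first assumes $R$ admits a canonical module and constructs a specific $\mu$-additive exact sequence $0\to R\to\omega_R^{\oplus\mu(\omega_R)}\to(\Omega_R\omega_R)^{\dagger}\to 0$ (this requires the build-up of Lemma \ref{artincan} through Proposition \ref{muadd}), then applies $\Hom_R(M,-)$ and the commutative diagram of Corollary \ref{long}, using $\Ext^1_R(M,\omega_R)=0$ for maximal Cohen--Macaulay $M$ to force the inclusion $\Ext^1_R(M,R)^{\mu}\hookrightarrow\Ext^1_R(M,R)$ to be surjective; the general case is then obtained by passing to the completion. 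Your approach is more elementary and direct: you reduce modulo a minimal reduction $\mathbf x$ of $\m$ to an Artinian ring with square-zero maximal ideal, and there the injectivity of $\bar\iota$ immediately rules out $\bar\iota(1)\in\bar\m\bar X$. This bypasses canonical modules entirely (hence no completion step is needed), and in fact proves the stronger statement that over any Artinian local ring $(\bar R,\bar\m)$ with $\bar\m^2=0$ and $\bar\m\ne 0$, every short exact sequence $0\to\bar R\to\bar X\to\bar M\to 0$ is $\mu$-additive. The paper's approach has the virtue of illustrating the subfunctor and long-exact-sequence machinery developed in Section \ref{sec3}, while your argument is shorter and makes transparent why minimal multiplicity (i.e.\ $\bar\m^2=0$ after going modulo $\mathbf x$) is exactly what is needed.
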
  

\begin{proof} By Lemma \ref{extended}, it is enough to prove the claim for $F=R$.  

We first consider the case when $R$ has a canonical module $\omega_R$. Consider the $\mu$-additive (by Proposition \ref{muadd}) exact sequence $\sigma:0\to R\to\omega_R^{\oplus\mu(\omega_R)}\to (\Omega_R\omega_R)^{\dagger}\to 0$. Applying $\Hom_R(M,-)$ to $\sigma$, we get the following part of a commutative diagram of exact sequences by Corollary \ref{long}:   

$$\begin{tikzcd}[sep=1.8em, font=\small]
{\Hom_R(M,(\Omega_R\omega_R)^{\dagger})} \arrow[r]                 & {\Ext_{R}^1(M,R)} \arrow[r]                 & {\Ext_{R}^1(M,\omega_R^{\oplus\mu(\omega_R)})}                 \\
{\Hom_R(M,(\Omega_R\omega_R)^{\dagger})} \arrow[r] \arrow[u, equal] & {\Ext_{R}^1(M,R)^{\mu}} \arrow[r] \arrow[u, hook] & {\Ext_{R}^1(M,\omega_R^{\oplus\mu(\omega_R)})^{\mu}} \arrow[u, hook]
\end{tikzcd}$$  
Since $\omega_R$ has finite injective dimension, we have $\Ext_{R}^1(M,\omega_R^{\oplus\mu(\omega_R)})=0$ by \cite[Exercise 3.1.24]{bh}. So, $\Ext_{R}^1(M,\omega_R^{\oplus\mu(\omega_R)})^{\mu}=0$ as well. Hence we get the following commutative diagram:
$$\begin{tikzcd}[sep=1.8em, font=\small]
{\Hom_R(M,(\Omega_R\omega_R)^{\dagger})} \arrow[r, "f"]                 & {\Ext_{R}^1(M,R)} \arrow[r]                 & 0             \\
{\Hom_R(M,(\Omega_R\omega_R)^{\dagger})} \arrow[r, "g"] \arrow[u, equal] & {\Ext_{R}^1(M,R)^{\mu}} \arrow[r] \arrow[u, hook, "h"] & 0 
\end{tikzcd}$$ 
Thus $h\circ g=f$ is surjective, so $h$ is surjective. Since $h$ is the natural inclusion map, we have $\Ext^1_R(M,R)=\Ext^1_R(M,R)^{\mu}$.

Now we consider the general case. Since $\Ext^1_R(M,R)^{\mu}\subseteq \Ext_{R}^1(M,R)$, it is enough to prove the other inclusion. So let, $\sigma: 0\to R\to X\to M\to 0 $ be an exact sequence. We need to show $\sigma$ is $\mu$-additive. Now consider the completion $\widehat \sigma: 0\to \widehat R\to \widehat X\to \widehat M \to 0$. Since $\widehat R$ is Cohen--Macaulay, having minimal multiplicity, admitting a canonical module, and $\widehat M$ is maximal Cohen--Macaulay over $\widehat R$, by the first part of the proof we get $ \Ext^1_{\widehat R}(\widehat M,\widehat R)=\Ext^1_{\widehat R}(\widehat M,\widehat R)^{\mu}$. Thus $[\widehat \sigma ]\in \Ext^1_{\widehat R}(\widehat M,\widehat R)=\Ext^1_{\widehat R}(\widehat M,\widehat R)^{\mu}$. Hence $\widehat \sigma$ is $\mu$-additive. Since number of generators does not change under completion, we get $\sigma$ is $\mu$-additive, which is what we wanted to prove.  
\end{proof}

\begin{subcor}\label{hyper}
Let $(R,\m,k)$ be a local Cohen--Macaulay ring of dimension $d$ and minimal multiplicity. If $\Ext^1_R(\syz^{i+d}_R k,R)^{\mu}=0$ for some $i\geq 0$, then $R$ is a hypersurface. 
\end{subcor}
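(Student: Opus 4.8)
The plan is to use Proposition \ref{mr} to convert the hypothesis into the vanishing of an ordinary $\Ext$ module, and then to read off the structure of $R$ from a single Bass number. I may assume $R$ is not already regular, since a regular ring is trivially a hypersurface. As $R$ is Cohen--Macaulay of dimension $d$, the syzygy $\syz^d_R k$ is maximal Cohen--Macaulay (its depth is at least $\min\{d,d\}=d$) and it is nonzero because $k$ has infinite projective dimension. Hence Proposition \ref{mr} applies with $M=\syz^d_R k$ and $F=R$, giving $\Ext^1_R(\syz^d_R k,R)=\Ext^1_R(\syz^d_R k,R)^{\mu}=0$. Dimension shifting along the short exact sequences $0\to \syz^{i+1}_R k\to F_i\to \syz^i_R k\to 0$ of a minimal free resolution yields $\Ext^1_R(\syz^d_R k,R)\cong \Ext^{d+1}_R(k,R)$, so the upshot is $\Ext^{d+1}_R(k,R)=0$.

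It then remains to show that, for a Cohen--Macaulay ring of minimal multiplicity, $\Ext^{d+1}_R(k,R)=0$ forces $R$ to be a hypersurface, and I would do this by reducing to the Artinian case. After the harmless faithfully flat base change to $R[X]_{\m[X]}$ I may assume $k$ is infinite, so that $\m$ has a minimal reduction $\mathbf x=x_1,\dots,x_d$ which, since $R$ is Cohen--Macaulay, is a regular sequence and, by minimal multiplicity, satisfies $\m^2=\mathbf x\,\m$. Setting $\bar R:=R/(\mathbf x)$, this is Artinian with $\bar\m^2=0$. Applying Rees's lemma $d$ times (each $x_i$ is regular on the relevant module and annihilates $k$) gives $\Ext^{d+1}_R(k,R)\cong \Ext^1_{\bar R}(k,\bar R)$, whence $\Ext^1_{\bar R}(k,\bar R)=0$.

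Finally I would compute this $\Ext$ module directly. Writing $e:=\mu(\bar\m)$, the condition $\bar\m^2=0$ gives $\bar\m\cong k^{\,e}$ and $\Soc(\bar R)=(0:_{\bar R}\bar\m)=\bar\m$, so $\r(\bar R)=e$. Applying $\Hom_{\bar R}(-,\bar R)$ to $0\to\bar\m\to\bar R\to k\to 0$ produces the exact sequence
$$\bar R\xrightarrow{\ \rho\ }\Hom_{\bar R}(\bar\m,\bar R)\to \Ext^1_{\bar R}(k,\bar R)\to \Ext^1_{\bar R}(\bar R,\bar R)=0,$$
where $\Hom_{\bar R}(\bar\m,\bar R)\cong \Hom_{\bar R}(k^{\,e},\bar R)\cong \Soc(\bar R)^{\,e}\cong k^{\,e^2}$, and where the image of $\rho$ is spanned by the inclusion $\bar\m\hookrightarrow\bar R$ (any $r\in\bar\m$ restricts to $0$ on $\bar\m$ since $\bar\m^2=0$), hence is one-dimensional. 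Thus $\Ext^1_{\bar R}(k,\bar R)\cong k^{\,e^2-1}$, and its vanishing forces $e=1$. Then $\bar R\cong k[x]/(x^2)$ is Gorenstein, so $R$ is Gorenstein; combined with minimal multiplicity this gives $\operatorname{embdim} R-\dim R=1$, i.e.\ $R$ has codimension one, and a codimension-one Cohen--Macaulay ring is a hypersurface.

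I expect the only delicate point to be the reduction step: one must choose $\mathbf x$ so that it is simultaneously a regular sequence and part of a minimal generating set of $\m$ (which is exactly why the passage to an infinite residue field is needed, so that the embedding-dimension bookkeeping $\operatorname{embdim}\bar R=\operatorname{embdim} R-d$ holds), and one must verify the hypotheses of Rees's lemma at each stage. The Artinian computation itself is routine once $\bar\m^2=0$ is in hand.
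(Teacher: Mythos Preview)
Your argument is correct. The first half---applying Proposition~\ref{mr} to the maximal Cohen--Macaulay module $\syz^d_R k$ to obtain $\Ext^{d+1}_R(k,R)=0$---is exactly what the paper does. The paper then finishes in one line: from $\Ext^{d+1}_R(k,R)=0$ it concludes that $R$ has finite injective dimension (using the standard fact that Bass numbers of a module have no gaps above the depth), hence $R$ is Gorenstein, and a Gorenstein ring of minimal multiplicity is a hypersurface.

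You instead bring in the minimal-multiplicity hypothesis earlier, reducing modulo a minimal reduction of $\m$ to an Artinian ring with square-zero maximal ideal, and compute $\Ext^1_{\bar R}(k,\bar R)\cong k^{e^2-1}$ by hand to force $e=1$. This is more work but is self-contained and avoids invoking the no-gaps theorem for Bass numbers; it also yields $\operatorname{embdim} R-\dim R=1$ directly from $e=1$, so your final appeal to ``Gorenstein plus minimal multiplicity'' is actually redundant. The only care needed, which you already flag, is choosing $\mathbf x$ as part of a minimal generating set of $\m$ so that $\operatorname{embdim}\bar R=\operatorname{embdim} R-d$; this is standard once the residue field is infinite.
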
 

\begin{proof}
By Proposition \ref{mr} we have, $\Ext^{d+i+1}_R(k,R)\cong \Ext^1_R(\syz^{d+i}_R k, R)=\Ext^1_R(\syz^{d+i}_R k,R)^{\mu}=0$. Hence $R$ has finite injective dimension by \cite[II. Theorem 2]{roberts}, so $R$ is Gorenstein. Since $R$ has minimal multiplicity, $R$ is a hypersurface.
\end{proof}  

One can also detect when a local ring has depth $0$ by comparing $\Ext^1_R(M,R)$ to $\Ext^1_R(M,R)^{\mu}$ as shown in the following proposition. In the following, $\Tr(-)$ stands for Auslander transpose (see \cite[Definition 12.3]{lw}). 

\begin{subproposition} Let $I$ be an ideal of a local ring $(R,\m,k)$. Then, $\Ext^1_R(\Tr(R/I),R/\ann_R(I))^{\mu}=\m \Ext^1_R(\Tr(R/I),R/\ann_R(I))$. Moreover, the following are equivalent: 

\begin{enumerate}[\rm(1)]

\item $\depth R=0$.
    \item $\Ext^1_R(M,F)^{\mu}=\Ext^1_R(M,F)$ for all finitely generated $R$-modules $M$ and $F$, where $F$ is free.  
    
    \item $\Ext^1_R(\Tr k,R)^{\mu}= \Ext^1_R(\Tr k,R)$.
\end{enumerate}
\end{subproposition}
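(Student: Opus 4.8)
The whole argument rests on one computation with the Auslander transpose. Writing $J=\ann_R(I)$ and fixing a minimal generating set $a_1,\dots,a_n$ of $I$ (so $n=\mu(I)$), the minimal free presentation $R^n\xrightarrow{(a_1,\dots,a_n)}R\to R/I\to 0$ dualizes to give $\Tr(R/I)=\operatorname{coker}(d^{*})$ with $d^{*}\colon R\to R^n$, $d^{*}(1)=(a_1,\dots,a_n)$, and $\ker(d^{*})=J$. Thus the first syzygy of $\Tr(R/I)$ sits in a short exact sequence $\tau\colon 0\to R/J\to R^n\to \Tr(R/I)\to 0$, where the inflation is $d^{*}$ factored through the isomorphism $R/J\cong \operatorname{im}(d^{*})$. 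Extending the presentation one more step and applying $\Hom_R(-,N)$, a direct cohomology computation gives, for every $R$-module $N$, a natural isomorphism $\Ext^1_R(\Tr(R/I),N)\cong (0:_N J)/IN$.

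For the standalone equality I would specialize $N=R/J$. Since $(J:_R J)=R$ we get $(0:_{R/J}J)=R/J$, whence $\Ext^1_R(\Tr(R/I),R/J)\cong R/(I+J)$, a cyclic module, and $\tau$ is one of its elements. When $I$ is proper and nonzero we have $\mu(R/J)=1$ and $\mu(\Tr(R/I))=n=\mu(R^n)$, so $\tau$ fails to be $\mu$-additive; hence $[\tau]\in \Ext^1_R(\Tr(R/I),R/J)\setminus \Ext^1_R(\Tr(R/I),R/J)^{\mu}$. Combining $\m\Ext^1\subseteq \Ext^{1,\mu}$ (Lemma \ref{jane}) with the cyclicity just established, Lemma \ref{one} (applied to the cyclic module $X=\Ext^1$ and its proper submodule $Y=\Ext^{1,\mu}$) forces $\Ext^{1,\mu}=\m\Ext^1$. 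In the degenerate cases $I=0$ or $I=R$ one has $\Tr(R/I)=0$, so both sides vanish and the identity is trivial.

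For the equivalence, $(2)\Rightarrow(3)$ is the special case $M=\Tr k$, $F=R$. For $(1)\Rightarrow(2)$, Lemma \ref{extended} (with the ideal taken to be $R$) reduces the claim to $F=R$, and I would show that every $\sigma\colon 0\to R\xrightarrow{\iota}X\to M\to 0$ is $\mu$-additive, which amounts to $\iota(1)\notin \m X$. If instead $\iota(1)\in\m X$, pick $0\ne s\in\Soc R$ (nonzero because $\depth R=0$); then $\iota(s)=s\iota(1)\in (s\m)X=0$, contradicting injectivity of $\iota$. For $(3)\Rightarrow(1)$ I argue contrapositively: if $\depth R>0$ then $\Soc R=0$, so $\ker(d^{*})=\ann_R(\m)=0$ and the syzygy sequence $\tau'\colon 0\to R\xrightarrow{d^{*}}R^{n}\to \Tr k\to 0$ is a genuine short exact sequence with $\mu(R^n)=n\ne n+1$, hence not $\mu$-additive; since the computation gives $\Ext^1_R(\Tr k,R)\cong \ann_R(\Soc R)/\m=k$, which is simple, and $[\tau']$ lies outside $\Ext^{1,\mu}$, we conclude $\Ext^{1,\mu}=0\ne \Ext^1_R(\Tr k,R)$, so (3) fails.

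The main obstacle is the transpose computation together with the realization that the first-syzygy sequence $0\to R/\ann_R(I)\to R^{\mu(I)}\to \Tr(R/I)\to 0$ is exactly the explicit non-$\mu$-additive extension one needs: once $\Omega\Tr(R/I)\cong R/\ann_R(I)$ is identified, the generator count $\mu(R^{\mu(I)})=\mu(I)\ne \mu(I)+1$ does all the work, and cyclicity plus Lemma \ref{one} finish both the standalone statement and $(3)\Rightarrow(1)$. The socle-element trick powering $(1)\Rightarrow(2)$ is the one genuinely independent idea.
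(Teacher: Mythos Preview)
Your argument is correct, and the standalone equality follows the same architecture as the paper: identify the syzygy sequence $0\to R/\ann_R(I)\to R^{\mu(I)}\to \Tr(R/I)\to 0$, observe it is not $\mu$-additive, note that $\Ext^1$ is cyclic, and finish with Lemma~\ref{one}. The paper packages this as an appeal to Lemma~\ref{lemM} (first Betti number $1$), whereas you compute $\Ext^1_R(\Tr(R/I),N)\cong (0:_NJ)/IN$ explicitly; these amount to the same thing.

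Where you genuinely diverge is in the two nontrivial implications. For $(1)\Rightarrow(2)$ the paper observes that $\depth R=0$ makes $k$ torsionless, so $\Ext^1_R(\Tr k,F)=0$, and then invokes \cite[Theorem~8.3]{sk} to conclude that $-\otimes_Rk$ stays exact on any $0\to F\to X\to M\to 0$. Your socle-element trick ($\iota(1)\in\m X$ would force $\iota(s)=0$ for $0\ne s\in\Soc R$) is strictly more elementary and avoids the external citation entirely. For $(3)\Rightarrow(1)$ the paper combines the standalone equality with Nakayama to get $\Ext^1_R(\Tr k,R)=0$, then uses the torsionless characterization \cite[Proposition~12.5]{lw} to force $k^*\ne 0$. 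You instead read off $\Ext^1_R(\Tr k,R)\cong k$ directly from your formula (since $\Soc R=0$ gives $(0:_R 0)/\m=R/\m$), making the contradiction immediate. Both routes are short; yours is more self-contained, while the paper's highlights the conceptual link between $\Ext^1_R(\Tr k,R)$ and torsionlessness of $k$.
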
     

\begin{proof} Since everywhere in this proposition the Auslander transpose is in the first component of $\Ext^1$, our claim does not depend on the choice of $\Tr$. First we prove, $\Ext^1_R(\Tr(R/I),R/\ann_R(I))^{\mu}=\m \Ext^1_R(\Tr(R/I),R/\ann_R(I))$. We may assume $\Tr(R/I)$ is non-free. Dualizing the exact sequence $R^{\oplus \mu(I)} \to R \xrightarrow{\pi} R/I \to 0$ by $R$, we get $0\to \Hom_R(R/I,R)\xrightarrow{\pi^*} R\to R^{\oplus \mu(I)}\to \Tr(R/I)\to 0$. Under the natural identification  $\Hom_R(R/I,R)\cong \ann_R(I)$, the map $\Hom_R(R/I,R)\xrightarrow{\pi^*} R$ can be identified with the inclusion map $\ann_R(I)\to R$, giving us $0\to \ann_R(I) \to R \to R^{\oplus \mu(I)}\to \Tr(R/I)\to 0$. Hence, we get an exact sequence $0\to R/\ann_R(I) \to R^{\oplus \mu(I)}\to \Tr(R/I)\to 0$. Since $\Tr(R/I)$ is non-free, this sequence gives us $\syz_R \Tr(R/I)\cong R/\ann_R(I)$ and the first Betti number of $\Tr(R/I)$ is $1$. Hence, $\Ext^1_R(\Tr(R/I),R/\ann_R(I))^{\mu}=\m \Ext^1_R(\Tr(R/I),R/\ann_R(I))$ by Lemma \ref{lemM}.

Now we prove the equivalence of the three conditions as follows:

$(1) \implies (2)$: Let $\sigma: 0\to F\to X\to M\to 0$ be an exact sequence, where $F$ is a finitely generated free $R$-module. Since $\depth R=0$, we get that $k$ embeds inside $R$, i.e., $k$ is torsionless. Hence $\Ext^1_R(\Tr k, R)=0$ by \cite[Proposition 12.5]{lw}. Hence, $\Ext^1_R(\Tr k, F)=0$. So, we get an exact sequence $0\to \Hom_R(\Tr k,F)\to \Hom_R(\Tr k, X) \to \Hom_R(\Tr k, M)\to 0$. Then by \cite[Exercise 13.36]{lw} we get that, the sequence $0\to F \otimes_R k \to X \otimes_R k \to M \otimes_R k \to 0$ is also exact, which means $\sigma$ is $\mu$-additive. Thus $\Ext^1_R(M,F)^{\mu}=\Ext^1_R(M,F)$.

$(2) \implies (3)$ Obvious.  

$(3) \implies (1)$: Assume $\Ext^1_R(\Tr k,R)^{\mu}=\Ext^1_R(\Tr k,R)$. Now if possible let, $\depth R>0$. Then $\m$ contains a non-zero-divisor, so $\ann_R(\m)=0$. Then by the first part of this proposition, we get  $\Ext^1_R(\Tr k,R)^{\mu}=\m \Ext^1_R(\Tr k,R)$. Hence $\Ext^1_R(\Tr k,R)=\m \Ext^1_R(\Tr k,R)$. Then by Nakayama's lemma, $\Ext^1_R(\Tr k,R)=0$.  Hence by \cite[Proposition 12.5]{lw}, we have an embedding $k \to k^{**}$. Consequently, $k^*\ne 0$, i.e., $\depth R=0$.
\end{proof}   

%Now dualizing the exact sequence $R^{\oplus \mu(\m)} \to R \to k \to 0$ by $R$, we get an exact sequence $0=\Hom_R(k,R)\to R\to R^{\oplus \mu(\m)}\to \Tr k\to 0$. Since $\Ext^1_R(\Tr k,R)=0$, so this sequence splits. Hence $\Tr k$ is projective, so $\Tr k$ is free. Hence $k$ is also free, which means $R$ is a field, contradicting $\depth R>0$. Thus we must have $\depth R=0$.  

For general local Cohen--Macaulay rings of dimension $1$, we now show that if $\Ext^1_R(M,R)^{\mu}=0$ for some $I$-Ulrich module (\cite[Definition 4.1]{dms}) $M\subseteq Q(R)$ containing a non-zero-divisor of $R$, then $I$ is principal. For this, we first need the following lemma. For the remainder of this section, given $R$-submodules $M,N$ of $Q(R)$, by $(M:N)$ we will mean $\{x\in Q(R): xN\subseteq M\}$.

\begin{sublemma}\label{endo} Let $(R,\m)$ be a local Cohen--Macaulay ring of dimension $1$. Let $I$ be an $\m$-primary ideal of $R$ admitting a principal reduction $a\in I$. Assume $(I:I)=R$. If $M\subseteq Q(R)$ is an $I$-Ulrich module, and contains a non-zero-divisor of $R$, then the natural inclusion  map $\Hom_R\left(M,(a)\right)\to \Hom_R(M,I)$, induced by the inclusion $(a) \to I$, is an isomorphism.    
\end{sublemma}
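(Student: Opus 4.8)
The plan is to translate everything into colon ideals inside $Q(R)$. Since $M\subseteq Q(R)$ contains a non-zero-divisor of $R$, every $R$-linear map $M\to Q(R)$ is multiplication by a unique element of $Q(R)$: fixing a non-zero-divisor $m_0\in M$, a homomorphism $f$ satisfies $m_0 f(m)=f(m_0 m)=m f(m_0)$, hence $f$ is multiplication by $f(m_0)/m_0$. Consequently, for any $R$-submodule $N\subseteq Q(R)$ one gets a natural identification $\Hom_R(M,N)\cong (N:M)$, and under these identifications the map induced by the inclusion $(a)\hookrightarrow I$ becomes the inclusion $((a):M)\hookrightarrow (I:M)$. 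Thus the lemma reduces to proving the reverse inclusion $(I:M)\subseteq ((a):M)$.

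Next I would record the key consequence of the Ulrich hypothesis, namely $IM=aM$. Since $a\in I$ is a principal reduction of the $\m$-primary ideal $I$, it is a parameter, hence $M$-regular because $M$ is maximal Cohen--Macaulay; therefore $e_R((a),M)=\lambda_R(M/aM)$. Because $a$ is a reduction of $I$, multiplicities agree, $e_R(I,M)=e_R((a),M)$. The $I$-Ulrich condition then gives
$$\lambda_R(M/IM)=e_R(I,M)=e_R((a),M)=\lambda_R(M/aM).$$
Combined with the chain $aM\subseteq IM\subseteq M$, which yields $\lambda_R(M/aM)=\lambda_R(M/IM)+\lambda_R(IM/aM)$, this forces $\lambda_R(IM/aM)=0$, i.e. $IM=aM$.

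The heart of the argument is then a short computation exploiting $(I:I)=R$. Take $x\in (I:M)$ and set $y:=x/a\in Q(R)$, which is legitimate as $a$ is a non-zero-divisor. From $xM\subseteq I=a\cdot(a^{-1}I)$ we obtain $yM\subseteq a^{-1}I$. Now, using $IM=aM$ and commutativity of multiplication in $Q(R)$,
$$I\cdot(yM)=y\cdot(IM)=y\cdot(aM)=a\cdot(yM)\subseteq a\cdot(a^{-1}I)=I,$$
so every element of $yM$ carries $I$ into $I$; that is, $yM\subseteq (I:I)=R$. Therefore $xM=a\,(yM)\subseteq aR=(a)$, which is exactly $x\in ((a):M)$, establishing $(I:M)\subseteq((a):M)$ and hence the desired equality of colon ideals.

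I expect the main obstacle to lie in the bookkeeping of the first two steps rather than in the final computation: one must check carefully that $\Hom_R(M,-)$ of submodules of $Q(R)$ is computed by colon ideals (this is where the hypothesis that $M$ contains a non-zero-divisor is used) and that the map induced by $(a)\hookrightarrow I$ is precisely the claimed inclusion, and one must justify $e_R(I,M)=e_R((a),M)$ and $e_R((a),M)=\lambda_R(M/aM)$ from the reduction property and the Cohen--Macaulay hypothesis. Once $IM=aM$ and the colon description are in place, the use of $(I:I)=R$ to finish is immediate.
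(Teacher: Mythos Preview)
Your proof is correct and follows essentially the same route as the paper: both translate the map on $\Hom$'s into the inclusion of colon ideals $((a):M)\hookrightarrow (I:M)$ and then verify the reverse inclusion using $(I:I)=R$ together with the $I$-Ulrich property of $M$. The only cosmetic difference is in how the key fact $IM=aM$ is obtained: the paper invokes the characterization of $I$-Ulrich modules as $B(I)$-modules (so that $\tfrac{I}{a}M\subseteq M$), while you derive it directly from the multiplicity equalities $\lambda_R(M/IM)=e_R(I,M)=e_R((a),M)=\lambda_R(M/aM)$; the subsequent colon-ideal computation is the same in both.
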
     

\begin{proof} The natural inclusion $0\to (a) \xrightarrow{i} I$ induces the following commutative diagram:

$$\begin{tikzcd}
0 \arrow[r] & {\Hom_R(M,(a))} \arrow[r]                                      & {\Hom_R(M,I)}                                            \\
0 \arrow[r] & ((a):M) \arrow[u, "\alpha\to\{x\mapsto \alpha x\}"] \arrow[r] & (I:M) \arrow[u, "\alpha\to\{x\mapsto \alpha x\}"']
\end{tikzcd}$$
where the rows are natural inclusion maps, and the vertical arrows are isomorphisms due to \cite[Proposition 2.4(1)]{trace}. So, it is enough to show that $(I:M)\subseteq ((a):M)$. Indeed, if $x\in (I:M)$, then $xM\subseteq I$. Since $M$ is $I$-Ulrich, $M$ is a $B(I)=R\left[\dfrac Ia\right]$-module (\cite[Remark 4.4, Theorem 4.6]{dms}). Hence $\dfrac I a M\subseteq M$, so $\dfrac I a xM\subseteq xM\subseteq I$. Thus $\dfrac 1 a xM\subseteq (I:I)=R$, so $xM\subseteq (a)$. Therefore $x\in \left((a):M\right)$.  

\end{proof}

\begin{subproposition} Let $(R,\m)$ be a local Cohen--Macaulay ring of dimension $1$. Let $I$ be an $\m$-primary ideal of $R$ admitting a principal reduction $a\in I$. Assume $(I:I)=R$. If there exists an $I$-Ulrich module $M\subseteq Q(R)$, containing a non-zero-divisor of $R$ such that $\Ext^1_R(M,R)^{\mu}=0$, then $I\cong R$.  
\end{subproposition}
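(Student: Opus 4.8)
The plan is to prove the contrapositive: assuming $(a)\subsetneq I$ (equivalently, that $I$ is not principal), I will manufacture a \emph{nonzero} $\mu$-additive non-split extension of $M$ by $R$, which contradicts $\Ext^1_R(M,R)^{\mu}=0$. The only input from the $I$-Ulrich hypothesis will be routed through Lemma \ref{endo}; the rest is a connecting-homomorphism argument combined with one length/Nakayama observation.

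First I would record the decisive elementary fact that $a$ is a minimal generator of $I$, i.e. $a\notin\m I$. Indeed, since $(a)$ is a reduction there is $n\ge 0$ with $I^{n+1}=aI^{n}$; if $a\in\m I$ then $aI^{n}\subseteq \m I^{n+1}$, so $I^{n+1}=\m I^{n+1}$, whence $I^{n+1}=0$ by Nakayama, contradicting that $a$ (being a non-zero-divisor contained in $I$) has $a^{n+1}\ne 0$. This minimality is exactly what will force $\mu$-additivity later.

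Next I would apply $\Hom_R(M,-)$ to the short exact sequence $0\to (a)\xrightarrow{\iota} I\xrightarrow{\pi} I/(a)\to 0$. By Lemma \ref{endo} the induced map $\iota_{*}\colon \Hom_R(M,(a))\to \Hom_R(M,I)$ is an isomorphism; hence $\pi_{*}=0$ on $\Hom$, and the connecting map $\partial\colon \Hom_R(M,I/(a))\to \Ext^1_R(M,(a))$ is injective. As $a$ is a non-zero-divisor, $(a)\cong R$, so I may view $\partial$ as an injection $\Hom_R(M,I/(a))\hookrightarrow \Ext^1_R(M,R)$. Since $(a)\subsetneq I$, the module $I/(a)$ is a nonzero finite length module, so $\Soc(I/(a))\ne 0$; because $M\ne 0$ there is a nonzero composite $\phi\colon M\twoheadrightarrow k\cong \Soc(I/(a))\hookrightarrow I/(a)$. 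Injectivity of $\partial$ then gives $\partial(\phi)\ne 0$, and $\partial(\phi)$ is represented by the pullback $E:=I\times_{I/(a)}M=\{(i,m): \bar i=\phi(m)\}$ sitting in $0\to (a)\xrightarrow{j} E\to M\to 0$ with $j(ra)=(ra,0)$.

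The key and hardest step is to check that this extension is $\mu$-additive, for then $0\ne\partial(\phi)\in\Ext^1_R(M,R)^{\mu}$, the desired contradiction. Tensoring the sequence with $k$, it is $\mu$-additive exactly when the one-dimensional space $(a)\otimes_R k$ injects into $E\otimes_R k$, i.e. when $(a,0)\notin\m E$. But a relation $(a,0)=\sum_j x_j(i_j,m_j)$ with $x_j\in\m$ and $(i_j,m_j)\in E\subseteq I\oplus M$ would force, on first coordinates, $a=\sum_j x_j i_j\in\m I$, contradicting $a\notin\m I$ from the first step. Hence $(a,0)\notin\m E$, the extension $\partial(\phi)$ is $\mu$-additive, and $\Ext^1_R(M,R)^{\mu}\ne 0$. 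Contrapositively, $\Ext^1_R(M,R)^{\mu}=0$ forces $I=(a)\cong R$. The main obstacle is precisely the $\mu$-additivity verification, and the point that makes it succeed is the automatic minimality $a\notin\m I$, which turns the hypothetical membership $(a,0)\in\m E$ into the impossible $a\in\m I$.
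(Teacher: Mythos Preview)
Your proof is correct and follows the same overall route as the paper: both use the short exact sequence $0\to(a)\to I\to I/(a)\to 0$, the fact that $a\notin\m I$, and Lemma~\ref{endo} to conclude. The one difference lies in how $\mu$-additivity is handled. The paper first observes that $a\notin\m I$ forces $\mu(I/(a))=\mu(I)-1$, so the sequence $0\to(a)\to I\to I/(a)\to 0$ itself is $\mu$-additive; it then invokes Corollary~\ref{long} (the long exact sequence for the subfunctor $\Ext^1_R(-,-)^{\mu}$), which guarantees that the connecting map automatically lands in $\Ext^1_R(M,R)^{\mu}$, yielding $\Hom_R(M,I/(a))=0$ directly. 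You instead construct one explicit nonzero $\phi\in\Hom_R(M,I/(a))$, write down its pullback $E$, and verify $\mu$-additivity of $0\to(a)\to E\to M\to 0$ by hand via $(a,0)\notin\m E$. Your computation is in effect a bare-hands instance of the axiom $[\mathrm{E2}^{\mathrm{op}}]$ established in Theorem~\ref{subadd}: pullbacks of $\mu$-additive sequences are $\mu$-additive. So the paper's argument is shorter because it cashes in on the general machinery it built, while yours is more self-contained and would work even without that framework. One small point: your phrase ``$k\cong\Soc(I/(a))$'' should be ``$k\hookrightarrow\Soc(I/(a))$'', since the socle need not be one-dimensional; but this does not affect the argument.
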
  

\begin{proof}   Consider the short exact sequence $0\to (a) \xrightarrow{i} I \to I/(a)\to 0$. Since $I^{n+1}=aI^n$ for all $n\gg 0$ and $I$ contains a non-zero-divisor, we have $a$ is a non-zero-divisor. So, $(a)\cong R$. If $a\in \m I$, then $I^{n+1}\subseteq \m II^n=\m I^{n+1}$, which implies $I^{n+1}=0$ by Nakayama's lemma. This contradicts the fact that $I$ is $\m$-primary. So, $a\notin \m I$, hence $\mu(I/(a))=\mu(I)-1$. Thus the above short exact sequence is $\mu$-additive.  So, by Corollary \ref{long} we get the following induced exact sequence:
$$0\to \Hom_R(M,(a))\to \Hom_R(M,I)\to \Hom_R(M,I/(a))\to \Ext^1_R(M,(a))^{\mu}\cong \Ext^1_R(M,R)^{\mu}=0$$ 
where the induced map  $ \Hom_R(M,(a))\to \Hom_R(M,I)$ is an isomorphism by Lemma \ref{endo}. Hence we get $\Hom_R(M,I/(a))=0$. Now $I/(a)$ has finite length. So, if $I/(a)\ne 0$, then $\Ass(I/(a))=\{\m\}$. Hence $\Ass\left(\Hom_R(M,I/(a))\right)=\supp(M)\cap \Ass(I/(a))=\supp(M)\cap \{\m\}=\{\m\}$, contradicting $\Hom_R(M,I/(a))=0$. Thus, we must have $I/(a)=0$, i.e., $I=(a)\cong R$.  
\end{proof}

\begin{subproposition}
    Let $(R,\m,k)$ be a Gorenstein, non-regular, local ring of dimension $d>0$. Let $M$ be an $R$-module of projective dimension $d-1$ such that the $(d-1)$-th Betti number of $M$ is $1$. Then $\Ext^1_R(k,M)^{\mu}\cong k$. 
\end{subproposition} 

\begin{proof} By \cite[Exercise 3.3.26]{bh} we have $\Ext^1_R(k,M)\cong \Tor^R_{d-1}(k,M)$, which in turn is isomorphic to $k$, since the $(d-1)$-th Betti number of $M$ is $1$.  By Lemma \ref{jane} and Lemma \ref{one}, we get either $\Ext^1_R(k,M)^{\mu}=\m\Ext^1_R(k,M)$ or $\Ext^1_R(k,M)^{\mu}=\Ext^1_R(k,M)$. However, if $\Ext^1_R(k,M)^{\mu}=\m\Ext^1_R(k,M)$, then $\Ext^1_R(k,M)^{\mu}=0$, so $R$ would be regular by Theorem \ref{reg}, contradicting our assumption that $R$ is not regular. Thus $\Ext^1_R(k,M)^{\mu}=\Ext^1_R(k,M)\cong k$.  
\end{proof}

\subsection{Some applications of the subfunctor $\Ext^1_{\Ul_I(R)}(-,-)$}   
\text{ }
\\
Let $I$ be an $\m$-primary ideal. In this subsection, we give various applications of the subfunctor $\Ext^1_{\Ul_I^s(R)}(-,-):\Ul_I^s (R)^{op}\times \Ul_I^s(R) \to \mod R$ (see \ref{subst} for notation), where we recall from Corollary \ref{iulrich} that, $\Ul_I^s(R)$ along with its all short exact sequences form an exact subcategory of $\mod R$. Hence $\Ext^1_{\Ul_I^s(R)}(-,-):\Ul_I^s (R)^{op}\times \Ul_I^s(R) \to \mod R$ is a subfunctor of $\Ext^1_R(-,-):\Ul_I^s (R)^{op}\times \Ul_I^s(R) \to \mod R$ by Proposition \ref{exsub}. 

We begin by observing a general connection between $\Ext^1_R(M,N)$ and $\Ext^1_R(M,N)^{\nu_I}$, where $M,N\in \mod R$ and $\nu_I(-):=\lambda((-)\otimes_R R/I):\mod R\to \mathbb Z$ and  (see Definition \ref{numdef} for notation of $\Ext^1_R(-,-)^{\phi}$).     

\begin{sublemma}\label{jane} Let $(R,\m)$ be a Noetherian local ring, and $I$ be an $\m$-primary ideal. Then, $I\Ext^1_R(M,N)\subseteq \Ext^1_R(M,N)^{\nu_I}$ for all $M,N\in \mod R$.  
\end{sublemma}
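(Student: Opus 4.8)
The plan is to characterize membership in $\Ext^1_R(M,N)^{\nu_I}$ through a $\Tor$ connecting homomorphism and then exploit the naturality of that map. First I would observe that $\nu_I(-)=\lambda_R(R/I\otimes_R-)$, and that $G(-):=R/I\otimes_R-$ is an additive half-exact (in fact right exact) functor landing in $\operatorname{fl}(R)$: for $M\in\mod R$ the module $M/IM$ is finitely generated over the Artinian ring $R/I$, hence of finite length. Thus $\nu_I$ is precisely the function produced by Lemma \ref{31} for this $G$, and by that lemma a class $[\sigma]$ with $\sigma:0\to N\to X\to M\to 0$ lies in $\Ext^1_R(M,N)^{\nu_I}$ if and only if $G(\sigma)$ is short exact, i.e. $0\to N/IN\to X/IX\to M/IM\to 0$ is exact.

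Next I would encode this exactness criterion via the $\Tor$ long exact sequence. Applying $R/I\otimes_R-$ to $\sigma$ and using right-exactness of tensor yields
\[
\Tor_1^R(M,R/I)\xrightarrow{\ \delta_\sigma\ }N/IN\longrightarrow X/IX\longrightarrow M/IM\longrightarrow 0,
\]
so the kernel of $N/IN\to X/IX$ is exactly the image of the connecting map $\delta_\sigma$. Hence $[\sigma]\in\Ext^1_R(M,N)^{\nu_I}$ precisely when $\delta_\sigma=0$.

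Then, for $a\in I$, I would compute $a[\sigma]$ using the pushout description of the $R$-action recalled just before Proposition \ref{exsub}: the class $a[\sigma]$ is represented by the pushout $a\sigma:0\to N\to X'\to M\to 0$ of $\sigma$ along $a\cdot\mathrm{id}_N:N\to N$, and the pushout square furnishes a morphism of short exact sequences $\sigma\to a\sigma$ whose left vertical leg is $a\cdot\mathrm{id}_N$ and whose right vertical leg is $\mathrm{id}_M$. By naturality of the connecting homomorphism with respect to morphisms of short exact sequences, this gives $\delta_{a\sigma}=\overline{a}\circ\delta_\sigma$, where $\overline{a}:N/IN\to N/IN$ is induced by multiplication by $a$. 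Since $a\in I$ annihilates $N/IN$, we have $\overline{a}=0$, so $\delta_{a\sigma}=0$ and therefore $a[\sigma]\in\Ext^1_R(M,N)^{\nu_I}$. As $a\in I$ and $[\sigma]$ are arbitrary and $\Ext^1_R(M,N)^{\nu_I}$ is an $R$-submodule, this yields $I\Ext^1_R(M,N)\subseteq\Ext^1_R(M,N)^{\nu_I}$ (the case $I=\m$ recovering $\m\Ext^1_R\subseteq\Ext^1_R(-,-)^{\mu}$ used elsewhere).

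The identification $\nu_I=\lambda_R(R/I\otimes-)$ and the finite-length claim are routine. The one step deserving care is matching the Yoneda/pushout $R$-module structure on $\Ext^1$ with the classical $\Tor$ connecting map: I must check that the morphism of short exact sequences produced by the pushout really has left leg $a\cdot\mathrm{id}_N$ and right leg $\mathrm{id}_M$, so that the functoriality square reads $\delta_{a\sigma}=\overline{a}\circ\delta_\sigma$. I expect this reconciliation to be the main (though mild) obstacle.
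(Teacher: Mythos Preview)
Your argument is correct. The paper's own proof is a one-line citation: it takes an arbitrary class $[\sigma]\in I\Ext^1_R(M,N)$ and appeals to \cite[Theorem 1.1]{jan} (Striuli) to conclude that $\sigma\otimes_R R/I$ actually \emph{splits}, whence $X/IX\cong M/IM\oplus N/IN$ and $\nu_I$-additivity follows by taking lengths. Your route is genuinely different and more self-contained: you never invoke Striuli's splitting result, but instead characterize $\nu_I$-additivity via Lemma~\ref{31} as vanishing of the connecting map $\delta_\sigma:\Tor_1^R(R/I,M)\to N/IN$, and then use naturality of $\delta$ under the pushout defining $a[\sigma]$ to get $\delta_{a\sigma}=\overline{a}\circ\delta_\sigma=0$ for $a\in I$. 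What you buy is an elementary, reference-free proof that stays entirely within the machinery already set up in the paper; what the paper's approach buys is brevity and a slightly stronger intermediate conclusion (splitting of $\sigma\otimes R/I$, not just short exactness). Your mild worry about reconciling the Yoneda pushout with the $\Tor$ connecting map is unfounded: the pushout diagram in \cite[Proposition 2.12(iv)]{Theo} (or the description preceding Proposition~\ref{exsub}) is literally a morphism of short exact sequences with left leg $a\cdot\mathrm{id}_N$ and right leg $\mathrm{id}_M$, and naturality of $\delta$ with respect to such morphisms is standard.
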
   

\begin{proof}  Let $\sigma:0\to N \to X \to M \to 0$ be a short exact sequence in $I\Ext^1_R(M,N)$. Then $\sigma\otimes_R R/I$ splits by \cite[Theorem 1.1]{jan}, so $M/IM \oplus N/IN \cong X/IX$. Hence taking length we get $\lambda(M/IM)+\lambda(N/IN)=\lambda(X/IX)$, so $\sigma$ is $\nu_I$-additive.  
\end{proof}  

This allows us to prove a general connection between $\Ext^1_{\Ul^s_I(R)}(M,N)$ and $\Ext^1_R(M,N)^{\nu_I}$, when $M,N\in \Ul^s_I(R)$ (recall the definition of $\Ul^s_I(R)$ from Definition \ref{uls}).   

\begin{sublemma}\label{uladd} Let $(R,\m)$ be a local  ring, $s\ge 0$ be an integer and $I$ an $\m$-primary ideal. Then, $\Ext^1_R(M,N)^{\nu_I}=\Ext^1_{\Ul^s_I(R)}(M,N)$ for all $M,N\in \Ul^s_I(R)$. So, in particular, $I\Ext^1_R(M,N)\subseteq \Ext^1_{\Ul_I^s(R)}(M,N)$ for all $M,N\in \Ul^s_I(R)$.
\end{sublemma}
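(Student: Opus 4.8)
The plan is to prove the equality of the two subsets $\Ext^1_{\Ul^s_I(R)}(M,N)$ and $\Ext^1_R(M,N)^{\nu_I}$ of $\Ext^1_R(M,N)$ by showing that, for a short exact sequence $\sigma:0\to N\to X\to M\to 0$ with $M,N\in \Ul^s_I(R)$, the middle term $X$ lies in $\Ul^s_I(R)$ if and only if $\nu_I$ is additive on $\sigma$. By the definitions of the two subfunctors (see \ref{subst} and Definition \ref{numdef}), the left-hand side consists of those classes whose middle term is in $\Ul^s_I(R)$, while the right-hand side consists of those classes on which $\nu_I$ is additive, so this equivalence is exactly what is needed.

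First I would fix such a $\sigma$. Since $M,N\in \cm^s(R)$ and $\cm^s(R)$ is closed under extensions in $\Mod R$ (as recorded in \ref{cmchu}), the middle term $X$ also lies in $\cm^s(R)$; thus testing membership $X\in \Ul^s_I(R)$ amounts to testing whether $\phi_I(X)=0$. Now, by the additivity of $e_R(I,-)$ on short exact sequences of modules in $\cm^s(R)$ (\cite[Corollary 4.7.7]{bh}, as invoked in Lemma \ref{ineq}), we have $e_R(I,X)=e_R(I,M)+e_R(I,N)$. Using $M,N\in \Ul^s_I(R)$, i.e. $e_R(I,M)=\nu_I(M)$ and $e_R(I,N)=\nu_I(N)$, we obtain
$$\phi_I(X)=\nu_I(X)-e_R(I,X)=\nu_I(X)-\nu_I(M)-\nu_I(N).$$
Hence $\phi_I(X)=0$ if and only if $\nu_I(X)=\nu_I(M)+\nu_I(N)$, i.e. if and only if $\nu_I$ is additive on $\sigma$. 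Combined with $X\in \cm^s(R)$, this says precisely that $X\in \Ul^s_I(R)$ if and only if $\nu_I$ is additive on $\sigma$, which yields the asserted equality.

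The ``in particular'' statement then follows at once by combining this equality with Lemma \ref{jane}: indeed $I\Ext^1_R(M,N)\subseteq \Ext^1_R(M,N)^{\nu_I}=\Ext^1_{\Ul^s_I(R)}(M,N)$. I do not anticipate any real obstacle; the only point requiring care is the observation that the defect function $\phi_I$ measures exactly the failure of $\nu_I$-additivity once both endpoints are already Ulrich, and this relies on the additivity of multiplicity staying within the fixed dimension class $\cm^s(R)$ --- which is why closure of $\cm^s(R)$ under extensions is needed to guarantee $X\in \cm^s(R)$ in the first place.
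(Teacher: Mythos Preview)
Your proof is correct and follows essentially the same approach as the paper's own proof: both observe that $X\in\cm^s(R)$ by extension-closedness, use additivity of $e_R(I,-)$ on such sequences together with $e_R(I,M)=\nu_I(M)$ and $e_R(I,N)=\nu_I(N)$ to reduce $X\in\Ul^s_I(R)$ to $\nu_I$-additivity of $\sigma$, and then invoke Lemma~\ref{jane} for the final inclusion. The only cosmetic difference is that you phrase the key computation via the defect $\phi_I(X)$, whereas the paper writes out the chain of equalities $\lambda(X/IX)=\lambda(M/IM)+\lambda(N/IN)\Leftrightarrow \lambda(X/IX)=e_R(I,X)$ directly.
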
  

\begin{proof} Let $M,N\in \Ul^s_I(R)$, and $\sigma: 0\to N \to X \to M \to 0$ be a short exact sequence of $R$-modules. Then $X\in \cm^s(R)$ (see \ref{cmchu}), and $\lambda(M/IM)+\lambda(N/IN)=e_R(I,M)+e_R(I,N)=e_R(I,X)$. Now, $\sigma\in \Ext^1_R(M,N)^{\nu_I}$, if and only if $\lambda(X/IX)=\lambda(M/IM)+\lambda(N/IN)$, if and only if $\lambda(X/IX)=e_R(I,X)$, if and only if $X\in \Ul^s_I(R)$ if and only if $\sigma\in \Ext^1_{\Ul_I^s(R)}(M,N)$. This proves the desired equality $\Ext^1_R(M,N)^{\nu_I}=\Ext^1_{\Ul^s_I(R)}(M,N)$. The last inclusion in the statement now follows from this and Lemma \ref{jane}.  
\end{proof}

When $R$ is Cohen--Macaulay of dimension $1$ and $M,N\in \Ul_I(R)$, then one can improve the inclusion $I\Ext^1_R(M,N)\subseteq \Ext^1_{\Ul_I(R)}(M,N)$ of Lemma \ref{uladd} quite a bit. For this, we first record a general lemma about trace ideals. In the following, we say that $\dfrac{a}{b}\in Q(R)$ is a non-zero-divisor if $a$ is a non-zero-divisor in $R$. Note that, this does not depend on the choice of representative, since $\frac ab=\frac {a'} {b'}$ in $Q(R)$ implies $ab'=a'b$, and since $b,b'$ are non-zero-divisors in $R$, so $a$ is a non-zero-divisor in $R$ if and only if $a'$ is a non-zero-divisor in $R$.  

\begin{sublemma}\label{trgen} Let $I$ be an ideal of $R$ containing a non-zero-divisor.  Then the following holds:

\begin{enumerate}[\rm(1)]
    \item There exist non-zero-divisors $x_1,...,x_n\in R$ such that $I=(x_1,...,x_n)$.
    
    \item There exist non-zero-divisors $y_1,...,y_n \in (R:I)$ such that $\tr_R(I)=\sum_{i=1}^n y_iI$. 
\end{enumerate}
\end{sublemma}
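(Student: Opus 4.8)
The plan is to prove (1) by a prime-avoidance argument and then obtain (2) as a formal reduction to (1). For (1), the key inputs are that in a Noetherian ring the set of zero-divisors equals the finite union $\bigcup_{\p\in\Ass R}\p$, and that the hypothesis ``$I$ contains a non-zero-divisor'' is equivalent to $I\not\subseteq\p$ for every $\p\in\Ass R$. Starting from an arbitrary finite generating set $a_1,\dots,a_m$ of $I$, I would modify the generators one at a time into non-zero-divisors without changing the ideal. Concretely, at the $i$-th stage I would have $I=(x_1,\dots,x_{i-1},a_i,\dots,a_m)$ with $x_1,\dots,x_{i-1}$ already non-zero-divisors, and I would seek $y$ in $J:=(x_1,\dots,x_{i-1},a_{i+1},\dots,a_m)$ so that $x_i:=a_i+y$ lies outside every associated prime; since $y\in J$, this substitution leaves the ideal unchanged.

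The existence of such a $y$ is exactly the coset form of prime avoidance: an element of the coset $a_i+J$ avoiding all of $\p_1,\dots,\p_t\in\Ass R$ exists as soon as, for each $j$, one has $a_i\notin\p_j$ or $J\not\subseteq\p_j$. The delicate step is the first one ($i=1$, with $J=(a_2,\dots,a_m)$): whenever $a_1\in\p_j$ I must know $J\not\subseteq\p_j$, and this follows because $I\not\subseteq\p_j$ forces some $a_\ell\notin\p_j$ with $\ell\ge 2$. Every subsequent step is automatic, since by then $J$ already contains the non-zero-divisor $x_1$, which lies outside all of $\Ass R$. After $m$ steps I obtain non-zero-divisors $x_1,\dots,x_m$ with $I=(x_1,\dots,x_m)$. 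I expect this step, balancing the preservation of the generating property against the prime-avoidance selection, to be the main obstacle; once it is in place part (2) is routine.

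For (2), I would first recall that because $I$ contains a non-zero-divisor, evaluation identifies $\Hom_R(I,R)\cong (R:I):=\{q\in Q(R): qI\subseteq R\}$, whence the trace ideal satisfies $\tr_R(I)=(R:I)\,I$. Thus it suffices to produce non-zero-divisor generators of the fractional ideal $(R:I)$, and here I would reduce to (1). Choosing a non-zero-divisor $a\in I$, the set $a(R:I)$ is an ideal of $R$ (as $a\in I$), and it contains $a=a\cdot 1$ because $1\in(R:I)$, so it contains a non-zero-divisor. Applying part (1) to $a(R:I)$ gives non-zero-divisors $w_1,\dots,w_n\in R$ with $a(R:I)=(w_1,\dots,w_n)$; dividing by $a$ yields $(R:I)=(y_1,\dots,y_n)$ with $y_i:=w_i/a\in (R:I)$, each $y_i$ being a non-zero-divisor of $Q(R)$ since its numerator $w_i$ is a non-zero-divisor of $R$. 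Finally $\tr_R(I)=(R:I)\,I=\sum_{i=1}^n y_iI$, which is the desired conclusion.
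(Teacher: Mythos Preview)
Your argument is correct. Part (2) is carried out exactly as in the paper: pick a non-zero-divisor $a\in I$, note that $a(R:I)\subseteq R$ is an ideal containing the non-zero-divisor $a$, apply (1) to it, and divide through by $a$ to obtain the $y_i$'s.

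Part (1) is where you take a genuinely different route. The paper argues globally: letting $S$ be the set of non-zero-divisors lying in $I$, one has tautologically $I\subseteq \langle S\rangle \cup \bigcup_{\p\in\Ass R}\p$, and the form of prime avoidance that allows one non-prime ideal among the covering family forces $I\subseteq \langle S\rangle$ (since $I\not\subseteq\p$ for $\p\in\Ass R$); hence $I=\langle S\rangle$, and Noetherianity gives a finite subset of $S$ that generates. Your approach is instead a step-by-step modification of a given generating set via Davis' coset prime avoidance. Both are standard and short; the paper's version is a one-line application of prime avoidance, while yours is more algorithmic and has the small bonus that it produces a generating set of non-zero-divisors of the same cardinality as the one you start with. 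Note also that your case analysis for the ``delicate first step'' is not really needed: at every stage $(a_i)+J=I\not\subseteq\p_j$, so Davis' lemma applies uniformly.
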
  

    \begin{proof} (1) Let $S$ be the collection of all non-zero-divisors of $R$ that are in $I$. Let $\langle S\rangle$ be the ideal of $R$ generated by $S$. Then $I\subseteq (\cup_{\p\in \Ass(R)} \p) \cup \langle S\rangle$. By prime avoidance, either $I \subseteq \p$ for some $\p \in \Ass(R)$, or $I\subseteq \langle S\rangle$. But $I$ contains a non-zero-divisor, so $I\nsubseteq \p$ for every $\p\in \Ass(R)$. Hence  $I\subseteq \langle S\rangle$. Since $S$ is a subset of $I$, we conclude $I= \langle S\rangle$. Since $R$ is Noetherian, there exist finitely many elements $x_1,...,x_n\in S$ such that $I=(x_1,...,x_n)$. 

(2) We know that $\tr_R(I)=(R:I)I$ in $Q(R)$ (see \cite[Proposition 2.4(2)]{trace}). Pick a non-zero-divisor $a\in I$, so $J=a(R:I)\subseteq R$ is an ideal of $R$. Then $\tr_R(I)=J(\frac 1 aI)$. Now $J$ contains a non-zero-divisor, so by part (1) we have $J=(x_1,...,x_n)$ for some non-zero-divisors $x_1,...,x_n$. Then $\tr_R(I)=\sum_{i=1}^n \frac 1 a x_iI$. Denoting $y_i:=\frac 1 a x_i$, we see that each $y_i\in Q(R)$ is a non-zero-divisor, and $y_i\in (R:I)$ as $x_i\in a(R:I)$. Hence the claim.
\end{proof}

In the proof of the next result, for an $\m$-primary ideal $I$, $B(I)$ denotes blow-up of $I$ in the sense of \cite[Definition 4.3, Remark 4.4]{dms}, namely $B(I) :=\cup_{n>0}(I^n:_{Q(R)} I^n)$. 

\begin{subproposition}\label{trset}  Let $(R,\m)$ be a local Cohen--Macaulay ring of dimension $1$, and $I$ be an $\m$-primary ideal of $R$. Then $\tr_R(I)\Ext^1_R(M,N)\subseteq \Ext^1_{\Ul_I(R)}(M,N)$ for all $M,N\in \Ul_I(R)$. 
\end{subproposition}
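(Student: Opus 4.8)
The plan is to collapse the whole statement to a single annihilator identity by means of the connecting homomorphism for $\Tor(-,R/I)$, and then to establish that identity by an explicit computation driven by the factorization $\tr_R(I)=(R:I)I$ and the reduction of $I$. By Lemma~\ref{uladd} it is equivalent to prove $\tr_R(I)\Ext^1_R(M,N)\subseteq \Ext^1_R(M,N)^{\nu_I}$, where $\nu_I(-)=\lambda((-)\otimes_R R/I)$. Since the containment is compatible with the faithfully flat extension $R\to R[X]_{\m[X]}$ (which preserves lengths, multiplicities, the $I$-Ulrich property and trace ideals), I would first assume the residue field is infinite and fix a principal reduction $a\in I$; the Ulrich hypothesis then forces $IM=aM$ and $IN=aN$, because $aN\subseteq IN$ and $\lambda(N/aN)=e_R(I,N)=\lambda(N/IN)$. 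Tensoring a sequence $\sigma\colon 0\to N\to X\to M\to 0$ with $R/I$ yields an exact sequence $\Tor_1^R(M,R/I)\xrightarrow{\partial_\sigma} N/IN\to X/IX\to M/IM\to 0$, and a length count shows $\sigma$ is $\nu_I$-additive precisely when $\partial_\sigma=0$. As $[\sigma]\mapsto\partial_\sigma$ is an $R$-linear map $\Psi\colon \Ext^1_R(M,N)\to \Hom_R(\Tor_1^R(M,R/I),N/IN)$ with $\Ext^1_R(M,N)^{\nu_I}=\ker\Psi$, it suffices to show $\tr_R(I)\cdot\operatorname{im}\Psi=0$, and for this it is enough that $\tr_R(I)$ annihilate $\Hom_R(\Tor_1^R(M,R/I),N/IN)$.

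Next I would identify the source of $\Psi$. Using $IM=aM$, tensoring $0\to I/(a)\to R/(a)\to R/I\to 0$ with $M$ and observing that $\Tor_1^R(M,R/(a))=0$ (as $a$ is a nonzerodivisor on the torsion free module $M$) produces a natural isomorphism $\Tor_1^R(M,R/I)\cong M\otimes_R\big(I/(a)\big)$. Consequently $\Tor_1^R(M,R/I)$ is annihilated by $J:=\ann_R\!\big(I/(a)\big)=\big((a):_R I\big)$, so every value of every $\partial_\sigma$ lands in the submodule $(0:_{N/IN}J)$ of $N/IN$. Thus the entire problem reduces to the single claim that $\tr_R(I)\cdot(0:_{N/IN}J)=0$.

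This reduced claim is the crux, and it is where $\tr_R(I)=(R:I)I$ interacts with the reduction $a$. I would write a general element of $\tr_R(I)$ as a finite sum of products $uc$ with $u\in(R:I)$ and $c\in I$, so it suffices to kill $(0:_{N/IN}J)$ by each such $uc$. Two elementary points do the work: (i) $ua\in J$, since $(ua)I=a(uI)\subseteq aR=(a)$; and (ii) if $w\in(0:_{N/IN}J)$ has a lift $\tilde w\in N$, then, writing $c\tilde w=as$ (possible as $c\tilde w\in IN=aN$ with $a$ a nonzerodivisor), the class $\bar s\in N/IN$ again lies in $(0:_{N/IN}J)$. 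For (ii) one checks, for $r\in J$, that $a(rs)=r(c\tilde w)=c(r\tilde w)$, and since $w\in(0:_{N/IN}J)$ gives $r\tilde w\in IN=aN$, we get $c(r\tilde w)\in c(aN)=a(cN)\subseteq aIN$, whence $rs\in IN$. Granting (i) and (ii), $(uc)\tilde w=u(c\tilde w)=(ua)s\in Js\subseteq IN$, so $(uc)w=0$ in $N/IN$; this proves $\tr_R(I)(0:_{N/IN}J)=0$ and completes the argument.

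The main obstacle is exactly this last annihilation. One cannot dispose of it by killing a single tensor factor, because in general $\tr_R(I)$ annihilates neither $\Tor_1^R(M,R/I)$ nor $N/IN$ (already for a discrete valuation ring, or for $I=\m^2$ in the cusp $k[[t^2,t^3]]$, one checks $\tr_R(I)$ does not kill $N/IN$). What rescues the proof is that the image of the connecting map is confined to the much smaller submodule $(0:_{N/IN}J)$, on which the splitting $\tr_R(I)=(R:I)I$ combined with the Ulrich identity $IN=aN$ forces vanishing. The remaining items — the $R$-linearity and naturality of $[\sigma]\mapsto\partial_\sigma$, and the descent along the residue field extension — are routine and I would verify them only briefly.
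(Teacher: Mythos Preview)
Your argument is correct but takes a genuinely different route from the paper. The paper's proof is very short: using Lemma~\ref{trgen} it writes $\tr_R(I)=\sum_{i=1}^n y_iI$ with each $y_i\in(R:I)$ a non-zero-divisor, notes that each $y_iI$ is again an $\m$-primary ideal with $B(y_iI)=B(I)$, invokes \cite[Proposition~4.24]{dms} to get $\Ul_{y_iI}(R)=\Ul_I(R)$, and then applies Lemma~\ref{uladd} to the ideal $y_iI$ directly, giving $y_iI\,\Ext^1_R(M,N)\subseteq\Ext^1_{\Ul_{y_iI}(R)}(M,N)=\Ext^1_{\Ul_I(R)}(M,N)$; summing finishes. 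Your approach instead avoids the blow-up invariance entirely: you pass to infinite residue field, fix a principal reduction $a$ of $I$, translate $\nu_I$-additivity into vanishing of the Tor connecting map $\partial_\sigma$, use $IM=aM$ to identify $\Tor_1^R(M,R/I)\cong M\otimes_R I/(a)$ (hence annihilated by $J=((a):_RI)$), and then carry out a direct element computation showing $\tr_R(I)\cdot(0:_{N/IN}J)=0$. The paper's method is shorter, exposes the structural reason (the Ulrich category depends only on $B(I)$), and does not require the infinite-residue-field reduction or the existence of a principal reduction; your method is more self-contained in that it does not appeal to the blow-up result from \cite{dms}, trading that for a longer explicit calculation.
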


\begin{proof}  Let $M,N\in\Ul_I(R)$ and $a\in (R:I)$ be a non-zero-divisor. Then $aI$ is an $\m$-primary ideal of $R$ and $B(I)=B(aI)$. So, $\Ul_I(R)=\Ul_{aI}(R)$ by \cite[Proposition 4.24]{dms}. Hence for every non-zero-divisor $a\in (R:I)$, we have  $aI\Ext^1_R(M,N)\subseteq \Ext^1_{\Ul_I(R)}(M,N)$ by Lemma \ref{uladd}. Now by Lemma \ref{trgen}, there exist non-zero-divisors $y_1,...,y_n\in (R:I)$ such that $\tr_R(I)=\sum_{i=1}^n y_iI$. Thus $\tr_R(I)\Ext^1_R(M,N)= \sum_{i=1}^n y_iI\Ext^1_R(M,N)\subseteq \Ext^1_{\Ul_I(R)}(M,N)$.  
\end{proof}   

%Now, for the rest of the section, $R$ will denote a local Cohen--Macaulay ring.   

When $s=1$, $\depth R>0$ and $I=\m$ (so $\nu_I(-)=\mu(-)$), the inclusion $I\Ext^1_R(M,N)\subseteq \Ext^1_{\Ul^s_I(R)}(M,N)$ of Lemma \ref{uladd} is actually an equality as we prove next. For this, we first record an easy lemma about flat extensions.

\begin{sublemma}\label{tenfaith} Let $R\to S$ be a flat extension of  rings. Let $M$ be an $R$-module and $I$ an ideal of $R$. Then, the following holds:  

\begin{enumerate}[\rm(1)]
    \item $S \otimes_R (IM)=(IS)(S \otimes_R M)$ when identified as submodules of $S\otimes_R M$.  
    
    \item If $N\subseteq M$ is an $R$-submodule, $S$ is a faithfully flat extension of $R$ and $S \otimes_R M=S \otimes_R N$, then $M=N$.    
\end{enumerate}

\end{sublemma}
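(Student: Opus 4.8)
The plan is to obtain both statements by applying the functor $S\otimes_R-$ to an appropriate short exact sequence of $R$-modules and exploiting (faithful) flatness; both are standard, so I will only spell out the identifications that genuinely need checking.

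For part (1), I would begin from the short exact sequence $0\to IM\to M\to M/IM\to 0$ of $R$-modules. Since $R\to S$ is flat, applying $S\otimes_R-$ gives a short exact sequence $0\to S\otimes_R IM\to S\otimes_R M\xrightarrow{\pi} S\otimes_R(M/IM)\to 0$, so that $S\otimes_R IM$ is identified, via the (injective) natural map, with $\ker\pi\subseteq S\otimes_R M$. It then remains to check that $\ker\pi=(IS)(S\otimes_R M)$. For this I would use the chain of natural isomorphisms $S\otimes_R(M/IM)\cong S\otimes_R\big((R/I)\otimes_R M\big)\cong (S/IS)\otimes_R M\cong (S/IS)\otimes_S(S\otimes_R M)\cong (S\otimes_R M)/(IS)(S\otimes_R M)$, under which $\pi$ becomes the canonical quotient map; hence $\ker\pi=(IS)(S\otimes_R M)$, giving $S\otimes_R IM\cong (IS)(S\otimes_R M)$ as desired. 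Alternatively, one can chase generators directly across the injection: $s\otimes(\sum_i a_im_i)=\sum_i (sa_i)\otimes m_i$ shows the image of $S\otimes_R IM$ lies in $(IS)(S\otimes_R M)$, and the reverse inclusion is equally immediate, since any generator $(sa)(t\otimes m)=sat\otimes m$ of $(IS)(S\otimes_R M)$ with $a\in I$ equals $s\otimes(atm)$ with $atm\in IM$.

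For part (2), the hypothesis $S\otimes_R M=S\otimes_R N$ means that the map $S\otimes_R N\to S\otimes_R M$ induced by the inclusion $N\subseteq M$ is an isomorphism. Applying the flat functor $S\otimes_R-$ to $0\to N\to M\to M/N\to 0$ yields the exact sequence $0\to S\otimes_R N\to S\otimes_R M\to S\otimes_R(M/N)\to 0$; since the first map is an isomorphism, $S\otimes_R(M/N)=0$. Because $S$ is a faithfully flat extension of $R$, the vanishing $S\otimes_R(M/N)=0$ forces $M/N=0$, i.e.\ $M=N$.

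The routine ingredients here---exactness under the flat functor, and the faithful-flatness cancellation $S\otimes_R P=0\Rightarrow P=0$---are immediate. The only mildly delicate point, and the one I would write out most carefully, is the identification $\ker\pi=(IS)(S\otimes_R M)$ in part (1), i.e.\ matching the submodule $S\otimes_R IM$ of $S\otimes_R M$ with the expanded ideal multiple; this is where the natural isomorphisms above, or the explicit generator chase, do all the work.
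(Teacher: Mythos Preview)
Your proof is correct and follows essentially the same approach as the paper: both parts tensor the relevant short exact sequence with $S$, and for (1) you identify $S\otimes_R(M/IM)$ with $(S\otimes_R M)/(IS)(S\otimes_R M)$ via the same chain of natural isomorphisms (the paper routes through $(S\otimes_R M)\otimes_R R/I$ and then observes $I(S\otimes_R M)=(IS)(S\otimes_R M)$, which is equivalent to your route through $(S/IS)\otimes_S(S\otimes_R M)$), while (2) is verbatim the paper's argument.
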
  

\begin{proof} (1) Consider the exact sequence $0\to IM \to M \to M/IM\to 0$, which after tensoring with $S$ gives $0\to S \otimes_R (IM) \to S\otimes_R M \to S\otimes_R M/IM \to 0$. Now $S \otimes_R M/IM \cong S \otimes_R (M\otimes_R R/I)\cong (S\otimes_R M)\otimes_R R/I\cong (S\otimes_R M)/I(S \otimes_R M)$. Now by the natural $S$-module structure on $S\otimes_R M$, we see that $I(S \otimes_R M)=(IS)(S \otimes_R M)$. Thus we get the exact sequence $0\to S \otimes_R (IM) \to S\otimes_R M \to (S \otimes_R M)/(IS)(S \otimes_R M)\to 0$, and by naturality of the isomorphisms, we see that the map $S\otimes_R M \to (S \otimes_R M)/(IS)(S \otimes_R M)$ in the exact sequence has kernel $(IS)(S \otimes_R M)$. Hence $S \otimes_R (IM)=(IS)(S \otimes_R M).$  

(2) Tensoring the exact sequence $0\to N \to M \to M/N \to 0$ with $S$ and using $S \otimes_R M=S \otimes_R N$, we get $(M/N)\otimes_R S=0$. Since $S$ is faithfully flat, we have $M/N=0$. Hence $M=N$. 
\end{proof}    

Now we prove the desired equality between $\m\Ext^1_R(M,N)$ and $\Ext^1_{\Ul^1(R)}(M,N)$, when $R$ has positive depth. 

\begin{subproposition}\label{1} Let $(R,\m,k)$ be a local ring of positive depth. Let $M,N\in \Ul^1(R)$. Then, the following holds:  

\begin{enumerate}[\rm(1)]
    \item We always have $\m\Ext^1_R(M,N)=\Ext^1_{\Ul^1(R)}(M,N)$.
    
    \item If $R$ is moreover Cohen--Macaulay of dimension $1$ (so $\Ul^1(R)=\Ul(R)$) and if $x\in \m$ is a minimal reduction of $\m$, then $x\Ext^1_R(M,N)=\Ext^1_{\Ul(R)}(M,N)$.
\end{enumerate} 
\end{subproposition}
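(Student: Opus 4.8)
The plan is to reduce both parts to one geometric claim and then prove that claim by constructing an explicit splitting. Applying Lemma~\ref{uladd} with $I=\m$ and $s=1$ (so that $\nu_\m=\mu$) already gives both $\Ext^1_R(M,N)^\mu=\Ext^1_{\Ul^1(R)}(M,N)$ and the inclusion $\m\Ext^1_R(M,N)\subseteq\Ext^1_{\Ul^1(R)}(M,N)$; since $x\Ext^1_R(M,N)\subseteq\m\Ext^1_R(M,N)$ always holds, both statements will follow once I establish the following \emph{Key Claim}: if $x\in\m$ is a nonzerodivisor on each of $M,N$ and on the middle term $X$ of a short exact sequence $\sigma\colon 0\to N\xrightarrow{i}X\xrightarrow{p}M\to 0$, and $\m M=xM$, $\m N=xN$, $\m X=xX$, then $[\sigma]\in x\Ext^1_R(M,N)$.

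Granting the Key Claim, I would finish as follows. For part (2), $R$ is Cohen--Macaulay of dimension $1$, so $\Ul^1(R)=\Ul(R)$ and the given minimal reduction $x$ is $R$-regular, hence regular on every maximal Cohen--Macaulay module. For $M,N\in\Ul(R)$ and any $[\sigma]\in\Ext^1_{\Ul(R)}(M,N)$ I pick a representative whose middle term $X$ lies in $\Ul(R)$; since $x$ is a reduction of $\m$ it is a reduction with respect to each of $M,N,X$, and the Ulrich condition gives $\mu(M)=e(\m,M)=e((x),M)=\lambda(M/xM)$, so from $xM\subseteq\m M$ with equal lengths we get $\m M=xM$, and likewise $\m N=xN$, $\m X=xX$. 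The Key Claim then yields $\Ext^1_{\Ul(R)}(M,N)\subseteq x\Ext^1_R(M,N)$, which combined with $x\Ext^1_R(M,N)\subseteq\m\Ext^1_R(M,N)\subseteq\Ext^1_{\Ul^1(R)}(M,N)=\Ext^1_{\Ul(R)}(M,N)$ gives the desired equality. For part (1), I first reduce to an infinite residue field by the faithfully flat base change $R\to R':=R[t]_{\m R[t]}$: Ulrich modules and $\mu$-additivity are preserved, $\Ext^1_R(M,N)\otimes_R R'\cong\Ext^1_{R'}(M\otimes_R R',N\otimes_R R')$ by flat base change, and Lemma~\ref{tenfaith} lets me transport the submodule $\m\Ext^1$ and descend the inclusion $\Ext^1_{\Ul^1(R)}(M,N)\subseteq\m\Ext^1_R(M,N)$ from $R'$ back to $R$. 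Over $R'$ I may choose a single $x\in\m R'$ that is a common minimal reduction of $\m R'$ with respect to $M'\oplus N'$; then $\m'M'=xM'$ and $\m'N'=xN'$, and by additivity of multiplicity along $\sigma$ the element $x$ is automatically a reduction with respect to the Ulrich middle term $X'$ as well, so $\m'X'=xX'$ and the Key Claim applies.

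To prove the Key Claim I would apply $\Hom_R(M,-)$ to the sequence $0\to N\xrightarrow{x}N\xrightarrow{\pi}N/xN\to 0$ (exact since $x$ is $N$-regular), whose relevant portion reads
\[
\Ext^1_R(M,N)\xrightarrow{\ \cdot x\ }\Ext^1_R(M,N)\xrightarrow{\ \pi_*\ }\Ext^1_R(M,N/xN).
\]
By exactness, $[\sigma]\in x\Ext^1_R(M,N)$ if and only if $\pi_*[\sigma]=0$, i.e.\ the pushout of $\sigma$ along $\pi$ splits; a direct computation identifies this pushout with
\[
\sigma''\colon\ 0\to N/xN\to X/x\,i(N)\to M\to 0 .
\]
To split $\sigma''$ I would build a section of $X/x\,i(N)\to M$: choose $y_1,\dots,y_a\in X$ with $p(y_1),\dots,p(y_a)$ a minimal generating set of $M$ ($a=\mu(M)$) and set $Y:=\sum_j Ry_j$, so $p(Y)=M$. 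The crucial point is $Y\cap i(N)\subseteq x\,i(N)$: if $z=\sum_j r_jy_j=i(n)\in Y\cap i(N)$, then $\sum_j r_jp(y_j)=0$ forces all $r_j\in\m$ because the $p(y_j)$ form a $k$-basis of $M/\m M$, whence $z\in\m Y\subseteq\m X=xX$; and $i(N)\cap xX=x\,i(N)$ by a short chase using the $x$-regularity of $M,N$ and the injectivity of $i$. Hence $m\mapsto(\text{class of any }y\in Y\text{ with }p(y)=m)$ is a well-defined $R$-linear section, so $\sigma''$ splits and $[\sigma]\in x\Ext^1_R(M,N)$.

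The main obstacle is exactly the splitting of $\sigma''$, and within it the inclusion $Y\cap i(N)\subseteq x\,i(N)$: this is where the Ulrich hypothesis on the middle term (in the form $\m X=xX$) is indispensable, and it is precisely this feature that causes the equality to fail for arbitrary $M,N$. A secondary technical point to handle with care is the faithfully flat descent in part (1), ensuring that both $\m\Ext^1$ and $\Ext^1_{\Ul^1(R)}$ base change correctly so that verifying the inclusion over $R'$ really suffices.
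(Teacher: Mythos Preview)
Your proof is correct and follows the same overall architecture as the paper's: one inclusion from Lemma~\ref{uladd}, reduction to infinite residue field via $R\to R[t]_{\m R[t]}$, choice of a suitable $x\in\m$, and verification that $[\sigma]\in x\Ext^1_R(M,N)$.  The genuine difference lies in this last step.  The paper observes that $M/xM$, $N/xN$, $X/xX$ are all $k$-vector spaces (because $\m L=xL$ for each), so $\sigma\otimes_R R/xR$ is a short exact sequence of $k$-vector spaces and therefore splits; it then invokes Striuli's \cite[Proposition~2.8]{jan} to conclude $[\sigma]\in x\Ext^1_R(M,N)$.  You bypass this citation entirely: using the connecting map in $\Hom_R(M,-)$ applied to $0\to N\xrightarrow{x}N\to N/xN\to 0$, you reduce to splitting the pushout $0\to N/xN\to X/x\,i(N)\to M\to 0$ and produce an explicit section by lifting a minimal generating set of $M$ to $X$ and using $\m X=xX$ together with $i(N)\cap xX=x\,i(N)$.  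Your route is more self-contained and in fact slightly sharper: you never need $x$ to be $R$-regular (only $M,N,X$-regular), whereas the paper includes $R$ in the direct sum $R\oplus M\oplus N\oplus X$ precisely because Striuli's result requires it.  A few of your justifications are compressed---e.g.\ that a minimal reduction with respect to $M'\oplus N'$ is automatically regular on $M',N'$ (true since these are $1$-dimensional Cohen--Macaulay), and that regularity on $X'$ then follows from the short exact sequence---but they are all correct once spelled out.
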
  

\begin{proof} (1) Due to Lemma \ref{uladd}, we only need to prove $\Ext^1_{\Ul^1(R)}(M,N) \subseteq \m\Ext^1_R(M,N)$. We may assume $M,N\ne 0$. 

First, we assume that the residue field is infinite.
Let $\sigma: 0\to N\to X\to M\to 0$ be a short exact sequence such that $X\in \Ul^1(R)$. Choose $x\in \m$ to be $R\oplus M \oplus N\oplus X$-superficial (which exists by \cite[Proposition 8.5.7]{HS}, since we are assuming $R$ has infinite residue field). Now $\depth_R(R\oplus M\oplus N\oplus X)=\inf\{\depth R,\depth_R M,\depth_R N,\depth_R X\}>0$, so $x$ is $R\oplus M\oplus N\oplus X$-regular by \cite[Lemma 8.5.4]{HS}. Hence $x\in \m$ is regular on $R,M$ and $N$, and superficial on $M,N$ and $X$. Thus $M/xM,N/xN,X/xX$ are $0$-dimensional Ulrich modules by \cite[Proposition 2.2(4)]{agl}, so these are $k$-vector spaces by \cite[Proposition 2.2(1)]{agl}. Hence $\sigma \otimes_R R/xR$, being a short exact sequence of $k$-vector spaces, is split exact. Since $x$ is $R,M,N$-regular, by \cite[Proposition 2.8]{jan} we have $\sigma \in x\Ext^1_R(M,N)\subseteq \m\Ext^1_R(M,N)$.  

Now we prove the general case. This part will use extensively that, $\Ext^1_{\Ul^1(R)}(M,N)$ is a submodule of $\Ext^1_R(M,N)$, where $M,N\in \Ul^1(R)$.
Consider the faithfully flat extension of $R$ as follows: $S :=R[X]_{\m[X]}$, with maximal ideal $\m S$, and infinite residue field. Since we already know, $\m\Ext^1_R(M,N)\subseteq \Ext^1_{\Ul^1(R)}(M,N)$ by Lemma \ref{uladd} and since $S$ is faithfully flat, it is enough to prove $S\otimes_R \m\Ext^1_R(M,N)=S \otimes_R \Ext^1_{\Ul^1(R)}(M,N)$ (by Lemma \ref{tenfaith}(2)). Now $\m\Ext^1_R(M,N)\subseteq \Ext^1_{\Ul^1(R)}(M,N)$, with flatness of $S$, already implies $S \otimes_R \m\Ext^1_R(M,N)\subseteq S \otimes_R \Ext^1_{\Ul^1(R)}(M,N)$. So, to prove equality, it is enough to prove that $S \otimes_R \Ext^1_{\Ul^1(R)}(M,N) \subseteq S\otimes_R \m\Ext^1_R(M,N)$. Now due to Lemma \ref{tenfaith}(1), it is enough to prove $S \otimes_R \Ext^1_{\Ul^1(R)}(M,N) \subseteq (\m S)(S \otimes_R \Ext^1_R(M,N))$, and the latter object here is naturally identified with $(\m S)\Ext^1_S(S \otimes_R M, S \otimes_R N)$.  Since the $S$-module $S \otimes_R \Ext^1_{\Ul^1(R)}(M,N)$ is generated by $1\otimes_R \sigma $ as $\sigma$ runs over all elements of $\Ext^1_{\Ul^1(R)}(M,N)$, we need to prove that $1 \otimes_R \sigma \in (\m S)\Ext^1_S(S \otimes_R M, S \otimes_R N)$ for all $\sigma \in \Ext^1_{\Ul^1(R)}(M,N)$. So, let $\sigma: 0\to N\to X\to M\to 0$ be a short exact sequence such that $X\in \Ul^1(R)$. Then, $1\otimes_R \sigma \in S \otimes_R \Ext^1_{\Ul^1(R)}(M,N)\subseteq S \otimes_R \Ext^1_R(M,N)\cong \Ext^1_S(S \otimes_R M, S \otimes_R N)$ is naturally identified with the exact sequence $S \otimes_R \sigma: 0\to S \otimes_R N  \to S \otimes_R X \to S \otimes_R M \to 0$ in $\Ext^1_S(S \otimes_R M, S \otimes_R N)$. Now $S \otimes_R \sigma$ is a short exact sequence of modules in $\Ul^1(S)$ by \cite[Proposition 2.2(3)]{agl}. Hence, $1\otimes_R \sigma \in \Ext^1_{\Ul^1(S)}(S \otimes_R M, S \otimes_R N)$. Since $S$ also has positive depth and infinite residue field, by the proof of the infinite residue field case we get $1\otimes_R \sigma \in \Ext^1_{\Ul^1(S)}(S \otimes_R M, S \otimes_R N) \subseteq (\m S)\Ext^1_S(S \otimes_R M, S \otimes_R N)$, which is what we wanted to prove.  

(2)  Due to Lemma \ref{uladd}, we only need to prove $\Ext^1_{\Ul(R)}(M,N) \subseteq x\Ext^1_R(M,N)$. So, let $\sigma: 0\to N\to X\to M\to 0$ be a short exact sequence such that $X\in\Ul(R)$. Since $xR$ is a reduction of $\m$, we have $xR$ is $\m$-primary. Hence $x$ is $R$-regular, so $x$ is $M,N,X$-regular. Also, $\m M=xM,\m N=xN,\m X=xX$. So, $\sigma \otimes_R R/xR: 0\to N/xN \to X/xX \to M/xM\to 0$ is an exact sequence of $k$-vector spaces. Hence, $\sigma \otimes_R R/xR$ is split exact. Since $x$ is $R,M,N,X$-regular, by \cite[Proposition 2.8]{jan} we have $\sigma \in x\Ext^1_R(M,N)$.  Thus we get $\Ext^1_{\Ul(R)}(M,N) \subseteq x\Ext^1_R(M,N)$.
\end{proof}  

Following is an interesting consequence of Proposition \ref{1}. Here, for $R$-modules $X,Y$, we denote by $X*Y$, the collection of all $R$-modules $Z$ that fit into a short exact sequence $0\to X \to Z \to Y \to 0 $.  

\begin{subcor}\label{ulfaith}  
Let $(R,\m)$ be a local $1$-dimensional Cohen--Macaulay ring. Then, the following are equivalent:  
\begin{enumerate}[\rm(1)]

\item $R$ is regular.

\item $\Ul(R)$ is closed under taking extensions.

\item There exists $M,N\in \Ul(R)$ such that $N$ is faithful, $M\ne 0$ and $N*M\subseteq \Ul(R)$. 
\end{enumerate}  
\end{subcor}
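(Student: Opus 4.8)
The plan is to prove the cycle $(1)\Rightarrow(2)\Rightarrow(3)\Rightarrow(1)$, with the last implication carrying essentially all the weight. For $(1)\Rightarrow(2)$, note that a regular local ring of dimension one is a DVR, over which every maximal Cohen--Macaulay module is free; since a free module $R^a$ has $\mu(R^a)=a=a\cdot e_R(\m,R)=e_R(\m,R^a)$ (as $e_R(\m,R)=1$), one gets $\Ul(R)=\{\text{free modules}\}$, and an extension of a free module by a free module splits, so $\Ul(R)$ is closed under extensions. For $(2)\Rightarrow(3)$, I would first observe that a faithful nonzero Ulrich module always exists: for $n\gg0$ the Hilbert--Samuel function gives $\mu(\m^n)=\lambda(\m^n/\m^{n+1})=e_R(\m,R)=e_R(\m,\m^n)$, so $\m^n\in\Ul(R)$, and it is faithful, being a nonzero ideal containing a nonzerodivisor. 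Taking $M=N=\m^n$ and invoking the closure hypothesis $(2)$ then yields $(3)$.

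The substance is $(3)\Rightarrow(1)$. Given $M,N$ as in $(3)$, the condition $N*M\subseteq\Ul(R)$ says precisely that every extension class has Ulrich middle term, i.e. $\Ext^1_{\Ul(R)}(M,N)=\Ext^1_R(M,N)$ (recall $\Ul(R)=\Ul^1(R)$ by Definition \ref{uls}). Since $\depth R=1>0$, Proposition \ref{1}(1) gives $\Ext^1_{\Ul(R)}(M,N)=\m\Ext^1_R(M,N)$, so $\Ext^1_R(M,N)=\m\Ext^1_R(M,N)$ and Nakayama forces $\Ext^1_R(M,N)=0$. After this reduction I may assume the residue field is infinite, passing to $R[X]_{\m[X]}$, which preserves being Ulrich, faithful, and $\Ext^1$-vanishing (by flat base change), and along which regularity descends faithfully flatly. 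I then fix a principal minimal reduction $x$ of $\m$, so that $\m M=xM$ and $\m N=xN$.

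The key computation proceeds in two steps. First, applying $\Hom_R(M,-)$ to $0\to N\xrightarrow{\,\cdot x\,}N\to N/xN\to0$ and using $\Ext^1_R(M,N)=0$ together with $N/xN=N/\m N\cong k^{\mu(N)}$ gives $\Hom_R(M,N)/x\Hom_R(M,N)\cong\Hom_R(M,k^{\mu(N)})\cong k^{\mu(M)\mu(N)}$. Second, dualizing the presentation $0\to\Omega M\to R^{\mu(M)}\to M\to0$ into $N$ and again using $\Ext^1_R(M,N)=0$ yields a short exact sequence $0\to\Hom_R(M,N)\to N^{\mu(M)}\to\Hom_R(\Omega M,N)\to0$ of torsion-free, hence maximal Cohen--Macaulay or zero, modules; reducing modulo the nonzerodivisor $x$ keeps it exact and produces an injection $k^{\mu(M)\mu(N)}\hookrightarrow(N/xN)^{\mu(M)}\cong k^{\mu(M)\mu(N)}$ of $k$-spaces of equal dimension, which is therefore an isomorphism. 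Consequently $\Hom_R(\Omega M,N)/x\Hom_R(\Omega M,N)=0$, so $\Hom_R(\Omega M,N)=0$ by Nakayama.

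Here is where faithfulness of $N$ enters, and this socle argument is the step I expect to be the crux. If $\Omega M\neq0$, I would choose $p\in\Ass(\Omega M)\subseteq\Min R$; then $(\Omega M)_p\neq0$, while $N_p\neq0$ because $N$ is faithful. Over the Artinian local ring $R_p$ one composes a surjection $(\Omega M)_p\twoheadrightarrow\kappa(p)$ with an inclusion $\kappa(p)\hookrightarrow N_p$ onto a socle element, producing a nonzero element of $\Hom_{R_p}((\Omega M)_p,N_p)=\Hom_R(\Omega M,N)_p$, contradicting $\Hom_R(\Omega M,N)=0$. Hence $\Omega M=0$, i.e. $M$ is free; being a nonzero free Ulrich module, $M\cong R^a$ with $a=e_R(\m,R^a)=a\,e_R(\m,R)$, forcing $e_R(\m,R)=1$, so the Cohen--Macaulay ring $R$ is regular. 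Descending along the faithfully flat extension then completes the proof.
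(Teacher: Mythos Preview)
Your proof is correct. The implications $(1)\Rightarrow(2)\Rightarrow(3)$ match the paper's essentially verbatim, but your argument for $(3)\Rightarrow(1)$ is genuinely different from the paper's and more self-contained.

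Both proofs begin the same way: from $N*M\subseteq\Ul(R)$ one gets $\Ext^1_R(M,N)=\Ext^1_{\Ul(R)}(M,N)=\m\Ext^1_R(M,N)$ via Proposition~\ref{1}, hence $\Ext^1_R(M,N)=0$ by Nakayama. After that the arguments diverge. The paper invokes the Ulrich identification $M\cong\m M$, $N\cong\m N$ (via \cite[Theorem 4.6]{dms}), then cites \cite[Proposition 2.6]{dk} to deduce $\pd_R(\m M)<\infty$ from $\Ext^1_R(\m M,\m N)=0$ with $N$ faithful, and finally appeals to the Levin--Vasconcelos theorem \cite{lv} to conclude regularity. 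Your route instead passes to infinite residue field, fixes a principal reduction $x$ of $\m$, and computes directly: from $\Ext^1_R(M,N)=0$ and the two short exact sequences $0\to N\xrightarrow{x}N\to N/xN\to 0$ and $0\to\Omega M\to R^{\mu(M)}\to M\to 0$ you extract $\Hom_R(\Omega M,N)/x\Hom_R(\Omega M,N)=0$ by a length count, hence $\Hom_R(\Omega M,N)=0$; faithfulness of $N$ then forces $\Omega M=0$ via localization at a minimal prime, so $M$ is free and Ulrich, giving $e(R)=1$. Your argument avoids all three external citations and replaces them with an explicit dimension count, at the modest cost of the base change to infinite residue field. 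The paper's proof is shorter on the page but outsources the real work; yours exposes the mechanism.
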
     

\begin{proof} $(1)\implies (2)$: If $R$ is regular, then $\Ul(R)=\cm(R)$, which is closed under taking extensions.  

$(2)\implies (3)$: One can take $M=N=\m^n$ for some $n\gg 0$. 

$(3) \implies (1)$: By hypothesis, we get $\Ext^1_{R}(M,N)=\Ext^1_{\Ul(R)}(M,N)$. Then from Proposition \ref{1} (remembering $\Ul^1(R)=\Ul(R)$ in our case) we get, $\Ext^1_{R}(M,N)=\m\Ext^1_{R}(M,N)$. So by Nakayama's lemma, we get $\Ext^1_{R}(M,N)=0$. By \cite[Theorem 4.6]{dms} (also, see \cite[5.2]{dk}) we get, $M\cong \m M$ and $N\cong \m N$. So, $\Ext^1_R(\m M,\m N)=0$. Hence $\pd_R (\m M)<\infty$ by \cite[Proposition 2.6]{dk}. Thus $\Tor^R_{\gg 0}(\m, \m M)=0$. Since $0\ne M$, we have $\m M\cong M\ne 0$. Thus $\pd_R(\m)<\infty$ by \cite[Corollary 3.17]{dk}. Hence $R$ is regular.  
\end{proof}      
\if0
Since $\dim R=1$, by our notation (Definition \ref{uls}), $\Ul^1(R)=\Ul(R)$. First, we will show that $\Ext_R^1(M,N)=\m \Ext_R^1(M,N)$ for all $M,N\in\Ul(R)$. Let $\alpha\in\Ext_R^1(M,N)$. Since $M,N\in\Ul(R)$ and $\Ul(R)$ is closed under taking extensions, we get that the middle term of $\alpha$ is Ulrich, i.e., $\alpha \in \Ext^1_{\Ul(R)}(M,N)$. Hence by Proposition \ref{1} we get, $\alpha\in\m\Ext_R^1(M,N)$. Thus $\Ext_R^1(M,N)=\m \Ext_R^1(M,N)$ for all $M,N\in\Ul(R)$, so by Nakayama's lemma we get $\Ext_R^1(M,N)=0$ for all $M,N\in\Ul(R)$. Next, note that $\m^n\in\Ul(R)$ for large enough $n$. So, by taking $M=N=\m^n$ for large enough $n$, it follows from \cite[Corollary 1.4]{power} that $R$ is regular.
\fi

We mention in passing that when   $R$ is local Cohen--Macaulay of minimal multiplicity, $(2)\implies (1)$ of Corollary \ref{ulfaith}  also follows from \cite[Proposition 3.2(5)]{restf}. 
  
%\begin{subremark}  When $R$ is local Cohen--Macaulay of minimal multiplicity, $(2)\implies (1)$ of Corollary \ref{ulfaith}  also follows from \cite[Proposition 3.2(5)]{restf}.
%\end{subremark}  

Before proceeding further with applications to $\Ul_I(R)$, we outline an alternative proof that $\Ul_I(R)$ is an exact subcategory of $\mod R$ when $R$ is a local $1$-dimensional Cohen--Macaulay ring, via birational extensions. For this, we first record some preliminary results.   

The following is easy to see from the definition of an exact category.  

\begin{sublemma}\label{ringex}
%Let $R,S$ be two commutative rings such that $R\to S$ be a ring extension. Let ${\S}_S$ and ${\S}_R$ be the standard exact structures on $\Mod(S)$ and $\Mod(R)$ respectively. Let ${\X}\subseteq\Mod(S)$ be a strictly full subcategory such that $({\X},{\S}_S|_{\X})$ is an exact subcategory of $(\Mod(S),{\S}_S)$. Assume that, $\Hom_S(M,N)=\Hom_R(M,N)$ for all $M,N\in{\X}$. and also, $\X$ is closed under isomorphism   

Let $R,S$ be two commutative rings such that $R\to S$ is a ring extension. Let $({\X},{\E})$ be a strictly full exact subcategory of $\Mod S$. Assume that, ${\X}$ is closed under $R$-linear isomorphism of $R$-modules. Also, assume that every $R$-linear map between any two modules in ${\X}$ is $S$-linear, i.e., $\Hom_R(M,N)=\Hom_S(M,N)$ for all $M,N\in {\X}$. Then, $({\X},{\E})$ is a strictly full exact subcategory of $\Mod R$.  
\end{sublemma}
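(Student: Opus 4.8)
The plan is to exploit the fact that the two hypotheses make $\X$, together with its class of kernel-cokernel pairs $\E$, literally the same abstract additive category whether its objects are viewed as $S$-modules or, via restriction of scalars along $R\to S$, as $R$-modules. Once this is pinned down, the exact-category axioms transfer for free, and the only genuine work is bookkeeping about how $\X$ sits inside $\Mod R$. First I would fix the restriction functor $\Mod S\to \Mod R$ and regard every object of $\X$ as an $R$-module through it. The assumption that $\X$ is closed under $R$-linear isomorphism of $R$-modules says exactly that $\X$ is strict as a subcategory of $\Mod R$; the assumption $\Hom_R(M,N)=\Hom_S(M,N)$ for all $M,N\in\X$ says that the morphism sets of $\X$ are identical whether computed in $\Mod R$ or in $\Mod S$. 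Since these are equalities of abelian groups under pointwise addition, $\X$ is also full in $\Mod R$ and carries the same additive structure on hom-sets. Finally, for $M,N\in\X$ the $S$-module biproduct $M\oplus N$ lies in $\X$ and, as an $R$-module, coincides with the $R$-module direct sum; together with the shared zero object this shows $\X$ is a strictly full additive subcategory of $\Mod R$.

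The central observation I would then make explicit is that all the data defining the exact category $(\X,\E)$ --- composition, the abelian-group structure on morphisms, biproducts, and the kernel-cokernel pairs comprising $\E$ --- are \emph{intrinsic} to the abstract category $\X$ equipped with $\E$, and by the two hypotheses this abstract category does not change when we pass from $\X\subseteq\Mod S$ to $\X\subseteq\Mod R$. Consequently each of the axioms [E0], [E0$^{\operatorname{op}}$], [E1$^{\operatorname{op}}$], [E2], [E2$^{\operatorname{op}}$] of \cite[Definition 2.1]{Theo}, being a statement purely about $\X$ and $\E$ (existence of certain admissible monics/epics, pushouts, pullbacks, and their compositions, all formulated via morphisms of $\X$), continues to hold verbatim. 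Hence $(\X,\E)$ is an exact category.

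It remains to check that $(\X,\E)$ is an exact subcategory of the standard exact structure on $\Mod R$, i.e. that each conflation in $\E$ is an honest short exact sequence of $R$-modules. Since $(\X,\E)$ is an exact subcategory of $\Mod S$, a sequence in $\E$ is a short exact sequence $0\to M\to N\to L\to 0$ of $S$-modules; its maps are $S$-linear, hence $R$-linear, and injectivity, surjectivity, and the equality $\Im=\ker$ are statements about the underlying abelian groups, which are untouched by restriction of scalars. Thus it is a short exact sequence of $R$-modules, so a kernel-cokernel pair of $\Mod R$ lying in the standard exact structure, and $(\X,\E)$ is a strictly full exact subcategory of $\Mod R$. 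The closest thing to an obstacle here is purely organizational: one must be careful to state that hypotheses (2) and (3) are precisely what transport strictness, fullness, and the additive structure from $\Mod S$ to $\Mod R$; after that the exact-category axioms require no further argument.
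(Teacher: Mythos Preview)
Your proposal is correct and is essentially a careful unpacking of what the paper leaves implicit: the paper gives no proof beyond the remark that the lemma ``is easy to see from the definition of exact category,'' and your argument spells out precisely that observation. The key points you identify---that the hypotheses transport strictness, fullness, and the additive structure of $\X$ from $\Mod S$ to $\Mod R$, that the exact-category axioms are intrinsic to the abstract category $(\X,\E)$, and that short exact sequences of $S$-modules remain short exact as $R$-modules---are exactly what is needed.
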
   

%\begin{proof}  Since ${\X}$ is closed under $R$-linear isomorphism of $R$-modules and $\Hom_R(M,N)=\Hom_S(M,N)$ for all $M,N\in {\X}$ and ${\X}$ is a strictly full subcategory of $\Mod S$, so ${\X}$ is a strictly full subcategory of $\Mod R$. Since every element in $\E$ is a short exact sequence of $S$-modules and $S$-linear maps, so every element in $\E$ is a short exact sequence of $R$-modules and $R$-linear maps. Hence every element in $\E$ is a kernel-cokernel pair in $\Mod(R)$.
%Hence every element in $\E$ is a kernel-cokernel pair in ${\X}$ considered as a subcategory of $\Mod(R)$, since ${\X}$ is a strictly full subcategory of $\Mod R$.

%Now by \cite[Definition 2.1]{Theo} and \cite[Remark 2.4]{Theo}, it is enough to show that $\E$ is closed under isomorphisms and $({\X},{\E})$ satisfies the axioms [E0], [E0$^{\text{op}}$], [E1$^{\text{op}}$], [E2] and [E2$^{\text{op}}$]. First we will show that, $\E$ is closed under isomorphisms. Let $M\to N\to L$ be an kernel-cokernel pair in ${\E}^{\phi}$. Also let, $M'\to N'\to L'$ be an kernel-cokernel pair in ${\A}$ such that it is isomorphic to $M\to N\to L$, which implies $M'\to N'\to L'$ is in $\E$ and $M\cong M'$, $N\cong N'$, $L\cong L'$. Since
%\end{proof}  

For the consequences of Lemma \ref{ringex}, we need the following lemma which is essentially \cite[Proposition 4.14(i)]{lw}. Before proceeding, we note that for a birational extension $R\subseteq S\subseteq Q(R)$ and an $S$-module $N$, being torsion-free as an $S$-module and as an $R$-module are the same. 

\begin{sublemma}\label{lw4.14} Let $R$ be a commutative Noetherian ring and let $R\subseteq S \subseteq Q(R)$ be a ring extension. Then, $\Hom_R(M,N)=\Hom_S(M,N)$ for all $M,N\in \Mod (S)$ where $N$ is torsion-free $S$-module.  
\end{sublemma} 

\begin{proof} $\Hom_S(M,N)\subseteq \Hom_R(M,N)$ is clear. Hence it is enough to show $\Hom_R(M,N)\subseteq \Hom_S(M,N)$. Let $\dfrac a b\in S \subseteq Q(R)$, where $a,b\in R$, so $b\in R$ is a non-zero-divisor. Let $m\in M$ and $f\in \Hom_R(M,N)$. Then $b\left(f\left(\dfrac a b m\right)-\dfrac a b f(m)\right)=bf\left(\dfrac a b m\right)-af(m)=f\left(b\dfrac a b m\right)-af(m)=f(am)-af(m)=0$. Since $N$ is torsion-free and $b$ is a non-zerodivisor on $R$, it is a non-zerodivisor on $N$. So, $f\left(\dfrac a b m\right)=\dfrac a b f(m)$. As $\dfrac a b\in S, m\in M$ and $f\in \Hom_R(M,N)$ were arbitrary, we conclude $\Hom_R(M,N)\subseteq \Hom_S(M,N)$. 
\end{proof}  

Now as a consequence, we can deduce the following:

\begin{subproposition}\label{tf}  Let $R\subseteq S$ be a birational extension ($S\subseteq Q(R)$) of commutative rings. Let ${\S}_R$ be the standard exact structure on $\Mod R$. Let ${\X}$ be the strictly full subcategory of $\Mod S$ consisting of all torsion-free $S$-modules. Then it holds that ${\S}_R|_{\X}={\S}_S|_{\X}$, and $({\X}, {\S}_R|_{\X})$ is a strictly full exact subcategory of $\Mod R$, and $({\X}\cap \mod R, {\S}_R|_{{\X}\cap\mod R})$ is an exact subcategory of $\Mod R$ (hence, also of $\mod R$). 
\end{subproposition}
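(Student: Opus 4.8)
The plan is to reduce all three assertions to Lemma \ref{ringex} and Proposition \ref{extclosed}, the heart of the matter being that on torsion-free $S$-modules the $R$-linear and $S$-linear structures cannot be distinguished. I would first dispose of the underlying $S$-side statement: the torsion-free $S$-modules form a strictly full additive subcategory of $\Mod S$ that is closed under extensions. Indeed, if $0\to A\to B\to C\to 0$ is exact with $A,C$ torsion-free and $tb=0$ for a non-zero-divisor $t$ of $S$, then $b$ maps to $0$ in $C$, hence $b$ comes from $A$, where $t$ is again injective, forcing $b=0$; so $B$ is torsion-free. Thus Proposition \ref{extclosed}, applied in the ambient exact category $(\Mod S,\S_S)$, shows that $(\X,\S_S|_\X)$ is an exact subcategory of $\Mod S$.

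The key step, and the one I expect to be the main obstacle, is the identity $\S_R|_\X=\S_S|_\X$, i.e. that an $R$-linear map between torsion-free $S$-modules is automatically $S$-linear. Here is where birationality enters. Since $S\subseteq Q(R)$, every non-zero-divisor $b$ of $R$ becomes invertible in $Q(R)$, hence is a non-zero-divisor of $S$ and so acts injectively on every torsion-free $S$-module. Given an $R$-linear map $f\colon M\to N$ with $M,N\in\X$ and an element $s=a/b\in S$ (with $a,b\in R$ and $b$ a non-zero-divisor), one has $bs=a$ in $S$, and therefore $b f(sm)=f(b\cdot sm)=f(am)=a f(m)=b\bigl(s f(m)\bigr)$; injectivity of $b$ on $N$ yields $f(sm)=s f(m)$, so $f$ is $S$-linear. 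This gives $\Hom_R(M,N)=\Hom_S(M,N)$ for all $M,N\in\X$, and consequently every $R$-linear short exact sequence with terms in $\X$ is an $S$-linear short exact sequence (the underlying maps are the same), establishing $\S_R|_\X=\S_S|_\X$.

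I would then verify the remaining hypothesis of Lemma \ref{ringex}, namely that $\X$ is closed under $R$-linear isomorphism: given $M\in\X$ and an $R$-isomorphism $M\cong N$, transport of structure endows $N$ with an $S$-module structure making it torsion-free and $S$-isomorphic to $M$, so $N\in\X$ (this structure is moreover forced, since for $s=a/b$ the element $s\cdot n$ is the unique solution of $b\cdot y=a\cdot n$, again by injectivity of $b$). With both hypotheses in hand, Lemma \ref{ringex} upgrades $(\X,\S_S|_\X)=(\X,\S_R|_\X)$ to a strictly full exact subcategory of $\Mod R$, which is the second assertion. For the last assertion I would apply Proposition \ref{extclosed} once more, now inside the exact category $(\X,\S_R|_\X)$: the subcategory $\X\cap\mod R$ is strictly full and additive (closure under $R$-isomorphism and under finite direct sums is clear), and it is closed under extensions, because the middle term of an extension of two finitely generated $R$-modules is finitely generated over $R$ while remaining in $\X$. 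Hence $(\X\cap\mod R,\S_R|_{\X\cap\mod R})$ is an exact subcategory of $(\X,\S_R|_\X)$, and therefore of $\Mod R$ and of $\mod R$.
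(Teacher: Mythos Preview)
Your proof is correct and follows essentially the same route as the paper: both verify the hypotheses of Lemma \ref{ringex} (closure of $\X$ under $R$-isomorphism and $\Hom_R=\Hom_S$ on $\X$), use extension-closedness of torsion-free modules to get $(\X,\S_S|_\X)$ exact in $\Mod S$, and then transfer to $\Mod R$. The only minor differences are that you prove $\Hom_R=\Hom_S$ directly from the fraction description of $S\subseteq Q(R)$ (the paper cites \cite[Proposition 4.14(i)]{lw}), and for the last assertion you invoke Proposition \ref{extclosed} inside $(\X,\S_R|_\X)$ whereas the paper intersects with $(\mod R,\S_R|_{\mod R})$ via Lemma \ref{inter}; both arguments are equally short.
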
  

\begin{proof}  By definition, ${\S}_R|_{\mod R\cap {\X}}={\S}_R|_{{\mod R}}\cap {\S}_R|_{\X}$. Since $(\mod R,{\S}_R|_{{\mod R}})$ is an exact subcategory of $\Mod R$, the second part of the statement would readily follow from the first part of the statement and Lemma \ref{inter}. First, we show that $\X$ is strictly full in $\Mod R$. If $M,N$ are two $R$-modules and $f:M\to N$ is an $R$-linear isomorphism and $N$ is moreover a torsion-free $S$-module extending its $R$-module structure, then $M$ has an $S$-module structure, extending its $R$-module structure, given by $s\cdot m:=f^{-1}(sf(m))$ for all $s\in S, m \in M$. Note that, with this structure, $M$ is moreover a torsion-free $S$-module. Indeed, let $s\in S$ be a non-zero-divisor such that $s\cdot m=0$. Then $sf(m)=f(s\cdot m)=0$, so $f(m)=0$ as $N$ is $S$-torsionfree. Thus $m=0$ as $f$ is an isomorphism. Moreover, $\Hom_R(M,N)=\Hom_S(M,N)$ for all torsion-free $S$-modules $M,N$ by Lemma \ref{lw4.14}. Consequently, we notice that ${\S}_R|_{{\X}}={\S}_S|_{{\X}}$. So, to show $({\X}, {\S}_R|_{\X})$ is an exact subcategory of $\Mod R$, it is enough to show that $({\X}, {\S}_S|_{\X})$ is an exact subcategory of $\Mod S$ (by Lemma \ref{ringex}). Now it is well known that for any ring $S$, the subcategory of all $S$-torsion-free modules is closed under extensions in $\Mod S$. Hence $({\X}, {\S}_S|_{\X})$ is an exact subcategory of $\Mod S$ by Proposition \ref{extclosed}.  
\end{proof}       

\begin{subcor} Let $R$ be a local Cohen--Macaulay ring of dimension $1$. Let $R\subseteq S$ be a finite birational extension. Then, $(\cm(S),{\S}_R|_{{\cm(S)}})$ is a strictly full exact subcategory of $\mod R$.  
\end{subcor}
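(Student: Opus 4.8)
The plan is to identify $\cm(S)$ with the subcategory $\X\cap \mod R$ of Proposition \ref{tf} — where $\X$ denotes the category of torsion-free $S$-modules for the birational extension $R\subseteq S$ — and then invoke that proposition directly. So the whole corollary reduces to a dictionary between maximal Cohen--Macaulay $S$-modules and torsion-free $S$-modules.

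First I would verify that $S$ is itself a one-dimensional Cohen--Macaulay (semilocal) ring. Since $R\subseteq S\subseteq Q(R)$, the ring $S$ is a torsion-free, finitely generated $R$-module, hence maximal Cohen--Macaulay over $R$, so $\depth_R S=1$. As $S$ is module-finite over $R$, depth is insensitive to the base ($\sqrt{\m S}$ is the Jacobson radical of $S$), giving $\depth_S S=\depth_R S=1=\dim R=\dim S$; thus $S$ is Cohen--Macaulay of dimension one. I would also record that $Q(S)=Q(R)$: every non-zero-divisor of $R$ becomes a unit in $Q(R)$, hence remains a non-zero-divisor in $S$, which yields $Q(R)\subseteq Q(S)$, while $S\subseteq Q(R)$ gives the reverse inclusion.

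Next I would establish the key identification: over the one-dimensional Cohen--Macaulay ring $S$, a finitely generated module $M$ is maximal Cohen--Macaulay if and only if it is torsion-free. Both directions rest on the fact that in a one-dimensional ring every prime is minimal or maximal, together with $\Ass_S(S)=\Min(S)$ (no embedded primes, as $S$ is Cohen--Macaulay): indeed $M$ is torsion-free iff $\Ass_S(M)\subseteq\Min(S)$ iff no maximal ideal is associated to $M$ iff $\depth_{S_\mathfrak n}M_\mathfrak n\ge 1$ for every maximal ideal $\mathfrak n$ of $S$ iff $M$ is maximal Cohen--Macaulay. I would then note that for an $S$-module $M$ the torsion-free condition over $S$ and over $R$ coincide, since $M\otimes_R Q(R)\cong M\otimes_S Q(S)$ because $S\otimes_R Q(R)=Q(R)=Q(S)$, and that finite generation over $S$ and over $R$ coincide because $S$ is finite over $R$. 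Combining these, $\cm(S)$ is exactly the category $\X\cap\mod R$ of finitely generated torsion-free $S$-modules, and a short exact sequence of MCM $S$-modules is precisely a short exact sequence of torsion-free $S$-modules, so $\S_R|_{\cm(S)}=\S_R|_{\X\cap\mod R}$.

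Finally I would apply Proposition \ref{tf} to the finite birational extension $R\subseteq S$, which asserts that $(\X\cap\mod R,\S_R|_{\X\cap\mod R})$ is a strictly full exact subcategory of $\mod R$ (with $\S_R|_\X=\S_S|_\X$). Under the identification just made, this gives that $(\cm(S),\S_R|_{\cm(S)})$ is a strictly full exact subcategory of $\mod R$, as desired. The only genuinely delicate step is the maximal-Cohen--Macaulay-versus-torsion-free dictionary over the semilocal ring $S$ — specifically confirming that $S$ is Cohen--Macaulay of dimension one and that the torsion-free property is unchanged when passing between the $R$- and $S$-module structures; once these are in place, the corollary is immediate from Proposition \ref{tf}.
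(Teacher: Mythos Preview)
Your proposal is correct and follows essentially the same route as the paper: identify $\cm(S)$ with $\X\cap\mod R$ (torsion-free $S$-modules that are finitely generated over $R$) and invoke Proposition \ref{tf}. You supply considerably more justification than the paper's terse proof---in particular for why $S$ is one-dimensional Cohen--Macaulay and why MCM equals torsion-free---though your digression on $Q(S)=Q(R)$ and on comparing torsion-freeness over $R$ versus over $S$ is not actually needed, since Proposition \ref{tf} already works directly with torsion-free $S$-modules.
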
   

\begin{proof} We note that, $\mod S=\Mod S\cap \mod R$. Since $S$ is also $1$-dimensional Cohen--Macaulay, we have $\cm(S)=$ collection of all finitely generated torsion-free $S$-modules $ = $ collection of all torsion-free $S$-modules $\cap \mod R$. Hence the conclusion follows from Proposition \ref{tf}.  
\end{proof}   

When $(R,\m)$ is a local Cohen--Macaulay ring of dimension $1$ and $I$ an $\m$-primary ideal, then considering the finite birational extension $R\subseteq B(I)$, where $B(I)$ denotes blow-up of $I$ (\cite[Definition 4.3, Remark 4.4]{dms}), it is shown in \cite[Theorem 4.6]{dms} that $\cm(B(I))=\Ul_I(R)$. Thus, in this particular case, we get another proof of the fact that $\Ul_I(R)$ is an exact subcategory of $\mod R$, which is very different from Corollary \ref{iulrich}.  
\text{ }

For further applications, we first record a lemma connecting $\Ext^1_{\Ul_I(R)}(-,-)$ to $\Ext^1_{B(I)}(-,-)$. Here, we will keep in mind that, when $\dim R=1$, we write $\Ul^1_I(R)=\Ul_I(R)$ in our notation (see Definition \ref{uls}).    

\begin{sublemma}\label{uliso} Let $(R,\m)$ be a local Cohen--Macaulay ring of dimension $1$. Let $I$ be an $\m$-primary ideal of $R$. Then, for all $I$-Ulrich modules $M,N$, we have a natural map $\Ext^1_{\Ul_I(R)}(M,N)\xrightarrow{[\sigma]_R \to [\sigma]_{B(I)}} \Ext^1_{B(I)}(M,N)$, which is an isomorphism of $R$-modules.  
\end{sublemma}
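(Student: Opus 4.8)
The plan is to recognize that the two Ext groups are built from one and the same exact category, so that the Yoneda construction produces literally the same object, with only the ring over which it is linear changing. Set $S:=B(I)$; by \cite[Remark 4.4]{dms} this is a finite birational extension $R\subseteq S$, and by \cite[Theorem 4.6]{dms} we have $\Ul_I(R)=\cm(S)$ as subcategories of $\mod R$. Applying Proposition \ref{tf} with $\X$ the torsion-free $S$-modules (so that $\X\cap\mod R=\cm(S)$) gives the equality of exact structures $\S_R|_{\cm(S)}=\S_S|_{\cm(S)}$, and, as used inside the proof of Proposition \ref{tf}, $\Hom_R(M,N)=\Hom_S(M,N)$ for all $M,N\in\cm(S)$. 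Hence $(\Ul_I(R),\S_R|_{\Ul_I(R)})$ and $(\cm(S),\S_S|_{\cm(S)})$ have the same objects, the same morphisms, and the same distinguished kernel-cokernel pairs: they are the same exact category, the only difference being that the first is viewed as $R$-linear and the second as $S$-linear.

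Granting this, the first step is to verify that $[\sigma]_R\mapsto[\sigma]_S$ is a well-defined bijection. A representative of a class in $\Ext^1_{\Ul_I(R)}(M,N)$ is a short exact sequence $0\to N\to X\to M\to 0$ of $R$-modules with $X\in\Ul_I(R)=\cm(S)$; since its maps lie in $\Hom_R=\Hom_S$, it is simultaneously a short exact sequence of $S$-modules with middle term in $\cm(S)$, hence a representative of a class in $\Ext^1_S(M,N)$ (recall $\cm(S)$ is extension-closed in $\Mod S$, so $\Ext_{\S_S|_{\cm(S)}}=\Ext^1_S$). Conversely, every $S$-representative is of this form, giving surjectivity. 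Two representatives are Yoneda-equivalent over $R$ precisely when there is an $R$-isomorphism of their middle terms commuting with the identities on $N$ and $M$; as these middle terms lie in $\cm(S)$, such an $R$-isomorphism is automatically $S$-linear, so the equivalence relations over $R$ and over $S$ coincide. This yields well-definedness and injectivity at once.

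The remaining point is that this bijection is $R$-linear, which I would establish structurally rather than with representatives. The Baer sum and the scalar action are defined purely by pullbacks and pushouts of distinguished sequences along morphisms, together with the fold map $N\oplus N\to N$ and the diagonal $M\to M\oplus M$. Because the two exact categories coincide — identical objects, morphisms, and distinguished sequences — these universal constructions agree on both sides, so the map is already an isomorphism of abelian groups. For the scalar action, multiplication by $r\in R$ on $\Ext^1_{\Ul_I(R)}(M,N)$ is realized by pullback along $r\cdot\mathrm{id}_M$ (equivalently pushout along $r\cdot\mathrm{id}_N$); since $r\in R\subseteq S$ and $\Hom_R=\Hom_S$ on $\cm(S)$, the morphism $r\cdot\mathrm{id}_M$ and its pullback are the same in the $R$-linear and the $S$-linear pictures. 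As the $R$-module structure on $\Ext^1_S(M,N)$ is by definition the restriction along $R\hookrightarrow S$ of its $S$-module structure, scalar multiplication by $r$ matches on both sides, so $[\sigma]_R\mapsto[\sigma]_S$ is $R$-linear and therefore an isomorphism of $R$-modules.

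The only genuine content lies in the identification of the two exact categories; once Proposition \ref{tf} and \cite[Theorem 4.6]{dms} are in hand, everything reduces to tracking that morphisms and Yoneda-equivalences over $R$ and over $S$ coincide. The step requiring the most care is the compatibility of the two module structures — ensuring that the formal pullback/pushout description of scalar multiplication is respected under restriction of scalars along $R\hookrightarrow S$ — and this is exactly where the hypothesis $\Hom_R=\Hom_S$ on $\cm(S)$, supplied by \cite[Proposition 4.14(i)]{lw} via Proposition \ref{tf}, does the essential work.
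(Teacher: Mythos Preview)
Your proof is correct and follows essentially the same approach as the paper: both rely on $\Ul_I(R)=\cm(B(I))$ from \cite[Theorem 4.6]{dms} together with $\Hom_R=\Hom_S$ on $\cm(S)$ from \cite[Proposition 4.14(i)]{lw}, and then check that well-definedness, bijectivity, and $R$-linearity all reduce to the coincidence of morphisms and pushout/pullback diagrams over $R$ and over $S$. Your presentation is slightly more conceptual in that you package these facts via Proposition \ref{tf} as ``the two exact categories are literally identical,'' whereas the paper verifies each property by hand, but the content is the same.
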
 

\begin{proof}  We recall that, $\Ext^1_{\Ul_I(R)}(M,N)$ is the equivalence class of all short exact sequences $0\to N \to X \to M\to 0$ of $I$-Ulrich modules and $R$-linear maps. Since $\Ul_I(R)=\cm(B(I))$ (\cite[Theorem 4.6]{dms}), any $R$-linear map between two Ulrich modules are $B(I)$-linear (\cite[Proposition 4.14(i)]{lw}). Hence any short exact sequence $0\to N \to X \to M\to 0$ of $I$-Ulrich modules and $R$-linear maps is a short exact sequence of maximal Cohen--Macaulay $B(I)$-modules and $B(I)$-linear maps, and any $R$-linear morphism between two short exact sequences of $I$-Ulrich $R$-modules is actually a $B(I)$-morphism. This proves the well-definedness of the map. Injectivity is similarly obvious. To prove surjectivity, we note that if $Y$ is a $B(I)$-module, and $0\to N \to Y \to M \to 0$ is a short exact sequence in $\Mod (B(I))$, then $M,N\in \cm(B(I))$ implies $Y\in \cm(B(I))$. Hence $Y\in \Ul_I(R)$. Since $B(I)$-linear maps are $R$-linear, this gives a pre-image in $\Ext^1_{\Ul_I(R)}(M,N)$. Finally, to show the map is $R$-linear, we recall that the Baer-sum structure and multiplication by $R$ on $\Ext^1$ are given by certain pullback and pushout diagrams. So, it is enough to prove that, given an exact sequence $\sigma: 0\to N \to X \to M \to 0$ in $\Ul_I(R)$ and $R$-linear maps $f: N\to N', g:M'\to M$ with $M',N'\in \Ul_I(R)$, the pushout and pullback of $\sigma$ by $f$ and $g$ in $\mod R$ respectively are actually pushout and pullback in $\mod (B(I))$. Let us look at the pushout case, as the pullback case is similar. If we have the following pushout diagram: 

$$\begin{tikzcd}
\sigma: 0 \arrow[r] & N \arrow[r] \arrow[d, "f"'] & X \arrow[r] \arrow[d] & M \arrow[d, equal] \arrow[r] & 0 \\
0 \arrow[r]         & N' \arrow[r]                & Y \arrow[r]           & M \arrow[r]                       & 0
\end{tikzcd}$$
then in the bottom row we again have $Y\in \Ul_I(R)$, since $\Ul_I(R)$ along with all its short exact sequences is an exact subcategory of $\mod R$ by Corollary \ref{iulrich}. Hence in the above diagram, all the modules are in $\Ul_I(R)=\cm(B(I))$, so all the $R$-linear maps are moreover $B(I)$-linear. Since $\cm(B(I))$ is an extension closed subcategory of $\mod(B(I))$, it is an exact subcategory of $\mod (B(I))$ by Proposition \ref{extclosed}. Thus this is a pushout diagram in $\mod(B(I))$ as well by \cite[Proposition 2.12]{Theo}. This is what we wanted to show. 
\end{proof}

\begin{subcor}\label{trann} Let $(R,\m)$ be a local Cohen--Macaulay ring of dimension $1$, and $I$ be an $\m$-primary ideal of $R$. If $M,N\in \Ul_I(R)$ are such that $\Ext^1_{B(I)}(M,N)=0$, then $\tr_R(I)\Ext^1_R(M,N)=0$.
\end{subcor}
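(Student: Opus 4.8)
The plan is to deduce the statement by chaining together the two immediately preceding results, Proposition \ref{trset} and Lemma \ref{uliso}, and reading off the consequence of the hypothesis $\Ext^1_{B(I)}(M,N)=0$. The key observation is that both results concern the same submodule $\Ext^1_{\Ul_I(R)}(M,N)$ of $\Ext^1_R(M,N)$: Proposition \ref{trset} bounds $\tr_R(I)\Ext^1_R(M,N)$ inside it, while Lemma \ref{uliso} identifies it with $\Ext^1_{B(I)}(M,N)$.

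First I would recall that, since $R$ is local Cohen--Macaulay of dimension $1$ and $M,N\in \Ul_I(R)$, Proposition \ref{trset} gives the inclusion of $R$-submodules of $\Ext^1_R(M,N)$,
$$\tr_R(I)\Ext^1_R(M,N)\subseteq \Ext^1_{\Ul_I(R)}(M,N).$$
Next I would apply Lemma \ref{uliso}, which furnishes an isomorphism of $R$-modules $\Ext^1_{\Ul_I(R)}(M,N)\xrightarrow{\ \cong\ } \Ext^1_{B(I)}(M,N)$, $[\sigma]_R\mapsto [\sigma]_{B(I)}$. By hypothesis $\Ext^1_{B(I)}(M,N)=0$, so this isomorphism forces $\Ext^1_{\Ul_I(R)}(M,N)=0$. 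Combining with the displayed inclusion yields $\tr_R(I)\Ext^1_R(M,N)\subseteq 0$, which is the desired conclusion $\tr_R(I)\Ext^1_R(M,N)=0$.

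There is essentially no hard step here; the entire content has been pushed into the two cited results. The one point I would be careful to state explicitly is that the containment from Proposition \ref{trset} and the module $\Ext^1_{\Ul_I(R)}(M,N)$ appearing in Lemma \ref{uliso} are genuinely the same object sitting inside $\Ext^1_R(M,N)$ via the natural inclusion of Proposition \ref{exsub}, so that vanishing of the latter really does annihilate the former; once this compatibility is noted, the proof is a two-line assembly.
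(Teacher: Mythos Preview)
Your argument is correct and is exactly the approach taken in the paper: combine the inclusion $\tr_R(I)\Ext^1_R(M,N)\subseteq \Ext^1_{\Ul_I(R)}(M,N)$ from Proposition \ref{trset} with the isomorphism $\Ext^1_{\Ul_I(R)}(M,N)\cong \Ext^1_{B(I)}(M,N)$ of Lemma \ref{uliso}, and then use the vanishing hypothesis.
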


\begin{proof} This follows from Lemma \ref{uliso} and Proposition \ref{trset}. 
\end{proof}

In the following, for a module $X$ over a ring $R$, by $\add_R(X)$ we denote the collection of all $R$-modules $Y$ such that there exists an $R$-module $Z$ and an isomorphism of $R$-modules $Y\oplus Z\cong X^{\oplus n}$ for some integer $n\ge 0$.   

\begin{subthm}\label{projgor} Let $(R,\m)$ be a local Cohen--Macaulay ring of dimension $1$. Let $I$ be an $\m$-primary ideal of $R$. Then the following hold: 

\begin{enumerate}[\rm(1)]
    \item If $M\in \add_R(B(I))$, then $\tr_R(I)\Ext^1_R(M, \Ul_I(R))=0$. 
    
    \item  Let $M\in \Ul(R)$. Then $M\in \add_R(B(\m))$ if and only if $\m \Ext^1_R(M, \Ul(R))=0$. 
    
    \item If $B(I)$ is a Gorenstein ring, then $\tr_R(I)\Ext^1_R(\Ul_I(R),B(I))=0$. The converse holds when $I=\m$.   
\end{enumerate}
\end{subthm}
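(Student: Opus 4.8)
The plan is to reduce all three parts to the connection between $\Ext^1_{\Ul_I(R)}(-,-)$ and $\Ext^1_{B(I)}(-,-)$ established in Lemma \ref{uliso}, together with the trace-annihilator inclusions of Proposition \ref{trset} and Corollary \ref{trann}. The recurring mechanism is that a short exact sequence of $I$-Ulrich $R$-modules is the same data as a short exact sequence of maximal Cohen--Macaulay $B(I)$-modules (since $\Ul_I(R)=\cm(B(I))$ by \cite[Theorem 4.6]{dms} and $R$-linear maps between such modules are $B(I)$-linear by \cite[Proposition 4.14(i)]{lw}), and that vanishing of the appropriate $\Ext^1_{B(I)}$ forces $\tr_R(I)$ (or $\m$) to kill $\Ext^1_R$.

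For part (1), I would first observe that $B(I)$ itself is a maximal Cohen--Macaulay $B(I)$-module, so $B(I)\in \Ul_I(R)$; hence every $M\in\add_R(B(I))$ is a direct summand of a finite direct sum of copies of $B(I)$, and in particular $M\in \cm(B(I))=\Ul_I(R)$. Since $M$ is a maximal Cohen--Macaulay $B(I)$-module and $B(I)$ is Cohen--Macaulay of dimension $1$, we get $\Ext^1_{B(I)}(M,N)=0$ for every $N\in\cm(B(I))=\Ul_I(R)$ by \cite[Exercise 3.1.24]{bh}. Corollary \ref{trann} then yields $\tr_R(I)\Ext^1_R(M,N)=0$ for all $N\in\Ul_I(R)$, which is exactly the claim. (Strictly, summands require the elementary observation that $\Ext^1_{B(I)}$ vanishes on summands when it vanishes on the whole, which is immediate by functoriality.)

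For part (3), the forward direction is identical in spirit: if $B(I)$ is Gorenstein then it has finite injective dimension over itself, so $\Ext^1_{B(I)}(X,B(I))=0$ for every $X\in\cm(B(I))=\Ul_I(R)$ by \cite[Exercise 3.1.24]{bh}, and Corollary \ref{trann} (with the roles arranged so $B(I)$ sits in the second variable, noting $B(I)\in\Ul_I(R)$) gives $\tr_R(I)\Ext^1_R(\Ul_I(R),B(I))=0$. For the converse when $I=\m$, note $\tr_R(\m)=\m$ in the non-regular case, so the hypothesis reads $\m\Ext^1_R(\Ul(R),B(\m))=0$; using Lemma \ref{uliso} to transport this back to $B(\m)$ and Proposition \ref{1}(2) to replace the $x$- (equivalently $\m$-) multiple of $\Ext^1_R$ by $\Ext^1_{\Ul(R)}=\Ext^1_{B(\m)}$, one should conclude that $\Ext^1_{B(\m)}(X,B(\m))$ itself vanishes for all $X\in\cm(B(\m))$, whence $B(\m)$ has finite injective dimension over itself and is therefore Gorenstein.

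Part (2) is the most delicate and I expect it to be the main obstacle. One direction follows from part (1) with $I=\m$ (so $\tr_R(\m)=\m$, as $R$ is not regular when $B(\m)\neq R$). The converse — deducing $M\in\add_R(B(\m))$ from $\m\Ext^1_R(M,\Ul(R))=0$ — is where the real work lies: here I would invoke Lemma \ref{uliso} and Proposition \ref{1}(2) to rewrite the hypothesis as $\Ext^1_{B(\m)}(M,N)=0$ for all $N\in\cm(B(\m))$, and then argue that a maximal Cohen--Macaulay $B(\m)$-module $M$ with $\Ext^1_{B(\m)}(M,-)$ vanishing on all of $\cm(B(\m))$ must be projective over $B(\m)$, i.e., $M\in\add_R(B(\m))$. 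The subtlety is justifying that such $\Ext^1$-rigidity forces projectivity over the (possibly non-local, but semilocal) ring $B(\m)$; I would take $N$ to range over the maximal ideals' residue data or use that $\Ext^1_{B(\m)}(M,\Omega_{B(\m)}M)=0$ together with the Cohen--Macaulay dimension-one structure to split the defining sequence $0\to \Omega_{B(\m)}M\to F\to M\to 0$, concluding $M$ is a $B(\m)$-summand of a free module, hence in $\add_R(B(\m))$.
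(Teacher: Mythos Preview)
Your overall strategy matches the paper's: translate between $\Ext^1_{\Ul_I(R)}$ and $\Ext^1_{B(I)}$ via Lemma~\ref{uliso}, then use Proposition~\ref{trset}/Corollary~\ref{trann} (or Proposition~\ref{1} when $I=\m$) to pass between $\tr_R(I)\Ext^1_R$ (or $\m\Ext^1_R$) and $\Ext^1_{\Ul_I(R)}$. Parts (2) and (3) are essentially the paper's argument; your extra detail in (2)---splitting the syzygy sequence $0\to\Omega_{B(\m)}M\to F\to M\to 0$ using $\Omega_{B(\m)}M\in\cm(B(\m))$---is exactly how one justifies the ``well-known'' fact the paper invokes. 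Your claim that $\tr_R(\m)=\m$ when $R$ is not regular is correct (the trace of $\m$ is an ideal containing $\m$, and equals $R$ only when $\m$ is principal), though for the forward direction of (2) you only need $\m\subseteq\tr_R(\m)$, which holds unconditionally.

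There is, however, a genuine slip in part~(1). You write: ``Since $M$ is a maximal Cohen--Macaulay $B(I)$-module and $B(I)$ is Cohen--Macaulay of dimension $1$, we get $\Ext^1_{B(I)}(M,N)=0$ for every $N\in\cm(B(I))$ by \cite[Exercise 3.1.24]{bh}.'' This is false as stated: two MCM modules over a one-dimensional Cohen--Macaulay ring need not have vanishing $\Ext^1$ between them, and \cite[Exercise 3.1.24]{bh} concerns vanishing against modules of \emph{finite injective dimension}, which $N\in\cm(B(I))$ generally is not. The correct reason---and what the paper uses---is that $M$, being an $R$-summand of $B(I)^{\oplus n}$, is a \emph{projective} $B(I)$-module (the $R$-linear isomorphism $M\oplus X\cong B(I)^{\oplus n}$ is automatically $B(I)$-linear since all modules involved lie in $\cm(B(I))$), so $\Ext^1_{B(I)}(M,-)\equiv 0$. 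Your parenthetical about passing to summands hints that you are thinking along these lines, but the justification you actually give does not establish the vanishing. Replace the appeal to \cite[Exercise 3.1.24]{bh} with ``$M$ is projective over $B(I)$'' and the argument goes through.
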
 

\begin{proof} (1) Denote $S:=B(I)$. If $M\in \add_R(S)(\subseteq \Ul_I(R))$, then there exists $X\in \mod(R)$ and an isomorphism of $R$-modules $M\oplus X\cong B(I)^{\oplus n}$ for some $n\ge 0$. Then $M,X$ are $I$-Ulrich, so $M,X\in \cm(S)$. Hence the isomorphism is also $S$-linear by \cite[Proposition 4.14(i)]{lw}, so $M$ is a projective $S$-module. Thus $\Ext^1_S(M,N)=0$ for all $N\in \cm(S)=\Ul_I(R)$. Now the conclusion follows from Corollary \ref{trann}.  

(2) If $M\in \add_R(B(\m))$, then the conclusion follows by (1) as $\m\subseteq \tr_R(\m)$. So, assume $M\in \Ul(R)$ and $\m \Ext^1_R(M,N)=0$ for all $N\in \Ul(R)$. By Proposition \ref{1}, we then have $\Ext^1_{\Ul(R)}(M,N)=0$ for all $N\in \Ul(R)$. Now denote $S=B(\m)$. Then by Lemma \ref{uliso} we have, $\Ext^1_{S}(M,N)=0$ for all $N\in \Ul(R)=\cm(S)$. We know that, for any Cohen--Macaulay ring $S$, and $M\in \cm(S)$, we have $\Ext^1_S(M,\cm(S))=0$ if and only if $M\in \add_S(S)$. Indeed, one direction is clear since $M\in \add_S(S)$ implies $M$ is projective. For the converse, we notice that as $M$ is finitely generated over $S$, so we have a short exact sequence $\sigma: 0\to M'\to S^{\oplus n}\to M \to 0$ for some $n>0$ and $S$-module $M'$. Moreover, $S$ being Cohen--Macaulay implies $M'\in \cm(S)$, and now the vanishing of $\Ext^1_S(M,M')$ gives that the sequence $\sigma$ splits. Hence $M\in \add_S(S)$. Now applying this observation to our scenario with $S=B(\m)$, we get $M\in \add_S(B(\m))=\add_R(B(\m))$, where this last equality follows from the fact that direct summand of $B(\m)$ is in $\cm(B(\m))$ and consequently, $R$-linear maps are $S$-linear (\cite[Proposition 4.14(i)]{lw}).  

(3) Nothing to prove if $R$ is regular, so we assume $R$ is singular. If $B(I)$ is a Gorenstein ring, then $\Ext^1_{B(I)}(M,B(I))=0$ for all $M \in \cm(B(I))=\Ul_I(R)$. Since $B(I)$ is $I$-Ulrich, by Lemma \ref{uliso} we get, $\Ext^1_{\Ul_I(R)}(M,B(I))=0$ for all $M \in \Ul_I(R)$. Then by Corollary \ref{trann} we get, $\tr_R(I)\Ext^1_R(M,B(I))=0$ for all $M \in \Ul_I(R)$. For the converse part, assume $I=\m$ and $\m\Ext^1_R(M,B(\m))=0$ for all $M \in \Ul(R)$. Then by Proposition \ref{1} we have, $\Ext^1_{\Ul(R)}(M,B(\m))=0$ for all $M\in \Ul(R)$. Hence by Lemma \ref{uliso} we have, $\Ext^1_{B(\m)}(M,B(\m))=0$ for all $M\in \Ul(R)=\cm(B(\m))$. Now we note that, for a finite-dimensional Cohen--Macaulay ring $S$,  $\Ext^1_S(M,S)=0$ for all $M \in \cm(S)$ if and only if $S$ is Gorenstein. Indeed, if $S$ is Gorenstein, then the vanishing is clear by \cite[Definition 3.1.18, Proposition 3.1.9, 3.1.24]{bh}. Conversely, let $d=\dim S$. Then for every $\p \in \spec(S)$, we have $\syz^d_S(S/\p) \in \cm(S)$ (by localizing and applying depth lemma). So, now the vanishing of $\Ext^{d+1}_S(S/\p, S)\cong \Ext^1_S(\syz^d_S(S/\p),S)$ for every $\p\in \spec(S)$ implies $\injdim_S S <\infty$. Thus $S$ is Gorenstein by \cite[Definition 3.1.18, Proposition 3.1.9]{bh}. Now applying this observation to our scenario with $S=B(\m)$, we get that $B(\m)$ is Gorenstein.  
\end{proof}  

Since for $1$-dimensional local Cohen--Macaulay rings $(R,\m)$ of minimal multiplicity it holds that $B(\m)=(\m:\m)\cong \m$, the following corollary is an immediate consequence of Theorem \ref{projgor}(3) and \cite[Theorem 5.1]{gmp}.

\begin{subcor}\label{algor}  Let $(R,\m)$ be a local Cohen--Macaulay ring of dimension $1$ and minimal multiplicity. Then, $R$ is almost Gorenstein if and only if $\m\Ext^1_R(\Ul(R),\m)=0$.  
\end{subcor}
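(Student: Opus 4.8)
The plan is to deduce the corollary by chaining Theorem \ref{projgor}(3) (taken with $I=\m$) together with the known characterization of almost Gorensteinness in the minimal multiplicity case, exactly as the preceding remark suggests. First I would record the two structural facts special to this setting. Since $R$ is Cohen--Macaulay of dimension $1$ with minimal multiplicity, we have $B(\m)=(\m:\m)\cong \m$ as $R$-modules; functoriality of $\Ext^1_R(M,-)$ then furnishes, for each $M\in\Ul(R)$, an $R$-module isomorphism $\Ext^1_R(M,B(\m))\cong \Ext^1_R(M,\m)$, under which the $\m$-multiplication corresponds. Consequently $\m\Ext^1_R(\Ul(R),\m)=0$ if and only if $\m\Ext^1_R(\Ul(R),B(\m))=0$. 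Second, since $\depth R=1$ the ideal $\m$ contains a non-zero-divisor, so by $\tr_R(\m)=(R:\m)\m$ and $R\subseteq(R:\m)$ we get the containment $\m\subseteq \tr_R(\m)$.

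Next I would convert the vanishing $\m\Ext^1_R(\Ul(R),B(\m))=0$ into Gorensteinness of $B(\m)$ using Theorem \ref{projgor}(3). The converse half of that theorem, stated precisely for $I=\m$, gives directly that $\m\Ext^1_R(\Ul(R),B(\m))=0$ implies $B(\m)$ is Gorenstein. For the reverse implication, Theorem \ref{projgor}(3) yields $\tr_R(\m)\Ext^1_R(\Ul(R),B(\m))=0$ whenever $B(\m)$ is Gorenstein, and the containment $\m\subseteq \tr_R(\m)$ recorded above forces $\m\Ext^1_R(\Ul(R),B(\m))=0$. Hence $B(\m)$ is Gorenstein if and only if $\m\Ext^1_R(\Ul(R),B(\m))=0$.

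Finally I would invoke \cite[Theorem 5.1]{gmp}, which in this minimal-multiplicity situation equates $R$ being almost Gorenstein with $B(\m)=(\m:\m)$ being Gorenstein. Concatenating the three equivalences—$R$ almost Gorenstein $\iff$ $B(\m)$ Gorenstein $\iff$ $\m\Ext^1_R(\Ul(R),B(\m))=0$ $\iff$ $\m\Ext^1_R(\Ul(R),\m)=0$—completes the proof. I do not expect a genuine obstacle; the only point that needs care is the mismatch between the trace ideal $\tr_R(\m)$ appearing in the forward direction of Theorem \ref{projgor}(3) and the ideal $\m$ in the statement, which is dispatched by the elementary containment $\m\subseteq \tr_R(\m)$, together with checking that the isomorphism $B(\m)\cong\m$ is genuinely an $R$-module isomorphism so that it transports the $\m$-torsion condition faithfully.
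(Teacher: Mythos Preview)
Your proof is correct and follows exactly the same route as the paper: identify $B(\m)\cong\m$ via minimal multiplicity, invoke Theorem~\ref{projgor}(3) (with the containment $\m\subseteq\tr_R(\m)$ bridging the forward direction), and appeal to \cite[Theorem~5.1]{gmp}. You have simply spelled out in full the details that the paper leaves to the sentence preceding the corollary.
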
      

Finally, we give one more application of Theorem \ref{projgor}(3), for which we first record an easy observation: 

\begin{sublemma}\label{redul} Let $(R,\m)$ be a local Cohen--Macaulay ring of dimension $1$. Let $I$ be an $\m$-primary ideal of $R$ admitting a principal reduction $x\in I$. Then, $\m$ is $I$-Ulrich if and only if $\m\subseteq ((x):I)$.  
\end{sublemma}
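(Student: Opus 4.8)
The plan is to reduce the statement to a purely ideal-theoretic identity and then invoke Lemma \ref{one}. First I would note that, since $R$ is Cohen--Macaulay of dimension $1$ and $\depth R=1>0$, the maximal ideal $\m$ contains a non-zero-divisor and hence is a maximal Cohen--Macaulay module, so it is a legitimate candidate for membership in $\Ul_I(R)$. The key input is the characterization of $I$-Ulrich modules in this setting: for a maximal Cohen--Macaulay module $M$ and a principal reduction $x$ of $I$, one has $M\in\Ul_I(R)$ if and only if $IM=xM$. This follows either from the description $\Ul_I(R)=\cm(B(I))$ together with the fact that being a $B(I)=R[I/x]$-module forces $IM\subseteq xM$ (as recalled in the proof of Lemma \ref{endo}), with the reverse containment $xM\subseteq IM$ automatic from $x\in I$; or directly from multiplicity theory, since $x$ is a reduction of $I$ and is $M$-regular, so that $e_R(I,M)=e_R((x),M)=\lambda_R(M/xM)$, while $\lambda_R(M/IM)\le\lambda_R(M/xM)$ with equality precisely when $IM=xM$. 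Applying this with $M=\m$ reduces the lemma to the equivalence $I\m=x\m\iff\m\subseteq((x):I)$.

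Next I would unwind the right-hand side: by definition $\m\subseteq((x):I)$ means $\m I\subseteq(x)$. The forward implication is then immediate, since if $\m$ is $I$-Ulrich then $I\m=x\m\subseteq(x)$, giving $\m\subseteq((x):I)$.

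For the converse, which I expect to be the main obstacle, I would assume $I\m\subseteq(x)$ and aim to upgrade this containment to the equality $I\m=x\m$. Because $x\in I$ we always have $x\m\subseteq I\m$, so $I\m$ is a submodule of the cyclic module $X=(x)$ containing $\m X=x\m$. By Lemma \ref{one} (with $Y=I\m$), either $I\m=x\m$, which is exactly what we want, or $I\m=(x)$. The delicate point is excluding the latter: $I\m=(x)$ would give $x\in I\m=\m I$, and I would rule this out by the standard reduction argument used elsewhere in this section, namely that $x$ being a reduction of $I$ yields $I^{n+1}=xI^n$ for $n\gg0$, whence $x\in\m I$ would force $I^{n+1}=xI^n\subseteq\m I^{n+1}$ and hence $I^{n+1}=0$ by Nakayama, contradicting that $I$ is $\m$-primary. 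Thus $I\m=x\m$, completing the equivalence. Everything outside this converse direction is formal once the criterion $IM=xM$ for $I$-Ulrich modules is in hand; I would only double-check that $(x)$ is genuinely cyclic and that $\m\cdot(x)=x\m$, so that Lemma \ref{one} applies verbatim.
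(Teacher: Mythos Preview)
Your proof is correct and follows essentially the same arc as the paper's: reduce to showing $I\m=x\m$, handle the forward direction trivially, and for the converse use that $x\notin\m I$ (via the reduction--Nakayama argument) to upgrade $I\m\subseteq(x)$ to $I\m=x\m$. The only difference is in packaging: where you invoke Lemma~\ref{one} on the cyclic module $(x)$ to force the dichotomy $I\m=x\m$ or $I\m=(x)$, the paper instead first disposes of the case $I$ principal and then applies Lemma~\ref{3.10} (which needs $x\in I\setminus\m I$ and $I$ not principal) to conclude $\m\subseteq((x):_R I)=(x\m:_R I)$ directly. Your route via Lemma~\ref{one} is slightly more self-contained and avoids the explicit case split on whether $I$ is principal, while the paper's route via Lemma~\ref{3.10} records the stronger colon-ideal equality as a reusable byproduct; the core idea---the exclusion of $x\in\m I$---is identical in both.
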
 

\begin{proof} If $I$ is principal, then all MCM modules are $I$-Ulrich. So, it is enough to assume $I$ is not principal. If $\m$ is $I$-Ulrich, then $I\m =x\m$ (\cite[Proposition 4.5]{dms}). So, $\m \subseteq ((x):I)$. Conversely, let  $\m\subseteq ((x):I)$. Then, $\m\subseteq ((x):_RI)$. We claim that, $x\notin \m I$. Since $x\in I$ is a principal reduction of $I$, we have $I^{n+1}=xI^n$ for some $n\ge 1$. Hence if $x\in \m I$, then $I^{n+1}\subseteq \m I^{n+1}$. So, $I^{n+1}=\m I^{n+1}$. Hence by Nakayama's lemma, $I^{n+1}=0$, contradicting $I$ is $\m$-primary. Thus $x\in I\setminus \m I$. Since $\m\subseteq ((x):_R I)$ and $I$ is not principal, we have $\m \subseteq (x\m :_R I)$ by Lemma \ref{3.10}. So, $\m I \subseteq x \m$. Hence $\m I=x\m$, so $\m$ is $I$-Ulrich. 
\end{proof}     

\begin{subcor}\label{5.2.16} Let $(R,\m)$ be a local Cohen--Macaulay ring of dimension $1$, with infinite residue field, and minimal multiplicity, admitting a canonical module.  If $\m\Ext^1_R(\Ul(R),\m)=0$, then $R$ admits a canonical ideal $\omega$ and $B(\omega)$ is Gorenstein. Consequently, $\m\Ext^1_{R}(\Ul_{\omega}(R),B(\omega))=0$. 
\end{subcor}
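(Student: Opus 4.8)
The plan is to run the hypothesis through Corollary \ref{algor} and Theorem \ref{projgor}(3), using the structure theory of almost Gorenstein rings to bridge the blow-up of $\m$ with the blow-up of the canonical ideal.

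First I would record the two immediate consequences of the hypothesis $\m\Ext^1_R(\Ul(R),\m)=0$. On the one hand, Corollary \ref{algor} gives that $R$ is almost Gorenstein. On the other hand, since $R$ has minimal multiplicity we have $B(\m)\cong\m$, so the hypothesis reads $\m\Ext^1_R(\Ul(R),B(\m))=0$, and the converse part of Theorem \ref{projgor}(3) (the case $I=\m$) then forces $B(\m)$ to be a Gorenstein ring. Because $R\subseteq B(\m)$ is a finite birational extension, the total quotient ring $Q(R)=Q(B(\m))$ is a localization of the Gorenstein ring $B(\m)$ and hence Gorenstein; thus $R$ is generically Gorenstein and therefore admits a canonical ideal $\omega$ (see \cite{bh}). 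This already yields the first assertion of the corollary.

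The heart of the argument is to prove that $B(\omega)$ is Gorenstein, and this is where I expect the main obstacle to lie. If $R$ is Gorenstein, then $\omega\cong R$ and $B(\omega)=R$ is Gorenstein, so there is nothing to do. When $R$ is not Gorenstein the canonical ideal has reduction number at least two: indeed $\omega^2=a\omega$ for a principal reduction $a\in\omega$ would give $a^{-1}\omega^2=\omega$, hence $a^{-1}\omega\subseteq(\omega:\omega)=R$, while also $R\subseteq a^{-1}\omega$, forcing $\omega=aR$ principal and $R$ Gorenstein, a contradiction. Thus $B(\omega)$ is a genuinely higher birational extension, and I would identify it, via the structure theory of almost Gorenstein rings of minimal multiplicity in \cite{gmp}, with the extension produced by $\m$ (equivalently, show $\cm(B(\omega))=\cm(B(\m))$, i.e.\ $\Ul_\omega(R)=\Ul(R)$), so that $B(\omega)$ inherits Gorensteinness from $B(\m)$. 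Establishing this coincidence of the two blow-ups in the non-Gorenstein minimal-multiplicity almost Gorenstein case is the crux of the proof.

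Finally, for the ``consequently'' clause I would apply the forward direction of Theorem \ref{projgor}(3) with $I=\omega$: since $B(\omega)$ is Gorenstein, we get $\tr_R(\omega)\Ext^1_R(\Ul_\omega(R),B(\omega))=0$. Every almost Gorenstein ring is nearly Gorenstein, so $\m\subseteq\tr_R(\omega)$ (see \cite{trace}); consequently
\[
\m\Ext^1_R(\Ul_\omega(R),B(\omega))\subseteq \tr_R(\omega)\Ext^1_R(\Ul_\omega(R),B(\omega))=0,
\]
which is exactly the desired conclusion.
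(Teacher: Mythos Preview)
Your proposal correctly identifies the opening and closing moves: Corollary \ref{algor} yields that $R$ is almost Gorenstein, and you obtain generic Gorensteinness (hence a canonical ideal $\omega$) via the pleasant observation that $B(\m)$ is Gorenstein, which is a valid alternative to the paper's route through $\m\subseteq\tr_R(\omega)$. The final ``consequently'' clause is also handled exactly as in the paper, via the forward direction of Theorem \ref{projgor}(3) together with $\m\subseteq\tr_R(\omega)$.

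The genuine gap is precisely the step you yourself flag as the crux: showing $B(\omega)=B(\m)$ (equivalently $\Ul_\omega(R)=\Ul(R)$) in the non-Gorenstein minimal-multiplicity almost Gorenstein case. You do not supply an argument for this, and none is readily available in \cite{gmp}; the almost Gorenstein condition gives $\m\omega=x\m$ for a principal reduction $x$ of $\omega$, whence $\omega/x\subseteq(\m:\m)=B(\m)$ and so $B(\omega)\subseteq B(\m)$, but the reverse containment is not obvious and may well fail. The paper does \emph{not} attempt this identification. Instead it proves directly that $B(\omega)$ is Gorenstein via a conductor criterion: from almost Gorensteinness one has $\m\subseteq((x):\omega)$ for a principal reduction $x$ of $\omega$ (\cite[Theorem 3.11]{gmp}), so Lemma \ref{redul} gives that $\m$ is $\omega$-Ulrich, hence $\m\in\cm(B(\omega))$, hence $\m\subseteq\tr_R(\m)\subseteq(R:_{Q(R)}B(\omega))$ by \cite[Theorem 2.9]{dms}. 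Thus the conductor of $B(\omega)$ in $R$ is either $\m$ or $R$, both stable ideals since $R$ has minimal multiplicity, and \cite[Corollary 3.8(2)]{gmp} then yields that $B(\omega)$ is Gorenstein. You should replace your unproven identification with this conductor argument.
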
     

\begin{proof} Let $\omega$ be a canonical module of $R$. We show that $\omega$ can be identified with an ideal of $R$, $\m\subseteq \tr_R(\omega)$, and $B(\omega)$ is Gorenstein. Since $R$ has minimal multiplicity and $\m\Ext^1_R(\Ul(R),\m)=0$, $R$ is almost Gorenstein by Corollary \ref{algor}. Hence $\m\subseteq \tr_R(\omega)$ from \cite[Definition 2.2, Proposition 6.1]{trcan}, so $R$ is generically Gorenstein (\cite[Lemma 2.1]{trcan}). Hence $\omega$ can be identified with an ideal of $R$ by \cite[Proposition 3.3.18]{bh}. Since $R$ is almost Gorenstein, we have $\m \subseteq ((x):\omega)$ for some principal reduction $x$ of $\omega$ (see \cite[Setting 3.4, Theorem 3.11]{gmp}). Thus $\m$ is $\omega$-Ulrich by Lemma \ref{redul}. So, $\m \in \cm(B(\omega))$ by \cite[Theorem 4.6]{dms}. Hence $\m\subseteq \tr_R(\m) \subseteq (R:_{Q(R)}B(\omega))$ by \cite[Theorem 2.9]{dms}. Thus the conductor $c_R(B(\omega)):=(R:_{Q(R)}B(\omega))$ of $B(\omega)$ is either $\m$ or $R$. As $R$ has minimal multiplicity, by using \cite[4.6.14(c)]{bh} we see that $c_R(B(\omega))$ satisfies the condition (2) of \cite[Corollary 3.8]{gmp}. Thus $B(\omega)$ is Gorenstein by \cite[Corollary 3.8]{gmp}. Now the claim follows from  Theorem \ref{projgor}(3). 
\end{proof}

\paragraph{Acknowledgment}
We are grateful for partial support from the Simons Collaborator Grant FND0077558 and the University of Kansas Mathematics Department. We thank David Eisenbud for helpful conversations and for pointing out reference \cite{ES}. We also thank the anonymous referee for numerous helpful suggestions that improved the exposition of the paper. 
%%%%%%\paragraph{Data Availability Statement} Nulla non mauris vitae wisi posuere convallis. Sed eu nulla nec eros scelerisque pharetra.

\def\refname{References}

\end{document}